\numberwithin{equation}{section}
\newtheorem{theorem}{Theorem}[section]
\newtheorem{lemma}[theorem]{Lemma}
\newtheorem{corollary}[theorem]{Corollary}
\newtheorem{proposition}[theorem]{Proposition}
\theoremstyle{definition}
\newtheorem{definition}[theorem]{Definition}
\newtheorem{remark}[theorem]{Remark}
\newcommand{\e}{\varepsilon}
\newcommand{\R}{\mathbb{R}}
\newcommand{\N}{\mathbb{N}}
\newcommand{\Z}{\mathbb{Z}}  
\renewcommand{\S}{\mathbb{S}}  
\newcommand{\norm}[1]{\|#1\|}
\newcommand{\haus}[1]{\mathcal{H}^{#1}}
\newcommand{\cont}{\mathcal{C}}
\renewcommand{\P}{\operatorname{P}}
\newcommand{\E}{\mathscr{E}} 
\newcommand{\cE}{\text{\Fontauri{E}}}
\newcommand{\F}{\mathscr{F}}
\newcommand{\J}{\mathcal{J}}
\DeclareMathOperator*{\essinf}{ess\,inf}
\DeclareMathOperator*{\esssup}{ess\,sup}
\DeclareMathOperator*{\osc}{osc}
\newcommand{\loc}{\rm{loc}}
\renewcommand{\leq}{\leqslant}
\renewcommand{\le}{\leqslant}
\renewcommand{\geq}{\geqslant}
\renewcommand{\ge}{\geqslant}
\renewcommand{\epsilon}{\varepsilon}
\title[Non-local planelike minimizers]{Non-local planelike minimizers \\
and 
	$\Gamma$-convergence of periodic energies\\ to a local anisotropic perimeter}
\date{}
\author{\href{https://research-repository.uwa.edu.au/en/persons/serena-dipierro}{Serena Dipierro}}
\address{
	Serena Dipierro,
	Department of Mathematics and Statistics, 
	The University of Western Australia, 
	35 Stirling Highway, Perth, WA 6009, Australia}
\email{serena.dipierro@uwa.edu.au}
\author{Matteo Novaga}
\address{
	Matteo Novaga,
	Dipartimento di Matematica,
	Universit\`a di Pisa,
	Largo Bruno Pontecorvo 5, 56126 Pisa, Italy}
\email{matteo.novaga@unipi.it}
\author{\href{https://research-repository.uwa.edu.au/en/persons/enrico-valdinoci}{Enrico Valdinoci}}
\address{
	Enrico Valdinoci,
	Department of Mathematics and Statistics, 
	The University of Western Australia, 
	35 Stirling Highway, Perth, WA 6009, Australia}
\email{enrico.valdinoci@uwa.edu.au}
\author{Riccardo Villa}
\address{
	Riccardo Villa,
	Department of Mathematics and Statistics, 
	The University of Western Australia, 
	35 Stirling Highway, Perth, WA 6009, Australia}
\email{riccardo.villa@research.uwa.edu.au}
\begin{document}
	
	\maketitle
	
	\begin{abstract}
		We investigate a homogenization problem related to a non-local interface energy with a periodic forcing term. We show the existence of planelike minimizers for such energy. 
		
		Moreover, we prove that, under suitable assumptions on the non-local kernel and the external field, the sequence of rescaled energies~$\Gamma$-converges to a suitable local anisotropic perimeter, where the anisotropy is defined as the limit of the normalized energy of a planelike minimizer in larger and larger cubes (i.e., what is called in jargon ``stable norm'').
		
		To obtain this, we also establish several auxiliary results, including: the minimality of the level sets of the minimizers, explicit bounds on the oscillations of the minimizers, density estimates for almost minimizers, and non-local perimeter estimates in the large.
	\end{abstract}
	
	\setcounter{tocdepth}{1}
	{
		\hypersetup{linkcolor=black}
		\tableofcontents
	}

	\section{Introduction}
	
	Planelike minimizers are objects (such as surfaces or interfaces) that minimize a given energy functional and lie at a bounded distance from a hyperplane. They tipically occur
	in periodic media, where the energy density repeats periodically in space, allowing for suitable cancellations of the inhomogeneous minutiae of the environment that produce stable structures which, albeit not being perfectly flat, appear as flat at a large scale.
	In this spirit, planelike minimizers show how a possibly complicated structure at a fine scale
	behaves like a simple, averaged object in the large, and this phenomenon in turn provides an essential building
	block to understand homogenization.
	
	Moreover, planelike minimizers link the microscopic features of a medium to its macroscopic behavior, often reducing the analysis to that of ``effective energies'', called
	in jargon ``stable norms'', which play the role of homogenized ``surface tensions''
	capturing the effective anisotropy of the medium at a large scale (see below for further details).
	
	Also, when planelike minimizers possess some geometric organization
	(such as foliations or laminations of space), they can help describe the global geometry of minimizers.
	
	In certain variational settings arising in the theory of dynamical systems
	(like twist maps or Frenkel-Kontorova models), planelike minimizers correspond to Aubry-Mather sets,
	namely to configurations that minimize action and exhibit periodic and quasi-periodic order
	(see~\cite{MR670747, MR719634, MR2356117}).
	In analogy to this, planelike minimizers have been constructed for
	elliptic functionals and minimal surfaces
	(see~\cite{MR0847308, MR1852978, MR2197072, MR2542727}),
	phase transition models and partial differential equations
	(see~\cite{MR2099113, MR2342272, MR2809349}), fluid jets (see~\cite{MR2126143}), statistical mechanics and spin systems (see~\cite{MR1620543, MR3652519}).
	
	In this paper, we investigate a homogenization problem related to a non-local interface energy with a periodic forcing term, extending the results presented in~\cite{chambolle_thouroude} to a non-local setting. Considering some assumptions on the non-local kernel~$K$ and the external field~$g$, we show the existence of planelike minimizers for such energy, i.e. minimizers that are at bounded distance from a plane (see Definition~\ref{def::planelike} below). Moreover, we prove that the sequence of rescaled energies~$\{\F_\epsilon\}_\epsilon$
$\Gamma$-converges, as~$\epsilon$ approaches~$0$, to a suitable local anisotropic perimeter~$\F_\phi$. The anisotropy~$\phi$ is also known as ``stable norm'' (see~\cite{MR3223561}), and is defined as the limit of the (normalized) energy of a planelike minimizer in larger and larger cubes.
	
	Adapting some ideas presented in~\cite{MR1852978, chambolle_thouroude}
	to our setting, we construct planelike minimizers for our energy~$\F$ via a cell-problem. Then, we show that the stable norm~$\phi$ is well-defined and we prove the main~$\Gamma$-convergence result.
	
	\subsection*{Setting and main assumptions}
	
	The mathematical framework adopted in this paper goes as follows.
	Let~$Q:=(0,1)^n$ be the~$n$-dimensional cube of side~$1$, 
	and let~$g\in L^\infty(\R^n)$ be a~$\Z^n$-periodic function such that
	\begin{equation}\label{eq::g_zero_avg}
		\int_Q g(x)\,dx=0.
	\end{equation}
	
In all the paper, the following assumptions on the kernel~$K$ will be in force. 
	
	Suppose\footnote{Hypotheses~\eqref{eq::K_invariance} and~\eqref{eq::K_integrable} are very natural assumptions to work with. Compare for instance with~\cite[Paragraph~1.2]{MR1634336}.} that, for all~$x$, $y$, $w\in\R^n$, and for any rotation~$R\in SO(n)$,
	\begin{equation} \label{eq::K_invariance}
		K(y,x)=K(x,y)=K(x+w,y+w)=K(Rx,Ry)\ge0,
	\end{equation}
	and
	\begin{equation} \label{eq::K_integrable}
		%\iint_{(Q\times\R^n)\cup(\R^n\times Q)}|x-y|\,K(x,y)\,dx\,dy<+\infty.
		\int_{\R^n} |h|K(h,0)\,dh<+\infty.
	\end{equation}
	Namely, we require that the kernel under consideration is non-negative, symmetric, translation and rotation invariant, and has some integrability properties. 
	
	Moreover, we assume that there exist parameters~$s_1$ and~$s_2$, and positive constants~$\delta$, $\kappa_1$, $\kappa_2$ and~$\kappa_3$, such that
	\begin{equation*}
		0<s_1<\frac{1}{2}<s_2<1,\qquad\kappa_1\leq\kappa_2,
	\end{equation*}
	and, 
	for every~$x$, $y\in\R^n$,
	\begin{eqnarray}
			\label{eq::K_behavior}&&\kappa_1 \frac{\chi_{(0,\delta)}(|x-y|)}{|x-y|^{n+2s_1}} \leq K(x,y) \leq \kappa_2\min\left\{ \frac{1}{|x-y|^{n+2s_1}},\frac{1}{|x-y|^{n+2s_2}}\right\}\\
			 \label{eq::K_lower_bound_Q}\mbox{and}\quad&&\inf_{(x,y)\in Q\times Q} K(x,y)\geq \kappa_3.
	\end{eqnarray}
	
	\begin{remark}
		We point out that, in light of~\eqref{eq::K_invariance}, the lower bound in~\eqref{eq::K_lower_bound_Q}	is equivalent to
		\begin{equation}  \label{eq::K_lower_bound_Q_2}
			K(x,y)\geq \kappa_3\chi_{(0,\sqrt{n})}(|x-y|),
			\quad{\mbox{for all }} (x,y)\in\R^n\times\R^n.
		\end{equation}
		Indeed, for every~$x$, $y\in Q$, we have that~$|x-y|\leq \mbox{diam}(Q)=\sqrt{n}$. Thus, \eqref{eq::K_lower_bound_Q_2} yields~\eqref{eq::K_lower_bound_Q}. 
		
Moreover, for every~$x$, $y\in\R^n$ such that~$|x-y|<\sqrt{n}$, we have that~$x-y\in Q$. Hence, from~\eqref{eq::K_invariance} and~\eqref{eq::K_lower_bound_Q}, we infer that
		\begin{equation*}
			K(x,y) = K(x-y,0) \geq \inf_{(w,z)\in Q\times Q} K(w,z) \geq \kappa_3,
		\end{equation*}
		which entails~\eqref{eq::K_lower_bound_Q_2}.

We also point out that	if the lower bound in~\eqref{eq::K_behavior} holds true with~$\delta\geq \sqrt{n}$, then~\eqref{eq::K_lower_bound_Q} and~\eqref{eq::K_lower_bound_Q_2} are always verified with~$\kappa_3:=\kappa_1 \delta^{-n-2s_1}$.
	\end{remark}

	As model cases of kernels that satisfy~\eqref{eq::K_invariance}, \eqref{eq::K_integrable}, \eqref{eq::K_behavior}, and~\eqref{eq::K_lower_bound_Q}, one can think about 
	\begin{align*}
		&K_1(x,y) := \frac{\chi_{(0,\sqrt{n})}(|x-y|)}{|x-y|^{n+2s}},\\
		&K_2(x,y) := \frac{\chi_{(0,\delta)}(|x-y|)}{|x-y|^{n+2s}}+\chi_{[\delta,+\infty)}(|x-y|)e^{-|x-y|},\\
		\mbox{and }\quad&K_3(x,y) := \frac{\chi_{(0,\delta)}(|x-y|)}{|x-y|^{n+2s}}+\frac{\chi_{[\delta,+\infty)}(|x-y|)}{|x-y|^{n+2S}},	
	\end{align*}
	with~$0<s<\frac{1}{2}<S<1$.
	
	\begin{remark} \label{rem::high_localing_K}
		We point out that~\eqref{eq::K_behavior} entails
		\begin{equation*}
			K(x,y) \leq \frac{\kappa_2 }{|x-y|^{n+2s_2}},\quad{\mbox{for all }} (x,y)\in\R^n\times\R^n.
		\end{equation*}
		Hence, $K$ is integrable at infinity (when~$x$ and~$y$ are very far apart). As we will see, this condition yields a local energy in the~$\Gamma$-limit.
	\end{remark}
	
	From now on, we will call a \textit{domain} any open and bounded set, not necessarily connected.
	
	For our purposes, we now recall the notion of~$K$-nonlocal interaction between disjoint sets~$A$, $B\subseteq\R^n$, that is
	$$ \mathcal{L}_K(A,B) := \iint_{A\times B} K(x,y)\, dx\,dy,$$
	and the notion of~$K$-nonlocal perimeter of a set~$E\subseteq\R^n$ with respect to a Lipschitz domain~$\Omega\subseteq\R^n$ (see Definition~\ref{def::unif_lipschitz} for the precise notion set with Lipschitz boundary), which is defined as
	\begin{equation*}
		\begin{split} 
			\P_K(E,\Omega)&:=\iint_{\Omega_\sharp}\frac{1}{2}|\chi_E(x)-\chi_E(y)|\,K(x,y)\,dx\,dy\\ 
			&=\mathcal{L}_K(\Omega\cap E,\Omega\cap E^c)+\mathcal{L}_K(\Omega\cap E,\Omega^c\cap E^c)+\mathcal{L}_K(\Omega^c\cap E,\Omega\cap E^c),
		\end{split}
	\end{equation*}
	where
	$$\Omega_\sharp:=(\Omega\times \Omega)\cup(\Omega^c\times \Omega)\cup(\Omega\times \Omega^c),$$
	with~$\Omega^c:=\R^n\setminus \Omega$.
	
We will also use the notation
$$\P_K(E):=\mathcal{L}_K(E,E^c).$$
	
	Moreover, we define the energy functional~$\J$ in~$\Omega$ as the perturbation of~$\P_K$ with the periodic external forcing term~$g$, given by
	\begin{equation*}
		\J(E,\Omega) := \P_K(E,\Omega) + \int_{E\cap\Omega} g(x)\,dx.
	\end{equation*}
	
	However, taking into account the periodicity of~$g$, in order to account for the energy
	contributions of~$\partial\Omega$, we will also consider the energy functional~$\F$ defined as
	\begin{equation}\label{1.4BIS}
		\F(E,\Omega) := \P_K(E,\Omega) + \int_{E\cap\mathcal{Q}(\Omega)_1} g(x)\,dx,
	\end{equation}
	where
	\begin{equation*}
		\mathcal{Q}(\Omega)_1 :=\{k+Q \mbox{ s.t. }k\in\Z^n,\ k+Q\subseteq\Omega\}.
	\end{equation*}
	
	\subsection*{Construction of planelike minimizers} 
	
	To construct minimizers for~$\J$ and~$\F$, given~$p\in\R^n$, we define
	\begin{equation*}
		\cE_p(u):=\iint_{Q\times \R^n} \frac{1}{2}|u(x)-u(y)+p\cdot(x-y)|\,K(x,y)\,dx\,dy+\int_Q g(x)\,u(x)\,dx.  
	\end{equation*}
	Then, we will address the cell problem:
	\begin{equation} \label{eq::cell_problem}
		\begin{split}
			&\mbox{find}\quad u_p\in \mathcal{W}:=\left\{u\in L^1_{\loc}(\R^n) \mbox{ s.t. $u$ is $\Z^n$-periodic and }\int_Qu(x)\,dx=0\right\}\\&
			\mbox{such that}\quad \cE_p(u_p) = \min_\mathcal{W} \cE_p.
		\end{split}
	\end{equation}
	
	\begin{remark}
		We stress that the choice of the domain of integration in the definition of $\cE_p$ is tailored so that the energy functional under consideration is additive with respect to unions of disjoint sets (compare~$Q\times\R^n$ with~$Q_\sharp$ and~$Q \times Q$).
		
		Moreover, this choice of $\cE_p$ is the right one for deriving the desired Euler–Lagrange equation (see Corollary~\ref{cor::EL_eq}) and for constructing (class-$A$) minimizers of~$\J$.
	\end{remark}
			
	The problem in~\eqref{eq::cell_problem} is well-defined, according to the following result:
	\begin{theorem}[Existence of minimizers for~$\cE_p$]\label{th::existence_minimizer}
		Let~$K:\R^n\times \R^n\to\R$ satisfy~\eqref{eq::K_invariance}, \eqref{eq::K_integrable}, \eqref{eq::K_behavior}, and~\eqref{eq::K_lower_bound_Q}. %Assume that~$g\in L^{\frac{n}{s}}(Q)$.
		
		Then, there exists~$\gamma>0$, depending only on~$n$, $\kappa_1$, $\delta$, and~$s_1$, such that, if~$\norm{g}_{L^{\frac{n}{2s_1}}(Q)}\le\gamma$, there exists~$u\in\mathcal{W}$ such that
		\begin{align}
			\label{eq::u_min_cE}& \cE_p(u) = \min_{\mathcal{W}}\cE_p\\
			\label{eq::u_lebesgue_holder}\mbox{and}\quad&u\in L^{\frac{n}{n-2s_1}}(Q).
		\end{align}
		% among the set of functions $u\in L^1_{\loc}(\R^n)$ which are $\Z^n$-periodic and such that~$\int_Q u(x)\,dx=0$.
	\end{theorem}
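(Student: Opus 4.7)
The plan is to apply the direct method of the calculus of variations to $\cE_p$ on $\mathcal{W}$, for which the two essential ingredients are (i) coercivity in a suitable fractional Sobolev space and (ii) lower semicontinuity. The smallness assumption on $\|g\|_{L^{n/(2s_1)}(Q)}$ is dictated by the need to absorb the linear term into the nonlocal one via the fractional Sobolev-Poincaré embedding $W^{2s_1,1}\hookrightarrow L^{n/(n-2s_1)}$, whose dual exponent to $n/(n-2s_1)$ is precisely $n/(2s_1)$; this explains the choice of norm for $g$.

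For coercivity, the first step is to discard the part of the nonlocal integral in $\cE_p$ where either $|x-y|\ge\delta$ or $y\notin Q$, and to use the lower bound in \eqref{eq::K_behavior} to obtain
\begin{equation*}
\cE_p(u)\ge\frac{\kappa_1}{2}\iint_{\{(x,y)\in Q\times Q:\,|x-y|<\delta\}}\frac{|u(x)-u(y)+p\cdot(x-y)|}{|x-y|^{n+2s_1}}\,dx\,dy+\int_Q g\,u.
\end{equation*}
The reverse triangle inequality $|u(x)-u(y)+p\cdot(x-y)|\ge|u(x)-u(y)|-|p|\,|x-y|$, together with the fact that $\iint_{Q\times Q,\,|x-y|<\delta}|x-y|^{1-n-2s_1}\,dx\,dy$ is a finite constant depending on $n,s_1,\delta$, yields a bound of the form $\cE_p(u)\ge\tfrac{\kappa_1}{2}[u]_\delta-C|p|+\int_Q g\,u$, where $[u]_\delta$ denotes the truncated Gagliardo seminorm of order $2s_1$. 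Next, I would invoke a fractional Sobolev-Poincaré inequality for $\Z^n$-periodic zero-mean functions, $\|u\|_{L^{n/(n-2s_1)}(Q)}\le C_S[u]_\delta$, combined with Hölder's inequality $\bigl|\int_Q g\,u\bigr|\le\|g\|_{L^{n/(2s_1)}(Q)}\|u\|_{L^{n/(n-2s_1)}(Q)}$; choosing $\gamma$ so that $\gamma C_S\le\kappa_1/4$ gives the clean bound $\cE_p(u)\ge\tfrac{\kappa_1}{4}[u]_\delta-C|p|$. Since $\cE_p(0)=0$, any minimizing sequence $\{u_k\}\subset\mathcal{W}$ is therefore uniformly bounded both in the fractional seminorm and in $L^{n/(n-2s_1)}(Q)$.

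The second step is compactness and lower semicontinuity. By the fractional Rellich-Kondrachov theorem, a uniform bound on $[u_k]_\delta$ on the bounded cell $Q$ yields, along a subsequence, strong convergence $u_k\to u$ in $L^1(Q)$ and a.e.\ convergence on $Q$; $\Z^n$-periodicity then propagates this a.e.\ convergence to all of $\R^n$. The candidate limit $u$ is automatically $\Z^n$-periodic and has zero mean on $Q$ (the mean being continuous under $L^1$ convergence), while Fatou's lemma together with the uniform $L^{n/(n-2s_1)}$ bound gives \eqref{eq::u_lebesgue_holder}. For lower semicontinuity, Fatou's lemma applied on $Q\times\R^n$ to the nonnegative integrand $\tfrac{1}{2}|u_k(x)-u_k(y)+p\cdot(x-y)|K(x,y)$ handles the nonlocal part; the linear term converges since $g\in L^\infty(Q)$ and $u_k\to u$ in $L^1(Q)$. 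Combining yields $\cE_p(u)\le\liminf_k\cE_p(u_k)=\inf_\mathcal{W}\cE_p$, establishing \eqref{eq::u_min_cE}.

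The main obstacle is the coercivity step, and specifically the validity of the fractional Sobolev-Poincaré inequality with the \emph{truncated} seminorm $[u]_\delta$ (rather than the full Gagliardo seminorm on $Q$ or on the torus). For periodic zero-mean functions this can be proved by noting that the complementary part of the full seminorm, where $|x-y|\ge\delta$ inside $Q\times Q$, is controlled by $\|u\|_{L^1(Q)}$ which in turn is controlled by $\|u\|_{L^{n/(n-2s_1)}(Q)}$ via Hölder, and then by the standard Sobolev-Poincaré inequality on the torus; a small-parameter absorption then recovers the truncated-seminorm version. Once this is in place the choice of $\gamma$ is quantitative and depends only on $n$, $\kappa_1$, $\delta$, $s_1$ through the embedding constant $C_S$, as claimed.
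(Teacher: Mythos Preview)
Your overall strategy coincides with the paper's: direct method, coercivity via the fractional Sobolev--Poincar\'e embedding (with the smallness of $\|g\|_{L^{n/(2s_1)}}$ allowing the linear term to be absorbed), compactness in $L^1(Q)$, Fatou for the nonlocal part, and passage to the limit in the linear term. Two small points and one genuine gap are worth flagging.

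First, $\cE_p(0)\ne 0$ in general: it equals $\tfrac12\iint_{Q\times\R^n}|p\cdot(x-y)|K(x,y)\,dx\,dy$, which is finite by \eqref{eq::K_integrable} but typically positive. This is harmless---you only need $\cE_p(0)<+\infty$ to bound a minimizing sequence---but the claim as written is wrong. Second, your argument for the linear term (``$g\in L^\infty$, $u_k\to u$ in $L^1$, done'') is correct and in fact simpler than what the paper does; the paper goes through Severini--Egorov and absolute continuity of the integral, which is more than necessary given the standing assumption $g\in L^\infty$.

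The real gap is in your last paragraph. You correctly identify that the coercivity argument needs $\|u\|_{L^{n/(n-2s_1)}(Q)}\le C_S[u]_\delta$ with the \emph{truncated} seminorm, and you sketch a proof by bounding the far-field part of the full seminorm by $\|u\|_{L^1}$, then closing the loop via ``small-parameter absorption''. But there is no small parameter available: the chain $\|u\|_{L^1}\le C[u]_{\mathrm{full}}\le C[u]_\delta+C'\|u\|_{L^1}$ only closes if $C'<1$, and nothing guarantees this. The paper proves the required inequality as a separate lemma (Lemma~\ref{lemma::short_range_norm}) by a contradiction/compactness argument: if the inequality failed, a normalized sequence would converge (after covering $Q$ by subcubes of diameter $<\delta$, on which the truncated and full seminorms agree) to a constant with zero mean and unit $L^1$ norm, which is impossible. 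You should replace your absorption sketch with an argument of this type.
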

	
	A key step to find minimizers for~$\J$ is the following integral condition for minimizers~$u_p$ constructed in Theorem~\ref{th::existence_minimizer}.
	
	\begin{proposition}\label{prop::existence_calibration}
		Let~$K:\R^n\times \R^n\to\R$ satisfy~\eqref{eq::K_invariance}, \eqref{eq::K_integrable}, \eqref{eq::K_behavior}, and~\eqref{eq::K_lower_bound_Q} and
		let~$u_p$ be the minimizer for~$\cE_p$ in~$\mathcal{W}$ given by Theorem~\ref{th::existence_minimizer}. %among the set of functions $u\in L^1_{\operatorname{loc}}(\R^n)$ which are $\Z^n$-periodic and such that~$\int_Q u(x)\,dx=0$.
		
		Then, there exists~$z:\R^n\times\R^n\to[-1,1]$ such that, for all~$\eta\in C^\infty(\R^n)$ that are~$\Z^n$-periodic,
		we have that
		\begin{equation}\label{SUBD7}
			\iint_{Q\times\R^n}\frac{1}{2}z(x,y) (\eta(x)-\eta(y))\,K(x,y)\,dx\,dy+\int_Q g(x)\,\eta(x)\,dx=0.
		\end{equation}
		
		Moreover, for a.e.~$(x,y)\in\R^n\times\R^n$ and for all~$k\in\Z^n$,
		\begin{align}
			\label{eq::z_layering}
			&u_p(x)-u_p(y)+p\cdot(x-y) = z(x,y) \big|u_p(x)-u_p(y)+p\cdot(x-y)\big|,\\
			\label{eq::z_antisymm}
			&z(x,y)=-z(y,x),\\
			\label{eq::z_translation_invariance}
			\mbox{and}\quad&z(x+k,y+k)=z(x,y).
		\end{align}
	\end{proposition}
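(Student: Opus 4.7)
The plan is to read~\eqref{SUBD7} as the Euler-Lagrange condition for the convex functional~$\cE_p$ at its minimizer~$u_p$, to extract the selector~$z$ of the subdifferential of~$|\cdot|$ via a Hahn-Banach / $L^1$-$L^\infty$ duality argument on the fundamental domain~$Q\times\R^n$, and then to post-process~$z$ so as to enforce the $\Z^n$-translation invariance~\eqref{eq::z_translation_invariance} and the antisymmetry~\eqref{eq::z_antisymm}. For every $\Z^n$-periodic $\eta\in C^\infty(\R^n)$ the convex map $t\mapsto\cE_p(u_p+t\eta)$ attains its minimum at $t=0$ by Theorem~\ref{th::existence_minimizer}; because of~\eqref{eq::g_zero_avg}, replacing~$\eta$ by~$\eta-c$ leaves both the non-local term (which sees only $\eta(x)-\eta(y)$) and $\int_Q g\eta$ unchanged, so the constraint $\int_Q\eta=0$ inherited from~$\mathcal{W}$ can be dropped. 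Setting $a(x,y):=u_p(x)-u_p(y)+p\cdot(x-y)$ and $b(x,y):=\eta(x)-\eta(y)$, a dominated convergence argument (with majorant $|b|K\leq\|\nabla\eta\|_\infty|x-y|K$ integrable by~\eqref{eq::K_integrable}) gives that the right derivative at $t=0$ equals $L(\eta)+N(\eta)\geq 0$, where
\begin{equation*}
L(\eta):=\iint_{\{a\neq 0\}}\tfrac{1}{2}\mathrm{sign}(a)\,b\,K\,dx\,dy+\int_Q g\eta\,dx,\qquad N(\eta):=\iint_{\{a=0\}}\tfrac{1}{2}|b|\,K\,dx\,dy.
\end{equation*}
The analogous inequality with $-\eta$ yields $|L(\eta)|\leq N(\eta)$.

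\textbf{Key step: Hahn-Banach selection.}
The linear functional~$L$ is thus dominated by the seminorm~$N$, which depends on~$\eta$ only through the restriction of~$b$ to~$\{a=0\}$. Consequently~$L$ factors through the map $\eta\mapsto b|_{\{a=0\}}$ (two test functions with $N(\eta_1-\eta_2)=0$ satisfy $L(\eta_1)=L(\eta_2)$) and extends continuously to the closure of its image in the weighted space $L^1(\{a=0\};\tfrac{1}{2}K\,dx\,dy)$. Hahn-Banach followed by $L^1$-$L^\infty$ duality then yields $z_1\in L^\infty(\{a=0\})$ with $|z_1|\leq 1$ such that $-L(\eta)=\iint_{\{a=0\}}\tfrac{1}{2}z_1\,b\,K\,dx\,dy$ for every admissible~$\eta$. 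Defining $z(x,y):=\mathrm{sign}(a(x,y))$ on $\{a\neq 0\}$ and $z:=z_1$ on~$\{a=0\}$ produces a measurable $z:Q\times\R^n\to[-1,1]$ satisfying both~\eqref{SUBD7} and~\eqref{eq::z_layering}.

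\textbf{Enforcing the symmetries.}
To upgrade~$z$ to $\R^n\times\R^n$, I would extend it by $z(x+k,y+k):=z(x,y)$ for $k\in\Z^n$ and $x\in Q$. The translation invariance of~$K$ and~$u_p$, combined with the $\Z^n$-periodicity of~$\eta$ and~$g$, makes the extension compatible with~\eqref{SUBD7} and~\eqref{eq::z_layering}, while~\eqref{eq::z_translation_invariance} holds by construction. Finally I would antisymmetrize, replacing~$z$ by $\tfrac{1}{2}\bigl(z(x,y)-z(y,x)\bigr)$: the bound $|z|\leq 1$ and the invariance~\eqref{eq::z_translation_invariance} are clearly preserved; \eqref{eq::z_layering} follows from $a(y,x)=-a(x,y)$, so that $z(y,x)=-\mathrm{sign}(a(x,y))$ on $\{a\neq 0\}$; and~\eqref{SUBD7} persists because the $\Z^n$-invariance of the integrand yields $\iint_{Q\times\R^n}=\iint_{\R^n\times Q}$, after which the swap $x\leftrightarrow y$ and $K(x,y)=K(y,x)$ convert the $z(y,x)$ contribution into that of $z(x,y)$.

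\textbf{Main obstacle.}
I expect the delicate part to be the Hahn-Banach step: one has to set up the correct Banach space of periodic test-differences supported on~$\{a=0\}$, verify carefully that the strictly linear part~$L$ of the first variation factors through that quotient (which is exactly what $|L|\leq N$ affords), and invoke $L^1$-$L^\infty$ duality to obtain the measurable selector~$z_1$. The remaining tasks---the $\Z^n$-extension, the antisymmetrization, and checking the pointwise identity~\eqref{eq::z_layering}---are then routine, relying only on the periodicity of~$u_p$ and~$g$ and on the invariance and symmetry properties of~$K$.
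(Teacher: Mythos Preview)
Your proposal is correct and follows a route that is close to, but in one respect more careful than, the paper's own proof. Both arguments compute the one-sided first variation of $\cE_p$ at $u_p$, set $z=\operatorname{sign}(a)$ on $\{a\neq0\}$ with $a(x,y)=u_p(x)-u_p(y)+p\cdot(x-y)$, and use the zero-average condition~\eqref{eq::g_zero_avg} to drop the constraint $\int_Q\eta=0$. The difference lies on the degenerate set $\{a=0\}$. The paper simply sets $z=0$ there and passes to the two-sided limit $\epsilon\to0$ under the integral sign; as written, this step tacitly requires the integrand to converge pointwise also on $\{a=0\}$, i.e.\ that this set have $K\,dx\,dy$-measure zero, which the paper does not verify. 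Your Hahn--Banach/$L^1$--$L^\infty$ selection of $z_1$ on $\{a=0\}$ avoids this issue entirely, at the (routine) cost of having to enforce~\eqref{eq::z_antisymm} and~\eqref{eq::z_translation_invariance} a posteriori by $\Z^n$-extension and antisymmetrization; the paper's explicit $z$ has those symmetries for free. One small point in your sketch: the majorant $|b|K\le\|\nabla\eta\|_\infty|x-y|K$ only controls the near-diagonal contribution via~\eqref{eq::K_integrable}; for $|x-y|\ge1$ you need instead $|b|\le2\|\eta\|_\infty$ together with the decay of $K$ from~\eqref{eq::K_behavior}, and this same absolute integrability is what justifies the unfolding $\iint_{Q\times\R^n}=\iint_{\R^n\times Q}$ in your antisymmetrization step.
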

	A proof of Theorem~\ref{th::existence_minimizer} will be presented in Section~\ref{sec::existence_minimizer}, while Section~\ref{sec::existence_calibration} is devoted to showing Proposition~\ref{prop::existence_calibration}. 
	\medskip
	
We utilize the function~$z$ constructed in Proposition~\ref{prop::existence_calibration} as a ``calibration'' to show that the level sets of the minimizers provided by Theorem~\ref{th::existence_minimizer} are in turn (class-$A$) minimizers of our geometric problem.
	
	As a counterpart of the classical theory of minimal surfaces,
	and in the wake of~\cite{MR4117514, MR4142859},
	one can define calibrations in our setting as follows:
	
	\begin{definition} \label{def::calibration}
		Let~$E \subseteq \R^n$ be a set of finite~$K$-perimeter. We say that a function~$z:\R^n\times\R^n\to\R$ is a calibration for~$E$ in~$\Omega$ if
		\begin{equation*} \label{eq::def_calibration}
			\P_K(E,\Omega) = \iint_{\Omega_\sharp} \frac{1}{2} z(x,y)\left(\chi_E(x)-\chi_E(y)\right) K(x,y)\,dx\,dy.
		\end{equation*} 
	\end{definition}
	
	By analogy with the notation introduced in~\cite{MR1852978}, we also give the following definition of minimality for~$\J$.

	\begin{definition} \label{def::classA_minimilatity}
		We say that a set~$E\subseteq \R^n$ is a class-$A$ minimizer for~$\J$ if, for any domain~$\Omega\subseteq \R^n$, we have that
		\begin{equation*} \label{eq::classA_minimilatity}
			\J(E,\Omega) \leq \J(F,\Omega),\quad\mbox{ for any~$F$ such that }F\setminus \Omega= E\setminus \Omega.
 		\end{equation*}
	\end{definition}
	\begin{remark} \label{rem::class_A_F}
		Observe that if $E$ is a class-$A$ minimizer for $\J$, then, for any Lipschitz domain~$\Omega$, it holds that
		\begin{equation*} %\label{eq::cluster_minimality}
			\F(E,\Omega) \leq \F(F,\Omega),
		\end{equation*}
		for every~$F\subseteq\R^n$ such that~$F\setminus \mathcal{Q}(\Omega)_1=E\setminus \mathcal{Q}(\Omega)_1$. 
		
		Indeed, by the properties of~$\P_K$, for any~$A$ and~$B$ subsets of $\R^n$ such that~$A\Delta B\subset \mathcal{Q}(\Omega)_1$, we have that
		$$ \P_K(A,\Omega)-P_K(B,\Omega)=\P_K(A,\mathcal{Q}(\Omega)_1)-\P_K(B,\mathcal{Q}(\Omega)_1).$$
		
		Alternatively, observe that if $F\setminus \mathcal{Q}(\Omega)_1=E\setminus \mathcal{Q}(\Omega)_1$, then in particular $F\setminus \Omega= E\setminus \Omega$ and
		\begin{equation*}
			\int_{E\cap \Omega} g(x)\,dx-\int_{F\cap \Omega} g(x)\,dx = \int_{E\cap \mathcal{Q}(\Omega)_1} g(x)\,dx - \int_{F\cap \mathcal{Q}(\Omega)_1} g(x)\,dx.
		\end{equation*}
	\end{remark}
	
	For the rest of our discussion, we adopt the short notation
	\begin{equation} \label{eq::def_v}
		v_p(x):=u_p(x)+p\cdot x
	\end{equation}
	and define, for any~$t\in\R$, 
	\begin{equation} \label{eq::def_lev_set}
		E_{p,t}:=\big\{{\mbox{$x\in\R^n$ s.t. $v_p(x)>t$}}\big\}.
	\end{equation}
	
	Then, we have the following result:
	
	\begin{theorem}[Minimality of level sets] \label{th::level_sets_minimality}
Let~$K:\R^n\times \R^n\to\R$ satisfy~\eqref{eq::K_invariance}, \eqref{eq::K_integrable}, \eqref{eq::K_behavior}, and~\eqref{eq::K_lower_bound_Q} and
		let~$u_p$ be a minimizer for~$\cE_p$ in~$\mathcal{W}$. %among the set of functions $u\in L^1_{\operatorname{loc}}(\R^n)$ which are $\Z^n$-periodic and such that~$\int_Q u(x)\,dx=0$.
		Let also~$v_p$ and~$E_{p,t}$ be as in~\eqref{eq::def_v} and~\eqref{eq::def_lev_set}, respectively.
		
		Then, for every~$t\in\R$, the set~$E_{p,t}$ is a class-$A$ minimizer for~$\J$ in the sense of Definition~\ref{def::classA_minimilatity}.
	\end{theorem}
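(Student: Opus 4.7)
The plan is to use the function $z$ from Proposition~\ref{prop::existence_calibration} as a calibration for each level set $E_{p,t}$. Fix a domain $\Omega$ and a competitor $F \subseteq \R^n$ with $F \setminus \Omega = E_{p,t} \setminus \Omega$. Set $h := \chi_{E_{p,t}} - \chi_F$, which is bounded and supported in $\Omega$. I would like to show
\[ \P_K(E_{p,t},\Omega) - \P_K(F,\Omega) + \int_\Omega g(x)\, h(x)\,dx \leq 0. \]

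First I would verify that $z$ calibrates $E_{p,t}$ in $\Omega$ in the sense of Definition~\ref{def::calibration}. By \eqref{eq::z_layering}, on the set where $v_p(x)\ne v_p(y)$ we have $z(x,y)=\mathrm{sign}(v_p(x)-v_p(y))$, while the set $\{v_p(x)=v_p(y)\}$ is irrelevant since both sides of the target identity vanish there. A four-case check on how $v_p(x),v_p(y)$ compare to $t$ yields the pointwise identity $z(x,y)\bigl(\chi_{E_{p,t}}(x)-\chi_{E_{p,t}}(y)\bigr)=\bigl|\chi_{E_{p,t}}(x)-\chi_{E_{p,t}}(y)\bigr|$, giving
\[ \P_K(E_{p,t},\Omega)=\iint_{\Omega_\sharp}\tfrac{1}{2}z(x,y)\bigl(\chi_{E_{p,t}}(x)-\chi_{E_{p,t}}(y)\bigr)K(x,y)\,dx\,dy. \]
Since $|z|\leq 1$, the same expression with $F$ in place of $E_{p,t}$ is a lower bound for $\P_K(F,\Omega)$. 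Subtracting and noting that $h$ is supported in $\Omega$ (so the integrand vanishes on $\Omega^c\times\Omega^c$) gives
\[ \P_K(E_{p,t},\Omega)-\P_K(F,\Omega)\leq \iint_{\R^n\times\R^n}\tfrac{1}{2}z(x,y)\bigl(h(x)-h(y)\bigr)K(x,y)\,dx\,dy. \]

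Next I would reduce this global integral to the cell $Q$ so as to invoke the Euler--Lagrange identity \eqref{SUBD7}. Let $\tilde h(x):=\sum_{k\in\Z^n} h(x+k)$; since $h$ has compact support in $\Omega$ the sum is finite pointwise, and $\tilde h$ is a bounded $\Z^n$-periodic function. By splitting $\R^n=\bigsqcup_k(k+Q)$, changing variables $x\mapsto x+k$, $y\mapsto y+k$, and using the $\Z^n$-periodicity of $g$ together with the translation invariance \eqref{eq::K_invariance} and \eqref{eq::z_translation_invariance}, one obtains $\int_{\R^n}gh\,dx=\int_Q g\tilde h\,dx$ and $\iint_{\R^n\times\R^n} z(x,y)h(x)K(x,y)\,dx\,dy=\iint_{Q\times\R^n} z(x,y)\tilde h(x)K(x,y)\,dx\,dy$. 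The antisymmetry \eqref{eq::z_antisymm} combined with the symmetry of $K$ then converts each of the two integrals $\iint z(x,y)(h(x)-h(y))K$ and $\iint_{Q\times\R^n}z(x,y)(\tilde h(x)-\tilde h(y))K$ into twice their respective $h(x)$-only parts, so the two left-hand integrals coincide.

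Finally, I would apply \eqref{SUBD7} with test function $\tilde h$ in place of $\eta$. Since \eqref{SUBD7} is stated for smooth $\Z^n$-periodic $\eta$, one needs a routine density argument extending it to bounded measurable periodic $\eta$: mollify $\tilde h$ by a periodic convolution $\tilde h_\varepsilon$, apply \eqref{SUBD7} to $\tilde h_\varepsilon$, and pass to the limit using \eqref{eq::K_integrable} together with $|z|\leq 1$ (the double integral over $Q\times\R^n$ is dominated by $\|\nabla\tilde h_\varepsilon\|_\infty\,\iint_{Q\times\R^n}|x-y|K(x,y)\,dx\,dy$ on the near-diagonal part and by $\|\tilde h_\varepsilon\|_\infty\,\iint_{Q\times\R^n}K$ on the far part, and for fixed supports one can instead use dominated convergence directly). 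The resulting identity
\[ \iint_{Q\times\R^n}\tfrac{1}{2}z(x,y)\bigl(\tilde h(x)-\tilde h(y)\bigr)K(x,y)\,dx\,dy+\int_Q g(x)\tilde h(x)\,dx=0 \]
combined with the previous step gives $\P_K(E_{p,t},\Omega)-\P_K(F,\Omega)+\int_\Omega gh\leq 0$, as desired.

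The main obstacle is the density step extending \eqref{SUBD7} to non-smooth periodic test functions of the form $\tilde h$: one must ensure that the approximation $\tilde h_\varepsilon\to\tilde h$ is compatible with the non-local energy, which requires controlling both the near-diagonal and the tail contributions of $K$ uniformly, using \eqref{eq::K_integrable} and \eqref{eq::K_behavior}. The remaining algebra with translation invariance and antisymmetry is standard but must be executed carefully to preserve the asymmetric $Q\times\R^n$ normalization that is used throughout the paper.
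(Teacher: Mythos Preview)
Your calibration step is a genuine improvement over the paper's route: the pointwise case analysis showing
\[
z(x,y)\bigl(\chi_{E_{p,t}}(x)-\chi_{E_{p,t}}(y)\bigr)=\bigl|\chi_{E_{p,t}}(x)-\chi_{E_{p,t}}(y)\bigr|
\]
for every $t$ (not just a.e.\ $t$) is correct, because on $\{v_p(x)=v_p(y)\}$ both sides vanish regardless of $z$, and on the complement \eqref{eq::z_layering} forces $z=\mathrm{sign}(v_p(x)-v_p(y))$. The paper instead uses a layer-cake argument (Lemma~\ref{lemma::level_sets_finite_perimeter}) that yields the identity only for a.e.\ $t$, and then invokes a separate closure lemma for class-$A$ minimizers under $L^1_{\loc}$-convergence to pass to every $t$. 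Your direct verification bypasses both steps. The periodization you perform to reduce from $\R^n\times\R^n$ to $Q\times\R^n$ is essentially the content of the paper's Corollary~\ref{cor::EL_eq}, which the paper derives and then applies to compactly supported $\eta_\ell$ approximating $h$; so the two routes rejoin at this point.

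The real gap is in your density step. Your proposed domination
\[
\bigl|z(x,y)\bigl(\tilde h_\varepsilon(x)-\tilde h_\varepsilon(y)\bigr)\bigr|\leq \|\nabla\tilde h_\varepsilon\|_\infty\,|x-y|
\]
near the diagonal is useless for passing to the limit, since $\tilde h$ is a (periodized) difference of characteristic functions and $\|\nabla\tilde h_\varepsilon\|_\infty\to\infty$; this gives no integrable majorant independent of $\varepsilon$, so neither dominated convergence nor a uniform bound follows. The paper's proof of Proposition~\ref{prop::level_sets_minimality_ae} handles exactly this point by invoking a density theorem in the nonlocal energy topology (\cite[Theorem~6]{MR3310082}): one finds $\eta_\ell\in C_c^\infty(\Omega,[-1,1])$ with
\[
\iint_{\Omega_\sharp}\bigl|(h-\eta_\ell)(x)-(h-\eta_\ell)(y)\bigr|K(x,y)\,dx\,dy\to 0
\quad\text{and}\quad \|h-\eta_\ell\|_{L^1(\Omega)}\to 0,
\]
which is precisely the convergence needed to pass to the limit in the calibration integral, given that $\P_K(E_{p,t},\Omega)$ and $\P_K(F,\Omega)$ are finite (Lemma~\ref{eq::finite_P_K}). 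If you want to keep your periodic formulation, you need the analogous statement for $\tilde h$ in the $Q\times\R^n$ seminorm; one clean way is to observe that periodic mollification is a contraction for that seminorm (by Jensen and the translation invariance of $K$), so $\iint_{Q\times\R^n}|\tilde h_\varepsilon(x)-\tilde h_\varepsilon(y)|K\leq \iint_{Q\times\R^n}|\tilde h(x)-\tilde h(y)|K<\infty$, after which a generalized dominated convergence argument goes through. Either way, the heuristic you wrote does not suffice and must be replaced by one of these.
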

	
	A proof of Theorem~\ref{th::level_sets_minimality} can be found in Section~\ref{sec::level_sets_minimality}. The idea is to first prove a weaker version of Theorem~\ref{th::level_sets_minimality}, in which the conclusion holds for a.e.~$t\in\R$ only. Then, exploiting the closedness of class-$A$ minimizers with respect to the~$L^1_{\loc}$-convergence, we will obtain the desired result.
	\medskip 
	
	Thanks to suitable density estimates (see Proposition~\ref{prop::UDE} below), we also infer that a minimizer~$u_p$ of~$\cE_p$ has controlled oscillations in~$Q$. To this end, we recall that, for any~$\varphi\in\mbox{BV}(Q)$, 
	$$\osc_Q(\varphi) := \esssup_Q \varphi - \essinf_Q \varphi. $$
	
	\begin{theorem}[Minimizers of~$\cE_p$ have controlled oscillations] \label{th::controlled_osc}
		Let~$K:\R^n\times \R^n\to\R$ satisfy~\eqref{eq::K_invariance}, \eqref{eq::K_integrable}, \eqref{eq::K_behavior}, and~\eqref{eq::K_lower_bound_Q} and
		let~$u_p$ be a minimizer for~$\cE_p$ in~$\mathcal{W}$. % among the set of functions $u\in L^1_{\operatorname{loc}}(\R^n)$ which are $\Z^n$-periodic and such that~$\int_{Q} u(x)\,dx=0$.
		
		Let also~$v_p$ be as~\eqref{eq::def_v}.
		
		Then, there exists a constant~$c>0$ independent of~$p$ such that
		$$\osc_Q(v_p)\leq c|p|\qquad
		{\mbox{and}}\qquad\osc_Q(u_p)\leq (c+\sqrt{n})|p|.$$
	\end{theorem}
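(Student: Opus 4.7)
The second estimate is an immediate consequence of the first: since $u_p(x)=v_p(x)-p\cdot x$ and the linear function $p\cdot x$ oscillates on $Q$ by at most $|p|\cdot\mathrm{diam}(Q)=\sqrt{n}\,|p|$, the triangle inequality yields $\osc_Q(u_p)\leq\osc_Q(v_p)+\sqrt{n}\,|p|$. Hence it suffices to prove $\osc_Q(v_p)\leq c|p|$.

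The plan is to combine the fact that the level sets of $v_p$ are class-$A$ minimizers with the uniform density estimates. Set $M:=\osc_Q(v_p)$, $\underline v:=\essinf_Q v_p$ and $\overline v:=\esssup_Q v_p$. For every $t\in(\underline v,\overline v)$ the level set $E_{p,t}$ of \eqref{eq::def_lev_set} satisfies both $|E_{p,t}\cap Q|>0$ and $|E_{p,t}^c\cap Q|>0$, so there exists $x_t\in\overline Q\cap\partial E_{p,t}$. By Theorem \ref{th::level_sets_minimality}, $E_{p,t}$ is a class-$A$ minimizer for $\J$, and Proposition \ref{prop::UDE} then furnishes universal constants $r_0,c_0>0$, independent of $t$ and $p$, such that $|E_{p,t}\cap B_{r_0}(x_t)|\geq c_0$ and $|E_{p,t}^c\cap B_{r_0}(x_t)|\geq c_0$. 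Choosing $r_0$ small enough that $|w-z|<\sqrt{n}$ for $w,z\in B_{r_0}(x_t)$, the bound \eqref{eq::K_lower_bound_Q_2} gives $K\geq\kappa_3$ on the product, and hence, for a fixed larger cube $Q'\supset Q$ containing every such ball,
$$\P_K(E_{p,t},Q')\geq\mathcal{L}_K\bigl(E_{p,t}\cap B_{r_0}(x_t),E_{p,t}^c\cap B_{r_0}(x_t)\bigr)\geq\kappa_3 c_0^2=:c_1>0,$$
uniformly for $t\in(\underline v,\overline v)$.

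Integrating in $t$ and invoking the nonlocal coarea identity
$$\int_{\R}\P_K(E_{p,t},Q')\,dt=\frac{1}{2}\iint_{Q'_\sharp}|v_p(x)-v_p(y)|\,K(x,y)\,dx\,dy$$
produces $c_1 M\leq\tfrac{1}{2}\iint_{Q'_\sharp}|v_p(x)-v_p(y)|K(x,y)\,dx\,dy$. I would control the right-hand side by the minimizer's own energy: covering $Q'$ by finitely many integer translates of $Q$ and using the identity $v_p(x+k)-v_p(x)=p\cdot k$ for every $k\in\Z^n$, the right-hand side reduces to a finite sum of integrals of the form $\iint_{Q\times\R^n}|v_p(x)-v_p(y)|K(x,y)\,dx\,dy$ plus $|p|$-linear correction terms from the shifts. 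Each such integral is in turn at most $2\cE_p(u_p)+2|\int_Q g\,u_p\,dx|$. Now $\cE_p(u_p)\leq\cE_p(0)=\tfrac{|p|}{2}\int_{\R^n}|h|K(h,0)\,dh\leq C|p|$ by \eqref{eq::K_integrable}, while the forcing term is bounded by H\"older's inequality together with \eqref{eq::u_lebesgue_holder}, a fractional Sobolev-type inequality, and the smallness of $\|g\|_{L^{n/(2s_1)}}$ granted by Theorem \ref{th::existence_minimizer}. Rearranging yields $M\leq c|p|$.

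The main obstacle is the coarea-to-energy reduction: one must match a purely local perimeter lower bound (living on $Q'$) with the global minimizer energy $\cE_p(u_p)$, which is defined by integration over the unbounded domain $Q\times\R^n$. The key inputs are the $\Z^n$-periodicity of $u_p$, which permits covering $Q'$ by translates of $Q$ with controlled shifts $p\cdot k$, and the integrability hypothesis \eqref{eq::K_integrable}, which keeps the far-field interactions finite and linear in $|p|$.
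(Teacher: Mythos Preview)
Your approach is essentially the paper's: a uniform perimeter lower bound for each level set via the density estimates, integrated in $t$ through the nonlocal coarea formula, and then bounded above via $\cE_p(u_p)\leq\cE_p(0)\leq C|p|$ together with control of the $g$-term.

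Two minor points. First, the ``$|p|$-linear correction terms from the shifts'' are actually zero: after changing variables in $\iint_{(Q+k)\times\R^n}$ you get $|v_p(x+k)-v_p(y+k)|=|v_p(x)-v_p(y)|$, since the $p\cdot k$ cancels in the difference; the covering reduction is exact. Second, the paper avoids the enlarged cube $Q'$ and the covering step entirely by packaging the lower bound into Lemma~\ref{lemma::per_lower_bound}, which gives $\P_K(E_{p,t},Q)\geq C$ directly; then the trivial inclusion $\iint_{Q_\sharp}\leq 2\iint_{Q\times\R^n}$ already lands on $\cE_p$. For the forcing term the paper uses the elementary bound $|u_p|\leq\osc(u_p)\leq\osc_Q(v_p)+\sqrt n|p|$ (valid because $\int_Q u_p=0$) and absorbs the resulting $\osc_Q(v_p)$ on the left-hand side by smallness of $\|g\|$, in place of your Sobolev route.
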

	
	As a consequence of Theorem~\ref{th::controlled_osc}, we obtain that the collection of level sets~$\{E_{p,t}\}_{t\in\mathcal{T}_p}$, where $$\mathcal{T}_p:=\{t\in\R \mbox{ s.t. } \partial E_{p,t}\cap Q\neq\varnothing\},$$ can be trapped within a strip of height~$M$, for some~$M>0$ independent of~$t$ and~$p$. For this reason, we call such level sets ``planelike'', according to the setting below.
	
	\begin{definition} \label{def::planelike}
		Let~$p\in\R\setminus\{0\}$ and let~$E\subseteq \R^n$ be a class-$A$ minimizer for the energy functional~$\J$.
		
		We say that~$E$ is a planelike minimizer for~$\J$ in direction~$p/|p|$ if there exists~$M>0$ (independent of~$p$) such that
		\begin{align*}
			\mbox{either}\quad& \{x\in\R^n \mbox{ s.t. } x\cdot p\leq -M|p|\} \subseteq E \subseteq \{x\in\R^n \mbox{ s.t. } x\cdot p\leq M|p|\}\\
			\mbox{or}\quad& \{x\in\R^n \mbox{ s.t. } x\cdot p\geq M|p|\} \subseteq E \subseteq \{x\in\R^n \mbox{ s.t. } x\cdot p\geq -M|p|\}.
		\end{align*}	
		In both cases, it also holds
		\begin{equation*}
			\partial E \subseteq \{x\in\R^n \mbox{ s.t. } |x\cdot p|\leq M|p|\}.
		\end{equation*}
	\end{definition}
	
	\begin{corollary}[Level sets are planelike minimizers] \label{cor::K_planelike}
		Let~$p\in\R^n$ and let~$K:\R^n\times \R^n\to\R$ satisfy~\eqref{eq::K_invariance}, \eqref{eq::K_integrable}, \eqref{eq::K_behavior}, and~\eqref{eq::K_lower_bound_Q}. Let~$u_p$ be a minimizer for~$\cE_p$ in~$\mathcal{W}$
		and let also~$v_p$ be as in~\eqref{eq::def_v}, $t\in\R$ and~$E_{p,t}$ be defined as in~\eqref{eq::def_lev_set}.
		
		If~$t\in\mathcal{T}_p$, then~$E_{p,t}$ is a planelike minimizer in direction~$p/|p|$ in the sense of Definition~\ref{def::planelike}. Namely, there exists~$M>0$ independent of~$p$ and~$t$ such that 
		\begin{eqnarray} 
			\label{eq::planelike_claim_1}&E_{p,t} \subseteq \{x\in\R^n \mbox{ s.t. } x\cdot p\geq -M|p|\} \\
			\label{eq::planelike_claim_2}\mbox{and}\quad &\{x\in\R^n \mbox{ s.t. } x\cdot p\geq M|p|\} \subseteq E_{p,t} .
		\end{eqnarray}
		In particular, it holds
		\begin{equation*}
			\partial E_{p,t} \subseteq \{x\in\R^n \mbox{ s.t. } |x\cdot p|\leq M|p|\}.  
		\end{equation*}
	\end{corollary}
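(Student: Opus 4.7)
The plan is to combine three ingredients already established: the minimality of each level set (Theorem~\ref{th::level_sets_minimality}), the periodicity structure of $v_p$, and the uniform oscillation bound of Theorem~\ref{th::controlled_osc}. Since class-$A$ minimality of $E_{p,t}$ is immediate from Theorem~\ref{th::level_sets_minimality}, the entire work goes into establishing the two strip-inclusions~\eqref{eq::planelike_claim_1}--\eqref{eq::planelike_claim_2}; once those hold, the containment $\partial E_{p,t}\subseteq\{|x\cdot p|\le M|p|\}$ follows by choosing the representative of $E_{p,t}$ consistent with these a.e.~inclusions.

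First I would extract the key consequence of the hypothesis $t\in\mathcal{T}_p$: since $\partial E_{p,t}\cap Q\ne\varnothing$, every neighborhood of such a boundary point meets both $E_{p,t}$ and its complement in positive measure, so
\begin{equation*}
\essinf_Q v_p \;\le\; t \;\le\; \esssup_Q v_p .
\end{equation*}
Combining this with Theorem~\ref{th::controlled_osc}, which gives $\esssup_Q v_p-\essinf_Q v_p\le c|p|$ with $c$ independent of $p$, I obtain the two-sided control
\begin{equation*}
t-c|p| \;\le\; \essinf_Q v_p \qquad\text{and}\qquad \esssup_Q v_p \;\le\; t+c|p|.
\end{equation*}

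Next, for arbitrary $x\in\R^n$ I would write $x=k+y$ with $k\in\Z^n$ and $y\in Q$. The $\Z^n$-periodicity of $u_p$ gives $v_p(x)=v_p(y)+p\cdot k$, and since $|y|\le\sqrt{n}$ one has $|p\cdot k-p\cdot x|\le\sqrt{n}|p|$. Hence for a.e.~$x$ with $x\cdot p\ge M|p|$,
\begin{equation*}
v_p(x) \;\ge\; \essinf_Q v_p + p\cdot k \;\ge\; t-c|p| + (M-\sqrt{n})|p|,
\end{equation*}
which is strictly greater than $t$ as soon as $M>c+\sqrt{n}$, proving~\eqref{eq::planelike_claim_2}. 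The symmetric estimate using $\esssup_Q v_p\le t+c|p|$ yields $v_p(x)<t$ for a.e.~$x$ with $x\cdot p\le -M|p|$, i.e.~$\{x\cdot p\le -M|p|\}\subseteq E_{p,t}^c$ up to a null set, which is~\eqref{eq::planelike_claim_1}. Choosing $M:=c+\sqrt{n}+1$ gives a constant independent of both $p$ and $t$.

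The only subtle point, and the one I expect to require the most care to phrase cleanly, is that $v_p$ is merely defined almost everywhere (via $u_p\in L^1_{\loc}$), so the inclusions initially hold only modulo null sets. I would resolve this by selecting, once and for all, the representative of $E_{p,t}$ determined by the pointwise inequalities above on the half-spaces $\{x\cdot p\ge M|p|\}$ and $\{x\cdot p\le -M|p|\}$; the topological boundary of this representative is then automatically confined to the strip $\{|x\cdot p|\le M|p|\}$, giving the last assertion of the corollary.
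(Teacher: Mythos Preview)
Your proof is correct and follows essentially the same strategy as the paper: combine the class-$A$ minimality from Theorem~\ref{th::level_sets_minimality}, the bracketing $\essinf_Q v_p\le t\le\esssup_Q v_p$ coming from $t\in\mathcal{T}_p$, and the oscillation bound of Theorem~\ref{th::controlled_osc} together with $\Z^n$-periodicity to trap $v_p(x)$ on either side of $t$ outside the strip. The only cosmetic difference is that the paper phrases the estimate in terms of $\osc(u_p)$ (using periodicity implicitly via $\osc(u_p)=\osc_Q(u_p)$) and arrives at $M=c+2\sqrt{n}$, whereas you work directly with $\osc_Q(v_p)$ and the decomposition $x=k+y$, obtaining $M=c+\sqrt{n}+1$; your extra paragraph on the a.e.~issue and choice of representative is in fact more careful than the paper's presentation.
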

	
	Section~\ref{sec::density estimates} is committed to showing the density estimates needed for Theorem~\ref{th::controlled_osc}, while a proof of the latter and Corollary~\ref{cor::K_planelike} is contained in Section~\ref{sec::osc}.
	
	\subsection*{$\Gamma$-convergence results and the stable norm}
	With the planelike minimizers of Corollary~\ref{cor::K_planelike}, we are able to prove the main result of this paper, which is the~$\Gamma$-convergence of a sequence of rescaled energies to the so-called stable norm.
	
	Let~$\Omega\subseteq \R^n$ be a Lipschitz domain and let, for any~$\epsilon>0$,
	$$ \mathcal{Q}(\Omega)_\epsilon :=\{\epsilon(k+Q) \mbox{ s.t. }k\in\Z^n,\ \epsilon(k+Q)\subseteq\Omega\}.$$ 
	Then, for any set~$E$, we define the rescaled energy~$\F_{\epsilon}$ in~$\Omega$ as 
	\begin{equation} \label{eq::rescaled_energy2}
		\begin{split}
			\F_{\epsilon} (E,\Omega) &:= \iint_{\Omega_\sharp} \frac{1}{2}|\chi_E(x)-\chi_E(y)|K_\epsilon(x,y)\,dx\,dy+\int_{\mathcal{Q}(\Omega)_\epsilon \cap E} g_\epsilon(x)\,dx\\
			&=\P_{K_\epsilon}(E,\Omega)+\int_{\mathcal{Q}(\Omega)_\epsilon \cap E} g_\epsilon(x)\,dx,
		\end{split}
	\end{equation}
	where 
	$$ K_\epsilon(x,y):=\frac1{\epsilon^{n+1}}K\left(\frac{x}{\epsilon},\frac{y}{\epsilon}\right)\qquad{\mbox{and}} \qquad g_\epsilon(x):=\frac1{\epsilon}g\left(\frac{x}{\epsilon}\right).$$ 
	
	For later convenience, we also set
	\begin{equation} \label{eq::truncated_energy}
		\begin{split}
			\E_\epsilon(E,\Omega) &:= \F_\epsilon(E,\Omega) - \mathcal{L}_{K_\epsilon}(E\cap\Omega,E^c\cap\Omega^c)\\
			&=  \iint_{\Omega\times\R^n} \chi_E(x)\chi_{E^c}(y) K_\epsilon(x,y)\,dx\,dy+\int_{\mathcal{Q}(\Omega)_\epsilon \cap E} g_\epsilon(x)\,dx.
		\end{split}
	\end{equation}
	
	We will show that the sequence~$\{\F_{\epsilon}\}_{\epsilon>0}$
	$\Gamma$-converges to a local anisotropic perimeter. 
	
	To this end, let~$p\in \S^{n-1}$ and let~$Q^p$ be a cube of side~$1$ with a face perpendicular to~$p$. Moreover, let~$E_p$ be a planelike minimizer for $\J$.
	%any level set of a minimizer~$u_p$ for~$\cE_p$, as constructed in Theorem~\ref{th::existence_minimizer}. 
	Then, in the wake of~\cite{MR3223561}, we define the stable norm~$\phi:\S^{n-1}\to\R$ as
	\begin{equation} \label{eq::anisotropy}
		\begin{split}
			\phi(p) := \lim_{\epsilon\to0^+} \F_{\epsilon}(\epsilon E_p,Q^p) = \lim_{R\to+\infty} R^{1-n}\F_1(E_p,Q^p_{R})  ,
			%\inf\Bigg\{\frac{\F_1(E_p,T_C)}{|C|} \mbox{ s.t. } C\in\cont_e \mbox{, and~$E_p$ is any level set of the minimizer}& \\ \mbox{$u_p$ constructed as in Theorem~\ref{th::existence_minimizer}} &\Bigg\},
		\end{split}
	\end{equation}
	whenever such limit exists. Here, we are adopting the notation~$Q^p_R$ for the cube of side~$R$ with a face perpendicular to~$p$.
	
	Besides, for any set~$E\subseteq \Omega$ of finite perimeter, we define 
	\begin{equation} \label{eq::anisotropic_per}
		\F_\phi(E,\Omega) := \int_{\partial E\cap\Omega} \phi(\nu_E(x)) d\haus{n-1}(x).
	\end{equation}
	Then, the following holds true:
	
	\begin{theorem}[$\Gamma$-convergence of the energies~$\F_\epsilon$] \label{th::gamma_conv}
		We have that~$\F_\epsilon\xrightarrow{\Gamma} \F_\phi$, as~$\epsilon\to0$, with respect to the~$L^1_{\loc}(\R^n)$-convergence of sets. Namely, for every set~$E\subseteq\R^n$, we have that
		\begin{enumerate}[(i)]
			\item \label{item::gamma_conv_inf} 
			for any sequence of sets~$\{E_\epsilon\}_\epsilon$ such that~$E_\epsilon\to E$ in~$L^1_{\loc}(\R^n)$ (i.e.~$|E_\epsilon\Delta E|\to0$ as~$\epsilon\to0$),
			\begin{equation} \label{eq::gamma_conv_inf}
				\liminf_{\epsilon\to0} \F_\epsilon(E_\epsilon,\Omega) \geq \F_\phi(E,\Omega);
			\end{equation}
			
			\item \label{item::gamma_conv_sup} 
			there exists a sequence of sets~$\{E_\epsilon\}_\epsilon$ such that~$E_\epsilon\to E$ in~$L^1_{\loc}(\R^n)$ and
			\begin{equation} \label{eq::gamma_conv_sup}
				\limsup_{\epsilon\to0} \F_\epsilon(E_\epsilon,\Omega) \leq \F_\phi(E,\Omega).
			\end{equation}
		\end{enumerate}
	\end{theorem}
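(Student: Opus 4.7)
The plan is the standard two-step $\Gamma$-convergence strategy for geometric homogenization problems: construct a recovery sequence for the $\limsup$ by gluing rescaled planelike minimizers from Corollary~\ref{cor::K_planelike}, and establish the $\liminf$ via a Fonseca--Müller blow-up argument. Throughout, the stable norm $\phi$ of~\eqref{eq::anisotropy} is the limiting density, and one needs as an auxiliary ingredient that $\phi$ is well-defined on $\S^{n-1}$, positively $1$-homogeneous and convex, so that $\F_\phi$ has a $BV$ relaxation with a good polyhedral density. This is typically obtained from Fekete-type subadditivity applied to the cell problem~\eqref{eq::cell_problem}, combined with the planelike bound of Theorem~\ref{th::controlled_osc}.

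For the $\limsup$~\eqref{eq::gamma_conv_sup} I proceed by layers. First, for a half-space $H_p=\{x\cdot p\leq 0\}$ with $p\in\S^{n-1}$, the natural recovery sequence is $E_\epsilon:=\epsilon E_p$ where $E_p$ is any planelike minimizer in direction $p$: by Corollary~\ref{cor::K_planelike} its interface lies in a strip of width $M$, so $E_\epsilon\to H_p$ in $L^1_{\loc}$, and~\eqref{eq::anisotropy} gives $\F_\epsilon(E_\epsilon,Q^p)\to\phi(p)$. Second, I extend this to polyhedral sets by gluing rescaled planelike minimizers face by face, with short transition regions near $(n-2)$-dimensional edges. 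The long-range cross-face contributions are of order
\begin{equation*}
\int_{|h|\geq R} |h|\,K(h,0)\,dh,
\end{equation*}
which tends to $0$ as $R\to\infty$ by~\eqref{eq::K_integrable} and Remark~\ref{rem::high_localing_K}, so they are negligible compared with the leading surface term. Third, any $E$ of finite perimeter is approximated in $L^1$ by polyhedra $E_k$ with $\F_\phi(E_k,\Omega)\to\F_\phi(E,\Omega)$ (a standard $BV$ density result valid because $\phi$ is convex and $1$-homogeneous), and a diagonal extraction closes the argument.

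For the $\liminf$~\eqref{eq::gamma_conv_inf} I use the blow-up method. Assume $\liminf_\epsilon\F_\epsilon(E_\epsilon,\Omega)<+\infty$; since $g$ has zero average and is periodic, the forcing term $\int_{\mathcal{Q}(\Omega)_\epsilon\cap E_\epsilon}g_\epsilon$ is controlled by the perimeter $\P_{K_\epsilon}(E_\epsilon,\Omega)$ up to lower-order terms (via Theorem~\ref{th::controlled_osc} and the density estimates of Proposition~\ref{prop::UDE}), so the associated Radon measures $\mu_\epsilon:=\F_\epsilon(E_\epsilon,\,\cdot\,)$ are locally equibounded and, up to subsequence, $\mu_\epsilon\rightharpoonup \mu$ weakly-$*$. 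By Besicovitch differentiation it then suffices to show, for $\haus{n-1}$-a.e. $x_0\in\partial^*E$ with normal $\nu=\nu_E(x_0)$,
\begin{equation*}
\frac{d\mu}{d\haus{n-1}\lfloor\partial^*E}(x_0)\;\geq\;\phi(\nu).
\end{equation*}
Fixing such an $x_0$, on cubes $Q^\nu_r(x_0)$ the blown-up $E_\epsilon$ converge to the half-space through $x_0$ orthogonal to $\nu$. The decisive comparison is then to modify $E_\epsilon$ in a thin boundary layer of $Q^\nu_r(x_0)$ so that it agrees with $x_0+\epsilon E_\nu$ outside, invoke the class-$A$ minimality of $E_\nu$ from Theorem~\ref{th::level_sets_minimality} on the rescaled cube, and read off the lower bound from~\eqref{eq::anisotropy}. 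The density estimates of Proposition~\ref{prop::UDE} guarantee that the cost of this surgery is $o(r^{n-1})$.

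The main obstacle, as usual for nonlocal-to-local $\Gamma$-convergence, is the control of long-range interactions in the blow-up step: when one splices $E_\epsilon$ with $x_0+\epsilon E_\nu$ inside $Q^\nu_r(x_0)$, nonlocal interactions between points on opposite sides of the cube do not cancel in a purely local way. This is exactly where the hypothesis $s_2>1/2$ is essential, since by Remark~\ref{rem::high_localing_K} it makes $K$ integrable at infinity and forces the tail interactions to contribute only $o(r^{n-1})$ as $r\to 0$. The same integrability tail estimate is what makes the face-by-face gluing work in the $\limsup$ construction, so the two halves of the proof rest on a common analytic input, combined with the minimality and planelike structure supplied by Theorem~\ref{th::level_sets_minimality} and Corollary~\ref{cor::K_planelike}.
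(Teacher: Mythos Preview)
Your overall architecture matches the paper's: the $\limsup$ via polyhedral approximation with rescaled planelike minimizers glued face by face (Lemma~\ref{lemma::limsup_polyhedron}), and the $\liminf$ via a blow-up argument comparing against a planelike competitor (Section~\ref{sec::gamma_liminf}). The analytic input you identify---tail integrability of $K$ from Remark~\ref{rem::high_localing_K}---is also what the paper uses, packaged as Lemma~\ref{lemma::alberti_bellettini}.

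There is, however, a real gap in your $\liminf$ step. You assert that for an \emph{arbitrary} sequence $E_\epsilon\to E$ the forcing term is ``controlled by the perimeter via Theorem~\ref{th::controlled_osc} and Proposition~\ref{prop::UDE}'', so that the localized energies $\mu_\epsilon:=\F_\epsilon(E_\epsilon,\cdot)$ form equibounded Radon measures. But both of those results concern \emph{minimizers} of $\cE_p$ and their level sets $E_{p,t}$; they say nothing about a generic $E_\epsilon$. Without a local lower bound, $\mu_\epsilon$ need not be nonnegative---and it is not even additive, since $\P_K(E,\cdot)$ is only subadditive in the domain---so weak-$*$ compactness and Besicovitch differentiation are not justified. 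The paper handles this by passing to the one-sided energy $\E_\epsilon$ of~\eqref{eq::truncated_energy}, which \emph{is} additive in the domain, and proving in Proposition~\ref{prop::energy_lower_bound_BR} that $\E(F,B_R)\ge 0$ for \emph{every} set $F$: the kernel lower bound~\eqref{eq::K_lower_bound_Q} dominates the $g$-contribution cube by cube under the standing smallness hypothesis $\|g\|_{L^\infty}\le\kappa_3/2$. This positivity-for-all-sets is the missing ingredient in your argument.

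A smaller difference in route: you take convexity of $\phi$ as a preliminary input (``Fekete-type subadditivity'') for the polyhedral density step in the $\limsup$. The paper proves only \emph{continuity} of $\phi$ beforehand (Proposition~\ref{prop::phi_continuous}), which already suffices for Reshetnyak continuity of $\F_\phi$ under polyhedral approximation; convexity is then recovered \emph{a posteriori} from lower semicontinuity of the $\Gamma$-limit (Corollary~\ref{cor::phi_convex}). The paper in fact remarks that the direct convexity argument of Caffarelli--de~la~Llave does not transfer cleanly to this nonlocal setting, so your claimed shortcut would need independent justification.
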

	
	Some preliminary results needed for Theorem~\ref{th::gamma_conv} can be found in Section~\ref{sec::preliminary_results_gamma_conv}, while we will present some properties of~$\phi$ such as well-definedness in Section~\ref{sec::phi_well_defined} and continuity in Section~\ref{sec::phi_continuous}. Then, Sections~\ref{sec::gamma_liminf} and~\ref{sec::gamma_limsup} are devoted to proving Theorem~\ref{th::gamma_conv}-\eqref{item::gamma_conv_inf} and Theorem~\ref{th::gamma_conv}-\eqref{item::gamma_conv_sup} respectively.
	
	We stress that Proposition~\ref{prop::energy_lower_bound_BR} in Section~\ref{sec::preliminary_results_gamma_conv} entails that the energy~$\F_1$ is non-negative in every large domain. However, there is no Poincar\'e Inequality available for generic non-local kernels as the ones
considered by our\footnote{Check~\cite[Proposition~4.1]{MR3732175} for a Poincar\'e Inequality for a non-local kernel~$K$ under some additional assumptions.} setting. Thus, given a sequence of sets~$\{E_\epsilon\}_\epsilon$ with~$\sup_\epsilon \F_\epsilon(E_\epsilon,\Omega)<+\infty$,  to the best of our knowledge, Proposition~\ref{prop::energy_lower_bound_BR} does not provide any useful uniform bound either on the $BV$-norm~$\norm{\chi_{E_\epsilon}}_{BV(\Omega)}$, or on any Gagliardo seminorm~$[\chi_{E_\epsilon}]_{W^{s,p}(\Omega)}$ (compare with~\cite[Remark~2.2]{chambolle_thouroude}). For this reason, it remains an open question whether a compactness result holds for sequences of sets with uniformly bounded energy.
\begin{comment}
This lack of ellipticity entails that a uniform bound on the energy~$\F_\epsilon$ is not sufficient to infer compactness even in dimension~$1$ (and even in presence of mass constraints!). We exhibit an explicit example of a non-compact sequence of sets with uniformly bounded energy in Appendix~\ref{sec::non_compactness}.
\end{comment}

	\subsection*{Complementary results}
	
	Additionally, we state and prove in the appendices of this work a few interesting results concerning some uniform density estimates for almost minimizers (defined in a suitable sense, to be compared with~\cite[Definition~1.1]{sequoia}) of~$\P_K$
	and the behavior of our non-local perimeter in large balls (i.e. in~$B_R$, with~$R\to+\infty$). 
	
	\subsection*{Comparison with the existing literature}
	In relation with the existing literature for the non-local setting, we recall that
	problems entangled with phase-transition, both at the macro-scale and at the meso-scale, have been addressed in~\cite{MR1612250, MR1634336} and~\cite{pagliardini_non_local_planelike}, respectively. However, our setting is quite different, since in~\cite{MR1612250, MR1634336} the authors only consider a model with no external fields, while in~\cite{pagliardini_non_local_planelike} no~$\Gamma$-convergence results have been investigated. Moreover, we drop the presence of the double-well potential, that is peculiar of the Allen-Cahn model. 
	\medskip
	
	In our setting, we have to face two main difficulties. First, we observe that the forcing term~$g$ requires a careful treatment. Indeed, even though the energy contribution of~$g$ does not need to be positive, we still have to guarantee that the energies~$\F_\epsilon$ eventually are. Moreover, we want to avoid the contribution due to~$g$ near the boundary of the domain. 
	
	The second main concern is that, since the kernel~$K$ accounts for long-range interactions, we will need specific assumptions to ensure that~$K$ ``localizes in the limit''. In fact, on the one hand, to have a well-defined stable norm~$\phi$, we need that the ``volume term'' measured by~$K$ and the contribution due to~$g$ scale compatibly. On the other hand, we want to avoid an infinite energy contribution coming from outside the domain.
	
	\subsection*{Further developments}
	A possible further research direction could be investigating the existence of class-$A$ minimizers for~$\J$ under a mass constraint. Suppose that such minimizers exist and let~$\{E_m\}_{m>0}$ be a sequence of class-$A$ minimizers for~$\J$ such that~$|E_m|=m$. Then, in the fashion of the classical case, it is reasonable to expect that a suitable rescaling of~$E_m$ converges, as~$m\to+\infty$, to the Wulff shape (i.e. the isoperimetric set) for~$\P_\phi$. 
	\smallskip
	
	As another further development of this work, we point out that it would be interesting to investigate the differentiability of the stable norm. We refer to~\cite{MR2197072} for the analogous in the classical case on manifolds. See also~\cite{MR3223561} for a different approach using the local version of the cell problem~\eqref{eq::cell_problem}. 
	\smallskip
	
	Finally, it would be interesting to determine whether it is possible to prove the existence of planelike minimizers when the kernel~$K$ is integrable (dropping the lower bound in~\eqref{eq::K_behavior}). For instance, in~\cite{MR4360596}, the authors have proved that a suitable rescaling of the fractional perimeter $\mbox{Per}_s$ $\Gamma$-converges, as~$s\to0$, to an integral operator with an $L^1$ kernel, now commonly referred to as $0$-Perimeter.
	However, addressing this problem goes beyond the purpose of this paper.
	
	% --------------------------------------------------------------------------------------------------------------------------------------------------------
	
	\section{Existence of minimizers for~$\cE_p$ - Proof of Theorem~\ref{th::existence_minimizer}} \label{sec::existence_minimizer}
	
	The first step to construct planelike minimizers for the energy functional~$\J$ is to prove that the cell problem~\eqref{eq::cell_problem} is well-defined, as stated in Theorem~\ref{th::existence_minimizer}.
	
	To do this, we observe that
	\begin{equation*}
		\begin{split}
			&\cE_p(0) =\iint_{Q\times\R^n} \frac{1}{2}|p\cdot(x-y)|\,K(x,y)\,dx\,dy\leq\frac{|p|}{2}\iint_{Q\times\R^n} |x-y|\,K(x,y)\,dx\,dy\\
			&\qquad\qquad=\frac{|p|\,|Q|}{2}\int_{\R^n} |h| K(h,0)\,dh <+\infty,
		\end{split}
	\end{equation*}
	thanks to~\eqref{eq::K_invariance} and~\eqref{eq::K_integrable}. Hence, the minimization process takes place on a nonempty set of competitors.
	
	Accordingly, we take a minimizing sequence~$u_j\in\mathcal{W}$ with~$\cE_p(u_j)\le \cE_p(0)$. Thus, using~\eqref{eq::K_behavior}, the fact that~$g\in L^\infty(\R^n)\subseteq L^{\frac{n}{2s_1}}(Q)$, and the H\"older inequality, we obtain
	\begin{equation}\label{Pa1}
		\begin{split}
			\cE_p(0)
			&\geq \iint_{Q\times\R^n} \frac{1}{2}|u_j(x)-u_j(y)|\,K(x,y)\,dx\,dy-\iint_{Q\times\R^n} \frac{1}{2}|p\cdot(x-y)|\,K(x,y)\,dx\,dy\\
			&\quad-\int_Q |g(x)|\,|u_j(x)|\,dx\\
			&\geq \frac{ \kappa_1}{2}\int_{(Q\times\R^n)\cap\{|x-y|<\delta\}} \frac{|u_j(x)-u_j(y)|}{|x-y|^{n+2s_1}}\,dx\,dy-\cE_p(0)-\norm{g}_{L^{\frac{n}{2s_1}}(Q)}\norm{u_j}_{L^{\frac{n}{n-2s_1}}(Q)}.
		\end{split}
	\end{equation}
	
	Also, by the fractional Poincar\'e inequality (see e.g.~\cite[Theorem~6.33]{MR4567945}),
	\begin{equation}\label{K:01} 
		\norm{u_j}_{L^1(Q)}\le C\iint_{Q\times Q} \frac{|u_j(x)-u_j(y)|}{|x-y|^{n+2s_1}}\,dx\,dy
	\end{equation}
	and, thanks to the fractional Sobolev inequality (see e.g.~\cite[Theorem~6.7]{MR2944369}),
	\begin{equation}\label{K:02} 
		\norm{u_j}_{L^{\frac{n}{n-2s_1}}(Q)}\le C\left(\norm{u_j}_{L^1(Q)}+\iint_{Q\times Q} \frac{|u_j(x)-u_j(y)|}{|x-y|^{n+2s_1}}\,dx\,dy\right),
	\end{equation}
	for some constant~$C>0$ depending only on~$n$ and~$s_1$ and possibly changing from line to line.
	
	Combining~\eqref{K:01} and~\eqref{K:02}, we infer that
	\begin{equation*}
		\norm{u_j}_{L^{\frac{n}{n-2s_1}}(Q)}\le C\iint_{Q\times Q} \frac{|u_j(x)-u_j(y)|}{|x-y|^{n+2s_1}}\,dx\,dy.
	\end{equation*}
	This, together with Lemma~\ref{lemma::short_range_norm}, gives that
	\begin{equation*}\begin{split}
			\norm{u_j}_{L^{\frac{n}{n-2s_1}}(Q)}&\le C\iint_{(Q\times Q)\cap\{|x-y|<\delta\}} \frac{|u_j(x)-u_j(y)|}{|x-y|^{n+2s_1}}\,dx\,dy\\&\le C\int_{(Q\times\R^n)\cap\{|x-y|<\delta\}} \frac{|u_j(x)-u_j(y)|}{|x-y|^{n+2s_1}}\,dx\,dy,\end{split}
	\end{equation*}
	up to renaming~$C>0$, in dependence only of~$\delta$, $n$, and~$s_1$.
	
	Therefore, plugging the latter inequality into~\eqref{Pa1}, we deduce that
	\begin{equation} \label{eq::short_gagliardo_bound}
		\begin{split}
			&\frac{\kappa_1}{4} \int_{(Q\times\R^n)\cap\{|x-y|<\delta\}} \frac{|u_j(x)-u_j(y)|}{|x-y|^{n+2s_1}}\,dx\,dy\\
			&\qquad\leq \left(\frac{\kappa_1}{2}-C\norm{g}_{L^{\frac{n}{2s_1}}(Q)}\right)\int_{(Q\times\R^n)\cap\{|x-y|<\delta\}} \frac{|u_j(x)-u_j(y)|}{|x-y|^{n+2s_1}}\,dx\,dy \leq 2\cE_p(0),
		\end{split}
	\end{equation}
	as long as~$\norm{g}_{L^{\frac{n}{2s_1}}(Q)}\le\frac{\kappa_1}{4C}$.
	
	Lemma~\ref{lemma::short_range_norm}, \eqref{K:01}, and~\eqref{eq::short_gagliardo_bound} provide a uniform bound for~$u_j$ in~$W^{s_1,1}(Q)$.
	Accordingly, by~\eqref{K:02}, the sequence~$\{u_j\}_j$ is also bounded in~$L^{\frac{n}{n-2s_1}}(Q)$, and hence precompact in~$L^1(Q)$ (see e.g.~\cite[Theorem~7.1]{MR2944369}). It thereby follows that, up to a subsequence, the sequence~$\{u_j\}_j$ converges to some~$u$ in~$L^1(Q)$ and, by periodicity, a.e. in~$\R^n$. In particular,
	we have that~$u\in\mathcal{W}$.
	
	Now, by construction, and thanks to the Fatou Lemma, we have that
	\begin{equation}\label{K:10}
		\begin{split}
			&\inf_{\mathcal{W}} \cE_p =\lim_{j\to+\infty}\cE_p(u_j)\\
			&\qquad\geq \iint_{Q\times\R^n} \frac{1}{2}|u(x)-u(y)+p\cdot(x-y)|\,K(x,y)\,dx\,dy+\lim_{j\to+\infty}\int_Q g(x)\,u_j(x)\,dx.
		\end{split}
	\end{equation}
	
	Notice also that, again by the Fatou Lemma,
	\begin{equation}\label{stae54786hfbn987654}
		S:=\sup_{j\in\N}\norm{u_j}_{L^{\frac{n}{n-2s_1}}(Q)}\ge\lim_{j\to+\infty}\norm{u_j}_{L^{\frac{n}{n-2s_1}}(Q)}\ge
		\norm{u}_{L^{\frac{n}{n-2s_1}}(Q)}.\end{equation}
	
	Moreover, given~$\epsilon>0$, using the absolute continuity of the Lebesgue integral (see e.g.~\cite[Corollary~3.6]{MR1681462}), we find~$\eta>0$ such that, whenever a subset~$V$ of~$Q$ has measure less than~$\eta$, we have that
	\begin{equation}\label{K:09}
		\norm{g}_{L^{\frac{n}{2s_1}}(V)}\leq\epsilon.
	\end{equation}
	
	At this stage, we invoke the Severini-Egorov Theorem (see e.g.~\cite[Theorem~2.33]{MR1681462}) to find a set~$U\subseteq Q$ of measure less than~$\eta$ such that~$u_j$ converges to~$u$ uniformly in~$Q\setminus U$.
	As a result, we come to
	\begin{equation*}\begin{split}
			& \lim_{j\to+\infty}\left|\int_Q g(x)\,\big(u_j(x)-u(x)\big)\,dx\right|=\lim_{j\to+\infty}\left|\int_U g(x)\,\big(u_j(x)-u(x)\big)\,dx\right|\\
			&\qquad\qquad\leq\lim_{j\to+\infty}\left( \norm{u_j}_{L^{\frac{n}{n-2s_1}}(U)}+\norm{u}_{L^{\frac{n}{n-2s_1}}(U)}\right)\norm{g}_{L^{\frac{n}{2s_1}}(U)}\le2S\norm{g}_{L^{\frac{n}{2s_1}}(U)}.
	\end{split}\end{equation*}
	
	From this and~\eqref{K:09}, we infer that
	\begin{equation*}
		\lim_{j\to+\infty}\left|\int_Q g(x)\,\big(u_j(x)-u(x)\big)\,dx\right|\le\e.
	\end{equation*}
	By the arbitrariness of~$\epsilon$, we find that
	\begin{equation*} 
		\lim_{j\to+\infty}\int_Q g(x)\,\big(u_j(x)-u(x)\big)\,dx=0.
	\end{equation*}
	
	Plugging this information into~\eqref{K:10}, we obtain that 
	$$\inf_{\mathcal{W}} \cE_p \ge \iint_{Q\times\R^n} |u(x)-u(y)+p\cdot(x-y)|\,K(x,y)\,dx\,dy+\int_Q g(x)\,u(x)\,dx=\cE_p(u),$$
	showing that~$u$ is a minimizer (i.e.~\eqref{eq::u_min_cE}).
	The statement in~\eqref{eq::u_lebesgue_holder} is a consequence of~\eqref{stae54786hfbn987654}.
	
	\begin{remark} \label{rem::estimate_g_norm}
		We point out that, by inspection of the proofs of Theorem~\ref{th::existence_minimizer} and Lemma~\ref{lemma::short_range_norm}, we infer that
		\begin{equation*}
			\norm{g}_{L^{\frac{n}{2s_1}}(Q)} \leq \frac{\kappa_1}{4C}\leq \widetilde{c}\,\delta^{n+2s_1}\varphi(\delta),
		\end{equation*}
		where~$\kappa_1$, $s_1$, and~$\delta$ are as in~\eqref{eq::K_behavior}, $C$ is a multiple of the constant appearing in Lemma~\ref{lemma::short_range_norm}, $\widetilde{c}=\widetilde{c}(n,s_1,\kappa_1)$ is a positive constant, and~$\varphi$ is a function such that
		$$\lim_{\delta\to0^+}\varphi(\delta)=0.$$
	\end{remark}
	
	\section{Proof of Proposition~\ref{prop::existence_calibration} and Euler-Lagrange equation for the cell problem~\eqref{eq::cell_problem}} \label{sec::existence_calibration} %{The Euler-Lagrange equation}
	
	Here, we present the proof of Proposition~\ref{prop::existence_calibration}. 
	
	\begin{proof}[Proof of Proposition~\ref{prop::existence_calibration}]
		We first show \eqref{SUBD7} for every~$\eta\in C^\infty(\R^n)$ that is~$\Z^n$-periodic, and such that $\int_Q \eta(x)\,dx=0$.
		Let us consider
		\begin{equation*}
			\begin{split}
				\mathcal{X} := \Bigg\{u\in L^{\frac{n}{n-2s_1}}(Q)\quad &\mbox{s.t. $u$ is $\Z^n$-periodic, }\int_Q u(x)\,dx=0,\\
				&\mbox{and }\iint_{Q\times\R^n} \frac{1}{2}|u(x)-u(y)|\,K(x,y)\,dx\,dy<+\infty\Bigg\}.
			\end{split}
		\end{equation*}
		
		For any~$u\in{\mathcal{X}}$, in the spirit of~\cite[Theorem~2.3]{MR3491533} (see also~\cite{MR3930619, MR3996039, MR4537323, MR4645235}), we define the subdifferential of~$\cE_p$ at~$u$ as the collection of all~$\varphi\in{\mathcal{X}}$
		satisfying
		\begin{equation}\label{eq::subdifferential_property} 
			\cE_p(w)-\cE_p(u)\ge\int_Q \big(w(x)-u(x)\big)\,\varphi(x)\,dx,\quad\mbox{ for all~$w\in{\mathcal{X}}$,}
		\end{equation} and we denote it by~$\partial \cE_p(u)$.
		
		Now, given a minimizer~$u_p$ for~$\cE_p$ (as constructed in Theorem~\ref{th::existence_minimizer}), let us define a function~$z:\R^n\times\R^n\to\R$ as
		\begin{equation}\label{eq::z_def} 
			z(x,y):=
			\begin{cases}
				\displaystyle\frac{u_p(x)-u_p(y)+p\cdot(x-y)}{|u_p(x)-u_p(y)+p\cdot(x-y)|},\quad &{\mbox{if }} u_p(x)-u_p(y)+p\cdot(x-y)\ne0,\\
				\\
				0,\quad&{\mbox{ otherwise.}}
			\end{cases}
		\end{equation}
		
		We have that
		\begin{equation}\label{bvcnxwew87y3r3486ytgfhajleks} 
			\left|u_p(x)-u_p(y)+p\cdot(x-y)\right|z(x,y)=u_p(x)-u_p(y)+p\cdot(x-y),
		\end{equation}
		namely~\eqref{eq::z_layering} holds true.
		
		Also,
		\begin{align*} %\label{eq::z_antisymm}
			&z(x,y)=-z(y,x)\in[-1,1]\quad\mbox{for a.e. }(x,y)\in\R^n\times\R^n,\\
			\mbox{and}\quad&z(x+k,y+k)=z(x,y)\quad\mbox{for all }k\in\Z^n,
		\end{align*}
		that are~\eqref{eq::z_antisymm} and~\eqref{eq::z_translation_invariance}, respectively.
		
		Now, we claim that, for all~$\varphi\in\partial \cE_p(u_p)$ and~$\eta\in{\mathcal{X}}\cap C^\infty(\R^n)$, the function~$z$ satisfies
		\begin{equation} \label{eq::z_subdiff}
			\iint_{Q\times\R^n} \frac{1}{2} z(x,y) (\eta(x)-\eta(y))\,K(x,y)\,dx\,dy+\int_Q g(x)\,\eta(x)\,dx=\int_Q \varphi(x)\,\eta(x)\,dx.
		\end{equation}
		To check this, we recall the notation for~$v_p$ in~\eqref{eq::def_v} and we stress that, by~\eqref{bvcnxwew87y3r3486ytgfhajleks},
		\begin{equation*} 
			|v_p(x)-v_p(y)|\,z(x,y)=v_p(x)-v_p(y).
		\end{equation*}
		
		Now, let~$\epsilon\in(-1,1)\setminus\{0\}$ and~$w:=u_p+\epsilon\eta$. 
		Notice that~$w\in{\mathcal{X}}$, since both~$u_p$ and~$\eta$ are
		(recall~\eqref{eq::u_lebesgue_holder} in Theorem~\ref{th::existence_minimizer}), therefore
		we can employ~\eqref{eq::subdifferential_property} with~$u:=u_p$ and~$w$ as above and we obtain that
		\begin{equation}\label{s.0.1}
			\begin{split}
				&\epsilon\int_Q \varphi(x)\,\eta(x)\,dx-\e\int_Q g(x)\,\eta(x)\,dx\\
				\leq\;& \iint_{Q\times\R^n} \frac{1}{2}\Big(|v_p(x)-v_p(y)+\epsilon(\eta(x)-\eta(y))| -|v_p(x)-v_p(y)|\Big)\,K(x,y)\,dx\,dy\\ 
				= \;&\iint_{Q\times\R^n} \frac{1}{2}\Big(\big| |v_p(x)-v_p(y)|z(x,y)+\epsilon(\eta(x)-\eta(y))\big| -|v_p(x)-v_p(y)|\Big)\,K(x,y)\,dx\,dy\\ 
				=\;& \iint_{Q\times\R^n} \frac{1}{2}\left(\int_0^1\frac{d}{dt}\Big||v_p(x)-v_p(y)|z(x,y)+t\epsilon(\eta(x)-\eta(y))\Big| \,dt\right)\,K(x,y)\,dx\,dy\\ 
				=\;& \iint_{Q\times\R^n} \frac{1}{2}\left(\int_0^1 \epsilon\frac{\Big( |v_p(x)-v_p(y)|z(x,y)+t\e(\eta(x)-\eta(y))\Big) (\eta(x)-\eta(y))}{\Big||v_p(x)-v_p(y)|z(x,y)+t\epsilon(\eta(x)-\eta(y))\Big|} \,dt\right)\,K(x,y)\,dx\,dy.
			\end{split}
		\end{equation}
		
		In addition, for all~$t\in[0,1]$,
		\begin{equation}\label{s.0.2}
			\begin{split}
				&\left|\frac{\Big(|v_p(x)-v_p(y)|z(x,y)+t\e(\eta(x)-\eta(y))\Big) (\eta(x)-\eta(y))}{\Big||v_p(x)-v_p(y)|z(x,y)+t\epsilon(\eta(x)-\eta(y))\Big|}\right| \\
				&\qquad\qquad\qquad \leq |\eta(x)-\eta(y)|\leq 2\norm{\eta}_{C^1(\R^n)}\min\{|x-y|,1\}.
			\end{split}
		\end{equation}
		On this account, dividing by~$\epsilon$ in~\eqref{s.0.1} and discussing the sign of~$\epsilon$, thanks to the Dominated Convergence Theorem (whose application is justified by~\eqref{eq::K_integrable} and~\eqref{s.0.2}), we deduce that
		\begin{equation*}
			\begin{split}
				& \int_Q \varphi(x)\,\eta(x)\,dx-\int_Q g(x)\,\eta(x)\,dx\\
				&\qquad=\lim_{\epsilon\to0} \iint_{Q\times\R^n} \frac{1}{2}\left(\int_0^1\frac{\Big( |v_p(x)-v_p(y)|z(x,y)+t\e(\eta(x)-\eta(y))\Big) (\eta(x)-\eta(y))}{\Big||v_p(x)-v_p(y)|z(x,y)+t\e(\eta(x)-\eta(y))\Big|} \,dt\right)\,K(x,y)\,dx\,dy\\
				&\qquad=\iint_{Q\times\R^n} \frac{1}{2}\left(\int_0^1\frac{ |v_p(x)-v_p(y)|z(x,y) (\eta(x)-\eta(y))}{|v_p(x)-v_p(y)|} \,dt\right)\,K(x,y)\,dx\,dy\\
				&\qquad=\iint_{Q\times\R^n} \frac{1}{2}z(x,y) (\eta(x)-\eta(y))\,K(x,y)\,dx\,dy,
			\end{split}
		\end{equation*}
		showing~\eqref{eq::z_subdiff}.
		
		Now, by the minimality of~$u_p$, for all~$w\in \mathcal{X}\subseteq \mathcal{W}$, we have that~$\cE_p(w)-\cE_p(u)\ge0$, which
		entails that~$0\in\partial \cE_p(u_p)$.
		Hence, employing~\eqref{eq::z_subdiff} with~$\varphi\equiv0$ yields~\eqref{SUBD7} for every~$\eta\in C^\infty(\R^n)$ that is~$\Z^n$-periodic, and such that $\int_Q \eta(x)\,dx=0$.
		\smallskip
		
		Let now $\eta\in C^\infty(\R^n)$ be~$\Z^n$-periodic, and define
		\begin{equation*}
			\widetilde{\eta}(x) := \eta(x) - \int_{Q} \eta(y)\, dy.
		\end{equation*}
		In particular, $\widetilde{\eta}\in \cont^\infty(\R^n)\cap \mathcal{X}$ and~\eqref{SUBD7} holds true for $\widetilde{\eta}$.
		
		Therefore, taking advantage of \eqref{eq::g_zero_avg} and the definition of $\widetilde{\eta}$, we infer that
		\begin{equation*}
			\begin{split}
				&\iint_{Q\times\R^n}\frac{1}{2}z(x,y) (\eta(x)-\eta(y))\,K(x,y)\,dx\,dy+\int_Q g(x)\,\eta(x)\,dx \\
				&\qquad = \iint_{Q\times\R^n}\frac{1}{2}z(x,y) (\widetilde{\eta}(x)-\widetilde{\eta}(y))\,K(x,y)\,dx\,dy+\int_Q g(x)\,\widetilde{\eta}(x)\,dx=0.
			\end{split}
		\end{equation*}
		concluding the proof.
	\end{proof}
	
	By a direct computation, from Proposition~\ref{prop::existence_calibration}, we also obtain the following:
	\begin{corollary} \label{cor::existence_calibration_2}
		Let~$K:\R^n\times \R^n\to\R$ satisfy~\eqref{eq::K_invariance}, \eqref{eq::K_integrable}, \eqref{eq::K_behavior}, and~\eqref{eq::K_lower_bound_Q} and
		let~$u_p$ be the minimizer for~$\cE_p$ in~$\mathcal{W}$ given by Theorem~\ref{th::existence_minimizer}. 
		Let also ~$z:\R^n\times\R^n\to[-1,1]$ be as in Proposition~\ref{prop::existence_calibration}.
		
		Then, for all~$\eta\in C^\infty(\R^n)$ that are~$\Z^n$-periodic,
		we have that
		\begin{equation}\label{SUBD7_2}
			\begin{split}
				&\iint_{Q\times Q}\frac{1}{2}z(x,y) (\eta(x)-\eta(y))\,K(x,y)\,dx\,dy \\
				&\qquad+ \iint_{Q\times Q^c}z(x,y) \eta(x)\,K(x,y)\,dx\,dy +\int_Q g(x)\,\eta(x)\,dx=0.
			\end{split}
			%\iint_Q \eta(x) \left[\int_{\R^n} z(x,y)\,K(x,y)dy\, +g(x)\right]\,dx  = 0.
		\end{equation}
	\end{corollary}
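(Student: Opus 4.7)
The plan is to start from equation~\eqref{SUBD7} in Proposition~\ref{prop::existence_calibration}, split the integration region $Q\times\R^n$ as the disjoint union $(Q\times Q)\cup(Q\times Q^c)$, and rewrite the contribution on $Q\times Q^c$ in the form appearing in~\eqref{SUBD7_2}. In other words, the target is to establish the identity
\begin{equation*}
\iint_{Q\times Q^c}\tfrac12 z(x,y)\bigl(\eta(x)-\eta(y)\bigr)K(x,y)\,dx\,dy
=\iint_{Q\times Q^c}z(x,y)\,\eta(x)\,K(x,y)\,dx\,dy,
\end{equation*}
since then the corollary follows immediately by adding the contribution on $Q\times Q$ and the term $\int_Q g\eta$ from~\eqref{SUBD7}.

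First, I would split the left-hand side into a piece carrying $\eta(x)$ and a piece carrying $\eta(y)$. For the $\eta(y)$-piece, I would relabel the dummy variables $x\leftrightarrow y$ and invoke the antisymmetry~\eqref{eq::z_antisymm} of $z$ together with the symmetry of $K$ from~\eqref{eq::K_invariance}, which converts
\begin{equation*}
-\iint_{Q\times Q^c}\tfrac12 z(x,y)\,\eta(y)\,K(x,y)\,dx\,dy
=\iint_{Q^c\times Q}\tfrac12 z(x,y)\,\eta(x)\,K(x,y)\,dx\,dy.
\end{equation*}
Hence it suffices to prove the symmetrization identity
\begin{equation*}
\iint_{Q^c\times Q}\tfrac12 z(x,y)\,\eta(x)\,K(x,y)\,dx\,dy
=\iint_{Q\times Q^c}\tfrac12 z(x,y)\,\eta(x)\,K(x,y)\,dx\,dy.
\end{equation*}

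This is the main computational step, and it is where the $\Z^n$-periodic structure is essential. I would decompose $Q^c=\bigcup_{k\in\Z^n\setminus\{0\}}(k+Q)$ and, on each piece $(k+Q)\times Q$, perform the change of variables $x=x'+k$ with $x'\in Q$. Using the translation invariance of $z$ from~\eqref{eq::z_translation_invariance}, the translation invariance of $K$ from~\eqref{eq::K_invariance}, and the $\Z^n$-periodicity of $\eta$, the integrand transforms as
\begin{equation*}
z(x'+k,y)\,\eta(x'+k)\,K(x'+k,y)=z(x',y-k)\,\eta(x')\,K(x',y-k).
\end{equation*}
A further substitution $y'=y-k$ then maps the domain $(k+Q)\times Q$ to $Q\times(-k+Q)$. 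Summing over $k\in\Z^n\setminus\{0\}$ and noting that $\bigcup_{k\neq 0}(-k+Q)=Q^c$, the desired equality follows.

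The only real obstacle is bookkeeping in the change of variables and verifying that the integrals converge absolutely so that the Fubini-type rearrangement (in particular the swap $x\leftrightarrow y$ and the countable sum over $k$) is legitimate; this is ensured by the integrability assumption~\eqref{eq::K_integrable}, the boundedness $|z|\leq 1$, and the smoothness and boundedness of $\eta$, which together with the Lipschitz bound $|\eta(x)-\eta(y)|\leq 2\|\eta\|_{C^1}\min\{|x-y|,1\}$ used in the proof of Proposition~\ref{prop::existence_calibration} guarantee that every integral in sight is finite. Once the identity above is in hand, combining it with~\eqref{SUBD7} yields~\eqref{SUBD7_2}, completing the proof.
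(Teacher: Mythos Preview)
Your proposal is correct and follows essentially the same route as the paper: reduce~\eqref{SUBD7_2} to the identity $\iint_{Q\times Q^c} z\,\eta(y)\,K = -\iint_{Q\times Q^c} z\,\eta(x)\,K$, then establish it by decomposing $Q^c=\bigcup_{k\ne 0}(k+Q)$, shifting by $k$ via the translation invariance of $z$, $K$, and $\eta$, and using the antisymmetry $z(x,y)=-z(y,x)$. The only cosmetic difference is the order in which you apply the swap $x\leftrightarrow y$ and the lattice shift; the paper also makes the integrability check slightly more explicit by bounding the split integrals directly by $\|\eta\|_{L^\infty}\,\P_K(Q)<+\infty$ rather than invoking the Lipschitz estimate on $\eta(x)-\eta(y)$, which controls the combined integrand but not each half separately.
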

	
	\begin{proof}
		Let~$\eta\in C^\infty(\R^n)$ be~$\Z^n$-periodic. Observe that~\eqref{SUBD7_2} will follow from~\eqref{SUBD7} if we show that 
		\begin{equation*} 
			\begin{split}
				&\iint_{Q\times\R^n}\frac{1}{2}z(x,y) (\eta(x)-\eta(y))\,K(x,y)\,dx\,dy \\
				&\qquad\qquad = \iint_{Q\times Q}\frac{1}{2}z(x,y) (\eta(x)-\eta(y))\,K(x,y)\,dx\,dy + \iint_{Q\times Q^c}z(x,y) \eta(x)\,K(x,y)\,dx\,dy,
			\end{split}
		\end{equation*}
		that is, if we show that 
		\begin{equation} \label{eq::bulk_rearrangment}
			\iint_{Q\times Q^c}z(x,y) \eta(y)\,K(x,y)\,dx\,dy = - \iint_{Q\times Q^c}z(x,y) \eta(x)\,K(x,y)\,dx\,dy.
		\end{equation}
		 
		To achieve this, notice that $Q^c = \bigcup_{j\in\Z^n\setminus\{0\}} j+Q$. Moreover, according to \cite[Remark 2.1]{MR3732175}, we have that
		\begin{equation*}
			\iint_{Q\times Q^c} \left|z(x,y) \eta(y)\,K(x,y)\right|\,dx\,dy \leq \norm{\eta}_{L^\infty(Q)} \P_K(Q) <+\infty
		\end{equation*}
		Thus, using Fubini-Tonelli's Theorem together with the periodicity of $\eta$, $z$, and~$K$ (see~\eqref{eq::z_translation_invariance} and~\eqref{eq::K_invariance}), we obtain 
		\begin{equation*}
			\begin{split}
				& \iint_{Q\times Q^c} z(x,y) \eta(y)\,K(x,y)\,dx\,dy =  \sum_{j\in\Z^n\setminus\{0\}} \iint_{Q\times (j+Q)} z(x,y) \eta(y)\,K(x,y)\,dx\,dy\\
				&\qquad\qquad = \sum_{j\in\Z^n\setminus\{0\}} \iint_{(Q-j)\times Q} z(x,y) \eta(y)\,K(x,y)\,dx\,dy = \iint_{Q^c\times Q} z(x,y) \eta(y)\,K(x,y)\,dx\,dy\\
				&\qquad\qquad = \iint_{Q\times Q^c} z(y,x) \eta(x)\,K(y,x)\,dx\,dy = -\iint_{Q\times Q^c} z(x,y) \eta(x)\,K(x,y)\,dx\,dy,
			\end{split}
		\end{equation*}
		where we have also used~\eqref{eq::z_antisymm} and the symmetry of~$K$ to obtain the last equality.
		This shows~\eqref{eq::bulk_rearrangment}, concluding the proof.
	\end{proof}
	
	As a byproduct of Corollary~\ref{cor::existence_calibration_2}, we deduce the Euler-Lagrange equation for the cell problem~\eqref{eq::cell_problem}.
	
		\begin{corollary}[Euler-Lagrange equation for the cell problem~\eqref{eq::cell_problem}]\label{cor::EL_eq}
		Let~$K:\R^n\times \R^n\to\R$ satisfy~\eqref{eq::K_invariance}, \eqref{eq::K_integrable}, \eqref{eq::K_behavior}, and~\eqref{eq::K_lower_bound_Q}. Let also ~$z:\R^n\times\R^n\to[-1,1]$ be as in Proposition~\ref{prop::existence_calibration}.
		
		Then, for every~$\eta\in C_c^\infty(\R^n)$, we have that
		\begin{equation}\label{XXXu4i3bcwq987645etwe}
			\iint_{\R^n\times\R^n} \frac{1}{2}z(x,y) (\eta(x)-\eta(y))\,K(x,y)\,dx\,dy+\int_{\R^n} g(x)\,\eta(x)\,dx=0.
			% \int_Q \eta(x) \left[\int_{\R^n} z(x,y)\,K(x,y)dy\, + \int_{Q^c} z(x,y)\,K(x,y)dy\, +g(x)\right]\,dx  = 0.
		\end{equation}
	\end{corollary}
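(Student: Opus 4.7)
The plan is to reduce the identity on $\R^n\times\R^n$ to identity~\eqref{SUBD7} on $Q\times\R^n$ via the standard periodization of $\eta$. Given $\eta\in C_c^\infty(\R^n)$, I would set
\begin{equation*}
\bar\eta(x):=\sum_{k\in\Z^n}\eta(x+k),
\end{equation*}
which is a finite sum at each point since $\eta$ has compact support, and therefore defines a $\Z^n$-periodic function in $C^\infty(\R^n)$ to which Proposition~\ref{prop::existence_calibration} applies.

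The key computation is to split $\R^n=\bigsqcup_{k\in\Z^n}(k+Q)$ in the first variable of the double integral and use the translation invariance~\eqref{eq::K_invariance} of $K$ together with~\eqref{eq::z_translation_invariance} for $z$. After the substitution $x\mapsto x+k$, $y\mapsto y+k$ in each cell, one obtains
\begin{equation*}
\iint_{\R^n\times\R^n}\tfrac12 z(x,y)(\eta(x)-\eta(y))K(x,y)\,dx\,dy=\sum_{k\in\Z^n}\iint_{Q\times\R^n}\tfrac12 z(x,y)(\eta(x+k)-\eta(y+k))K(x,y)\,dx\,dy.
\end{equation*}
To interchange sum and integral I need absolute convergence. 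Since the support of $\eta$ is compact, one has a uniform pointwise bound
\begin{equation*}
\sum_{k\in\Z^n}|\eta(x+k)-\eta(y+k)|\le C_\eta\min\{|x-y|,1\},
\end{equation*}
because only a bounded number of shifts $\eta(\cdot+k)$ are nonzero at any given point, each contributing either the Lipschitz bound $\|\nabla\eta\|_\infty|x-y|$ or the trivial bound $2\|\eta\|_\infty$. Combined with $|z|\le 1$ and the hypothesis~\eqref{eq::K_integrable} (plus Remark~\ref{rem::high_localing_K} for the tail), this gives
\begin{equation*}
\iint_{Q\times\R^n}K(x,y)\min\{|x-y|,1\}\,dx\,dy<+\infty,
\end{equation*}
so Fubini--Tonelli allows me to push the sum inside and produce $\bar\eta(x)-\bar\eta(y)$. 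At that point I apply~\eqref{SUBD7} with test function $\bar\eta$ to conclude that
\begin{equation*}
\iint_{\R^n\times\R^n}\tfrac12 z(x,y)(\eta(x)-\eta(y))K(x,y)\,dx\,dy=-\int_{Q}g(x)\bar\eta(x)\,dx.
\end{equation*}

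The right-hand side is then unfolded by a symmetric periodicity argument: using that $g$ is $\Z^n$-periodic and that $\eta$ has compact support, a change of variable $y=x+k$ in each term gives
\begin{equation*}
\int_Q g(x)\bar\eta(x)\,dx=\sum_{k\in\Z^n}\int_Q g(x)\eta(x+k)\,dx=\sum_{k\in\Z^n}\int_{k+Q}g(y)\eta(y)\,dy=\int_{\R^n}g(x)\eta(x)\,dx,
\end{equation*}
yielding~\eqref{XXXu4i3bcwq987645etwe}. I expect the main obstacle to be the justification of the interchange of summation and integration; the uniform bound above via $\min\{|x-y|,1\}K(x,y)$ together with~\eqref{eq::K_integrable} is exactly what is tailored for this step, so no additional hypothesis on $K$ should be needed. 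The use of Corollary~\ref{cor::existence_calibration_2} is not required for this argument, but one could alternatively use it to localize the first-variable integration to $Q$ and then periodize, obtaining the same conclusion.
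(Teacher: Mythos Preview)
Your argument is correct and is in fact more direct than the paper's. The paper proceeds in two steps: it first establishes~\eqref{XXXu4i3bcwq987645etwe} for test functions supported in a single translated cube $Q^{(j)}=k_j+Q$ by periodically extending such a function and invoking the localized identity~\eqref{SUBD7_2} of Corollary~\ref{cor::existence_calibration_2}; it then handles a general $\eta\in C_c^\infty(\R^n)$ by covering the support with finitely many unit cubes, multiplying by smooth cut-offs $\xi_m\to\chi_Q$ to build a partition of unity, and passing to the limit via dominated convergence. Your route bypasses both the intermediate Corollary~\ref{cor::existence_calibration_2} and the partition-of-unity/limit step: you periodize $\eta$ globally, tile the first variable, and apply~\eqref{SUBD7} once. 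The only analytical burden is the interchange of $\sum_{k}$ and $\iint_{Q\times\R^n}$, which you justify correctly via the uniform bound $\sum_k|\eta(x+k)-\eta(y+k)|\le C_\eta\min\{|x-y|,1\}$ together with~\eqref{eq::K_integrable} and the tail integrability from Remark~\ref{rem::high_localing_K}. The paper's approach is more ``local'' and makes the role of single cubes explicit, but your periodization argument is shorter and uses strictly less machinery.
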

	
	\begin{proof}
		We use the notation~$Q^{(j)}:=k_j+Q$, for some~$k_j\in\Z^n$
		and we first claim that \begin{equation}\label{bvnc483o6yt4ghjkdsgbvj}{\mbox{formula~\eqref{XXXu4i3bcwq987645etwe} holds
		true for~$\eta\in C^\infty_c(Q^{(j)})$.}}\end{equation}
		
		To do this, we extend~$\eta$ periodically in~$\R^n$ and we call this new function~$\widetilde\eta$. 
		
		Taking advantage of the periodicity of $\widetilde{\eta}$, $z$, and~$K$, we obtain 
		\begin{equation} \label{eq::EL_cube1_1}
			\begin{split}
				&\iint_{Q^{(j)}\times Q^{(j)}} \frac{1}{2}z(x,y) (\eta(x)-\eta(y))\,K(x,y)\,dx\,dy + \iint_{Q^{(j)}\times (Q^{(j)})^c}z(x,y) \eta(x)\,K(x,y)\,dx\,dy \\
				=&\iint_{Q^{(j)}\times Q^{(j)}}\frac{1}{2}z(x,y) (\widetilde\eta(x)-\widetilde\eta(y))\,K(x,y)\,dx\,dy + \iint_{Q^{(j)}\times (Q^{(j)})^c}z(x,y) \widetilde\eta(x)\,K(x,y)\,dx\,dy \\
				=&\iint_{Q\times Q}\frac{1}{2}z(x,y) (\widetilde\eta(x)-\widetilde\eta(y))\,K(x,y)\,dx\,dy + \iint_{Q\times Q^c}z(x,y) \widetilde\eta(x)\,K(x,y)\,dx\,dy .
			\end{split}
		\end{equation}
		Besides, from the periodicity of~$\widetilde{\eta}$ and~$g$, we deduce that
		\begin{equation} \label{eq::EL_cube1_2}
				\int_{Q^{(j)}} g(x)\,\eta(x)\,dx = \int_{Q^{(j)}} g(x)\,\widetilde\eta(x)\,dx =\int_{Q} g(x)\,\widetilde\eta(x)\,dx.
		\end{equation}
		Therefore, using~\eqref{SUBD7_2} with~$\eta:=\widetilde\eta$,~\eqref{eq::EL_cube1_1}, and~\eqref{eq::EL_cube1_2}, we infer that
		\begin{equation} \label{eq::EL_cube1}
			\begin{split}
				&\iint_{Q^{(j)}\times Q^{(j)}} \frac{1}{2}z(x,y) (\eta(x)-\eta(y))\,K(x,y)\,dx\,dy \\
				&\qquad+ \iint_{Q^{(j)}\times \left(Q^{(j)}\right)^c}z(x,y) \eta(x)\,K(x,y)\,dx\,dy +\int_{Q^{(j)}} g(x)\,\eta(x)\,dx =0.
			\end{split}
		\end{equation}
		\begin{comment}
			&\int_{Q^{(j)}} \eta(x) \left[\int_{\R^n} z(x,y)\,K(x,y)dy\, +g(x)\right]\,dx \\
			&\qquad = \int_{Q^{(j)}} \widetilde\eta(x) \left[\int_{\R^n} z(x,y)\,K(x,y)dy\, +g(x)\right]\,dx  \\
			&\qquad = \int_Q \widetilde\eta(x) \left[\int_{\R^n} z(x,y)\,K(x,y)dy\, +g(x)\right]\,dx  = 0.
		\end{comment}
		
		Now, recall that 
		\begin{equation*}
			(Q^{(j)})_\sharp = (\R^n\times\R^n) \setminus \big((Q^{(j)})^c\times (Q^{(j)})^c\big).
		\end{equation*}
		Thus, since $\mbox{supp}(\eta)\Subset Q^{(j)}$, \eqref{eq::EL_cube1} entails that
		\begin{equation*}
			\begin{split}
				&\iint_{\R^n\times\R^n} \frac{1}{2}z(x,y) \left(\eta(x)-\eta(y)\right)\,K(x,y)\,dx\,dy+\int_{\R^n} g(x)\,\eta(x)\,dx\\
			 &\qquad= \iint_{(Q^{(j)})_\sharp} \frac{1}{2}z(x,y) \left(\eta(x)-\eta(y)\right)\,K(x,y)\,dx\,dy+\int_{Q^{(j)}} g(x)\,\eta(x)\,dx\\
	&\qquad= \iint_{Q^{(j)}\times Q^{(j)}} \frac{1}{2}z(x,y) \left(\eta(x)-\eta(y)\right)\,K(x,y)\,dx\,dy \\&\qquad\qquad+ \iint_{Q^{(j)}\times (Q^{(j)})^c} z(x,y) \eta(x)\,K(x,y)\,dx\,dy
+\int_{Q^{(j)}} g(x)\,\eta(x)\,dx\\
&\qquad= 0,
			\end{split}
		\end{equation*}
		where we have also used \eqref{eq::z_antisymm} to obtain the second equality. This establishes the claim in~\eqref{bvnc483o6yt4ghjkdsgbvj}.
		
		Let now~$\eta\in C^\infty_c(\R^n)$ with~$\mbox{supp}(\eta)\Subset\Omega$, for some domain~$\Omega\subseteq\R^n$, and consider a finite covering of~$\Omega$ made of unit cubes $Q^{(1)},\dots,Q^{(N)}$, with~$Q^{(j)}=k_j+Q$, for some~$k_j\in\Z^n$.
		
		Let also~$\xi_m$ be a sequence of functions in~$C_c^\infty(Q,[0,1])$ such that~$\xi_m\to\chi_Q$ (pointwise), as~$m\to+\infty$. 
		
		We set
		$$\eta_{m,j}(x):=\eta(x)\xi_m(x-k_j)$$  %$$\eta_{m,j}(x+k_j):=\eta(x+k_j)\xi_m(x)$$ 
		and observe that
		\begin{equation} \label{eq::compact_approx}
\eta_{m,j}\in C^\infty_c(Q^{(j)}).
		\end{equation} 
		Moreover, for all $x\in\R^n$,
		\begin{equation} \label{eq::eta_partition_of_unity}
			\begin{split}
				&\eta(x)=\eta(x)\sum_{j=1}^N\chi_{Q^{(j)}}(x)=\eta(x)\sum_{j=1}^N\chi_{Q}(x-k_j)\\
				&\qquad=\lim_{m\to+\infty}\eta(x)\sum_{j=1}^N\xi_m(x-k_j)=\lim_{m\to+\infty}\sum_{j=1}^N\eta_{m,j}(x).
			\end{split}
		\end{equation}
		
		In light of~\eqref{eq::compact_approx} and~\eqref{bvnc483o6yt4ghjkdsgbvj}, we can employ~\eqref{XXXu4i3bcwq987645etwe} with~$\eta:=\eta_{m,j}$, obtaining that
		\begin{equation} \label{eq::EL_approx_cube}
				\iint_{\R^n\times\R^n} \frac{1}{2}z(x,y) \left(\eta_{m,j}(x)-\eta_{m,j}(y)\right)\,K(x,y)\,dx\,dy+\int_{\R^n} g(x)\,\eta_{m,j}(x)\,dx = 0.
		\end{equation}
		Therefore, recalling~\eqref{eq::eta_partition_of_unity}, we sum~\eqref{eq::EL_approx_cube} over~$j$ and pass to the limit as~$m\to+\infty$ (using the Dominated Convergence Theorem) to obtain the desired result.
\end{proof}

	\section{Minimality of level sets - Proof of Theorem~\ref{th::level_sets_minimality}} \label{sec::level_sets_minimality}
	
	This section is committed to the proof of Theorem~\ref{th::level_sets_minimality}. Our strategy goes as follows: we first prove a weaker version of Theorem~\ref{th::level_sets_minimality}, in which the conclusion holds only for a.e.~$t\in\R$, then we conclude thanks to the closedness of class-$A$ minimizers with respect to the~$L^1_{\loc}$-convergence. Therefore, we now focus on showing the following:
	
	\begin{proposition} \label{prop::level_sets_minimality_ae}
		Let~$K:\R^n\times \R^n\to\R$ satisfy~\eqref{eq::K_invariance}, \eqref{eq::K_integrable}, \eqref{eq::K_behavior}, and~\eqref{eq::K_lower_bound_Q} and
		let~$u_p$ be a minimizer for~$\cE_p$ in~$\mathcal{W}$. 
		Let also~$v_p$ and~$E_{p,t}$ be as in~\eqref{eq::def_v} and~\eqref{eq::def_lev_set}, respectively.
		
		Then, for a.e.~$t\in\R$, the set~$E_{p,t}$ is a class-$A$ minimizer for~$\J$ in the sense of Definition~\ref{def::classA_minimilatity}.
	\end{proposition}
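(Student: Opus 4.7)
The plan is to use the calibration function $z$ from Proposition~\ref{prop::existence_calibration} together with the Euler--Lagrange equation of Corollary~\ref{cor::EL_eq} in order to transfer the minimality of $u_p$ to its superlevel sets $E_{p,t}$ for a.e.~$t\in\R$. The key geometric observation is that $z$, which by~\eqref{eq::z_layering} encodes the sign of $v_p(x)-v_p(y)$, simultaneously calibrates the entire family $\{E_{p,t}\}_{t\in\R}$ in the sense of Definition~\ref{def::calibration}.

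First I would prove that, for a.e.\ $t\in\R$ and every domain $\Omega$,
\begin{equation*}
\P_K(E_{p,t},\Omega) = \iint_{\Omega_\sharp} \tfrac12 z(x,y)\,\bigl(\chi_{E_{p,t}}(x)-\chi_{E_{p,t}}(y)\bigr)\,K(x,y)\,dx\,dy.
\end{equation*}
This follows from a direct case analysis on the four configurations of $v_p(x)$ and $v_p(y)$ relative to $t$: whenever $t\notin\{v_p(x),v_p(y)\}$, the pointwise identity $z(x,y)\,(\chi_{E_{p,t}}(x)-\chi_{E_{p,t}}(y))=|\chi_{E_{p,t}}(x)-\chi_{E_{p,t}}(y)|$ holds, and a Fubini argument guarantees that this is valid for a.e.~$(x,y)$ outside a negligible set of levels~$t$; this negligible set is precisely where the ``a.e.\ $t$'' restriction in the statement originates. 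Then, given any competitor $F$ with $F\setminus\Omega=E_{p,t}\setminus\Omega$, I would set $\psi:=\chi_{E_{p,t}}-\chi_F$ (bounded and compactly supported in $\Omega$), subtract the above calibration identity for $E_{p,t}$ from the trivial bound $\P_K(F,\Omega)\geq\iint_{\Omega_\sharp}\tfrac12 z(x,y)(\chi_F(x)-\chi_F(y))K(x,y)\,dx\,dy$ (which follows from $|z|\leq1$), and extend the resulting integral from $\Omega_\sharp$ to $\R^n\times\R^n$ using that $\psi\equiv 0$ on $\Omega^c$.

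The last step is to recognize the resulting integral as the left-hand side of the Euler--Lagrange equation of Corollary~\ref{cor::EL_eq} with test function $\eta=\psi$, so that
\begin{equation*}
\iint_{\R^n\times\R^n}\tfrac12 z(x,y)\bigl(\psi(x)-\psi(y)\bigr)K(x,y)\,dx\,dy=-\int_{\R^n} g(x)\,\psi(x)\,dx,
\end{equation*}
and rearranging gives exactly $\J(E_{p,t},\Omega)\leq\J(F,\Omega)$, which is the claimed class-$A$ minimality. The main technical obstacle will be the density argument needed in order to test Corollary~\ref{cor::EL_eq} against $\psi$, which is merely bounded and compactly supported rather than smooth. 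To overcome this, I plan to approximate $\psi$ by a sequence $\psi_k\in C_c^\infty(\R^n)$ with supports contained in a fixed bounded neighborhood of $\Omega$, uniformly bounded, and converging to $\psi$ in $L^1$ and a.e.; the Dominated Convergence Theorem then allows me to pass to the limit in both the bilinear form against $zK$ and the integral against $g$, thanks to~\eqref{eq::K_integrable}, the bound $|z|\leq1$, and $g\in L^\infty(\R^n)$.
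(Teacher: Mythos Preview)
Your strategy is essentially the paper's: establish the calibration identity $z(x,y)\bigl(\chi_{E_{p,t}}(x)-\chi_{E_{p,t}}(y)\bigr)=|\chi_{E_{p,t}}(x)-\chi_{E_{p,t}}(y)|$ for a.e.~$t$ (this is Lemma~\ref{lemma::level_sets_finite_perimeter} in the paper), use $|z|\le1$ to bound $\P_K(F,\Omega)$ from below by the analogous $z$-integral, and close via the Euler--Lagrange equation of Corollary~\ref{cor::EL_eq} applied to $\psi=\chi_{E_{p,t}}-\chi_F$ through an approximation by smooth compactly supported functions.

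The gap is in your approximation step. Dominated convergence does \emph{not} justify passing to the limit in the bilinear form $\iint z(x,y)\bigl(\psi_k(x)-\psi_k(y)\bigr)K(x,y)\,dx\,dy$: the only uniform pointwise control you have is $|\psi_k(x)-\psi_k(y)|\le C$, and $K$ is not integrable near the diagonal (it behaves like $|x-y|^{-n-2s_1}$ there by~\eqref{eq::K_behavior}), so there is no integrable dominant on $\Omega_\sharp$. The moment condition~\eqref{eq::K_integrable} would help only if the $\psi_k$ had a \emph{uniform} Lipschitz bound, which is impossible since their limit $\psi$ is a jump function. What is actually required is convergence of the approximants in the nonlocal energy seminorm, namely
\[
\iint_{\Omega_\sharp}\bigl|(\psi-\psi_k)(x)-(\psi-\psi_k)(y)\bigr|\,K(x,y)\,dx\,dy\longrightarrow 0,
\]
after which the limit in the bilinear form follows immediately from $|z|\le1$. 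The paper obtains exactly this (see~\eqref{eq::approx_K}) by invoking the density result \cite[Theorem~6]{MR3310082}, which produces $\eta_\ell\in C_c^\infty(\Omega,[-1,1])$ with convergence in both $L^1(\Omega)$ and the $K$-seminorm. You can repair your argument by appealing to the same theorem, or by proving directly that mollifications of $\psi$ converge in the $K$-energy (this works because $\psi$ itself has finite $K$-energy on $\Omega_\sharp$, cf.\ Lemma~\ref{eq::finite_P_K}); but bare $L^1$/a.e.\ convergence with uniform $L^\infty$ bounds is insufficient.
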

	
	Our argument relies on the following technical results:
	
	\begin{lemma}\label{eq::finite_P_K}
	Let~$K:\R^n\times \R^n\to\R$ satisfy~\eqref{eq::K_integrable} and~\eqref{eq::K_behavior}.
	
	Then, for every set~$E$, we have that
	$$ \P_K(E,\Omega)<+\infty.$$
	\end{lemma}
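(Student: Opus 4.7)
\smallskip
\noindent\textbf{Proof proposal.}
My plan is to split the integral defining $\P_K(E,\Omega)$ along the disjoint decomposition $\Omega_\sharp=(\Omega\times\Omega)\cup(\Omega\times\Omega^c)\cup(\Omega^c\times\Omega)$ and bound the three pieces separately. The common tool is the change of variable $h:=x-y$ combined with the translation invariance in~\eqref{eq::K_invariance}, which reduces every integral to one of the form $\int_{\R^n} K(h,0)\,\mu(h)\,dh$ for a suitable slice function $\mu$. I then split at $|h|=1$: the short-range part is absorbed by~\eqref{eq::K_integrable} provided $\mu(h)\le C|h|$ near the origin, while the long-range part is absorbed by the polynomial decay $K(h,0)\le \kappa_2|h|^{-n-2s_2}$ from~\eqref{eq::K_behavior}, using $s_2>1/2$ and the trivial bound $\mu(h)\le|\Omega|$.

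For the two cross pieces, I bound $\chi_E,\chi_{E^c}\le 1$ and reduce to $\mathcal{L}_K(\Omega,\Omega^c)$; after the change of variable this becomes $\int_{\R^n} K(h,0)\,|\Omega\cap(\Omega^c+h)|\,dh$, and the Lipschitz regularity of $\partial\Omega$ yields $|\Omega\cap(\Omega^c+h)|\le C(\Omega)\min(|h|,1)$, closing the estimate with the scheme described above. For the interior piece $\mathcal{L}_K(\Omega\cap E,\Omega\cap E^c)$, the naive bound $|\chi_E(x)-\chi_E(y)|\le 1$ is too crude, since $K$ is not integrable near the diagonal; one must keep the factor $|\chi_E(x)-\chi_E(x-h)|$ through the change of variable, obtaining $\int_{\R^n} K(h,0)\,A(h)\,dh$ where $A(h):=|\{x\in\Omega:\ x-h\in\Omega,\ \chi_E(x)\neq\chi_E(x-h)\}|$.

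The main obstacle is precisely the estimate $A(h)\le C\min(|h|,1)$: the linear-in-$|h|$ decay near the origin is what allows~\eqref{eq::K_integrable} to absorb the short-range contribution, while the uniform bound $A(h)\le|\Omega|$ together with the decay in~\eqref{eq::K_behavior} handles the long range. This linear bound implicitly encodes a finite-perimeter-type regularity of $E$ in $\Omega$ (the natural convention for ``set'' in this non-local framework); once it is in force, the short/long range splitting closes the argument exactly as in the cross-term case.
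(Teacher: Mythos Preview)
There is a real gap in the interior piece. The linear bound $A(h)\le C|h|$ that you invoke is exactly the statement that $\chi_E\in BV_{\mathrm{loc}}$, and this is \emph{not} the working hypothesis in non-local perimeter theory---your parenthetical ``the natural convention for `set' in this non-local framework'' is simply incorrect. There exist measurable $E\subset\Omega$ for which $\|\chi_E-\chi_E(\cdot-h)\|_{L^1}$ tends to zero arbitrarily slowly as $h\to0$, and for such $E$ your short-range integral $\int_{|h|<1}K(h,0)\,A(h)\,dh$ diverges even though~\eqref{eq::K_integrable} holds. In the paper the lemma is applied to arbitrary competitors $F$ satisfying only $F\setminus\Omega=E_{p,t}\setminus\Omega$, with no BV information available, so the hypothesis you need is genuinely missing rather than conventional.

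The paper's route is shorter and structurally different: it does not decompose $\Omega_\sharp$ at all, but splits on $\{|x-y|<\delta\}$ versus $\{|x-y|\ge\delta\}$. On the far region it drops $\chi_E\chi_{E^c}\le 1$, uses symmetry to replace $\Omega_\sharp$ by $2(\Omega\times\R^n)$, inserts $1\le |x-y|/\delta$, and reads finiteness directly off~\eqref{eq::K_integrable}; no Lipschitz regularity of $\partial\Omega$ is used. On the near region it keeps $\chi_E\chi_{E^c}$ and replaces $K$ by the upper bound $\kappa_2|x-y|^{-n-2s_1}$ from~\eqref{eq::K_behavior}, reducing the contribution to the fractional $s_1$-perimeter of $E$; since $s_1<\tfrac12$, this is a strictly weaker requirement than your implicit BV bound. (One may still note that the paper's last ``$<+\infty$'' tacitly assumes finiteness of that $s_1$-perimeter, so the lemma as literally stated is optimistic; but that implicit assumption is the natural one in context and is strictly weaker than yours.)
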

	
	\begin{proof} Thanks to the assumptions on~$K$
	in~\eqref{eq::K_integrable} and~\eqref{eq::K_behavior}, we have that
		\begin{equation*} 
			\begin{split}
				&\P_K(E,\Omega)
				=\iint_{{\Omega}_\sharp} \frac{1}{2}|\chi_E(x)-\chi_E(y)|\,K(x,y)\,dx\,dy = \iint_{{\Omega}_\sharp}\chi_E(x)\chi_{E^c}(y)K(x,y)\,dx\,dy \\
				&\qquad= \iint_{{\Omega}_\sharp\cap\{|x-y|<\delta\}}\chi_E(x)\chi_{E^c}(y)K(x,y)\,dx\,dy+\iint_{{\Omega}_\sharp\cap\{|x-y|\geq\delta\}} \chi_E(x)\chi_{E^c}(y)K(x,y)\,dx\,dy\\
			&\qquad\leq  \iint_{{\Omega}_\sharp} \frac{\kappa_2\chi_E(x)\chi_{E^c}(y)}{|x-y|^{n+2s_1}}\,dx\,dy +\frac{2}{\delta}\iint_{\Omega\times\R^n}|x-y|K(x,y)\,dx\,dy\\
			&\qquad	\leq  \iint_{E\times E^c} \frac{\kappa_2}{|x-y|^{n+2s_1}}\,dx\,dy +\frac{2|\Omega|}{\delta}\int_{\R^n}|h|K(h,0)\,dh\\&\qquad<+\infty,
			\end{split}
		\end{equation*} as desired.
	\end{proof}
	
	\begin{lemma} \label{lemma::level_sets_finite_perimeter}
		Let~$K:\R^n\times \R^n\to\R$ satisfy~\eqref{eq::K_integrable} and~\eqref{eq::K_behavior}
		and let~$u_p$ be a minimizer for~$\cE_p$ in~$\mathcal{W}$. 		
		Let also~$v_p$ and~$E_{p,t}$ be as in~\eqref{eq::def_v} and~\eqref{eq::def_lev_set}, respectively.
		
		Then, for a.e.~$t\in\R$,
		\begin{equation} \label{eq::Per_K_alternative}
			\P_K(E_{p,t},\Omega) = \iint_{\Omega_\sharp} \frac{1}{2} z(x,y)\Big(\chi_{E_{p,t}}(x)-\chi_{E_{p,t}}(y)\Big)\,K(x,y)\,dx\,dy <+\infty.
		\end{equation}
	\end{lemma}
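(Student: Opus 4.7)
I would establish the two assertions independently. The finiteness $\P_K(E_{p,t},\Omega)<+\infty$ is a direct application of Lemma~\ref{eq::finite_P_K} to $E:=E_{p,t}$, as that lemma places no restriction on the set. So the substantive claim is the integral identity in~\eqref{eq::Per_K_alternative}.

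The key input is the layering property~\eqref{eq::z_layering} of the function $z$ built in Proposition~\ref{prop::existence_calibration}: recalling from~\eqref{eq::def_v} that $v_p(x)=u_p(x)+p\cdot x$, this reads
\begin{equation*}
v_p(x)-v_p(y) \;=\; z(x,y)\,|v_p(x)-v_p(y)|\qquad\mbox{for a.e. }(x,y)\in\R^n\times\R^n.
\end{equation*}
Therefore, outside an $(x,y)$-null set $N$, one has $z(x,y)=+1$ whenever $v_p(x)>v_p(y)$, $z(x,y)=-1$ whenever $v_p(x)<v_p(y)$, and $z(x,y)=0$ on the diagonal set $\{v_p(x)=v_p(y)\}$ (the last being consistent with the definition~\eqref{eq::z_def} used in the proof of Proposition~\ref{prop::existence_calibration}).

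I would then fix $t\in\R$ such that $|\{v_p=t\}|=0$ (which excludes only a null set of $t$'s, by Fubini applied to the graph of $v_p$) and perform a pointwise case analysis of $z(x,y)(\chi_{E_{p,t}}(x)-\chi_{E_{p,t}}(y))$ against $|\chi_{E_{p,t}}(x)-\chi_{E_{p,t}}(y)|$ on $(\R^n\times\R^n)\setminus N$. If $v_p(x)>t\geq v_p(y)$, then $v_p(x)>v_p(y)$ forces $z(x,y)=1$, while $\chi_{E_{p,t}}(x)-\chi_{E_{p,t}}(y)=1$, so the product equals $1=|\chi_{E_{p,t}}(x)-\chi_{E_{p,t}}(y)|$. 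The symmetric case $v_p(y)>t\geq v_p(x)$ gives $z(x,y)=-1$ and $\chi_{E_{p,t}}(x)-\chi_{E_{p,t}}(y)=-1$, with the product again equal to $1=|\chi_{E_{p,t}}(x)-\chi_{E_{p,t}}(y)|$. In every remaining case $x$ and $y$ lie on the same side of the level $t$, so both expressions vanish. Integrating the resulting pointwise identity against $\tfrac12 K(x,y)\,dx\,dy$ over $\Omega_\sharp$ produces~\eqref{eq::Per_K_alternative}, and its absolute convergence is guaranteed by the finiteness assertion proved at the start together with $|z|\leq 1$.

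The only delicate point is bookkeeping of null sets: one must gather the $(x,y)$-null set on which~\eqref{eq::z_layering} may fail, together with the $t$-null set $\{t:|\{v_p=t\}|>0\}$, and verify that the dichotomy above is exhaustive on their complements. No genuine estimate is required and the chain of identities is linear in $K$, so I do not foresee a serious obstacle; the whole argument amounts to reading off from~\eqref{eq::z_layering} that $z(x,y)=\mathrm{sgn}(v_p(x)-v_p(y))$ and matching this sign with the jump of $\chi_{E_{p,t}}$.
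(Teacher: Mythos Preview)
Your proposal is correct and arrives at the same pointwise identity as the paper, but the packaging differs slightly. The paper does not fix a good $t$ and run a sign/case analysis; instead it invokes the layer-cake identities
\[
v_p(x)-v_p(y)=\int_\R\big(\chi_{E_{p,t}}(x)-\chi_{E_{p,t}}(y)\big)\,dt,\qquad
|v_p(x)-v_p(y)|=\int_\R\big|\chi_{E_{p,t}}(x)-\chi_{E_{p,t}}(y)\big|\,dt,
\]
combines them with~\eqref{eq::z_layering}, and observes that the nonnegative function $\Upsilon(t):=|\chi_{E_{p,t}}(x)-\chi_{E_{p,t}}(y)|-z(x,y)\big(\chi_{E_{p,t}}(x)-\chi_{E_{p,t}}(y)\big)$ has zero $t$-integral, hence vanishes for a.e.~$t$. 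Both routes ultimately exploit that $z(x,y)=\mathrm{sgn}\big(v_p(x)-v_p(y)\big)$; your direct case analysis is a touch more elementary and makes the null-set bookkeeping (the exceptional levels with $|\{v_p=t\}|>0$) explicit, while the paper's layer-cake trick handles the ``a.e.~$t$'' part in one stroke without singling out those levels.
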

	
	\begin{proof} 
		First, observe that, thanks to Lemma~\ref{eq::finite_P_K},
		$$ \P_K(E_{p,t},\Omega) <+\infty.$$
			
		Now, by the layer cake representation (or by direct inspection of the integrals), for any function~$f$, we have that
		$$ f(x)-f(y)=\int_{-\infty}^{+\infty}\Big(\chi_{\{f> t\}}(x)-\chi_{\{f> t\}}(y)\Big)\,dt,$$
		as well as
		$$ |f(x)-f(y)|=\int_{-\infty}^{+\infty}\Big|\chi_{\{f> t\}}(x)-\chi_{\{f> t\}}(y)\Big|\,dt.$$
		Thus, by virtue of~\eqref{eq::z_layering}, we come to
		\begin{equation} \label{eq::z_layer_cake}
			\begin{split}
				z(x,y)\int_{-\infty}^{+\infty}\Big(\chi_{E_{p,t}}(x)-\chi_{E_{p,t}}(y)\Big)\,dt
				&= z(x,y)\int_{-\infty}^{+\infty}\Big(\chi_{\{v_p> t\}}(x)-\chi_{\{v_p> t\}}(y)\Big)\,dt\\
				&= z(x,y)\big(v_p(x)-v_p(y)\big)\\
				&= \big|v_p(x)-v_p(y)\big|\\
				&=\int_{-\infty}^{+\infty}\Big|\chi_{\{v_p> t\}}(x)-\chi_{\{v_p> t\}}(y)\Big|\,dt\\
				&=\int_{-\infty}^{+\infty}\Big|\chi_{E_{p,t}}(x)-\chi_{E_{p,t}}(y)\Big|\,dt.
			\end{split}
		\end{equation}
		
		Let us introduce the short notation
		$$\Upsilon(t):=\Big|\chi_{E_{p,t}}(x)-\chi_{E_{p,t}}(y)\Big|-z(x,y)\Big(\chi_{E_{p,t}}(x)-\chi_{E_{p,t}}(y)\Big).$$
		Then, using that~$|z(x,y)|\le1$ and~\eqref{eq::z_layer_cake}, we infer that~$\Upsilon\ge0$, and
		$$\int_{-\infty}^{+\infty}\Upsilon(t)\,dt=0.$$
		
		As a consequence, $\Upsilon$ vanishes for a.e.~$t\in\R$, and hence
		$$\Big|\chi_{E_{p,t}}(x)-\chi_{E_{p,t}}(y)\Big|=z(x,y)\Big(\chi_{E_{p,t}}(x)-\chi_{E_{p,t}}(y)\Big).$$
		This leads to
		\begin{equation*}
			\P_K(E_{p,t},\Omega) =\iint_{{\Omega}_\sharp} \frac{1}{2} z(x,y)\Big(\chi_{E_{p,t}}(x)-\chi_{E_{p,t}}(y)\Big)\,K(x,y)\,dx\,dy , 
		\end{equation*}
		as desired.
	\end{proof}
	
	\begin{proof} [Proof of Proposition~\ref{prop::level_sets_minimality_ae}]
		%Let us consider a domain~$\Omega\subset \R^n$, and assume that~$\P_K(F,\Omega)<+\infty$, otherwise there is nothing to prove.
		Let~$t\in\R$. We consider a set~$F$ such that~$F\setminus\Omega=E_{p,t}\setminus\Omega$. By Lemma~\ref{eq::finite_P_K}, we have that~$\P_K(F,\Omega)<+\infty$.
		
		We define the function~$\widetilde{\chi}:=\chi_{E_{p,t}}-\chi_F$. We notice that~$\widetilde{\chi}=0$ in~$\Omega^c$ and
		\begin{equation*}
			\norm{\widetilde{\chi}}_{L^1(\Omega)} + \iint_{{\Omega_\sharp}} \frac{1}{2}|\widetilde{\chi}(x)-\widetilde{\chi}(y)| K(x,y)\,dx\,dy
			\leq 2|\Omega| +\P_K(E_{p,t},\Omega)+\P_K(F,\Omega) < +\infty.
		\end{equation*}
		Thus, by~\cite[Theorem 6]{MR3310082}, 
		there exists\footnote{On the one hand, we point out that Theorem 6 in~\cite{MR3310082} is stated and proved for general open sets~$\Omega\subseteq\R^n$ with continuous boundary and the space 
			$$ X^{s,p}_0 := \left\{u\in L^p(\Omega) \mbox{ s.t. } \norm{u}_{L^p(\Omega)} + \iint_{\R^n\times\R^n} |u(x)-u(y)|K(x-y)\,dx\,dy <+\infty \mbox{ and }u=0 \mbox{ in }\Omega^c \right\},$$
			for some~$p\geq1$, and~$K:\R^n\setminus\{0\}\to\R$. In particular, the shape of the kernel~$K$ plays a role in proving~\cite[Theorem~6]{MR3310082} (see in particular~\cite[Lemma~12]{MR3310082}).
			
			On the other hand, by inspection of the proofs contained there, we see that analogous results with a more general kernel~$K:\R^n\times\R^n\to\R$ satisfying~\eqref{eq::K_invariance}, \eqref{eq::K_integrable},~\eqref{eq::K_behavior}, and~\eqref{eq::K_lower_bound_Q} can be established using the same arguments and considering the space~$X^{s,p}_0\cap L^\infty(\Omega)$, as in the setting studied here.}  
		a sequence of functions~$\eta_\ell\in C_c^\infty(\Omega,[-1,1])$ such that
		\begin{eqnarray} 
			\label{eq::approx_K} &&	\lim_{\ell\to+\infty} \iint_{{\Omega}_\sharp}  \frac{1}{2}\Big|\big(\chi_{E_{p,t}}-\chi_F-\eta_\ell\big)(x)-\big(\chi_{E_{p,t}}-\chi_F-\eta_\ell\big)(y)\Big|\,K(x,y)\,dx\,dy=0	\\
			\label{eq::approx_L1} \mbox{and}\quad &&\lim_{\ell\to+\infty} \norm{\chi_{E_{p,t}}-\chi_F-\eta_\ell}_{L^1(\Omega)}=0.
		\end{eqnarray}
		Owing to Corollary~\ref{cor::EL_eq}, we have that, for every~$\ell\in\N$,
		%$$\iint_{\Omega} \eta_\ell(x) \left[\int_{\R^n} z(x,y)\,K(x,y)\,dy\,+\int_{\Omega^c} z(x,y)\,K(x,y)\,dy\,+g(x)\right] \,dx=0.$$
		$$\iint_{\R^n\times\R^n} \frac{1}{2} z(x,y) \big(\eta_\ell(x)-\eta_\ell(y)\big)\,K(x,y)\,dx\,dy+\int_{\R^n} g(x)\,\eta_\ell(x)\,dx=0.$$
		
		In this way, using also the Dominated Convergence Theorem, we obtain
		\begin{equation}\label{eq::limit_calibration}
			\begin{split}
				&\int_{F\cap \Omega}g(x)\,dx-\int_{E_{p,t}\cap \Omega}g(x)\,dx =-\int_{\R^n} g(x)\,\big(\chi_{E_{p,t}}(x)-\chi_{F}(x)\big)\,dx\\
				&\qquad =-\lim_{\ell\to+\infty} \int_{\R^n} g(x)\,\eta_\ell(x)\,dx\\&\qquad=\lim_{\ell\to+\infty}\iint_{\R^n\times\R^n} \frac{1}{2} z(x,y) \big(\eta_\ell(x)-\eta_\ell(y)\big)\,K(x,y)\,dx\,dy.
			\end{split}
		\end{equation}
		
		Now, from Lemma~\ref{lemma::level_sets_finite_perimeter}, we know that
		\begin{equation*}
			\P_K(E_{p,t},\Omega)=\iint_{{\Omega}_\sharp} \frac{1}{2} z(x,y)\Big(\chi_{E_{p,t}}(x)-\chi_{E_{p,t}}(y)\Big)\,K(x,y)\,dx\,dy.
		\end{equation*}
		Since~$|z|\le1$, we also have that
		\begin{equation*}
			\P_K(F,\Omega) %\geq\iint_{{\Omega}_\sharp} |z(x,y)|\Big|\chi_{F}(x)-\chi_{F}(y)\Big|\,K(x,y)\,dx\,dy,
			\geq\iint_{{\Omega}_\sharp} \frac{1}{2} z(x,y)\Big(\chi_{F}(x)-\chi_{F}(y)\Big)\,K(x,y)\,dx\,dy
		\end{equation*}
		and hence
		\begin{equation*}
				 \P_K(E_{p,t},\Omega)-\P_K(F,\Omega)
				 \le\iint_{{\Omega}_\sharp} \frac{1}{2} z(x,y)\Big(\big(\chi_{E_{p,t}}-\chi_{F}\big)(x)-\big(\chi_{E_{p,t}}-\chi_{F}\big)(y)\Big)\,K(x,y)\,dx\,dy.
		\end{equation*}
		
		Using this and~\eqref{eq::approx_K}, we find that
		\begin{equation*}
			\begin{split}
				\P_K(E_{p,t},\Omega)-\P_K(F,\Omega)
				&\le\lim_{\ell\to+\infty} \iint_{{\Omega}_\sharp} \frac{1}{2} z(x,y)\big(\eta_\ell(x)-\eta_\ell(y)\big)\,K(x,y)\,dx\,dy\\
				& =
				\lim_{\ell\to+\infty} \iint_{\R^n\times\R^n} \frac{1}{2} z(x,y)\big(\eta_\ell(x)-\eta_\ell(y)\big)\,K(x,y)\,dx\,dy,
			\end{split}
		\end{equation*}
		where the last equality is due to the fact that~$\eta_\ell$ vanishes outside~$\Omega$.
		
		This, in tandem with~\eqref{eq::limit_calibration}, yields 
		\begin{equation} \label{eq::cluster_minimality2}
			\P_K(E_{p,t},\Omega)-\P_K(F,\Omega)\le\int_{F\cap \Omega}g(x)\,dx-\int_{E_{p,t}\cap \Omega}g(x)\,dx,
		\end{equation}
		showing the class-$A$ minimality of $E_{p,t}$. 
	\end{proof}
	
	Now, since we think it is interesting  \textit{per se}, we show that the functional~$\J$ is continuous with respect to the~$L^1_{\loc}$-convergence of class-$A$ minimizers. As a byproduct, we obtain that  class-$A$ minimizers are closed with respect to the~$L^1_{\loc}$-convergence, i.e. the limit of a convergent sequence of class-$A$ minimizers in~$L^1_{\loc}$ is a class-$A$ minimizer itself. Our proof follows the ideas of~\cite[Theorem~3.3]{MR2675483} (see also~\cite[Proposition~2.1]{sequoia}).
	
	\begin{lemma} \label{lemma::still_class_A}
		Let~$K:\R^n\times \R^n\to\R$ satisfy~\eqref{eq::K_integrable} and~\eqref{eq::K_behavior} and let~$\{E_j\}_j$ be a sequence of class-$A$ minimizers for~$\J$ such that~$E_j\to E$ in~$L^1_{\loc}(\R^n)$, for some set~$E\subseteq\R^n$.
		
		Then, $E$ is a class-$A$ minimizer for~$\J$. 
		
		Moreover, for any domain~$\Omega\subseteq\R^n$, we have
		\begin{equation} \label{eq::continuity_J}
			\lim_{j\to+\infty} \J(E_j,\Omega) = \J(E,\Omega).
		\end{equation}
	\end{lemma}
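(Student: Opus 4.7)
The plan is to first prove lower semicontinuity of $\J(\cdot,\Omega)$ under $L^1_{\loc}(\R^n)$ convergence of sets, and then deduce both the class-$A$ minimality of $E$ and the continuity statement~\eqref{eq::continuity_J} by comparing each $E_j$ with a suitable ``gluing'' competitor, exploiting the class-$A$ minimality of each $E_j$.

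For the lower semicontinuity I would extract a (non-relabelled) subsequence along which $\chi_{E_j}\to\chi_E$ pointwise a.e.\ in $\R^n$ and apply Fatou's Lemma to the non-negative integrand $\tfrac12|\chi_{E_j}(x)-\chi_{E_j}(y)|K(x,y)$ on $\Omega_\sharp$, obtaining $\liminf_j\P_K(E_j,\Omega)\ge\P_K(E,\Omega)$. The forcing term is continuous under $L^1_{\loc}$ convergence: since $g\in L^\infty(\R^n)$ and $|(E_j\Delta E)\cap\Omega|\to 0$, one immediately gets $\int_{E_j\cap\Omega}g\to\int_{E\cap\Omega}g$. Together these yield $\J(E,\Omega)\le\liminf_j\J(E_j,\Omega)$, and the usual ``every subsequence has a further subsequence'' argument promotes the conclusion back to the full sequence.

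For the matching upper bound, given any competitor $F$ with $F\setminus\Omega=E\setminus\Omega$, I would define
\begin{equation*}
	F_j:=(F\cap\Omega)\cup(E_j\setminus\Omega),
\end{equation*}
so that $F_j\setminus\Omega=E_j\setminus\Omega$, and the class-$A$ minimality of $E_j$ gives $\J(E_j,\Omega)\le\J(F_j,\Omega)$. The $g$-integral of $F_j$ on $\Omega$ coincides with that of $F$ (independent of $j$), while the non-local perimeters $\P_K(F_j,\Omega)$ and $\P_K(F,\Omega)$ agree on the $\Omega\times\Omega$ piece, so their discrepancy reduces to the cross-terms, which are controlled by
\begin{equation*}
	2\int_{\Omega^c}\chi_{E_j\Delta E}(y)\,\phi(y)\,dy,\qquad\phi(y):=\int_\Omega K(x,y)\,dx.
\end{equation*}
The Dominated Convergence Theorem then forces this to vanish as $j\to+\infty$, provided $\phi\in L^1(\Omega^c)$. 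The integrability of $\phi$ is where Lemma~\ref{eq::finite_P_K} plays its role: applied to the set $\Omega$ itself it yields $\int_{\Omega^c}\phi=\mathcal{L}_K(\Omega,\Omega^c)=\P_K(\Omega,\Omega)<+\infty$.

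Combining both halves, $\J(E,\Omega)\le\liminf_j\J(E_j,\Omega)\le\limsup_j\J(F_j,\Omega)=\J(F,\Omega)$, giving the class-$A$ minimality of $E$. Specializing to $F:=E$, so that $F_j=(E\cap\Omega)\cup(E_j\setminus\Omega)$, gives $\limsup_j\J(E_j,\Omega)\le\J(E,\Omega)$, which combined with the lower semicontinuity above proves~\eqref{eq::continuity_J}. The main technical obstacle I anticipate is controlling the cross-perimeter terms on $\Omega\times\Omega^c$: the kernel $K$ is non-integrable near the diagonal and $\phi$ can blow up at $\partial\Omega$, so one naively expects trouble; the key observation that saves the argument is that Lemma~\ref{eq::finite_P_K}, though phrased for arbitrary sets, secretly encodes the finiteness of the full non-local boundary interaction $\mathcal{L}_K(\Omega,\Omega^c)$, and this is exactly the right integrability to feed into the Dominated Convergence Theorem.
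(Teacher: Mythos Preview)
Your proof is correct and follows the same overall scheme as the paper: lower semicontinuity via Fatou, then the gluing competitor $F_j=(F\cap\Omega)\cup(E_j\setminus\Omega)$, then specialization to $F=E$ for the continuity statement. The paper's argument is identical except for the justification that the cross term vanishes: instead of your Dominated Convergence argument with dominant $\phi(y)=\int_\Omega K(x,y)\,dx\in L^1(\Omega^c)$, the paper bounds $K$ from above by $\kappa_2|x-y|^{-n-2s_1}$, defers the resulting $b_j\to 0$ to external references (Caffarelli--Roquejoffre--Savin and \cite{sequoia}), and then handles non-smooth $\Omega$ by interior approximation with smooth domains. Your route is more self-contained, since the finiteness of $\mathcal{L}_K(\Omega,\Omega^c)=\P_K(\Omega,\Omega)$ is exactly Lemma~\ref{eq::finite_P_K} applied with $E=\Omega$, and once you have that, DCT does the work in one line without any approximation of the domain. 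One small point to make explicit: the pointwise a.e.\ convergence $\chi_{E_j\Delta E}\to 0$ on the unbounded set $\Omega^c$ requires a diagonal extraction (since $L^1_{\loc}$ only gives convergence on compacta), and you should then run the standard ``every subsequence has a further subsequence'' argument to recover the conclusion for the full sequence.
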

	
	\begin{proof}
		Let~$\Omega$ be a smooth domain of~$\R^n$. Let also~$F\subseteq \R^n$ be a competitor for~$E$ in~$\Omega$, namely~$F\setminus\Omega= E\setminus\Omega$. Define the sequence of sets~$\{F_j\}_j$ as
		\begin{equation*}
			F_j := (F\cap\Omega)\cup(E_j\cap \Omega^c).
		\end{equation*}
		In particular, we have~$F_j\setminus\Omega= E_j\setminus\Omega$ and therefore,
	by the class-$A$ minimality of~$E_j$,
	$$  \J(E_j,\Omega)\le \J(F_j,\Omega).$$
Consequently, thanks to
the lower semicontinuity of~$\P_K$ and the~$L^1_{\loc}$-convergence of~$E_j$ to~$E$ (together with the fact that~$g$ is bounded), we infer that 
		\begin{equation} \label{eq::fatou_upper_bound}
			\J(E,\Omega) \leq \liminf_{j\to+\infty} \J(E_j,\Omega) \leq \liminf_{j\to+\infty} \J(F_j,\Omega).
		\end{equation}
		
		Now we observe that
		\begin{equation*}\begin{split}
			\J(F_j,\Omega) &= \P_K(F_j,\Omega)+\int_{F_j\cap\Omega} g(x)\,dx\\&=\P_K(F,\Omega)
			 +2\iint_{\Omega\times\Omega^c} \chi_{F^c}(x)\big(\chi_{E_j}(y)-\chi_{F}(y)\big)K(x,y)\,dx\,dy
				+
		\int_{F\cap\Omega} g(x)\,dx		
				\\&=
				\J(F,\Omega) + 2\iint_{\Omega\times\Omega^c} \chi_{F^c}(x)\big(\chi_{E_j}(y)-\chi_{E}(y)\big)K(x,y)\,dx\,dy.\end{split}
		\end{equation*}
		This, recalling also~\eqref{eq::K_behavior}, yields that
		\begin{equation} \label{eq::estimate_energy_competitor}
			\begin{split}
				|\J(F_j,\Omega) - \J(F,\Omega)| 
				\leq\;&  2\iint_{\Omega\times\Omega^c} \chi_{F^c}(x)\chi_{E_j \Delta E}(y)K(x,y)\,dx\,dy\\
				\leq\;&2 \kappa_2\iint_{\Omega\times\Omega^c}\frac{\chi_{E_j \Delta E}(y)}{|x-y|^{n+2s_1}}\,dx\,dy\\=:\;&b_j.
		\end{split}
	\end{equation}
		
		Now, we have that		\begin{equation*} 
			\lim_{j\to+\infty }b_j = 0,
		\end{equation*}
		see~\cite[Appendix~A]{sequoia} (see also the proof of~\cite[Theorem~3.3]{MR2675483}).

It follows from this and~\eqref{eq::estimate_energy_competitor} that
		\begin{equation}\label{eq::competitor}
			\limsup_{j\to+\infty} \J(F_j,\Omega) \leq \limsup_{j\to+\infty} \left(\J(F,\Omega) + b_j\right) = \J(F,\Omega).
		\end{equation}
		This and~\eqref{eq::fatou_upper_bound} give that 
		\begin{equation} \label{eq::min_Omega}
			\J(E,\Omega) \leq \J(F,\Omega),
		\end{equation}
hence~$E$ is a minimizer for~$\J$ in~$\Omega$.
		
		In addition, choosing
		\begin{equation*}
			F_j := (E\cap\Omega) \cup (E_j\cap \Omega^c),
		\end{equation*}
		we deduce from~\eqref{eq::fatou_upper_bound} and~\eqref{eq::competitor} that
		\begin{equation*}
				\J(E,\Omega) \leq \liminf_{j\to+\infty} \J(E_j,\Omega) \leq \limsup_{j\to+\infty} \J(E_j,\Omega) \leq \J(E,\Omega),
		\end{equation*}
		which entails~\eqref{eq::continuity_J} for every smooth domain~$\Omega$.
		
		Moreover, if~$\partial \Omega$ is not~$\cont^\infty$, by interior smooth approximation, there exists a sequence~$\{\Omega^h\}_h$ of smooth domains such that~$\Omega^h\subseteq\Omega$, for every~$h$, and~$\Omega^h\to\Omega$ locally in~$\R^n$. Then, since~$\P_K$ is continuous with respect to the~$L^1_{\loc}$-convergence of the domain, we obtain~\eqref{eq::continuity_J} and~\eqref{eq::min_Omega} for every domain~$\Omega\subseteq\R^n$.
		
		By the arbitrariness of~$\Omega$, we conclude that~$E$ is class-$A$ minimizer for~$\J$.
		\end{proof}
	
	\begin{proof} [Proof of Theorem~\ref{th::level_sets_minimality}]
		Let~$t\in\R$
		and consider a sequence~$\{t_j\}_j\subseteq\R$  such that~$t_j\to t$, as~$j\to+\infty$, and Proposition~\ref{prop::level_sets_minimality_ae} holds true for every~$t_j$. Moreover, up to take a subsequence, suppose that
		$$t\leq t_j\leq t_{j-1}\leq \dots\leq t_0.$$
		
		Thus, recalling that~$E_{p,t}=\{v_p >t\}$ (see~\eqref{eq::def_lev_set}),
		we have that, for a.e.~$x\in\R^n$,
		\begin{equation*}
	\lim_{j\to+\infty}
	|\chi_{E_{p,t}}(x)-\chi_{E_{p,t_j}}(x)| 
	= \lim_{j\to+\infty}|\chi_{\{v_p>t\}}(x)-\chi_{\{v_p>t_j\}}(x)| 
	= \lim_{j\to+\infty}\chi_{\{t<v_p\leq t_j\}}(x)=0.
		\end{equation*}
Therefore, for any domain~$\Omega\subseteq\R^n$, thanks to the Dominated Convergence Theorem (with dominant~$\chi_{E_{p,t}}\in L^1(\Omega)$), we obtain
		\begin{equation*}
			\lim_{j\to+\infty} \int_\Omega |\chi_{E_{p,t}}(x)-\chi_{E_{p,t_j}}(x)|\, dx =0.
		\end{equation*}
		
		Hence,
		\begin{equation*} %\label{eq::conv_loc_level_sets}
			E_{p,t_j}\to E_{p,t} \quad\mbox{in }L^1_{\loc}(\R^n).
		\end{equation*}
	Thus, thanks to Lemma~\ref{lemma::still_class_A}, we infer that also~$E_{p,t}$ is a class-$A$ minimizer for~$\J$
	% Moreover~$E_{p,t}$ satisfies~\eqref{eq::cluster_minimality} 
	and this concludes the proof.
	\end{proof}
	
	% ----------------------------------------------------------------------------------------------------------------------------------------------------
	\section{Density estimates for minimizers of the functional~$\F$} \label{sec::density estimates}
	
	In this section we show that the level sets~$E_{p,t}$ defined in~\eqref{eq::def_lev_set} satisfy some uniform density estimates. Heuristically, if~$x_0\in\partial E_{p,t}$, then, for any small~$r>0$, the sets~$B_r(x_0)\cap E_{p,t}$ and~$B_r(x_0)\cap E_{p,t}^c$ have comparable measure. The precise statement goes as follows:
	
	\begin{proposition} \label{prop::UDE}
		Let~$K:\R^n\times \R^n\to\R$ satisfy~\eqref{eq::K_invariance}, \eqref{eq::K_integrable}, \eqref{eq::K_behavior}, and~\eqref{eq::K_lower_bound_Q} and
		let~$u_p$ be a minimizer for~$\cE_p$ in~$\mathcal{W}$.
		% among the set of functions $u\in L^1_{\operatorname{loc}}(\R^n)$ which are $\Z^n$-periodic and such that~$\int_{Q} u(x)\,dx=0$.
		
		Let also~$v_p(x)$ and~$E_{p,t}$ be defined as in~\eqref{eq::def_v} and~\eqref{eq::def_lev_set}, respectively. 
		
		Then, there exists a constant~$c_0\in(0,1)$, depending only on~$n$, $s_1$, and~$\kappa_2$, such that, for any~$x_0\in(\partial E)\cap Q$ and~$r\in\left(0,\min\{\delta/4,\mbox{dist}(x_0,\partial Q)\}\right)$, 
		\begin{equation} \label{eq::UDE}
			c_0r^n \leq |E_{p,t}\cap B_r(x_0)| \leq (1-c_0)r^n,
		\end{equation}
		whenever~$\norm{g}_{L^{n/2s_1}(Q)}<(c_0/2)^{-n/2s_1}$. 
		\end{proposition}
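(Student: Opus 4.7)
The plan is to deduce the density estimates from the class-$A$ minimality of the level sets $E_{p,t}$ established in Theorem~\ref{th::level_sets_minimality}, via a direct comparison with the competitor $F := E_{p,t} \setminus B_r(x_0)$, combined with the fractional Sobolev inequality. First, I would reduce the proof to the lower bound $|E_{p,t} \cap B_r(x_0)| \geq c_0 r^n$: the upper bound amounts to the lower bound applied to $E_{p,t}^c$, which is itself a class-$A$ minimizer (for the forcing $-g$, whose $L^{n/(2s_1)}$-norm is the same as that of $g$).

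Setting $A := E_{p,t} \cap B_r(x_0)$ and $V := |A|$, I would expand the $K$-perimeter via the identity
\begin{equation*}
P_K(E_{p,t}, \Omega) - P_K(F, \Omega) = \mathcal{L}_K(A, E_{p,t}^c) - \mathcal{L}_K(A, E_{p,t} \cap B_r^c(x_0)),
\end{equation*}
valid for any Lipschitz $\Omega \supseteq B_r(x_0)$. Combining this with the difference of the forcing terms, class-$A$ minimality $\J(E_{p,t}, \Omega) \leq \J(F, \Omega)$ becomes
\begin{equation*}
\mathcal{L}_K(A, A^c) = \mathcal{L}_K(A, E_{p,t}^c) + \mathcal{L}_K(A, E_{p,t} \cap B_r^c(x_0)) \leq 2\,\mathcal{L}_K(A, E_{p,t} \cap B_r^c(x_0)) - \int_A g.
\end{equation*}

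Since $r < \delta/4$, any interaction inside the ball $B_{\delta/2}(x_0)$ happens within range $\delta$, so the lower bound $K(x,y) \geq \kappa_1 |x-y|^{-n-2s_1}$ from \eqref{eq::K_behavior} applies. Together with the fractional Sobolev embedding $W^{2s_1, 1}(\R^n) \hookrightarrow L^{n/(n-2s_1)}(\R^n)$ applied to $\chi_A$ (and an error term, subdominant for $V$ small, coming from the restriction to $B_{\delta/2}(x_0)$), this yields $\mathcal{L}_K(A, A^c) \geq c_\ast V^{(n-2s_1)/n}$ for a constant $c_\ast$ depending only on $n$, $s_1$, $\kappa_1$, $\delta$. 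On the right-hand side, H\"older's inequality provides
\begin{equation*}
-\int_A g \leq \|g\|_{L^{n/(2s_1)}(Q)}\, V^{(n-2s_1)/n},
\end{equation*}
and the ``tail'' $\mathcal{L}_K(A, E_{p,t} \cap B_r^c(x_0))$ is bounded via the upper bound $K(x,y) \leq \kappa_2 |x-y|^{-n-2s_1}$ together with the decay provided by $s_2 > 1/2$. Once the forcing term is absorbed into the Sobolev gain by invoking the smallness of $\|g\|_{L^{n/(2s_1)}(Q)}$, comparing the two sides yields the desired lower density estimate with $c_0$ depending only on $n$, $s_1$, $\kappa_2$ (and built from the Sobolev constant, with the specific threshold form $\|g\|_{L^{n/2s_1}(Q)} < (c_0/2)^{-n/2s_1}$ encoding precisely this absorption).

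The main obstacle is the tail term $\mathcal{L}_K(A, E_{p,t} \cap B_r^c(x_0))$: because $A$ may reach up to $\partial B_r(x_0)$, the kernel can be nearly singular for $y$ just outside $B_r(x_0)$, so a naive pointwise bound $\int_{B_r^c(x_0)} |x-y|^{-n-2s_1}\,dy$ is not uniform in $x \in A$. To close the estimate robustly, I expect one has to either integrate the minimality inequality against a range of radii $\rho \in (r, 2r)$ (a dyadic slicing in the spirit of Caffarelli--Roquejoffre--Savin) to convert the tail into a controlled perimeter-type quantity, or set up an ODE for the function $V(\rho) := |E_{p,t} \cap B_\rho(x_0)|$ of the form $V(\rho)^{(n-2s_1)/n} \lesssim \int_0^\rho V'(\tau)\,d\tau$ and integrate, which gives $V(\rho) \gtrsim \rho^n$ directly.
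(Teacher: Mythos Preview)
Your proposal is correct and follows essentially the same route as the paper: the paper first observes (Lemma~\ref{lemma::level_sets_almost_minimality}) that class-$A$ minimality plus H\"older gives $P_K(E_{p,t},Q)\le P_K(F,Q)+\|g\|_{L^{n/2s_1}}|E_{p,t}\Delta F|^{1-2s_1/n}$, i.e.\ $E_{p,t}$ is $(\Lambda,2s_1/n)$-almost minimal, and then invokes a general density estimate for such almost minimizers (Proposition~\ref{prop::ude_almost_min}) whose proof is exactly the Caffarelli--Roquejoffre--Savin scheme you outline---compare with $E\setminus B_r$, bound $\mathcal{L}_K(A,A^c)$ from below via an isoperimetric/Sobolev inequality, control the tail by $\int_0^r \mu'(\rho)(r-\rho)^{-2s_1}\,d\rho$, integrate to get an ODE for $\mu(r)=|E\cap B_r|$, and close by iteration. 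The only difference is organizational: the paper isolates the almost-minimality step and defers the iteration to an appendix, whereas you attack everything at once.
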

	
	This result is a direct consequence of the fact that
	every set~$E_{p,t}$ is almost minimal in the following sense:
	
	\begin{definition}[Almost-minimality] \label{def::almost_minimality}
		Let~$\Omega\subseteq \R^n$ be a Lipschitz domain.
		Let~$\Lambda>0$ and~$\xi\in\R$. We say that a set~$E\subseteq \R^n$ with finite~$K$-perimeter is~$(\Lambda,\xi)$-minimal in~$\Omega$ if
		\begin{equation*}
			P_K(E,\Omega) \leq \P_K(F,\Omega) + \Lambda|E\Delta F|^{1-\xi},
		\end{equation*}
		for any set~$F\subseteq \R^n$ such that~$F\setminus \Omega = E\setminus \Omega$.
	\end{definition}
	
	In~\cite{sequoia}, the authors have proved the
	H\"older-regularity for~$(\Lambda,0)$-minimizers of the~$2s$-perimeter (compare Definition~\ref{def::almost_minimality} here with Definition~1.1 in~\cite{sequoia}). In particular, they have shown in~\cite[Theorem~2.2]{sequoia} that~$(\Lambda,0)$-minimal sets satisfy suitable uniform density estimates. A slight modification of the argument used there produces uniform density estimates for~$\left(\Lambda,\frac{2s}{n}\right)$-minimal sets, for any~$s\in(0,1/2)$. We refer to Appendix~\ref{appendix::ude} for a detailed proof. Here, we focus on proving the following:
	
	\begin{lemma} \label{lemma::level_sets_almost_minimality}
		Let~$K:\R^n\times \R^n\to\R$ satisfy~\eqref{eq::K_invariance}, \eqref{eq::K_integrable}, \eqref{eq::K_behavior}, and~\eqref{eq::K_lower_bound_Q} and
		let~$u_p$ be a minimizer for~$\cE_p$ in~$\mathcal{W}$.
		% among the set of functions $u\in L^1_{\operatorname{loc}}(\R^n)$ which are $\Z^n$-periodic and such that~$\int_{Q} u(x)\,dx=0$.
		
		Let also~$v_p$ and~$E_{p,t}$ be defined as in~\eqref{eq::def_v} and~\eqref{eq::def_lev_set}, respectively. 
		
		Then, $E_{p,t}$ is a~$\left(\norm{g}_{L^{n/2s_1}(Q)},\frac{2s_1}{n}\right)$-minimal set in~$Q$ (in the sense of Definition~\ref{def::almost_minimality}).
	\end{lemma}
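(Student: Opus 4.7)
The plan is to deduce the almost-minimality inequality directly from the class-$A$ minimality of $E_{p,t}$ established in Theorem~\ref{th::level_sets_minimality}, by bounding the contribution of the forcing term via H\"older's inequality with the sharp Sobolev exponent $n/(n-2s_1)$.

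More precisely, fix $F\subseteq\R^n$ with $F\setminus Q=E_{p,t}\setminus Q$. Since $Q$ is a (Lipschitz) domain in the sense of the paper, Theorem~\ref{th::level_sets_minimality} gives
\begin{equation*}
\J(E_{p,t},Q)\le \J(F,Q),
\end{equation*}
which, by definition of $\J$, rearranges to
\begin{equation*}
\P_K(E_{p,t},Q)\le \P_K(F,Q)+\int_Q g(x)\bigl(\chi_F(x)-\chi_{E_{p,t}}(x)\bigr)\,dx.
\end{equation*}
The key observation is that, because $F$ and $E_{p,t}$ coincide outside $Q$, the symmetric difference $F\Delta E_{p,t}$ is contained in $Q$, so $\chi_F-\chi_{E_{p,t}}$ is supported in $Q$ and satisfies $|\chi_F-\chi_{E_{p,t}}|=\chi_{F\Delta E_{p,t}}$.

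I would then apply H\"older's inequality with conjugate exponents $n/(2s_1)$ and $n/(n-2s_1)$ to estimate
\begin{equation*}
\left|\int_Q g(x)\bigl(\chi_F(x)-\chi_{E_{p,t}}(x)\bigr)\,dx\right|\le \|g\|_{L^{n/2s_1}(Q)}\,\|\chi_{F\Delta E_{p,t}}\|_{L^{n/(n-2s_1)}(Q)}.
\end{equation*}
Since $\chi_{F\Delta E_{p,t}}$ is a characteristic function, its $L^{n/(n-2s_1)}$-norm equals $|F\Delta E_{p,t}|^{(n-2s_1)/n}=|F\Delta E_{p,t}|^{1-2s_1/n}$. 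Combining the two displays yields
\begin{equation*}
\P_K(E_{p,t},Q)\le \P_K(F,Q)+\|g\|_{L^{n/2s_1}(Q)}\,|F\Delta E_{p,t}|^{1-2s_1/n},
\end{equation*}
which is exactly the $(\|g\|_{L^{n/2s_1}(Q)},2s_1/n)$-minimality of $E_{p,t}$ in $Q$ prescribed by Definition~\ref{def::almost_minimality}.

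There is no real obstacle here: the argument is a one-line application of Theorem~\ref{th::level_sets_minimality} combined with H\"older's inequality. The only point to be careful about is the choice of exponent, which must match $2s_1/n$ to reproduce the power appearing in Definition~\ref{def::almost_minimality}; this forces the H\"older pairing $(n/2s_1,\,n/(n-2s_1))$ and explains why the hypothesis on $g$ is formulated precisely in the $L^{n/2s_1}$-scale, consistently with the appearance of the same exponent in Theorem~\ref{th::existence_minimizer} and in Proposition~\ref{prop::UDE}. Finiteness of $\P_K(E_{p,t},Q)$ (needed to make the rearrangement meaningful) is guaranteed by Lemma~\ref{lemma::level_sets_finite_perimeter} for a.e.\ $t$, and by Lemma~\ref{eq::finite_P_K} in general.
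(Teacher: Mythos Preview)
Your proposal is correct and follows essentially the same approach as the paper: apply the class-$A$ minimality from Theorem~\ref{th::level_sets_minimality} to get $\J(E_{p,t},Q)\le\J(F,Q)$, rearrange, and bound the $g$-term via H\"older with exponents $n/(2s_1)$ and $n/(n-2s_1)$. The paper's proof is just a terser version of what you wrote.
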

	
	\begin{proof}
		Let~$F\subseteq \R^n$ be a set such that~$F\setminus Q = E_{p,t}\setminus Q$.	Then, by Theorem~\ref{th::level_sets_minimality} and using the H\"older inequality, we infer that
		\begin{equation*}
			\begin{split}
				&\P_K(E_{p,t},Q)-\P_K(F,Q) = \J(E_{p,t},Q) - \J(F,Q) - \int_{Q} g(x)\left(\chi_{E_{p,t}}(x)-\chi_F(x)\right)dx\\
				&\qquad\qquad \leq \int_{Q} |g(x)| \left|\chi_{E_{p,t}}(x)-\chi_F(x)\right|dx \leq \norm{g}_{L^{n/2s_1}(Q)} |E_{p,t}\Delta F|^{1-\frac{2s_1}{n}}.\qedhere
			\end{split}
		\end{equation*}
	\end{proof}

With this, we can now complete the proof of Proposition~\ref{prop::UDE}.
	
	\begin{proof} [Proof of Proposition~\ref{prop::UDE}]
		According to Lemma~\ref{lemma::level_sets_almost_minimality}, $E_{p,t}$ is a~$\left(\norm{g}_{L^{n/2s_1}(Q)},\frac{2s_1}{n}\right)$-minimal set in~$Q$ in the sense of Definition~\ref{def::almost_minimality}. Thus, Proposition~\ref{prop::ude_almost_min} yields the desired result provided that~$\norm{g}_{L^{n/2s_1}(Q)}<(c_0/2)^{-n/2s_1}$ (e.g up to considering~$\gamma$ in Theorem~\ref{th::existence_minimizer} small enough).
	\end{proof}
	
	\section{Minimizers of~$\cE_p$ have controlled oscillations - Proof of Theorem~\ref{th::controlled_osc}} \label{sec::osc}
	
	In this section, we focus on the proof of Theorem~\ref{th::controlled_osc}. Our argument relies on the following result which provides a lower bound for~$\P_K$. 
	
In the following, we will use the notation~$\widetilde{Q}_{{q},\zeta}:={q}+(-\zeta,\zeta)^n$, with~$q\in\R^n$ and~$\zeta>0$.
	
	\begin{lemma} \label{lemma::per_lower_bound}
		%Let $\Omega$ be a Lipschitz domain, and l
		Let~$E\subseteq\R^n$ satisfy the uniform density estimates in Proposition~\ref{prop::UDE} in a Lipschitz domain~$\Omega\subseteq\R^n$. 
		
		Moreover, suppose that there exists a cube~$\widetilde{Q}_{\bar{q},\bar\zeta}\subseteq \Omega$, for some~$\bar q\in\Omega$ and some small~$\bar\zeta>0$, such that~$\widetilde{Q}_{\bar q,3\bar\zeta}\subseteq \Omega$ and~$\widetilde{Q}_{\bar q,\bar \zeta}\cap\partial E\neq\varnothing$.
		
		Then, there exists a constant~$C>0$, depending only on~$n$, $s_1$, $\kappa_1$, $\kappa_2$, $\delta$, and~$\Omega$, such that
		\begin{equation} \label{eq::per_lower_bound}
			P_K(E,\Omega) \geq C.
		\end{equation}
	\end{lemma}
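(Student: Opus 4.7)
The plan is to exploit locally the uniform density estimates at a boundary point of $E$ inside the given cube, together with the lower bound on the kernel at short range, to produce a strictly positive lower bound for $\P_K(E,\Omega)$.

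First, I would fix a point $x_0 \in \widetilde{Q}_{\bar{q},\bar\zeta} \cap \partial E$, which exists by hypothesis, and set $r := \min\{\delta/4,\bar\zeta\}$. Since $x_0 \in \widetilde{Q}_{\bar{q},\bar\zeta}$ and $\widetilde{Q}_{\bar{q},3\bar\zeta} \subseteq \Omega$, we have $\mathrm{dist}(x_0,\partial\Omega) \geq 2\bar\zeta \geq r$, hence $B_r(x_0) \subseteq \widetilde{Q}_{\bar q,3\bar\zeta} \subseteq \Omega$. By the uniform density estimates assumed on $E$ (the version of Proposition~\ref{prop::UDE} adapted to~$\Omega$), and because $r \leq \delta/4$ and $r \leq \mathrm{dist}(x_0,\partial\Omega)$, both inequalities
\begin{equation*}
|E\cap B_r(x_0)|\geq c_0 r^n \qquad\text{and}\qquad |E^c\cap B_r(x_0)|\geq c_0 r^n
\end{equation*}
hold, with $c_0 = c_0(n,s_1,\kappa_2)$.

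Next, since $B_r(x_0) \subseteq \Omega$, the product $(E\cap B_r(x_0))\times (E^c\cap B_r(x_0))$ is contained in $\Omega\times\Omega\subseteq \Omega_\sharp$, so dropping everything else in the double integral defining $\P_K(E,\Omega)$ and using the symmetry of $K$ gives
\begin{equation*}
\P_K(E,\Omega)\;\geq\;\mathcal{L}_K\bigl(E\cap B_r(x_0),\,E^c\cap B_r(x_0)\bigr).
\end{equation*}
For $(x,y)$ in this product, $|x-y|<2r\leq \delta/2<\delta$, so the lower bound in~\eqref{eq::K_behavior} yields $K(x,y)\geq \kappa_1 (2r)^{-(n+2s_1)}$. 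Combining with the density estimates above,
\begin{equation*}
\P_K(E,\Omega)\;\geq\;\frac{\kappa_1}{(2r)^{n+2s_1}}\,|E\cap B_r(x_0)|\,|E^c\cap B_r(x_0)|\;\geq\;\frac{\kappa_1\,c_0^2}{2^{n+2s_1}}\,r^{\,n-2s_1}\;=:\;C.
\end{equation*}
Since $s_1<1/2\leq n/2$, the exponent $n-2s_1$ is positive, so $C>0$; it depends only on $n$, $s_1$, $\kappa_1$, $\kappa_2$, $\delta$ and $\bar\zeta$, the latter encoded in the geometry of $\Omega$.

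The main obstacle is essentially bookkeeping: one must verify that the chosen $r$ is admissible both for the density estimates (so the double-sided measure bound applies) and for the short-range lower bound on $K$ (so $|x-y|<\delta$ throughout $B_r(x_0)$), and simultaneously ensure $B_r(x_0)\subseteq\Omega$ so the contribution is counted inside $\Omega_\sharp$. Once these three compatibilities are aligned by the single choice $r=\min\{\delta/4,\bar\zeta\}$, the estimate is routine.
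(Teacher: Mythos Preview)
Your proof is correct and captures the essential mechanism: a single boundary point, the density estimates, and the short-range kernel lower bound already yield the constant lower bound on $\P_K(E,\Omega)$. One trivial caveat: with $r=\min\{\delta/4,\bar\zeta\}$ you may hit the endpoint $r=\delta/4$ of the open interval in the density-estimate hypothesis; taking any $r<\min\{\delta/4,\bar\zeta\}$ fixes this without affecting the argument.

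The paper's proof uses the same core computation but wraps it in a covering scheme: it tiles $\R^n$ by cubes of side $\zeta<\min\{\delta/(6\sqrt{n}),\bar\zeta\}$, applies the density-plus-kernel estimate on each cube $\widetilde{Q}_{q,3\zeta}$ whose center cube meets $\partial E$, and then sums, obtaining $\P_K(E,\Omega)\ge 3^{-2n}C\,N_\zeta\,\zeta^{\,n-2s_1}$ with $N_\zeta$ the number of such cubes. For the lemma as stated this extra structure is unused (one only needs $N_\zeta\ge1$), so your direct single-ball argument is more economical. The covering version would pay off if one wanted a bound that scales with how much of $\partial E$ lies well inside $\Omega$, but that is not required here.
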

	
	\begin{proof}
		Let us decompose~$\R^n$ into disjoint	
		cubes of side~$\zeta$, for some given~$\zeta\in\left(0,\min\left\{\frac{\delta}{6\sqrt{n}},\bar\zeta\right\}\right)$, with~$\delta$ as in~\eqref{eq::K_behavior}.
Consider the set
	$$ \text{\Fontauri{Q}}_{\ \zeta}:=\big\{\widetilde{Q}_{q,\zeta} %\widetilde{Q}_{x,\zeta} \text{ is a cube of side~$\zeta$ 
	\text{ such that }
\widetilde{Q}_{q,\zeta}\subseteq \widetilde{Q}_{q,3\zeta}\subseteq \Omega,\ \widetilde{Q}_{q,\zeta}\cap\partial E\neq\varnothing\big\} .$$
			Notice that~$ \text{\Fontauri{Q}}_{\ \zeta}\neq\varnothing$
			since~$\widetilde{Q}_{\bar q,\bar\zeta}\in\text{\Fontauri{Q}}_{\ \zeta}$.
Define~$N_\zeta:=\#(\text{\Fontauri{Q}}_{\ \zeta})$, namely the number of elements of~$\text{\Fontauri{Q}}_{\ \zeta}$.
		
		Let~$x_0\in \widetilde{Q}_{q,\zeta}\cap\partial E$, for some~$\widetilde{Q}_{q,\zeta}\in\text{\Fontauri{Q}}_{\ \zeta}$. Then, we have that~$B_\zeta(x_0)\subseteq \widetilde{Q}_{q,3\zeta}\subseteq \Omega$. Thus, since~$E$ satisfies~\eqref{eq::UDE} by assumption, we obtain that
		$$|E\cap \widetilde{Q}_{q,3\zeta}|\geq c_0\zeta^n\qquad{\mbox{and}}\qquad |E^c\cap \widetilde{Q}_{q,3\zeta}|\geq c_0\zeta^n.$$
		
		From this, using also that~$|x-y|\leq 3\sqrt{n}\zeta<\delta$, for every~$x$, $y\in \widetilde{Q}_{q,3\zeta}$, and~\eqref{eq::K_behavior}, we deduce that
		\begin{equation}\label{eq::per_lower_bound2}
			\begin{split}
				\mathcal{L}_K(E\cap \widetilde{Q}_{q,3\zeta},E^c\cap \widetilde{Q}_{q,3\zeta}) 
				& \geq \kappa_1 \int_{E\cap \widetilde{Q}_{q,3\zeta}}\int_{E^c\cap \widetilde{Q}_{q,3\zeta}} \frac{dx\,dy}{|x-y|^{n+2s_1}} \\
				& \geq \kappa_1 \int_{E\cap \widetilde{Q}_{q,3\zeta}}\int_{E^c\cap \widetilde{Q}_{q,3\zeta}} \big(3\sqrt{n}\zeta\big)^{-n-2s_1}\,dx\,dy\\
				& = \kappa_1 \big(3\sqrt{n}\zeta\big)^{-n-2s_1}|E\cap \widetilde{Q}_{q,3\zeta}|\,|E^c\cap \widetilde{Q}_{q,3\zeta}| \\
				& \geq C\zeta^{n-2s_1},
			\end{split}
		\end{equation}
		for some~$C>0$, depending only on~$n$, $s_1$, $\kappa_1$,
		and~$\kappa_2$.
				
		Set now~$\text{\Fontskrivan{R}}_\zeta$ to be the family of all cubes~$\widetilde{Q}_{q,\zeta}$ such that~$\widetilde{Q}_{q,3\zeta}\subseteq\Omega$, and notice that~$\text{\Fontauri{Q}}_{\ \zeta}\subseteq\text{\Fontskrivan{R}}_\zeta$. Thus, using~\eqref{eq::per_lower_bound2}, we see that
		\begin{equation*}
			\begin{split}
				&\mathcal{L}_K(E\cap \Omega,E^c\cap \Omega) 
				\geq \sum_{\substack{\widetilde{Q}_{q,\zeta}\in\text{\Fontskrivan{R}}_\zeta,\\ \widetilde{Q}_{q',\zeta}\in\text{\Fontskrivan{R}}_\zeta}} \mathcal{L}_K(E\cap \widetilde{Q}_{q,\zeta},E^c\cap \widetilde{Q}_{q',\zeta}) 
				\geq 3^{-2n}\sum_{\substack{\widetilde{Q}_{q,\zeta}\in\text{\Fontskrivan{R}}_\zeta,\\ \widetilde{Q}_{q',\zeta}\in\text{\Fontskrivan{R}}_\zeta}} \mathcal{L}_K(E\cap \widetilde{Q}_{q,3\zeta},E^c\cap \widetilde{Q}_{q',3\zeta}) \\
				&\qquad \geq 3^{-2n}\sum_{\widetilde{Q}_{q,\zeta}\in\text{\Fontskrivan{R}}_\zeta} \mathcal{L}_K(E\cap \widetilde{Q}_{q,3\zeta},E^c\cap \widetilde{Q}_{q,3\zeta}) 
				\geq 3^{-2n}\sum_{\widetilde{Q}_{q,\zeta}\in\text{\Fontauri{Q}}_{\ \zeta}} \mathcal{L}_K(E\cap \widetilde{Q}_{q,3\zeta},E^c\cap \widetilde{Q}_{q,3\zeta}) \\
				&\qquad \geq 3^{-2n}CN_\zeta\zeta^{n-2s_1}.
			\end{split}
		\end{equation*}
From this, we infer that
		\begin{equation*}
			\P_K(E,\Omega) \geq 3^{-2n}CN_\zeta\zeta^{n-2s_1},
		\end{equation*}
	which entails the desired estimate.
	\end{proof}
	
	Before proving Theorem~\ref{th::controlled_osc}, we observe that, since~$u_p$ is~$\Z^n$-periodic by assumption, we have that 
	$$ \osc_Q(u_p) = \osc(u_p) :=\osc_{\R^n}(u_p)= \esssup_{\R^n}  u_p - \essinf_{\R^n}  u_p.$$
	Thus, in what follows, we will simply write~$\osc(u_p)$.
	
	\begin{proof} [Proof of Theorem~\ref{th::controlled_osc}]
		Let us recall that~$E_{p,t}=\{v_p>t\}$ (see~\eqref{eq::def_lev_set}). In virtue of Proposition~\ref{prop::UDE}, we have that~$E_{p,t}$ satisfies the assumptions of Lemma~\ref{lemma::per_lower_bound} with~$\Omega:=Q$. Thus, integrating~\eqref{eq::per_lower_bound} over the set
		$$ \mathcal{T}_p := \{t\in\R \mbox{ s.t. }E_{p,t}\cap Q\neq\varnothing\},$$
		we obtain
		\begin{equation} \label{eq::osc_lower_bound}
			\int_{\mathcal{T}_p} \P_K(E_{p,t},Q)\,dt \geq C|\mathcal{T}_p| = C\osc_Q(v_p),
		\end{equation}
		where~$C$ is as in Lemma~\ref{lemma::per_lower_bound}.
		
		On the other hand, by the layer cake representation we have that
		\begin{equation}  \label{eq::osc_upper_bound1}
			\begin{split}
				&\int_{\mathcal{T}_p} \P_K(E_{p,t},Q)\,dt \leq
	 			\int_{-\infty}^{+\infty}\left(	 \iint_{Q_\sharp} \frac{1}{2}|\chi_{E_{p,t}}(x)-\chi_{E_{p,t}}(y)|K(x,y)\,dx\,dy\right)\,dt\\
	 			&\qquad\qquad	= \iint_{Q_\sharp} \left(\int_{-\infty}^{+\infty} \frac{1}{2}|\chi_{E_{p,t}}(x)-\chi_{E_{p,t}}(y)|K(x,y)\,dt\right)\,dx\,dy \\
				&\qquad\qquad = \iint_{Q_\sharp} \frac{1}{2}|v_p(x)-v_p(y)|K(x,y)\,dx\,dy \\
				&\qquad\qquad \leq 2 \iint_{Q\times\R^n} \frac{1}{2}|v_p(x)-v_p(y)|K(x,y)\,dx\,dy \\
				&\qquad\qquad = 2 \left(\cE_p(u_p)-\int_Q g(x)u_p(x)dx\right).
			\end{split}
		\end{equation}
		
Also, by the minimality of~$u_p$ and~\eqref{eq::K_integrable}, we deduce that
		\begin{equation}  \label{eq::osc_upper_bound2}
			\cE_p(u_p) \leq \cE_p(0) \leq c|p|,
		\end{equation}
		for some constant~$c>0$ independent of~$p$.
		
		Moreover, since~$\int_Q u_p(x)\,dx=0$, we have that 
		\begin{equation*}
			\essinf_{\R^n} u_p = \essinf_Q u_p \leq 0 \leq \esssup_Q u_p = \esssup_{\R^n} u_p,
		\end{equation*} 
		and hence~$|u_p|\leq\osc(u_p)$. It thereby follows that
		\begin{equation*} 
			\begin{split}
				&-\int_Q g(x)u_p(x)dx \leq \int_Q |g(x)||u_p(x)|dx \leq |Q|^{(n-s_1)/n}\norm{g}_{L^{n/s_1}(Q)}\osc(u_p) \\
				&\qquad\qquad\leq \norm{g}_{L^{n/s_1}(Q)}\left(\osc_Q(v_p)+\sqrt{n}|p|\right).
			\end{split}
		\end{equation*}
		Plugging this and~\eqref{eq::osc_upper_bound2} into~\eqref{eq::osc_upper_bound1}, we obtain
		\begin{equation*}
			\int_{\mathcal{T}_p} \P_K(E_{p,t},Q)dt \leq c|p| + 2\norm{g}_{L^{n/s_1}(Q)}\osc_Q(v_p),
		\end{equation*}
		up to renaming $c$.

		This and~\eqref{eq::osc_lower_bound} entail that 
		\begin{equation*}
			\osc_Q(v_p) \leq c|p|,
		\end{equation*}
		provided that~$\norm{g}_{L^{n/s_1}(Q)} \leq C/4$ (where~$C$ is as in Lemma~\ref{lemma::per_lower_bound}).
		
		Besides,
		\begin{equation*}
			\osc(u_p) \leq \osc_Q(v_p) + \sqrt{n}|p|\leq (c+\sqrt{n})|p|.\qedhere
		\end{equation*}
	\end{proof}
	
	\begin{proof}[Proof of Corollary~\ref{cor::K_planelike}]
For any~$t\in\mathcal{T}_p$ (i.e.~$t\in\R$ such that~$E_{p,t}\cap Q\neq\varnothing$), by Theorem~\ref{th::level_sets_minimality} we have that~$E_{p,t}$ is a class-$A$ minimizer for~$\J$.
		
		Moreover, notice that for any~$x_1\in E_{p,t}$, we have by definition that 
		\begin{equation} \label{eq::planelke_proof1}
			p\cdot x_1 > t- u_p(x_1).
		\end{equation}
		Now, since~$\partial E_{p,t}\cap Q\neq \varnothing$, there exist~$y_1\in Q$ such that
		\begin{equation*}
			u_p(y_1)+p\cdot y_1 \leq t.
		\end{equation*}
		In particular, we have that
		\begin{equation*}
			u_p(y_1)-\sqrt{n}|p| \leq t .
		\end{equation*}
		Hence, using also~\eqref{eq::planelke_proof1}, we infer that 
		\begin{equation*}
			p\cdot x_1 > t-u_p(x_1)\geq u_p(y_1)-u_p(x_1)-\sqrt{n}|p|\geq -\osc(u_p)-\sqrt{n}|p| .
		\end{equation*}
		From this and Theorem~\ref{th::controlled_osc}, it follows that 
		\begin{equation*}
			p\cdot x_1 >
			-\big(\osc(u_p)+\sqrt{n}|p|\big)\ge
			-(c+2\sqrt{n})|p|.
		\end{equation*}
			Thus, setting~$M:=c+2\sqrt{n}$, we obtain~\eqref{eq::planelike_claim_1}.
			
			Analogously, for any~$x_2\in E_{p,t}^c$, we have that 
			\begin{equation*} \label{eq::planelke_proof2}
				p\cdot x_2 \leq t- u_p(x_2).
			\end{equation*}
			Moreover, from~$\partial E_{p,t}\cap Q\neq \varnothing$, we deduce that there exist~$y_2\in Q$ such that
			\begin{equation*}
				u_p(y_2)+\sqrt{n}|p| \geq u_p(y_2)+p\cdot y_2 \geq t .
			\end{equation*}
			We infer from the last two displays that
			\begin{equation*}
				p\cdot x_2 \leq t-u_p(x_2) \leq u_p(y_2)-u_p(x_2)+\sqrt{n}|p| \leq\osc(u_p)+\sqrt{n}|p| \leq M.
			\end{equation*}
			This entails that
			\begin{equation*}
				E_{p,t}^c\subseteq \{x\in\R^n \mbox{ s.t. } x\cdot p\leq M|p|\}
			\end{equation*}
			which reads~\eqref{eq::planelike_claim_2}, concluding the proof.
	\end{proof}
	
	\begin{comment}[Proof of Corollary~\ref{cor::K_planelike}]
		For any~$t\in\mathcal{T}_p$ (i.e.~$t\in\R$ such that~$E_{p,t}\cap Q\neq\varnothing$), by Theorem~\ref{th::level_sets_minimality} we have that~$E_{p,t}$ is a class-$A$ minimizer for~$\J$.
		
		Moreover, notice that for any~$x\in\partial E_{p,t}$, we have by definition that 
		\begin{equation} %\label{eq::planelke_proof1}
			p\cdot x = t- u_p(x).
		\end{equation}
		Now, since~$\partial E_{p,t}\cap Q\neq \varnothing$, there exist~$y_1$, $y_2\in Q$ such that
		\begin{equation*}
			u_p(y_1)+p\cdot y_1 \leq t\qquad
			\mbox{and}\qquad u_p(y_2)+p\cdot y_2 \geq t.
		\end{equation*}
		In particular, we have that
		\begin{equation*}
			u_p(y_1)-\sqrt{n}|p| \leq t \leq u_p(y_2)+\sqrt{n}|p|.
		\end{equation*}
		Hence, using also~\eqref{eq::planelke_proof1}, we infer that 
		\begin{align*}
			&p\cdot x = t-u_p(x)\geq u_p(y_1)-u_p(x)-\sqrt{n}|p|\geq -\osc(u_p)-\sqrt{n}|p|\\
			\mbox{and}\qquad&p\cdot x = t-u_p(x) \leq u_p(y_2)-u_p(x)+\sqrt{n}|p| \leq\osc(u_p)+\sqrt{n}|p|.
		\end{align*}
		From this and Theorem~\ref{th::controlled_osc}, it follows that 
		\begin{equation*}
			|p\cdot x| \leq 
			\big(\osc(u_p)+\sqrt{n}|p|\big)\le
			(c+2\sqrt{n})|p|.
		\end{equation*}
		Thus,
		setting~$M:=c+2\sqrt{n}$, we obtain~\eqref{eq::planelike_claim}, as desired.
	\end{comment}
	
	% ----------------------------------------------------------------------------------------------------------------
	
	%\section{Preliminary results for Theorem~\ref{th::gamma_conv}} 
	\section{Preliminary results for the $\Gamma$-convergence} \label{sec::preliminary_results_gamma_conv}
	
	In this section, we state and prove some preliminary results which are
	useful for the proof of the~$\Gamma$-convergence stated in Theorem~\ref{th::gamma_conv}.
	
	In what follows, we will often use a stronger version of~\cite[Theorem~2.8]{MR1634336} to show that the energy contribution due to the interactions with the outside of the domain are negligible in the limit. We recall such result for the convenience of the reader.
	
	\begin{lemma}[{\cite[Theorem 2.8]{MR1634336}}] \label{lemma::alberti_bellettini}
		Let~$A$ and~$B$ be open subsets of~$\R^n$ and assume that~$A$ is bounded with Lipschitz boundary. Moreover, define~$\Sigma:=\partial A\cap \partial B$.
		
		Let also~$K$ satisfy~\eqref{eq::K_invariance}, \eqref{eq::K_integrable}, \eqref{eq::K_behavior}, and~\eqref{eq::K_lower_bound_Q} and let~$\{\epsilon_j\}_j$ be an infinitesimal sequence.
		
		Consider a sequence of functions~$\{u_j:A\cup B\to[-1,1]\}_j$ and assume that there exists an infinitesimal sequence~$\{\eta_j\}_j$ such that
		\begin{eqnarray*}
			&&\lim_{j\to+\infty}u_j(x+\eta_j h)= v(x),\quad\mbox{for a.e.~$x\in\Sigma$ and a.e~$h\in A$,}\\
			\mbox{and}\quad	&&\lim_{j\to+\infty}u_j(x+\eta_j h)= w(x),\quad\mbox{for a.e.~$x\in\Sigma$ and a.e.~$h\in B$,}
		\end{eqnarray*}
		for some functions~$v$, $w:\Sigma\to[-1,1].$
		
		Then, there exists a constant~$C_K>0$ depending only on~$K$ such that
		\begin{equation*} \label{eq::alberti_bellettini}
			\limsup_{j\to+\infty} \iint_{A\times B} |u_j(x)-u_j(y)| K_{\epsilon_j}(x,y)\,dx\,dy \leq C_K\int_{\Sigma} |v(x)-w(x)|\, d\haus{n-1}(x).
		\end{equation*}
	\end{lemma}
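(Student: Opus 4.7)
The plan is to follow the blueprint of Alberti--Bellettini and reduce the double integral to a one-dimensional slicing analysis of thin strips around~$\Sigma$. Using the translation invariance in~\eqref{eq::K_invariance}, first I would rewrite $K_{\epsilon_j}(x,y) = \epsilon_j^{-n-1} K((x-y)/\epsilon_j, 0)$ and perform the change of variables $y = x + \epsilon_j z$, producing
\begin{equation*}
I_j := \iint_{A\times B} |u_j(x)-u_j(y)|\,K_{\epsilon_j}(x,y)\,dx\,dy = \int_{\R^n}\frac{K(z,0)}{\epsilon_j}\,\Phi_j(z)\,dz,
\end{equation*}
where $\Phi_j(z) := \int_A |u_j(x)-u_j(x+\epsilon_j z)|\,\chi_B(x+\epsilon_j z)\,dx$. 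The factor $\epsilon_j^{-1}$ is exactly what will be consumed by the thin strip around $\Sigma$.

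For each fixed $z\in\R^n\setminus\{0\}$, let $e=z/|z|$ and decompose $x=x'+\tau e$ with $x'\in e^\perp$. By Fubini, $\Phi_j(z)$ becomes an integral of $\tau$-slices along lines $x'+\R e$: the condition $\chi_A(x'+\tau e)\chi_B(x'+(\tau+\epsilon_j |z|)e)=1$ forces such a line to cross $\Sigma$, with $\tau$ lying in an interval of length at most $\epsilon_j |z|$ immediately on the $A$-side of the crossing. Since $A$ has Lipschitz boundary, for a.e.\ $x'\in e^\perp$ the number of intersections of $x'+\R e$ with $\Sigma$ is finite, and the Cauchy--Crofton identity yields
\begin{equation*}
\int_{e^\perp}\#\{\tau : x'+\tau e\in\Sigma\}\,dx' = \int_\Sigma |e\cdot \nu(\sigma)|\,d\haus{n-1}(\sigma),
\end{equation*}
where $\nu$ is a unit normal to $\Sigma$.

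On each such slice, for $\tau$ in the thin interval adjacent to a crossing point $\sigma\in\Sigma$, the point $x'+\tau e$ lies in $A$ close to $\sigma$ and $x'+(\tau+\epsilon_j|z|)e$ lies in $B$ close to $\sigma$. The hypothesis on the traces of $u_j$ from the two sides then forces
\begin{equation*}
\lim_{j\to+\infty}|u_j(x'+\tau e)-u_j(x'+(\tau+\epsilon_j|z|)e)| = |v(\sigma)-w(\sigma)|
\end{equation*}
for a.e.\ such $(x',\tau)$. Combining with the uniform bound $|u_j(x)-u_j(y)|\le 2$ and the slice length $\epsilon_j|z|$, Fatou's lemma yields
\begin{equation*}
\limsup_{j\to+\infty}\frac{\Phi_j(z)}{\epsilon_j}\,\le\, |z| \int_\Sigma |v(\sigma)-w(\sigma)|\,|e\cdot\nu(\sigma)|\,d\haus{n-1}(\sigma).
\end{equation*}
Multiplying by $K(z,0)$ and integrating in $z$, the rotational invariance in~\eqref{eq::K_invariance} makes $\int_{\R^n}K(z,0)\,|z\cdot \nu|\,dz$ independent of $\nu$ and equal to a finite constant $C_K$ by~\eqref{eq::K_integrable}; dominated convergence, with dominant $2|z|K(z,0)$, delivers the stated bound.

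The main obstacle will be matching the trace hypothesis (phrased as convergence of $u_j(\,\cdot\,+\eta_j h)$ for a.e.\ probe direction $h\in A$ or $h\in B$, with $\eta_j$ possibly unrelated to $\epsilon_j$) with the slicing geometry above, where the natural perturbation parameter is $\epsilon_j|z|$ in the direction $e$. I expect one must perform a Fubini argument over $(z,x',\tau)$ to extract a co-null set of good slices on which the pointwise limit applies, using the freedom in choosing $h$ to line up the perturbation direction with $e$, and separately discarding the negligible set of directions $z$ tangent to $\Sigma$ and of $x'$ whose slice meets $\Sigma$ pathologically. Once this reconciliation is carried out, the slicing and the constant $C_K$ combine to give the claim.
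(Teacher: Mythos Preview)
The paper does not prove this lemma at all: it is stated as a quotation of \cite[Theorem~2.8]{MR1634336} (Alberti--Bellettini), introduced with ``We recall such result for the convenience of the reader,'' and no argument is given. So there is no ``paper's own proof'' to compare against; your proposal is effectively an attempt to reconstruct the Alberti--Bellettini proof itself.

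Your outline is the standard one from that reference: the change of variables $y=x+\epsilon_j z$, one-dimensional slicing in the direction $z/|z|$, the coarea/Crofton identity to convert slice counts into $\int_\Sigma |e\cdot\nu|\,d\haus{n-1}$, and the final integration in $z$ using rotational invariance and~\eqref{eq::K_integrable} to produce $C_K$. That skeleton is correct and is exactly how the cited theorem is proved.

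The point you flag as the ``main obstacle''---that the trace hypothesis is stated with an auxiliary scale $\eta_j$ while the slicing naturally produces perturbations of size $\epsilon_j|z|$---is real, and you have not resolved it. In the original Alberti--Bellettini argument this is handled by showing that the hypothesis, once it holds for \emph{some} infinitesimal $\eta_j$ and a.e.\ probe $h$, forces the one-sided averages of $u_j$ over shrinking neighborhoods of $\sigma\in\Sigma$ to converge to $v(\sigma)$ (resp.\ $w(\sigma)$); one then uses this averaged convergence, together with the uniform bound $|u_j|\le 1$, inside the slice integral rather than pointwise convergence at the specific displacement $\epsilon_j|z|$. Your Fatou/dominated-convergence step as written assumes pointwise convergence on the slice at scale $\epsilon_j$, which is not what the hypothesis gives directly. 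If you want a self-contained proof you should fill this in; otherwise, citing \cite[Theorem~2.8]{MR1634336} as the paper does is the intended route.
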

	
	Notice that, in our setting, a suitable rescaled sequence of planelike minimizers (for~$\J$) satisfies the assumptions of Lemma~\ref{lemma::alberti_bellettini}.
	
	For later convenience, for any~$p\in\S^{n-1}$ let us define the set
	\begin{equation} \label{eq::def_halfsapce}
		\mathcal{I}_p := \{x\cdot p <0\}.
	\end{equation}
	Then, we have the following:
	
	\begin{lemma}
		Let~$p\in\S^{n-1}$ and let~$E_p$ be a set such that
		\begin{equation}\label{RRRbvcnwiqour8392yt}
	\{x\cdot p \leq -M\} \subseteq E_p \subseteq \{x\cdot p \leq M\},
		\end{equation}
		for some~$M>0$.
		
		Let~$\{\epsilon_j\}_j$ be an infinitesimal sequence and define~$u_j:=\chi_{\epsilon_jE_p}$.
	
	Then, for every Lipschitz domain~$\Omega$, we have that,
	for a.e.~$x\in\partial\Omega$ and a.e.~$h\in \R^n$,
		\begin{equation*}
		\lim_{j\to+\infty}u_j(x+\epsilon_j^2h)= \chi_{\mathcal{I}_p}(x).
		\end{equation*}	
	\end{lemma}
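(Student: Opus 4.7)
The proof is essentially a direct scaling computation. The first step is the identity
$$u_j(x+\epsilon_j^2 h) \;=\; \chi_{\epsilon_j E_p}\!\bigl(x+\epsilon_j^2 h\bigr) \;=\; \chi_{E_p}\!\Bigl(\tfrac{x}{\epsilon_j}+\epsilon_j h\Bigr),$$
which reduces the question to deciding whether the rescaled point $y_j:=\tfrac{x}{\epsilon_j}+\epsilon_j h$ lies in $E_p$ for $j$ large. Testing against the unit vector $p$ yields
$$p\cdot y_j \;=\; \frac{x\cdot p}{\epsilon_j} + \epsilon_j\,(h\cdot p),$$
where the second summand is infinitesimal while the first is the dominant one whenever $x\cdot p\neq 0$.

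I would then split according to the sign of $x\cdot p$. If $x\cdot p<0$, then $p\cdot y_j\to-\infty$, so eventually $p\cdot y_j\le-M$; the left inclusion in \eqref{RRRbvcnwiqour8392yt} then gives $y_j\in E_p$, whence $u_j(x+\epsilon_j^2 h)\to 1=\chi_{\mathcal{I}_p}(x)$. Symmetrically, if $x\cdot p>0$, then $p\cdot y_j\to+\infty$, so eventually $p\cdot y_j>M$; the right inclusion in \eqref{RRRbvcnwiqour8392yt} yields $y_j\notin E_p$, and thus $u_j(x+\epsilon_j^2 h)\to 0=\chi_{\mathcal{I}_p}(x)$. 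Note that, in both cases, the limit actually holds for \emph{every} $h\in\R^n$, not merely for almost every one.

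The only points not covered by the above dichotomy are those in $\partial\Omega\cap\{z\cdot p=0\}$, where the information supplied by the sandwich condition \eqref{RRRbvcnwiqour8392yt} degenerates (the rescaled point stays inside the strip $|z\cdot p|\le M$, within which $E_p$ is uncontrolled). However, away from flat portions of $\partial\Omega$ lying in a hyperplane parallel to $p^{\perp}$, the set $\partial\Omega\cap\{z\cdot p=0\}$ is $\mathcal{H}^{n-1}$-negligible on $\partial\Omega$ by standard rectifiability/area-formula considerations, and it can therefore be absorbed in the \emph{a.e.} exceptional set allowed by the statement. This last measure-theoretic remark is the only mildly delicate point in an otherwise routine verification; the two sign cases above then give the claimed pointwise convergence for a.e. $x\in\partial\Omega$ and every (hence a.e.) $h\in\R^n$.
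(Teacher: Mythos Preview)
Your proof is correct and follows essentially the same approach as the paper: both reduce to checking the sign of $(x+\epsilon_j^2 h)\cdot p$ against the strip width, splitting on the sign of $x\cdot p$ and discarding the $\mathcal{H}^{n-1}$-negligible set $\partial\Omega\cap\{z\cdot p=0\}$. The only cosmetic difference is that the paper works directly with the scaled inclusion $\{x\cdot p\le -\epsilon_j M\}\subseteq\epsilon_j E_p\subseteq\{x\cdot p\le\epsilon_j M\}$, whereas you first rescale to land back in $E_p$; after dividing by $\epsilon_j$ the two computations are identical.
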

	
	\begin{proof}
We observe that, thanks to~\eqref{RRRbvcnwiqour8392yt},
\begin{equation}\label{nvjekwth4ui657}
\{x\cdot p \leq -\epsilon_jM\} \subseteq \epsilon_j
E_p \subseteq \{x\cdot p \leq \epsilon_jM\}.\end{equation}
	
Now, let~$x\in\partial\Omega\setminus\partial\mathcal{I}_p$, then~$x\cdot p\neq0$ and therefore~$|x\cdot p|>\epsilon_j(M+1)$
	as soon as~$j$ is large enough.
	Moreover, if~$h\in (\partial\Omega\cap\partial\mathcal{I}_p)^c$,
	possibly taking~$j$ larger, we can assume that~$\epsilon_j|h|\leq 1$.
	
Suppose first that~$x\cdot p >\epsilon_j(M+1)$. In this case, we see that
		$$ (x+\epsilon_j^2h)\cdot p =
		x\cdot p+\epsilon_j^2 h\cdot p
		\geq x\cdot p - \epsilon_j^2|h| >
		\epsilon_j(M+1)- \epsilon_j=		
		 \epsilon_j M.$$
		It thereby follows from this and~\eqref{nvjekwth4ui657} that, for any~$j$ large enough,
		$$ u_j(x+\epsilon_j^2h)-\chi_{\mathcal{I}_p}(x)
		=\chi_{\epsilon_jE_p}(x+\epsilon_j^2h)-\chi_{\mathcal{I}_p}(x)=0.$$
		
		If instead~$x\cdot p <-\epsilon_j(M+1)$, then 
		$$ (x+\epsilon_j^2h)\cdot p =x\cdot p+\epsilon_j^2 h\cdot p\leq x\cdot p + \epsilon_j^2|h| <-\epsilon_j(M+1)+\epsilon_j
		=
		 -\epsilon_j M.$$
		Thus, recalling~\eqref{nvjekwth4ui657}, we get that,
		for any~$j$ large enough, 
		$$ u_j(x+\epsilon_j^2h)-\chi_{\mathcal{I}_p}(x) =
		\chi_{\epsilon_jE_p}(x+\epsilon_j^2h)-\chi_{\mathcal{I}_p}(x)
		= 1-1 =0.$$
		
These considerations establish the desired limit.
	\end{proof}
	
	Another crucial step to prove Theorem~\ref{th::gamma_conv} is showing that, for every set~$F\subseteq\R^n$, 
	$$ \E(F,B_R) := \iint_{B_R\times\R^n} \chi_F(x)\chi_{F^c}(y)K(x,y)\,dx\,dy + \int_{\mathcal{Q}(B_R)_1\cap F} g(x)\,dx$$
(namely~\eqref{eq::truncated_energy}
with~$\epsilon:=1$ and~$\Omega:=B_R$) is non-negative in any large ball. 
	
	\begin{proposition} \label{prop::energy_lower_bound_BR}
		Let~$K$ satisfy~\eqref{eq::K_invariance}, \eqref{eq::K_integrable}, \eqref{eq::K_behavior}, and~\eqref{eq::K_lower_bound_Q}.
		
		Then, if~$\norm{g}_{L^\infty(\R^n)}\leq \frac{\kappa_3}{2}$, we have that 
		\begin{equation*}
			\E(F,B_R)  \geq 0,
			%\iint_{B_R\times\R^n} \chi_F(x)\chi_{F^c}(y)K(x,y)\,dx\,dy + \int_{F\cap \mathcal{Q}(B_R)_1} g(x)\,dx \geq 0, %\left(\frac{\kappa_3}{2}-\norm{g}_{L^\infty}\right)\min{|F\cap \mathcal{Q}(B_R)_1|,|F^c\cap \mathcal{Q}(B_R)_1|},
		\end{equation*}
		for every set~$F\subseteq \R^n$ and for every~$R>1$.
	\end{proposition}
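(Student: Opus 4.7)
The plan is to control the energy cube-by-cube, partitioning $\mathcal{Q}(B_R)_1$ into the disjoint unit cubes $Q_k := k+Q$, $k\in\Z^n$, with $k+Q\subseteq B_R$, and exploiting the cancellation granted by~\eqref{eq::g_zero_avg}. For each such~$k$, set $\alpha_k := |Q_k\cap F|\in[0,1]$, which is the only information about $F$ on $Q_k$ that the local bounds will involve.

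First I would lower bound the interaction term. Since $\mathrm{diam}(Q_k)=\sqrt{n}$, the consequence~\eqref{eq::K_lower_bound_Q_2} of the assumption~\eqref{eq::K_lower_bound_Q} gives $K(x,y)\geq\kappa_3$ for every $x,y\in Q_k$, hence
$$\iint_{Q_k\times Q_k}\chi_F(x)\chi_{F^c}(y)K(x,y)\,dx\,dy\ \geq\ \kappa_3\,\alpha_k(1-\alpha_k).$$
The integrand of the full interaction is non-negative, the sets $Q_k\times Q_k$ are pairwise disjoint, and each is contained in $B_R\times\R^n$ by construction. Summing over $k$ yields
$$\iint_{B_R\times\R^n}\chi_F(x)\chi_{F^c}(y)K(x,y)\,dx\,dy\ \geq\ \kappa_3\sum_{k}\alpha_k(1-\alpha_k).$$

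Next I would handle the forcing term. Since $g$ is $\Z^n$-periodic with zero average on $Q$, one has $\int_{Q_k}g\,dx=0$ for every $k$ above, and therefore $\int_{Q_k\cap F}g\,dx=-\int_{Q_k\setminus F}g\,dx$. These two representations together with the $L^\infty$-bound on $g$ imply
$$\left|\int_{Q_k\cap F}g\,dx\right|\ \leq\ \norm{g}_{L^\infty(\R^n)}\,\min\{\alpha_k,\,1-\alpha_k\}.$$
The elementary inequality $\min\{\alpha,1-\alpha\}\leq 2\alpha(1-\alpha)$ on $[0,1]$ (immediate by distinguishing $\alpha\leq 1/2$ and $\alpha\geq 1/2$) combined with the hypothesis $\norm{g}_{L^\infty(\R^n)}\leq\kappa_3/2$ then yields
$$\int_{\mathcal{Q}(B_R)_1\cap F}g\,dx\ =\ \sum_{k}\int_{Q_k\cap F}g\,dx\ \geq\ -\kappa_3\sum_{k}\alpha_k(1-\alpha_k).$$

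Adding the two displays gives $\E(F,B_R)\geq 0$. No step is particularly delicate; the only thing worth flagging is recognising the correct elementary inequality $\min\{\alpha,1-\alpha\}\leq 2\alpha(1-\alpha)$, which is precisely the bridge that matches the affine-in-$\alpha_k$ estimate of the forcing term to the quadratic lower bound $\alpha_k(1-\alpha_k)$ on the intra-cube nonlocal interaction. This matching is what makes the threshold $\kappa_3/2$ on $\norm{g}_{L^\infty}$ exactly the right one to guarantee non-negativity of the energy on every ball $B_R$ with $R>1$ (for smaller $R$ the conclusion is trivial since $\mathcal{Q}(B_R)_1=\varnothing$).
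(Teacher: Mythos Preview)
Your proof is correct and follows essentially the same approach as the paper: decompose $\mathcal{Q}(B_R)_1$ into unit cubes, use $K\geq\kappa_3$ on each $Q_k\times Q_k$ to get the lower bound $\kappa_3\,\alpha_k(1-\alpha_k)$ on the interaction, and use the zero-average of $g$ to bound the forcing term by $\|g\|_{L^\infty}\min\{\alpha_k,1-\alpha_k\}$. The paper packages the per-cube estimate as a separate lemma (Lemma~\ref{lemma::energy_lower_bound_Q}) and writes the elementary inequality in the equivalent form $\alpha(1-\alpha)\geq\frac{1}{2}\min\{\alpha,1-\alpha\}$, but the argument is the same.
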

	
	This result is a consequence of the following lemma.
	
	\begin{lemma} \label{lemma::energy_lower_bound_Q}
		Let~$K$ satisfy~\eqref{eq::K_invariance}, \eqref{eq::K_integrable}, \eqref{eq::K_behavior}, and~\eqref{eq::K_lower_bound_Q}.
		
		Then, for every set~$F\subseteq\R^n$,
		\begin{equation*}
			\iint_{Q\times Q} \chi_F(x)\chi_{F^c}(y)K(x,y)\,dx\,dy + \int_{Q\cap F} g(x)\,dx \geq \left(\frac{\kappa_3}{2}-\norm{g}_{L^\infty(\R^n)}\right)\min\{|F\cap Q|,|F^c\cap Q|\} .
		\end{equation*}
	\end{lemma}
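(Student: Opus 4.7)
The statement reduces to two pointwise inequalities followed by an elementary combinatorial step. Set $a:=|F\cap Q|$ and $b:=|F^c\cap Q|$, so $a+b=|Q|=1$.

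The first step is to bound the double integral from below by applying~\eqref{eq::K_lower_bound_Q} (or its equivalent form~\eqref{eq::K_lower_bound_Q_2}), which gives $K(x,y)\ge\kappa_3$ on $Q\times Q$. This yields
$$\iint_{Q\times Q} \chi_F(x)\chi_{F^c}(y)K(x,y)\,dx\,dy \;\ge\; \kappa_3\, a\, b.$$
Since $a+b=1$, whichever of $a$ or $b$ is larger is at least $1/2$, hence $ab\ge \tfrac{1}{2}\min\{a,b\}$, giving
$$\iint_{Q\times Q} \chi_F(x)\chi_{F^c}(y)K(x,y)\,dx\,dy \;\ge\; \frac{\kappa_3}{2}\,\min\{a,b\}.$$

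The second step is to bound the forcing term. The naive estimate $\int_{Q\cap F}g\ge -\|g\|_{L^\infty}\,a$ is not enough, since when $a>b$ we need the right-hand side to involve $b=\min\{a,b\}$. The key observation is to invoke the zero-average hypothesis~\eqref{eq::g_zero_avg}, which gives
$$\int_{Q\cap F} g(x)\,dx \;=\; -\int_{Q\cap F^c} g(x)\,dx.$$
Applying the $L^\infty$ bound to whichever of these two representations involves the smaller set produces
$$\int_{Q\cap F} g(x)\,dx \;\ge\; -\|g\|_{L^\infty(\R^n)}\,\min\{a,b\}.$$

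The third and final step is to add the two estimates, obtaining
$$\iint_{Q\times Q} \chi_F(x)\chi_{F^c}(y)K(x,y)\,dx\,dy + \int_{Q\cap F} g(x)\,dx \;\ge\; \left(\frac{\kappa_3}{2}-\|g\|_{L^\infty(\R^n)}\right)\min\{a,b\},$$
which is the claim. The only subtle point, which I would call out explicitly, is the use of~\eqref{eq::g_zero_avg} to replace $a$ by $\min\{a,b\}$ in the lower bound for the forcing term; everything else is routine.
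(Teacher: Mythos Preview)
Your proof is correct and essentially identical to the paper's: both use the kernel lower bound $K\ge\kappa_3$ on $Q\times Q$ together with $ab\ge\tfrac12\min\{a,b\}$ (from $a+b=1$), and both invoke the zero-average condition~\eqref{eq::g_zero_avg} to rewrite $\int_{Q\cap F}g=-\int_{Q\cap F^c}g$ so that the forcing term is bounded below by $-\|g\|_{L^\infty}\min\{a,b\}$.
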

	
	\begin{proof}
		Notice that
		\begin{equation}\label{eq::double_vol_lower_bound_Q} 
			\begin{split}
				|F\cap Q|\,|F^c\cap Q| &= \min\{|F\cap Q|,|F^c\cap Q|\} \Big(1-\min\{|F\cap Q|,|F^c\cap Q|\}\Big) \\
				& \geq \frac{1}{2}\min\{|F\cap Q|,|F^c\cap Q|\}.
			\end{split}
		\end{equation}
		
		Moreover, recalling~\eqref{eq::g_zero_avg}, we have that
		\begin{equation*}
			\int_{Q\cap F}g(x)\,dx = - \int_{ Q\cap F^c}g(x)\,dx \geq -\norm{g}_{L^\infty(\R^n)} \min\{|F\cap Q|,|F^c\cap Q|\}.
		\end{equation*}
Thus, from this, \eqref{eq::K_lower_bound_Q}, and~\eqref{eq::double_vol_lower_bound_Q}, we obtain that
		\begin{equation*}
			\begin{split}
				&\iint_{Q\times Q} \chi_F(x)\chi_{F^c}(y)K(x,y)\,dx\,dy + \int_{Q\cap F} g(x)\,dx \\
				&\qquad\qquad\geq \kappa_3|F\cap Q|\,|F^c\cap Q|-\norm{g}_{L^\infty(\R^n)} \min\{|F\cap Q|,|F^c\cap Q|\}\\
				&\qquad\qquad \geq \left(\frac{\kappa_3}{2}-\norm{g}_{L^\infty(\R^n)}\right)\min\{|F\cap Q|,|F^c\cap Q|\},
			\end{split}
		\end{equation*}
		showing the result.
	\end{proof}
	
	\begin{proof}[Proof of Proposition~\ref{prop::energy_lower_bound_BR}]
		Notice that
		\begin{equation*}
			\begin{split}
				\E(F,B_R)&=\iint_{B_R\times\R^n} \chi_F(x)\chi_{F^c}(y)K(x,y)\,dx\,dy + \int_{ \mathcal{Q}(B_R)_1\cap F} g(x)\,dx \\
				&\geq \iint_{\mathcal{Q}(B_R)_1\times\R^n} \chi_F(x)\chi_{F^c}(y)K(x,y)\,dx\,dy + \int_{ \mathcal{Q}(B_R)_1\cap F} g(x)\,dx \\
				& \geq \sum_{\substack{ j\in\Z^n\\ j+Q\subseteq B_R}} \left(\iint_{(F\cap (j+Q))\times(F^c\cap (j+Q))} K(x,y)\,dx\,dy+ \int_{ F\cap(j+Q)} g(x)\,dx\right).
			\end{split}
		\end{equation*} 
In light if this, the claim in Proposition~\ref{prop::energy_lower_bound_BR} will follow if we show that
		\begin{equation} \label{eq::energy_lower_bound_cube}
			\iint_{(F\cap (j+Q))\times(F^c\cap (j+Q))} K(x,y)dxdy+ \int_{F\cap (j+Q)} g(x)dx \geq 0,
		\end{equation}
		for every~$j\in\Z^n$ such that~$j+Q\subseteq B_R$.
		
		Now, using Lemma~\ref{lemma::energy_lower_bound_Q} in combination with \eqref{eq::K_invariance} and the periodicity~$g$, we deduce that
		\begin{equation*}
			\begin{split}
				&\iint_{(F\cap (j+Q))\times(F^c\cap (j+Q))} K(x,y)\,dx\,dy+ \int_{F\cap (j+Q)} g(x)\,dx \\
				&\qquad\qquad= \iint_{((F-j)\cap Q)\times((F^c-j)\cap Q)} K(x,y)\,dx\,dy+ \int_{(F-j)\cap Q} g(x)\,dx\\
				&\qquad\qquad\geq \left(\frac{\kappa_3}{2}-\norm{g}_{L^\infty(\R^n)}\right)\min\{|(F-j)\cap Q|,|(F^c-j)\cap Q|\} \\&\qquad\qquad \geq 0,
			\end{split}
		\end{equation*}
		whenever~$\norm{g}_{L^\infty(\R^n)}\leq \frac{\kappa_3}{2}$.
		This shows~\eqref{eq::energy_lower_bound_cube}, as desired.
	\end{proof}
	
	% ------------------------------------------------------------------------------------------------------------
	
	\section{Well-definedness of the stable norm $\phi$} \label{sec::phi_well_defined}
	
	In this section, we prove that the stable norm~$\phi$
	given in~\eqref{eq::anisotropy}
	 is well-defined. The results presented below are the analogues in our setting of Lemma A.1 and Corollary A.2 in~\cite{chambolle_thouroude}.

	\begin{proposition} \label{prop::existence_phi}
		Let~$K$ satisfy~\eqref{eq::K_invariance}, \eqref{eq::K_integrable}, \eqref{eq::K_behavior}, and~\eqref{eq::K_lower_bound_Q}.
		Let~$g\in L^\infty(Q)$ be a~$\Z^n$-periodic function such that~$\int_Qg =0$.
		
		Assume that for any~$p\in \S^{n-1}$
		there exists a planelike minimizer~$E_p$ for~$\J$ in direction~$p$ such that
		\begin{equation} \label{eq::flatness}
			\partial E_p \subseteq \{|x\cdot p|\leq M\},
		\end{equation}
		for some~$M>0$ independent of~$p$.
		
		Then, there exists a function~$\phi: \S^{n-1}\to\R$ such that, for any infinitesimal sequence~$\{\epsilon_j\}_j$
		and any planelike minimizer~$E'_p$ for~$\J$ in direction~$p$ satisfying \eqref{eq::flatness}, it holds
		\begin{equation} \label{eq::existence_phi}
			\phi(p) = \lim_{j\to+\infty} \F_{\epsilon_j}(\epsilon_j E'_p,Q^p) = \lim_{j\to+\infty} \epsilon_j^{n-1}\F\left(
E'_p,\frac1{\epsilon_j}Q^p\right).
		\end{equation}
	\end{proposition}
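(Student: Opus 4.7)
The plan is first to reduce~\eqref{eq::existence_phi} to the convergence of the non-rescaled quantity $R^{1-n}\F(E'_p, Q^p_R)$ as $R \to +\infty$, and then to establish this convergence together with its independence of the specific planelike minimizer $E'_p$ via a competitor argument based on class-$A$ minimality.

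The second identity in~\eqref{eq::existence_phi} follows by a direct change of variables $x = \epsilon_j \xi$, $y = \epsilon_j \eta$ in~\eqref{eq::rescaled_energy2}: the relations $K_{\epsilon_j}(\epsilon_j \xi, \epsilon_j \eta)\,\epsilon_j^{2n} = \epsilon_j^{n-1}K(\xi,\eta)$, $g_{\epsilon_j}(\epsilon_j \xi)\,\epsilon_j^n = \epsilon_j^{n-1}g(\xi)$, and $\mathcal{Q}(Q^p)_{\epsilon_j} = \epsilon_j\, \mathcal{Q}(Q^p/\epsilon_j)_1$ yield $\F_{\epsilon_j}(\epsilon_j E'_p, Q^p) = \epsilon_j^{n-1}\F(E'_p, Q^p/\epsilon_j)$. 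Writing $R_j := 1/\epsilon_j$, it thus suffices to prove that $R^{1-n}\F(E'_p, Q^p_R)$ admits a limit $\phi(p)$ independent of the specific planelike minimizer $E'_p$ satisfying~\eqref{eq::flatness}. A uniform bound $|\F(E'_p, Q^p_R)| \le CR^{n-1}$ is ensured by combining the flatness~\eqref{eq::flatness}, which confines the non-trivial interactions in $\P_K(E'_p, Q^p_R)$ and the non-vanishing unit-cube contributions in $\int g$ to the strip $\{|x\cdot p|\le M\}$; the tail estimate $K(x,y) \le \kappa_2 |x-y|^{-n-2s_2}$ with $2s_2>1$ from Remark~\ref{rem::high_localing_K}; and the zero-average property~\eqref{eq::g_zero_avg}. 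The matching lower bound $\F(E'_p, Q^p_R) \ge 0$ for $R$ large is obtained by adapting Proposition~\ref{prop::energy_lower_bound_BR} from balls to cubes.

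The core step is the following comparison. Given two planelike minimizers $E_p, E'_p$ in direction $p$ satisfying~\eqref{eq::flatness} with the same $M$, the inclusions $\{x\cdot p \le -M\} \subseteq E_p, E'_p \subseteq \{x \cdot p \le M\}$ force $E_p \Delta E'_p \subseteq \{|x\cdot p| \le M\}$. Pick a slowly diverging scale $h = h(R)$ with $h \to +\infty$ and $h/R \to 0$, and set
\begin{equation*}
F := (E'_p \cap Q^p_{R-h}) \cup (E_p \setminus Q^p_{R-h}).
\end{equation*}
As $F \setminus Q^p_R = E_p \setminus Q^p_R$, Remark~\ref{rem::class_A_F} gives $\F(E_p, Q^p_R) \le \F(F, Q^p_R)$. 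Since $F$ and $E'_p$ differ only on $(E_p \Delta E'_p) \cap (Q^p_R \setminus Q^p_{R-h})$, which sits in the intersection of the strip with an annular shell of width $h$ and thus has volume $O(MhR^{n-2})$, the $g$-contribution to $\F(F, Q^p_R) - \F(E'_p, Q^p_R)$ is $O(hR^{n-2}) = o(R^{n-1})$; a parallel estimate for $\P_K$, splitting the modification-induced nonlocal interactions into short-range terms (within the shell) and long-range terms (across $\partial Q^p_{R-h}$) and invoking once more the tail bound of Remark~\ref{rem::high_localing_K}, yields the same order of smallness. Therefore $\F(E_p, Q^p_R) \le \F(E'_p, Q^p_R) + o(R^{n-1})$; swapping the roles of $E_p$ and $E'_p$ gives the reverse inequality, so that $\limsup_R R^{1-n}\F(E_p, Q^p_R)$ and $\liminf_R R^{1-n}\F(E'_p, Q^p_R)$ coincide. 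Specializing to $E'_p = E_p$ produces the existence of $\phi(p)$, and applying the comparison between any two planelike minimizers gives its independence of the chosen one.

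The main obstacle is the $\P_K$-part of the competitor estimate: since $K$ is not compactly supported, even modifications confined to the thin annular shell $Q^p_R \setminus Q^p_{R-h}$ generate long-range nonlocal interactions with the rest of the domain, and the $o(R^{n-1})$ control requires carefully marrying the localization of $E_p \Delta E'_p$ to the strip with the integrability of $K$ at infinity provided by the assumption $s_2 > 1/2$.
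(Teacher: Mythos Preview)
Your competitor argument has a genuine logical gap at the very end. The comparison you set up, using $F := (E'_p \cap Q^p_{R-h}) \cup (E_p \setminus Q^p_{R-h})$, is a \emph{single-scale} comparison: it yields
\[
\big|R^{1-n}\F(E_p,Q^p_R)-R^{1-n}\F(E'_p,Q^p_R)\big|\to 0,
\]
which only says that the two sequences have the same $\limsup$ and the same $\liminf$. It does \emph{not} give $\limsup_R R^{1-n}\F(E_p,Q^p_R)=\liminf_R R^{1-n}\F(E'_p,Q^p_R)$, and specializing $E'_p=E_p$ produces a tautology (then $F=E_p$). Your argument therefore proves independence of the choice of planelike minimizer \emph{conditional on existence of the limit}, but does not establish convergence of $R^{1-n}\F(E_p,Q^p_R)$ for a fixed $E_p$.

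What is missing is a \emph{two-scale} comparison, and this is precisely what the paper supplies. Given two scales $\epsilon'\ll\epsilon$ (in fact $\epsilon'\sim\epsilon^{1+\alpha}$ with $\alpha\in(0,2s_2-1)$), one covers $\mathcal{I}_p\cap Q^p/\epsilon'$ by $N\sim(\epsilon/\epsilon')^{n-1}$ integer-translated copies $Q^{(j)}$ of $Q^p/\epsilon$, places a translate of $E_p\cap Q^p/\epsilon$ inside each $Q^{(j)}$, and glues with $E'_p$ outside. Class-$A$ minimality of $E'_p$ then gives
\[
(\epsilon')^{n-1}\F\!\left(E'_p,\tfrac1{\epsilon'}Q^p\right)\le (\epsilon')^{n-1}N\,\F\!\left(E_p,\tfrac1{\epsilon}Q^p\right)+\text{error},
\]
and the error is controlled using the tail bound from~\eqref{eq::K_behavior} together with Lemma~\ref{lemma::finite_limit} (the estimate~\eqref{eq::energy_tail} for the energy in $Q^p/\epsilon'\setminus Q^p/\eta$). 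Since $(\epsilon')^{n-1}N\to\epsilon^{n-1}$, passing to the limit along subsequences realizing $\ell$ and $\ell'$ yields $\ell'\le\ell$; swapping gives equality. This tiling step is what converts the uniform bound of Lemma~\ref{lemma::finite_limit} into actual convergence, and it has no analogue in your single-scale argument.
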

	\begin{comment} %\label{prop::existence_phi}
		Let~$K$ satisfy~\eqref{eq::K_invariance}, \eqref{eq::K_integrable}, \eqref{eq::K_behavior}, and~\eqref{eq::K_lower_bound_Q}.
		Let~$g\in L^\infty(Q)$ be a~$\Z^n$-periodic function such that~$\int_Qg =0$.
		
		Assume also that for any~$p\in \S^{n-1}$
		there exists a planelike minimizer~$E_p$ for~$\J$ in direction~$p$ such that
		\begin{equation} %\label{eq::flatness}
			\partial E_p \subseteq \{|x\cdot p|\leq M\},
		\end{equation}
		for some~$M>0$ independent of~$p$.
		
		Then, there exists a function~$\phi: \S^{n-1}\to\R$ such that, for any infinitesimal sequence~$\{\epsilon_j\}_j$
		and any sequence of planelike minimizers~$E_p^j$ for~$\J$ in direction~$p$, with 
		$$ \partial E_p^j \subseteq \{|x\cdot p|\leq M\},\quad{\mbox{ for all }} j\in\N,$$
		it holds
		\begin{equation} %\label{eq::existence_phi}
			\phi(p) = \lim_{j\to+\infty} \F_{\epsilon_j}(\epsilon_j E_p^j,Q^p) = \lim_{j\to+\infty} \epsilon_j^{n-1}\F( E_p^j,Q^p/\epsilon_j).
		\end{equation}
	\end{comment}
		
	In order to prove Proposition~\ref{prop::existence_phi}, we need the auxiliary results in Lemmata~\ref{lemma::vanishing_energy}
	and~\ref{lemma::finite_limit} here below.
	%, which shows that the energy~$\F_\epsilon$ of a a set~$\epsilon E$, where $E$ is a class-$A$ planelike minimizer, is uniformly bounded in~$\epsilon$.
	
	\begin{lemma} \label{lemma::vanishing_energy}
		Let~$E\subseteq \R^n$ satisfy~\eqref{eq::flatness}. Let also~$\alpha\in(0,2s_2-1)$, where~$s_2$ is as in~\eqref{eq::K_behavior}. 
		
		Then,
		\begin{equation} \label{eq::vanishing_energy}
			\lim_{R\to+\infty} \frac{\F(E,A_{R,R^{1+\alpha},R})}{R^{n-1}}=0,
		\end{equation}
		where~$A_{L_1,L_2,R}:=\{L_1 < |x\cdot p|< L_2,\ |x\cdot p^\perp| < R\}$.
	\end{lemma}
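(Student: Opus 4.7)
The key observation is that the flatness hypothesis $\partial E\subseteq\{|x\cdot p|\le M\}$ forces $\chi_E$ to coincide with the halfspace indicator $\chi_H$ on the complement of the interface slab $\{|x\cdot p|\le M\}$, where $H:=\{x\cdot p<0\}$ (the alternative planelike case $H:=\{x\cdot p>0\}$ is handled identically). For $R>M$ the region $A:=A_{R,R^{1+\alpha},R}$ is disjoint from this slab and splits into two components $A_+\subseteq\{x\cdot p>R\}$ and $A_-\subseteq\{x\cdot p<-R\}$ separated by a gap of at least $2R$ in the $p$-direction, with $\chi_E=\chi_H$ throughout $A$. The plan is to show (i) that the forcing integral in $\F(E,A)$ vanishes for $R$ large, and (ii) that $\P_K(E,A)/R^{n-1}\to 0$, by splitting the double integral defining $\P_K(E,A)$ into contributions internal to $A$ and contributions between $A$ and $A^c$, and exploiting the far-field bound $K(x,y)\le\kappa_2|x-y|^{-n-2s_2}$ from Remark~\ref{rem::high_localing_K}.

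For (i), I would observe that for $R>M+\sqrt n$ every unit cube $k+Q\subseteq A$ satisfies $|y\cdot p|>M$ for all $y\in k+Q$, hence is contained entirely in $E$ or in $E^c$; in either case the $\Z^n$-periodicity of $g$ combined with the zero-mean condition \eqref{eq::g_zero_avg} yields $\int_{(k+Q)\cap E}g\,dx=0$, so summing over the cubes of $\mathcal{Q}(A)_1$ shows that the forcing contribution is identically zero for $R$ large.

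For (ii), I would decompose $\P_K(E,A)=I_{\mathrm{in}}+I_{\mathrm{out}}$ with $I_{\mathrm{in}}:=\iint_{A\times A}\tfrac12|\chi_E(x)-\chi_E(y)|K\,dx\,dy$ and $I_{\mathrm{out}}:=\iint_{A\times A^c}|\chi_E(x)-\chi_E(y)|K\,dx\,dy$. Since $\chi_E=\chi_H$ on $A$, the inner term equals $\mathcal{L}_K(A_+,A_-)$; using $|x-y|\ge 2R$ and performing a cylindrical integration in coordinates adapted to $p$, the bound $K\le\kappa_2|x-y|^{-n-2s_2}$ yields $I_{\mathrm{in}}\le CR^{n-2s_2}$, where convergence of the remaining one-dimensional $p$-integral uses $s_2>\tfrac12$. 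For $I_{\mathrm{out}}$, the pointwise inequality
$$|\chi_E(x)-\chi_E(y)|\le|\chi_H(x)-\chi_H(y)|+\chi_{\{|y\cdot p|\le M\}}(y),\qquad x\in A,$$
(valid because $\chi_E=\chi_H$ on $A$ and $\chi_E=\chi_H$ also outside the interface slab) splits $I_{\mathrm{out}}\le J_1+J_2$. The slab-correction $J_2$ is bounded by $CR^{n-1-2s_2}$ using $|x-y|\ge R-M$ together with slicing in $p$. The halfspace term $J_1$ is the most delicate: for $x\in A_+$, the cylindrical integration yields $\int_H K(x,y)\,dy\le C(x\cdot p)^{-2s_2}$, and integrating this over $x\in A_+$ with the crude length bound $\int_R^{R^{1+\alpha}}s^{-2s_2}\,ds\le R^{1+\alpha-2s_2}$ and the cross-section $(2R)^{n-1}$ gives $J_1\le CR^{n+\alpha-2s_2}$; the $A_-$ contribution is identical by symmetry.

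Combining the above,
$$\frac{\F(E,A_{R,R^{1+\alpha},R})}{R^{n-1}}\le C\bigl(R^{1-2s_2}+R^{-2s_2}+R^{1+\alpha-2s_2}\bigr),$$
and each term vanishes as $R\to+\infty$ under $\alpha<2s_2-1$ (using $s_2>\tfrac12$ for the first summand). The main technical obstacle is the halfspace decay $\int_H K(x,y)\,dy\le C(x\cdot p)^{-2s_2}$, which requires the cylindrical integration adapted to $p$: a naive use of the integrability of $K$ at infinity would not suffice, and it is this sharp decay, together with the choice $\alpha<2s_2-1$, that renders the outer $A$-to-$A^c$ interactions asymptotically negligible relative to $R^{n-1}$.
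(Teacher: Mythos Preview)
Your proof is correct. Both you and the paper begin by noting that the region~$A_{R,R^{1+\alpha},R}$ lies outside the interface slab, so the $g$-contribution vanishes by periodicity and zero mean, and then reduce to bounding the $K$-perimeter via the far-field estimate $K(x,y)\le\kappa_2|x-y|^{-n-2s_2}$. The decompositions differ, however. The paper works directly with $\iint_{A^+\times\{y\cdot p\le M\}}|x-y|^{-n-2s_2}\,dx\,dy$ and splits the $y$-domain crudely into $B_R$ and $B_R^c$, obtaining $cR^{n+\alpha-2s_2}$ for each piece via the separation bound $|x-y|\ge R/2$ and a volume count. You instead split $\P_K(E,A)$ into an inner piece $I_{\mathrm{in}}=\mathcal{L}_K(A_+,A_-)$ and an outer piece $I_{\mathrm{out}}$, and then decompose $I_{\mathrm{out}}$ further via the pointwise inequality $|\chi_E(x)-\chi_E(y)|\le|\chi_H(x)-\chi_H(y)|+\chi_{\{|y\cdot p|\le M\}}$, treating each term by a cylindrical integration adapted to~$p$. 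Your route is a bit more systematic and yields sharper intermediate exponents (for instance $I_{\mathrm{in}}\le CR^{n-2s_2}$ and $J_2\le CR^{n-1-2s_2}$, versus the paper's uniform $R^{n+\alpha-2s_2}$), and it also transparently accounts for all three pieces of~$A_\sharp$ in the definition of~$\P_K(E,A)$; the paper's argument treats only $\iint_{A\times\R^n}\chi_E\chi_{E^c}K$ explicitly and relies on an implicit symmetry to cover $\mathcal{L}_K(A^c\cap E,A\cap E^c)$. The trade-off is that the paper's version is shorter and avoids introducing the auxiliary halfspace~$H$.
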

	
	\begin{proof}
		Let us call for simplicity
		\begin{align*}
		A_{R,R^{1+\alpha},R}^+:= A_{R,R^{1+\alpha},R} \cap \{x\cdot p >0\}.
		\end{align*}
		Without loss of generality, we suppose that
		\begin{equation}\label{cbjekwfgiu876543}\{x\cdot p > M\}\subseteq E.\end{equation}
		Let us also take~$R$ so large that
		$$M\leq \frac{R}2.$$ 
		
We claim that 
\begin{equation}\label{bcnxwiqyr8tyuigfewi}
E\cap A_{R,R^{1+\alpha},R}=A_{R,R^{1+\alpha},R}^+.\end{equation}
To check this, we first show that
\begin{equation}\label{bewjkrt342345678}
%|x\cdot p|>R\qquad\mbox{and}\qquad x\cdot p>0.
x\cdot p>R\qquad{\mbox{for every~$x\in E\cap A_{R,R^{1+\alpha},R}$.}}
		\end{equation}
		Indeed, if~$x\in
	A_{R,R^{1+\alpha},R}$, we have that~$|x\cdot p|>R$, namely
	either~$x\cdot p<-R$ or~$x\cdot p> R$.

Let now~$x\in E\cap A_{R,R^{1+\alpha},R}$ and
suppose that~$x\cdot p<-R$.
Then thanks to~\eqref{eq::flatness} and~\eqref{cbjekwfgiu876543} we have that
$$ -R>x\cdot p>-M\ge-\frac{R}2,$$ which gives the desired contradiction
and establishes~\eqref{bewjkrt342345678}.
		
In light of~\eqref{bewjkrt342345678}, we see that
\begin{equation}\label{bcnxwiqyr8tyuigfewi2}
E\cap  A_{R,R^{1+\alpha},R} \subseteq
A_{R,R^{1+\alpha},R}^+.
\end{equation}

Furthermore, for every~$x\in A_{R,R^{1+\alpha},R}^+ $, we have that~$x\cdot p > R >M$, 
		which entails that~$x\in E$, thanks to~\eqref{cbjekwfgiu876543}.
This proves that
$$  A_{R,R^{1+\alpha},R}^+ \subseteq
E\cap A_{R,R^{1+\alpha},R}.$$
The claim in~\eqref{bcnxwiqyr8tyuigfewi}
then follows from this and~\eqref{bcnxwiqyr8tyuigfewi2}.

As a consequence of~\eqref{bcnxwiqyr8tyuigfewi} we also have that
$$ E\cap \mathcal{Q}(A_{R,R^{1+\alpha},R})_1
		=\mathcal{Q}\big(A_{R,R^{1+\alpha},R}^+\big)_1.$$
Hence, recalling also~\eqref{eq::g_zero_avg}, we infer that
\begin{equation} \label{eq::g_null_contribution}
\int_{E\cap \mathcal{Q}(A_{R,R^{1+\alpha},R})_1} g(x)\,dx=\int_{\mathcal{Q}\big(A_{R,R^{1+\alpha},R}^+\big)_1} g(x)\,dx=0.
\end{equation}
		
		Additionally, using~\eqref{cbjekwfgiu876543} and~\eqref{bcnxwiqyr8tyuigfewi}, and recalling~\eqref{eq::K_behavior}, we obtain
		\begin{equation}\label{mnbvc98765oiuytiuytgr}
			\begin{split}
				&\iint_{A_{R,R^{1+\alpha},R}\times \R^n} \chi_E(x)\chi_{E^c}(y)K(x,y)\,dx\,dy 
		\leq \kappa_2	\iint_{(E\cap A_{R,R^{1+\alpha},R})\times E^c}\frac{dx\,dy}{|x-y|^{n+2s_2}}\\
					&\quad
				\leq \kappa_2	\iint_{A_{R,R^{1+\alpha},R}^+\times \{y\cdot p \leq M\}}\frac{dx\,dy}{|x-y|^{n+2s_2}}\\
				&\quad = \kappa_2	\iint_{A_{R,R^{1+\alpha},R}^+\times (\{y\cdot p \leq M\}\cap B_{R})}\frac{dx\,dy}{|x-y|^{n+2s_2}} +\kappa_2 \iint_{A_{R,R^{1+\alpha},R}^+\times (\{y\cdot p \leq M\}\cap B_{R}^c)}\frac{dx\,dy}{|x-y|^{n+2s_2}}\\
				&\quad =: \mathscr{I}_1+\mathscr{I}_2,
			\end{split}
		\end{equation}
		and we aim to estimate~$\mathscr{I}_1$ and~$\mathscr{I}_2$.
		
		First, notice that for every~$x\in A_{R,R^{1+\alpha},R}^+$ and~$y\in\{y\cdot p \leq M\}$, we have that~$x\cdot p>R$ and
		\begin{equation*}
			|x-y| \geq (x-y)\cdot p=x\cdot p-y\cdot p
			% \mbox{dist}(\{x\cdot p>R\},\{y\cdot p \leq M\}) 
			> R-M\geq\frac{ R}2.
		\end{equation*}
		It thereby follows that
		\begin{equation}\label{mnbv87jtrsh}
		A_{R,R^{1+\alpha},R}^+\times\{y\cdot p \leq M\}\subseteq
		\left\{ (x,y)\in\R^n\times\R^n\;: \; |x-y|>\frac{R}2\right\}.
		\end{equation}
		
Using this, we see that
		\begin{equation} \label{eq::estimate_cylinder_short_range}
			\begin{split}
				\mathscr{I}_1 &\leq \kappa_2 \iint_{\substack{A_{R,R^{1+\alpha},R}^+\times (\{y\cdot p \leq M\}\cap B_R) \\ \{|x-y|>R/2\}}}\frac{dx\,dy}{|x-y|^{n+2s_2}} \\&\leq \frac{2^{n+2s_2}\kappa_2|A_{R,R^{1+\alpha},R}|\, |\{y\cdot p \leq M\}\cap B_R|}{R^{n+2s_2}}  \\
				&\leq \frac{c(R^{1+\alpha}-R)R^{n-1}\,R^{n}}{R^{n+2s_2}}\\&=cR^{n+\alpha-2s_2},
			\end{split}
		\end{equation}
		for some positive constant~$c=c(n,s_2,\kappa_2,M)$, possibly changing at every step of the calculation. 
		
		Besides, exploiting again~\eqref{mnbv87jtrsh} and changing variable~$h:=y-x$,
		\begin{equation*}
			\begin{split}
				&\mathscr{I}_2 \le \kappa_2 \iint_{\substack{A_{R,R^{1+\alpha},R}^+\times (\{y\cdot p \leq M\}\cap B_{R}^c) \\ \{|x-y|>R/2\}}}\frac{dx\,dy}{|x-y|^{n+2s_2}}  \leq \kappa_2 |A_{R,R^{1+\alpha},R}| \int_{B_{R/2}^c}\frac{dh}{|h|^{n+2s_2}}\\
				&\qquad \leq c(R^{1+\alpha}-R)R^{n-1}
				\int_{R/2}^{+\infty} \frac{d\rho}{\rho^{1+2s_2}}  =  cR^{n+\alpha-2s_2}.
			\end{split}
		\end{equation*}
Gathering this, \eqref{mnbvc98765oiuytiuytgr}, and~\eqref{eq::estimate_cylinder_short_range} together, we deduce that	$$\iint_{A_{R,R^{1+\alpha},R}\times \R^n} \chi_E(x)\chi_{E^c}(y)K(x,y)\,dx\,dy \le cR^{n+\alpha-2s_2}.$$

From this and~\eqref{eq::g_null_contribution}, we conclude that
		\begin{equation*}
			\left|\frac{\F(E,A_{R,R^{1+\alpha},R})}{R^{n-1}}\right|
			= \frac{1}{R^{n-1}}\left|
			\P_K\big(E,A_{R,R^{1+\alpha},R}\big) + \int_{E\cap\mathcal{Q}(A_{R,R^{1+\alpha},R})_1} g(x)\,dx\right|
			 \leq cR^{1+\alpha-2s_2} ,
		\end{equation*}
which entails~\eqref{eq::vanishing_energy}.
	\end{proof}
	
	\begin{lemma} \label{lemma::finite_limit}
		Let~$E$ be a planelike minimizer for~$\J$ satisfying~\eqref{eq::flatness}.
		
		Then, there exist~$C_1$, $C_2\in\R$ independent of~$\epsilon$ such that
		\begin{equation} \label{eq::finite_limit}
			C_1\epsilon^{1-n} \leq \F\left(E,\frac{1}{\epsilon} Q^p\right) \leq C_2\epsilon^{1-n}.
		\end{equation}
		
		Moreover, let~$\eta\in(0,1)$ such that~$\eta^{1+\alpha}<\epsilon<\eta$, for some parameter~$\alpha\in(0,2s_2-1)$. 
		
		Then, there exists a constant~$c$ independent of~$\epsilon$ and~$\eta$ such that
		\begin{equation} \label{eq::energy_tail}
			\F\left(E,\frac{1}{\epsilon}Q^p\setminus \frac{1}{\eta}Q^p\right)  \leq c\big(\epsilon^{1-n}-\eta^{1-n}+\eta^{2s_2-1-\alpha}\big),
		\end{equation}
		whenever~$\eta$ is small enough (and hence~$\epsilon$ is small enough).
	\end{lemma}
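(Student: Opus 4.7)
The plan is to exploit the planelike structure~\eqref{eq::flatness}: since $\partial E$ lies in the slab $\{|x\cdot p|\leq M\}$, the ``active'' portion of $\partial E$ inside $\frac{1}{\epsilon}Q^p$ has $(n-1)$-dimensional cross section of area $\sim\epsilon^{1-n}$, and this is what dictates the scaling of $\F(E,\frac{1}{\epsilon}Q^p)$. For the upper bound in~\eqref{eq::finite_limit}, I would compare $E$ with the competitor $F:=\big(E\setminus\mathcal{Q}(\frac{1}{\epsilon}Q^p)_1\big)\cup\big(\mathcal{I}_p\cap\mathcal{Q}(\frac{1}{\epsilon}Q^p)_1\big)$, which agrees with $E$ outside $\mathcal{Q}(\frac{1}{\epsilon}Q^p)_1$, and invoke the class-$A$ minimality of $E$ via Remark~\ref{rem::class_A_F}. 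The forcing integral $\int_{\mathcal{I}_p\cap\mathcal{Q}(\frac{1}{\epsilon}Q^p)_1} g$ vanishes on every unit cube not crossing $\{x\cdot p=0\}$ by~\eqref{eq::g_zero_avg} and $\Z^n$-periodicity, leaving $O(\epsilon^{1-n})$ relevant cubes each contributing at most $\norm{g}_{L^\infty(\R^n)}$. To bound $\P_K(F,\frac{1}{\epsilon}Q^p)$, I would split $K$ by $|x-y|<\delta$ versus $|x-y|\geq\delta$: the near-diagonal piece, controlled by $\kappa_2|x-y|^{-n-2s_1}$, concentrates in a tubular $\delta$-neighbourhood of $\{x\cdot p=0\}\cap\frac{1}{\epsilon}Q^p$ and integrates to $\lesssim \epsilon^{1-n}\delta^{1-2s_1}$ thanks to the $\delta$-truncation and $s_1<\frac12$, while the long-range piece, controlled by $\kappa_2|x-y|^{-n-2s_2}$ with $s_2>\frac12$, is globally integrable at infinity and likewise contributes $\lesssim \epsilon^{1-n}$.

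For the lower bound in~\eqref{eq::finite_limit}, I would partition the $(n-1)$-dimensional cross section of $\frac{1}{\epsilon}Q^p$ into unit cubes, each generating an axis-parallel ``column'' of length $1/\epsilon$ in the $p$-direction. By the inclusions $\{x\cdot p\leq -M\}\subseteq E\subseteq\{x\cdot p\leq M\}$, every column must contain a point of $\partial E$, and hence a unit sub-cube $\widetilde{Q}_{\bar q,1/2}$ with $\widetilde{Q}_{\bar q,3/2}\subseteq\frac{1}{\epsilon}Q^p$ that meets $\partial E$. Mimicking the counting step inside the proof of Lemma~\ref{lemma::per_lower_bound}, each such sub-cube contributes a uniform positive constant to $\P_K$, so summing over the $\sim\epsilon^{1-n}$ columns yields $\P_K(E,\frac{1}{\epsilon}Q^p)\geq c\,\epsilon^{1-n}$. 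The forcing term is controlled in absolute value by $C\norm{g}_{L^\infty(\R^n)}\epsilon^{1-n}$ (again only strip cubes contribute non-trivially, by~\eqref{eq::g_zero_avg}) and is absorbed into the positive constant, consistently with the smallness of $\norm{g}_{L^\infty(\R^n)}$ already required in Proposition~\ref{prop::energy_lower_bound_BR}.

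For the tail estimate~\eqref{eq::energy_tail}, I would decompose the annulus $\frac{1}{\epsilon}Q^p\setminus\frac{1}{\eta}Q^p$ into a ``side'' piece $\Omega_{\mathrm{side}}\subseteq\{|x\cdot p|\leq 1/(2\eta)\}$ and two ``caps'' $\Omega_{\mathrm{cap}}^{\pm}\subseteq\{|x\cdot p|>1/(2\eta)\}$, both of which lie in the planelike complement $\{|x\cdot p|>M\}$ once $\eta$ is small enough, and exploit the sub-additivity $\P_K(E,A\cup B)\leq \P_K(E,A)+\P_K(E,B)$ for disjoint $A,B$ to treat them separately. On $\Omega_{\mathrm{side}}$, the same upper-bound argument as in the first part, now applied to an annular cross section of $(n-1)$-area $\epsilon^{1-n}-\eta^{1-n}$, gives $\F(E,\Omega_{\mathrm{side}})\leq C(\epsilon^{1-n}-\eta^{1-n})$. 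On the caps, the forcing integral vanishes identically (full unit cubes lie either in $E$ or in $E^c$), and the only contribution is the long-range $K$-interaction with the opposite side of the strip, at distance $\gtrsim 1/(2\eta)$. Mimicking the proof of Lemma~\ref{lemma::vanishing_energy}, the inner integral satisfies $\int_E K(x,y)\,dy\leq C(x\cdot p-M)^{-2s_2}$; integrating $x_p^{-2s_2}$ over $x_p\in(1/(2\eta),1/(2\epsilon))$ against the $(n-1)$-cross section yields the cap bound, which together with the constraint $\eta^{1+\alpha}<\epsilon$ lands inside the stated expression $\epsilon^{1-n}-\eta^{1-n}+\eta^{2s_2-1-\alpha}$.

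The main technical obstacle is recovering the $L^{n-1}$-scaling of $\P_K(\mathcal{I}_p,LQ^p)$: the naive fractional $s$-perimeter of a half-space in a cube of side $L$ scales as $L^{n-2s}$, which for $s<\frac12$ is strictly larger than $L^{n-1}$. The dichotomous assumption~\eqref{eq::K_behavior}, with the singular $s_1$-lower bound truncated at $|h|<\delta$ and the tail controlled by $s_2>\frac12$, is precisely what restores the $L^{n-1}$-scaling, and the near- and long-range estimates must be carried out with this truncation carefully in view. A secondary delicate point is the bookkeeping of the cap/side cross interactions and of the interactions of the annular region with its complement in~\eqref{eq::energy_tail}, which all have to be checked to land within the stated bound rather than accumulating into a larger power of $\epsilon^{-1}$ or $\eta^{-1}$.
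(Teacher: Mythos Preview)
Your plan is correct and would prove the lemma, but it diverges from the paper's argument in two places. For the lower bound the paper does much less: since the statement only asks for $C_1\in\R$, it simply drops the non-negative $\P_K$ term and bounds the forcing integral by $-2(M+\sqrt n)\norm{g}_{L^1(Q)}\,\epsilon^{1-n}$ (only the $O(\epsilon^{1-n})$ unit cubes meeting the strip $\{|x\cdot p|\le M\}$ contribute non-trivially), so the resulting $C_1$ is negative; your density-counting argument would yield a positive $C_1$ but is unnecessary here and imports the smallness of $g$ that the bare statement does not require. For the upper bound the paper avoids your global half-space competitor---and the attendant seam bookkeeping along $\partial\mathcal{Q}(\frac1\epsilon Q^p)_1$---in favour of a purely local comparison: on each unit cube $\widetilde Q=k+Q$, class-$A$ minimality against $E\setminus\widetilde Q$ gives $\P_K(E,\widetilde Q)\le\P_K(Q)+\norm{g}_{L^1(Q)}$, and summing over the $O(\epsilon^{1-n})$ cubes covering the strip handles the ``near'' part. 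The region outside the strip is then treated by a dyadic decomposition into shells $\{M2^j\le|x\cdot p|\le M2^{j+1}\}$, using that on each shell $E$ coincides with a half-space and the $s_2$-tail of $K$ makes the per-cube contribution decay like $2^{-2s_2j}$; the geometric sum converges and gives another $O(\epsilon^{1-n})$. This same dyadic bookkeeping, combined with Lemma~\ref{lemma::vanishing_energy}, is what drives the tail estimate~\eqref{eq::energy_tail}, and your side/cap split is essentially the same idea in different packaging.
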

	
	\begin{proof}
	Since~$E$ satisfies~\eqref{eq::flatness}, we suppose, without loss
	of generality, that
	\begin{equation}\label{vbcnx4832345rwfyahduewtywoihl}
	\big\{x\cdot p > M\big\}\subseteq E.
	\end{equation}
	
We will use the notation~$\widetilde{Q}:=k+Q$, for some~$k\in\Z^n$. Notice that~$\mathcal{Q}(\widetilde{Q})_1=\widetilde{Q}$.
Thus, by the
		class-$A$ minimality of~$E$ (for~$\J$),
		%with the competitor~$E\setminus \widetilde{Q}$, 
		we find
		\begin{equation*}
			\F(E,\widetilde{Q}) = \J(E,\widetilde{Q}) \leq \J(E\setminus \widetilde{Q},\widetilde{Q}) =P_K(E\setminus \widetilde{Q},\widetilde{Q}) 
+\int_{(E\setminus \widetilde{Q})\cap\widetilde{Q}} g(x)\,dx
			\leq P_K(\widetilde{Q}) = P_K(Q). 
		\end{equation*}
		Hence,
		\begin{equation} \label{eq::PK_estimate_kQ}
			\P_K(E,\widetilde{Q}) = \F(E,\widetilde{Q})-\int_{E\cap \widetilde{Q}}g(x)\, dx \leq \P_K(Q)+\norm{g}_{L^1(Q)}.
		\end{equation}
		
		Now, 
		let~$S_M:= \{|x\cdot p|\leq M\}$ and
		take a (finite) covering of the strip~$S_M\cap Q^p/\epsilon$, namely
		$$ \mathcal{B} := \big\{k+Q \mbox{ s.t. } k\in\Z^n,\ (k+Q)\cap (S_M\cap Q^p/\epsilon)\neq\varnothing\big\}.$$
Observe that~$\sharp(\mathcal{B}) \leq 2(M+\sqrt{n})\epsilon^{1-n}$.
		
		Then, using~\eqref{eq::PK_estimate_kQ} together with the monotonicity and the subadditivity of~$\P_K$ with respect to the domain, we infer that
		\begin{equation} 
			\label{eq::P_K_upper_bound_close}
			\epsilon^{n-1}\P_K\left(E,S_M\cap \frac1{\epsilon}Q^p\right) \leq \epsilon^{n-1}\sum_{\widetilde{Q}\in\mathcal{B}} \P_K(E,\widetilde{Q}) \leq 2(M+\sqrt{n})\Big(\P_K(Q)+\norm{g}_{L^1(Q)}\Big)\end{equation}
and
\begin{equation}\label{eq::g_bound}\epsilon^{n-1}\left|\int_{E\cap\mathcal{Q}(Q^p/\epsilon)_1} g(x)\,dx\right| = \epsilon^{n-1}\sum_{\widetilde{Q}\in\mathcal{B}}\left|\int_{E\cap\mathcal{Q}(Q^p/\epsilon)_1\cap\widetilde{Q}} g(x)\,dx\right| \leq 2(M+\sqrt{n})\norm{g}_{L^1(Q)}.
		\end{equation}
		In particular, \eqref{eq::g_bound} yields 
		\begin{equation} \label{eq::lower_bound_limit}
			\epsilon^{n-1}\F\left(E,\frac1\epsilon Q^p\right)\geq \epsilon^{n-1}\int_{E\cap\mathcal{Q}(Q^p/\epsilon)_1} g(x)\,dx \geq -  2(M+\sqrt{n})\norm{g}_{L^1(Q)}.
		\end{equation}
		
Moreover, we claim that 
		\begin{equation} \label{eq::upper_bound_limit}
			\epsilon^{n-1}\P_K\left(E,\frac1{\epsilon}Q^p \setminus S_M\right) \leq c.
		\end{equation}
		% To achieve this, let~$m\in\N$ be such that~$2^{-m-1} \leq \epsilon\leq 2^{-m}$. We define, for every~$j\in\{0,\dots,m-1\}$,
				To achieve this, let~$m\in\N$ be such that~$2^{-m} \leq \epsilon\leq 2^{-m+1}$. We define, for every~$j\in\{0,\dots,m-1\}$,
		$$ C_j:=\frac1{\epsilon}Q^p  \cap \Big\{ M2^{j} \leq |x\cdot p | \leq M2^{j+1}\Big\}\subseteq\R^n.$$
		% $$ C_j:=\left(Q^p/\epsilon\right)^{n-1} \times \{ M2^{j} \leq |x\cdot p | \leq M2^{j+1}\},\quad \mbox{for every }j\in\{0,\dots,m-1\}.$$
We recall~\eqref{vbcnx4832345rwfyahduewtywoihl} and we also set
		\begin{align*}
			&S_{M}^+:=\big\{x\cdot p > M\big\}\subseteq E\\
\mbox{and}\qquad &S_{M}^-:=\big\{x\cdot p <- M\big\}\subseteq E^c.
		\end{align*}	
		Then, we have
		$$\frac1{\epsilon} Q^p \setminus S_{M} %\subseteq 2^mQ^p\setminus S_{M}
		\subseteq \bigcup_{j=0}^{m-1} C_j.$$
	
		Moreover, notice that every~$C_j$ can be covered with at most~$M2^{m(n-1)+j+1}$ cubes of side~$1$ centered at
		points in~$\Z^n$ (let us call~$\mathcal{B}_j$ such covering). 
		
		Let ~$\widetilde{Q}:=k+Q\in\mathcal{B}_j$, for some~$k\in\Z^n$ and some~$j\in\{0,\dots,m-1\}$. Without loss of generality, we can suppose that~$\widetilde{Q}\cap S_{M}^+\neq\varnothing$, and hence~$\widetilde{Q}\cap S_{M}^-=\varnothing$. 
Notice also that~$\widetilde{Q}^c\cap S_{M}^-\subseteq
{B}_{(2^j+1)M}^c(k)$. Thus,
		\begin{equation} \label{eq::estimate_annulus}
			\begin{split}
				&\iint_{\widetilde{Q}_\sharp} \chi_{S_{M}^+}(x)\chi_{S_{M}^-}(y) K(x,y)\,dx\,dy 
				=\iint_{\widetilde{Q}\times (\widetilde{Q}^c\cap S_{M}^-)} K(x,y)\,dx\,dy \\
				&\qquad\qquad\leq \kappa_2\iint_{\widetilde{Q}\times (\widetilde{Q}^c\cap S_{M}^-)} \frac{dx\,dy}{|x-y|^{n+2s_2}}
				\leq \kappa_2\iint_{\widetilde{Q}\times {B}_{(2^j+1)M}^c(k)} \frac{dx\,dy}{|x-y|^{n+2s_2}} \\
				&\qquad\qquad = \kappa_2\iint_{Q\times {B}_{(2^j+1)M}^c} \frac{dx\,dy}{|x-y|^{n+2s_2}}= \kappa_2\iint_{Q\times {B}_{(2^j+1)M}^c(x)} \frac{dx\,dh}{|h|^{n+2s_2}}\\
				&\qquad\qquad \leq \kappa_2 |Q|\int_{{B}_{2^jM}^c}\frac{dh}{|h|^{n+2s_2}} 
				\leq c_1\,2^{-2s_2j},
			\end{split}
		\end{equation}
		for some positive~$c_1=c_1(n,s_2,\kappa_2,M)$ independent of~$m$. %, possibly changing from line to line.

		Now, observe that, by~\eqref{vbcnx4832345rwfyahduewtywoihl} and the definitions of~$S_M^+$ and~$S_M^-$, we have that
		\begin{equation*}
			E\cap S_M^c = S_M^+ \qquad\mbox{and}\qquad E^c\cap S_M^c = S_M^-.
		\end{equation*}
		As a consequence,
		\begin{equation*}
			\begin{split}
				&\P_K\left(E,\frac1{\epsilon}Q^p\setminus S_M\right) = \iint_{(Q^p/\epsilon\setminus S_M)_\sharp} \chi_{E}(x)\chi_{E^c}(y) K(x,y)\,dx\,dy \\
				=& \iint_{(Q^p/\epsilon\setminus S_M)\times (Q^p/\epsilon\setminus S_M)} \chi_{E}(x)\chi_{E^c}(y) K(x,y)\,dx\,dy \\
				& +  \iint_{((Q^p/\epsilon)^c\setminus S_M)\times (Q^p/\epsilon\setminus S_M)} \chi_{E}(x)\chi_{E^c}(y) K(x,y)\,dx\,dy \\
				& +  \iint_{(Q^p/\epsilon\setminus S_M)\times ((Q^p/\epsilon)^c\setminus S_M)} \chi_{E}(x)\chi_{E^c}(y) K(x,y)\,dx\,dy\\
				& + \iint_{S_M\times(Q^p/\epsilon \setminus S_M)} \chi_{E}(x)\chi_{E^c}(y) K(x,y)\,dx\,dy+ \iint_{(Q^p/\epsilon\setminus S_M)\times S_M} \chi_{E}(x)\chi_{E^c}(y) K(x,y)\,dx\,dy\\
				=& \iint_{(Q^p/\epsilon\setminus S_M)_\sharp} \chi_{S_{M}^+}(x)\chi_{S_{M}^-}(y) K(x,y)\,dx\,dy\\
				& + \iint_{S_M\times(Q^p/\epsilon \setminus S_M)} \chi_{E}(x)\chi_{S_{M}^-}(y) K(x,y)\,dx\,dy+ \iint_{(Q^p/\epsilon\setminus S_M)\times S_M} \chi_{S_{M}^+}(x)\chi_{E^c}(y) K(x,y)\,dx\,dy\\
				\leq&  \iint_{(Q^p/\epsilon\setminus S_M)_\sharp} \chi_{S_{M}^+}(x)\chi_{S_{M}^-}(y) K(x,y)\,dx\,dy + \iint_{S_M\times(Q^p/\epsilon\setminus S_M)} K(x,y)\,dx\,dy.
			\end{split}
		\end{equation*}
		Moreover, by Proposition~\ref{prop::P_K_upper_bound_BR}, there exists a positive constant~$c_2$, independent of $\epsilon$, such that %depending only on $n$, $M$, and $K$ such that
		\begin{equation} \label{eq::estimate_remainder_S_M}
			\iint_{S_M\times(Q^p/\epsilon\setminus S_M)} K(x,y)\,dx\,dy
			\le 
			\P_K\left(\frac1{\epsilon}Q^p \cap S_M^+\right)+\P_K\left(\frac1{\epsilon}Q^p \cap S_M^-\right)\leq c_2\epsilon^{1-n}.
		\end{equation}
		
		Thus, the estimate in~\eqref{eq::estimate_annulus}, in combination with~\eqref{eq::estimate_remainder_S_M} and the subadditivity of~$P_K$ with respect to the domain, entails that
		\begin{equation*}
			\begin{split}
				&\epsilon^{n-1}\P_K\left(E,\frac1{\epsilon}Q^p\setminus S_M\right) \leq \epsilon^{n-1} \left(\iint_{(Q^p/\epsilon\setminus S_M)_\sharp} \chi_{S_{M}^+}(x)\chi_{S_{M}^-}(y) K(x,y)\,dx\,dy +c_2\epsilon^{1-n}\right)\\
				&\qquad \leq 2^{-(m-1)(n-1)} \iint_{(Q^p/\epsilon\setminus S_M)_\sharp} \chi_{S_{M}^+}(x)\chi_{S_{M}^-}(y) K(x,y)\,dx\,dy +c_2\\
				&\qquad\leq  2^{-(m-1)(n-1)}\sum_{j=0}^{m} \sum_{\widetilde{Q}\in\mathcal{B}_j} \iint_{\widetilde{Q}_\sharp} \chi_{S_{M}^+}(x)\chi_{S_{M}^-}(y) K(x,y)\,dx\,dy +c_2\\
				&\qquad \leq 2^{-(m-1)(n-1)}\sum_{j=0}^{m} c_12^{m(n-1)+j}2^{-2s_2j}\ +c_2 \leq c_1\sum_{j=0}^{+\infty} 2^{(1-2s_2)j}\ +c_2 <+\infty,
			\end{split}
		\end{equation*}
		showing~\eqref{eq::upper_bound_limit}.
		
		Therefore, gathering~\eqref{eq::P_K_upper_bound_close}, \eqref{eq::lower_bound_limit} and~\eqref{eq::upper_bound_limit}, we obtain
		\begin{equation*}
			-2(M+\sqrt{n})\norm{g}_{L^1(Q)} \leq \epsilon^{n-1}\F\left(E,\frac1{\epsilon}Q^p\right) \leq 2(M+\sqrt{n})\Big(\P_K(Q)+2\norm{g}_{L^1(Q)}\Big) + c,
		\end{equation*}
		showing~\eqref{eq::finite_limit}.
		
		Now, to prove~\eqref{eq::energy_tail}, let us consider~$\eta^{1+\alpha}<\epsilon<\eta$, for some~$\alpha\in(0,2s_2-1)$. We assume again that~$2^{-m}\leq\epsilon\leq 2^{-m+1}$, for some~$m\in\N$, and we
		decompose~$\frac{1}{\epsilon}Q^p\setminus \frac{1}{\eta}Q^p$ as
		\begin{equation*}
			\frac{1}{\epsilon}Q^p\setminus \frac{1}{\eta}Q^p = \left(\frac{1}{\epsilon}Q^p\setminus \frac{1}{\eta}Q^p\right)\cap S_M \cup \bigcup_{j=0}^{m-1}\widetilde{C}_j,
		\end{equation*}
		where, for all~$ j\in\{0,\dots,m-1\}$,
		$$\widetilde{C}_j:=\left(\frac{1}{\epsilon}Q^p\setminus \frac{1}{\eta}Q^p\right)\cap \Big\{M2^{j}\leq |x\cdot p| \leq M2^{j+1}\Big\}.$$ 
		Notice that~$\left(\frac{1}{\epsilon}Q^p\setminus \frac{1}{\eta}Q^p\right)\cap S_M$ can be covered with at most~$2M(\epsilon^{1-n}-\eta^{1-n})$ cubes of side~$1$. 
		Similarly, each cylinder~$\widetilde{C}_j$ can be covered with at most~$2^j(\epsilon^{1-n}-\eta^{1-n})$ translated unit cubes.
		
		Therefore, arguing as for~\eqref{eq::finite_limit} and recalling Lemma~\ref{lemma::vanishing_energy}, we deduce that, for every~$\eta$ small enough,
		\begin{equation*}
			\F\left(E,\frac{1}{\epsilon}Q^p\setminus\frac{1}{\eta}Q^p\right) \leq c\big(\epsilon^{1-n}-\eta^{1-n}+\eta^{2s_2-1-\alpha}\big),
		\end{equation*}
		for some positive constant~$c$ independent of~$\eta$ and~$\epsilon$, concluding the proof.
	\end{proof}
	
	\begin{proof}[Proof of Proposition~\ref{prop::existence_phi}]
		Let~$p\in\S^{n-1}$. In order to prove that the limit function~$\phi$ in~\eqref{eq::existence_phi} is well-defined, we show that if~$E_p$ and~$E'_p$ are two planelike minimizers for~$\J$ with respect to some~$p$ such that
		$$ \partial E_p\cap \partial E'_p \subseteq \{|x\cdot p| \leq M\},$$
		and~$\epsilon$, $\epsilon'>0$ are infinitesimal sequences
		such that the limits
$$
\ell:=\lim_{\epsilon\to0}\epsilon^{n-1}\F\left(E_p,\frac1{\epsilon}Q^p\right)\qquad\mbox{and}\qquad
\ell':=\lim_{\epsilon'\to0}(\epsilon')^{n-1}\F\left(E'_p,\frac1{\epsilon'}Q^p\right)$$	
exist, then it must be~$\ell=\ell'$. To this purpose, up to taking subsequences, let us assume that~$\epsilon'\ll\epsilon$ (e.g.~$\epsilon'\sim\epsilon^{1+\alpha}$, for some~$\alpha\in(0,2s_2-1)$). 
		
		Following the footsteps of~\cite{chambolle_thouroude}, 
		we recall the notation for~$\mathcal{I}_p$
		in~\eqref{eq::def_halfsapce} and
we consider a finite covering of~$\mathcal{I}_p\cap \mathcal{Q}(Q^p/\epsilon')_1$ made of~$N:=\left\lfloor \left(\frac{\epsilon}{\epsilon'(1+2\epsilon\sqrt{n})}\right)^{n-1}\right\rfloor$ cubes that are translations of~$(2\sqrt{n}+1/\epsilon)Q^p$ centered at~$\mathcal{I}_p$. Notice that each of this cube contains at least one translated cube~$Q^{(j)}:=j+Q^p/\epsilon$, with~$j\in\Z^n$. Let also 
		$$ E_j := j+\left(E_p\cap \frac1\epsilon Q^p\right) \subseteq Q^{(j)}.$$
Also, we define
		\begin{equation} \label{eq::competitor_phi_limit}
	R:= \bigcup_{j=1}^N Q^{(j)} \subseteq \mathcal{Q}(Q^p/\epsilon')_1\qquad
				\mbox{and}\qquad
		F':=\left(\bigcup_{j=1}^N E_j \right)\cup  (E'_p\setminus R).
		\end{equation}
		In particular, we have~$F'\setminus \mathcal{Q}(Q^p/\epsilon')_1 = E'_p\setminus\mathcal{Q}(Q^p/\epsilon')_1$ (see Figure \ref{fig::phi_well_defined}).
		
		\begin{figure}
			\centering
			\includegraphics[width=.85\linewidth]{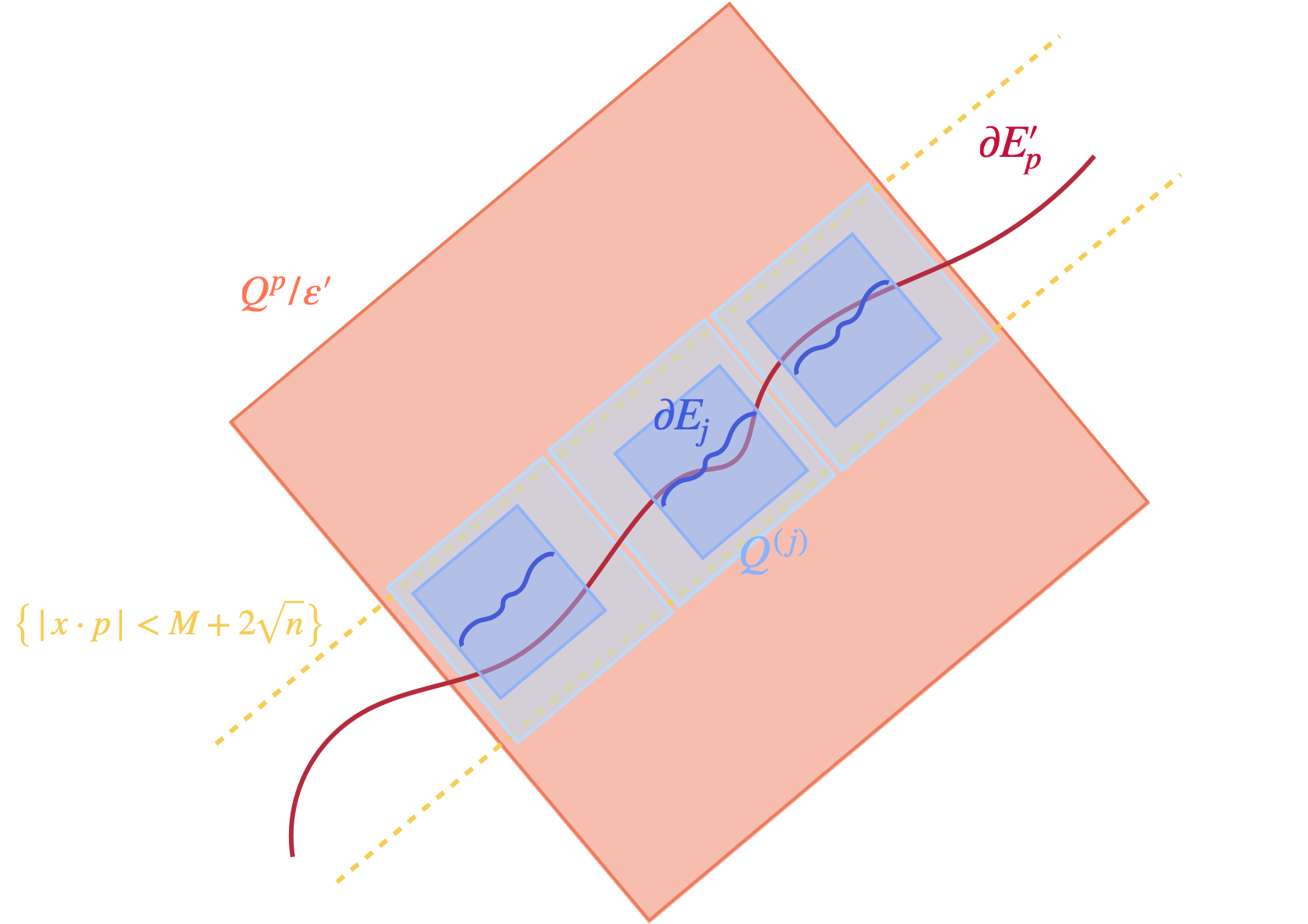}
			\caption{The construction of the set $F'$ as in \eqref{eq::competitor_phi_limit}.}\label{fig::phi_well_defined}
		\end{figure}
		
		Thus, by Remark~\ref{rem::class_A_F}, the subadditivity of~$\P_K$ with respect to the domain, and~\eqref{eq::K_invariance}, we infer that
		\begin{equation} \label{eq::estimate_phiexistence_1}
			\begin{split}
				&\F\left(E'_p,\frac1{\epsilon'}
				Q^p\right)\leq \F\left(F',\frac1{\epsilon'}Q^p\right) =  \F\left(F',R\cup \left(\frac1{\epsilon'}Q^p\setminus R\right)\right) \\
				&\qquad \leq \sum_{j=1}^{N}\F\left(E_j,Q^{(j)}\right) + \iint_{(Q^p/\epsilon'\setminus R)_\sharp} \chi_{F'}(x)\chi_{(F')^c}(y)K(x,y)\,dx\,dy \\
				&\qquad\qquad+ \int_{F'\cap (\mathcal{Q}(Q^p/\epsilon')_1\setminus \bigcup_{j=1}^N\mathcal{Q}(Q^{(j)})_1)}g(x)\,dx\\
				&\qquad \leq N\F\left(E_p,\frac1{\epsilon}Q^p\right) + \iint_{(Q^p/\epsilon'\setminus R)_\sharp} \chi_{F'}(x)\chi_{(F')^c}(y)K(x,y)\,dx\,dy \\
				&\qquad\qquad+ \int_{F'\cap (\mathcal{Q}(Q^p/\epsilon')_1\setminus \bigcup_{j=1}^N\mathcal{Q}(Q^{(j)})_1)}g(x)\,dx.
			\end{split}
		\end{equation}
		At this stage, observe that 
		$$ \partial F' \cap \left(\frac1{\epsilon'}Q^p\setminus R\right) \subseteq \left\{|x\cdot p| \leq M+2\sqrt{n}\right\}$$
		can be covered with~$\left\lfloor(M+2\sqrt{n})\left( (\epsilon')^{1-n}-N\epsilon^{1-n}\right)\right\rfloor$ cubes~$Q$ of side~$1$ and centered at~$\Z^n$. 
		Therefore, it follows from~\eqref{eq::energy_tail} that %Lemma~\ref{lemma::vanishing_energy} that%\eqref{eq::P_K_upper_bound_close} and~\eqref{eq::upper_bound_limit} that
		\begin{equation} \label{eq::term_xi_to_0}
			\begin{split}
				&\iint_{(Q^p/\epsilon'\setminus R)_\sharp} \chi_{F'}(x)\chi_{(F')^c}(y)K(x,y)\,dx\,dy \\
				\leq\;& c(M+2\sqrt{n})\Big(
				 (\epsilon')^{1-n}-N\epsilon^{1-n}+\epsilon^{2s_2-1-\alpha}\Big)  \\
				\leq\;& \frac{c(M+2\sqrt{n})}{(\epsilon')^{n-1}}\left(1-\left(\frac{\epsilon'}{\epsilon}\right)^{n-1}\left\lfloor \left(\frac{\epsilon}{\epsilon'(1+2\epsilon\sqrt{n})}\right)^{n-1}\right\rfloor\right) 
				+c(M+2\sqrt{n})\epsilon^{2s_2-1-\alpha} \\
				=:\;&\frac{\xi(\epsilon,\epsilon')}{(\epsilon')^{n-1}},
			\end{split}
		\end{equation}
		for some function~$\xi$ such that~$\xi(\epsilon,\epsilon')\to0$, as~$\epsilon$, $\epsilon'$, and~$\epsilon'/\epsilon\sim \epsilon^\alpha\to0$. 
		
		Furthermore, since
		$$ \partial F' \cap (\mathcal{Q}(Q^P/\epsilon')_1\setminus \bigcup_{j=1}^N\mathcal{Q}(Q^{(j)})_1)) \subseteq \{|x\cdot p| \leq M+2\sqrt{n}\}$$
		can be covered with~$\left\lfloor(M+2\sqrt{n})\left((\epsilon')^{1-n}-N\lfloor\epsilon^{1-n}\rfloor\right)\right\rfloor$ cubes~$Q^p$ of side~$1$ and centered at~$\Z^n$, we also have
		\begin{equation} \label{eq::term_zeta_to_0}
			\begin{split}
				&\int_{F' \cap (\mathcal{Q}(Q^P/\epsilon')_1\setminus \bigcup_{j=1}^N\mathcal{Q}(Q^{(j)})_1))}g(x)\,dx \leq \norm{g}_{L^1(Q)}(M+2\sqrt{n})\Big((\epsilon')^{1-n}-N\lfloor\epsilon^{1-n}\rfloor\Big)\\
				&\qquad= \frac{\norm{g}_{L^1(Q)}(M+2\sqrt{n})}{(\epsilon')^{n-1}}\left(1-(\epsilon')^{n-1}\lfloor\epsilon^{1-n}\rfloor\left\lfloor \left(\frac{\epsilon}{\epsilon'(1+2\epsilon\sqrt{n})}\right)^{n-1}\right\rfloor\right)=: \frac{\zeta(\epsilon,\epsilon')}{(\epsilon')^{n-1}},
			\end{split}
		\end{equation}
		for some function~$\zeta$ such that~$\zeta(\epsilon,\epsilon')\to0$, as~$\epsilon\to0$ (and also~$\epsilon',\epsilon'/\epsilon\to0$). Indeed, 
		\begin{equation*}
			\begin{split}
				&\lim_{\substack{\epsilon'\to0 \\ \epsilon\to0}}\ (\epsilon')^{n-1}\lfloor\epsilon^{1-n}\rfloor\left\lfloor \left(\frac{\epsilon}{\epsilon'(1+2\epsilon\sqrt{n})}\right)^{n-1}\right\rfloor \\
				= \;& \lim_{\substack{\epsilon'\to0 \\ \epsilon\to0}}\ (\epsilon')^{n-1}\big(\epsilon^{1-n}-\{\epsilon^{1-n}\}\big)\left( \left(\frac{\epsilon}{\epsilon'(1+2\epsilon\sqrt{n})}\right)^{n-1}-\left\{\left(\frac{\epsilon}{\epsilon'(1+2\epsilon\sqrt{n})}\right)^{n-1}\right\}\right)\\
				=\;& 1,
			\end{split}
		\end{equation*}
		where~$\{\cdot\}:=(\cdot)-\lfloor\cdot\rfloor$ denotes the fractional part of a real number.
		
		Now, recalling the definitions of~$\ell$ and~$\ell'$, and putting together~\eqref{eq::estimate_phiexistence_1}, \eqref{eq::term_xi_to_0} and~\eqref{eq::term_zeta_to_0}, we obtain that 
		\begin{equation*}
			(\epsilon')^{n-1}\F\left(E'_p,\frac1{\epsilon'}Q^p\right) \leq (\epsilon')^{n-1}\left\lfloor\left(\frac{\epsilon}{\epsilon'(1+2\epsilon\sqrt{n})}\right)^{n-1} \right\rfloor\F\left(
			E_p,\frac1{\epsilon}Q^p\right) +\xi(\epsilon,\epsilon')+\zeta(\epsilon,\epsilon').
		\end{equation*}
		Hence, taking the limits for~$\epsilon$, $\epsilon'$, and~$\epsilon'/\epsilon\sim \epsilon^\alpha\to0$, we deduce that~$\ell'\leq\ell$.
		
		Interchanging the roles of~$\epsilon$ and~$\epsilon'$, we also get that~$\ell\leq\ell'$, concluding the proof.
\end{proof}
	
	We stress that the construction of the cubes $Q^{(j)}$ in the proof of Proposition~\ref{prop::existence_phi} remains unaltered if we replace $Q^p/\epsilon'$ with one of its translated along $\partial \mathcal{I}_p$. Indeed, notice that when we consider translations of the cube $(2\sqrt{n}+1/\epsilon)Q^p$, we do not ask any requirement on those translations but the fact that they are centered at $\partial \mathcal{I}_p$.  Therefore, we infer that $\phi(p)$ is independent of translations of the domain in directions orthogonal to $p$. 
	
	The precise statement goes as follows: 
	
	%As a consequence of Proposition~\ref{prop::existence_phi}, we infer that the cube~$Q^p/\epsilon$ in the definition of~$\phi$ can be replaced by any of its translated along $\partial \mathcal{I}_p$.
	\begin{corollary} \label{cor::phi_translated_cube}
		Let~$K$ satisfy~\eqref{eq::K_invariance}, \eqref{eq::K_integrable}, \eqref{eq::K_behavior}, and~\eqref{eq::K_lower_bound_Q}.
		Let~$g\in L^\infty(Q)$ be a~$\Z^n$-periodic function such that~$\int_Qg =0$.
		
		Let~$p\in \S^{n-1}$	and let~$E_p$ be a planelike minimizer for~$\J$ in direction~$p$ that satisfies~\eqref{eq::flatness} .
		Let also~$v\in \partial \mathcal{I}_p$, i.e. $v\cdot p = 0$.
		
		Then,
		\begin{equation*}
			\phi(p) = \lim_{\epsilon\to0} \epsilon^{n-1}\F\left(E_{p},v+\frac1\epsilon Q^p\right).
		\end{equation*}
	\end{corollary}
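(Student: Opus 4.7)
The plan is to adapt the proof of Proposition~\ref{prop::existence_phi} almost verbatim, replacing one of the two cubes by its translate $v + \frac{1}{\epsilon'} Q^p$. The key geometric point enabling this is that $v \cdot p = 0$, so the hyperplane $\partial \mathcal{I}_p = \{x \cdot p = 0\}$ still bisects $v + \frac{1}{\epsilon'} Q^p$ through a cross-section orthogonal to $p$, exactly as it bisects $\frac{1}{\epsilon'} Q^p$. Thus the tiling of the central region $\mathcal{I}_p \cap \mathcal{Q}(v + Q^p/\epsilon')_1$ by integer translates $Q^{(j)} = j + \frac{1}{\epsilon} Q^p$ centered near $v + \partial \mathcal{I}_p$ is carried out exactly as before, yielding the same count $N \sim (\epsilon/\epsilon')^{n-1}$.

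For the upper bound, I would fix an infinitesimal sequence $\epsilon' \to 0$, set $\epsilon := (\epsilon')^{1/(1+\alpha)}$ for some $\alpha \in (0, 2s_2-1)$ so that $\epsilon \to 0$ and $\epsilon'/\epsilon \sim \epsilon^\alpha \to 0$, and then define, in analogy with \eqref{eq::competitor_phi_limit}, the competitor
$$F' := \Big( \bigcup_{j=1}^N (j + (E_p \cap \tfrac{1}{\epsilon} Q^p))\Big) \cup \Big(E_p \setminus \bigcup_{j=1}^N Q^{(j)}\Big).$$
Since $F'$ agrees with $E_p$ outside $\mathcal{Q}(v + Q^p/\epsilon')_1$, the class-$A$ minimality of $E_p$ reformulated in Remark~\ref{rem::class_A_F} would yield $\F(E_p, v + Q^p/\epsilon') \leq \F(F', v + Q^p/\epsilon')$. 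Splitting the right-hand side into the sum of the $N$ contributions $\F(E_p, \frac{1}{\epsilon} Q^p)$ (equal by $\Z^n$-periodicity of $K$ and $g$) plus boundary interactions and boundary $g$-integrals, and estimating these boundary terms via Lemmata~\ref{lemma::finite_limit} and~\ref{lemma::vanishing_energy} as in \eqref{eq::term_xi_to_0} and \eqref{eq::term_zeta_to_0}, I would arrive at
$$(\epsilon')^{n-1}\F\Big(E_p, v + \tfrac{1}{\epsilon'} Q^p\Big) \leq (\epsilon')^{n-1} N\, \F\Big(E_p, \tfrac{1}{\epsilon} Q^p\Big) + \xi(\epsilon,\epsilon') + \zeta(\epsilon,\epsilon'),$$
with $\xi, \zeta \to 0$. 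Passing to the $\limsup$ and invoking Proposition~\ref{prop::existence_phi} to identify the first term on the right with $\phi(p)$ then gives $\limsup_{\epsilon' \to 0} (\epsilon')^{n-1}\F(E_p, v + Q^p/\epsilon') \leq \phi(p)$.

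For the matching lower bound, I would swap the roles of the two cubes: given $\epsilon \to 0$, set $\epsilon' := \epsilon^{1+\alpha}$ and tile $\mathcal{I}_p \cap \mathcal{Q}(Q^p/\epsilon)_1$ with integer translates of $v + \frac{1}{\epsilon'} Q^p$ (again allowed since $v \cdot p = 0$), assembling copies of $E_p \cap (v + \frac{1}{\epsilon'} Q^p)$ into a competitor for $E_p$ in $\frac{1}{\epsilon} Q^p$. Applying Remark~\ref{rem::class_A_F} in this opposite direction together with the same boundary estimates as before would produce $\liminf_{\epsilon' \to 0} (\epsilon')^{n-1}\F(E_p, v + Q^p/\epsilon') \geq \phi(p)$, concluding the proof. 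No real obstacle is anticipated beyond careful bookkeeping: the single non-routine point is the observation that the tiling geometry along $\partial \mathcal{I}_p$ is invariant under translations by $v$, which is exactly the content of the remark preceding the corollary.
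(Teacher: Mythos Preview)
Your proposal is correct and follows exactly the approach the paper takes. The paper does not give a separate proof of this corollary; it simply observes in the paragraph immediately preceding the statement that the construction of the cubes $Q^{(j)}$ in the proof of Proposition~\ref{prop::existence_phi} places no constraint on the translations other than centering along $\partial\mathcal{I}_p$, so replacing $Q^p/\epsilon'$ by $v+Q^p/\epsilon'$ with $v\cdot p=0$ leaves the argument unchanged --- which is precisely the key geometric point you identify and spell out in detail.
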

	
		As a consequence, we deduce the following:
	\begin{corollary} \label{cor::tail_estimate_phi}
		Let~$K$ satisfy~\eqref{eq::K_invariance}, \eqref{eq::K_integrable}, \eqref{eq::K_behavior}, and~\eqref{eq::K_lower_bound_Q}.
		Let~$g\in L^\infty(Q)$ be a~$\Z^n$-periodic function such that~$\int_Qg =0$.
		
		Let~$p\in \S^{n-1}$ and $t\in\mathcal{T}_p$, so that $E_{p,t}$ is a planelike minimizer for~$\J$ in direction~$p$ that satisfies~\eqref{eq::flatness} (according to Corollary~\ref{cor::K_planelike}).
		
Then, for any Lipschitz domain~$\Omega\subseteq\R^n$,
		\begin{equation} \label{eq::tail_estimate_phi}
			\phi(p)\haus{n-1}(\Omega\cap \partial\mathcal{I}_p) = \lim_{\epsilon\to0} \F_\epsilon(\epsilon E_{p,t}, \Omega),
		\end{equation}
		where~$\mathcal{I}_p$ is defined as in~\eqref{eq::def_halfsapce}.
	\end{corollary}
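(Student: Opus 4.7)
The plan is to exploit the scaling identity $\F_\epsilon(\epsilon E_{p,t},\Omega)=\epsilon^{n-1}\F(E_{p,t},\Omega/\epsilon)$, which follows by the change of variables $x=\epsilon x'$, $y=\epsilon y'$ in~\eqref{eq::rescaled_energy2} (using $K_\epsilon(\epsilon x',\epsilon y')=\epsilon^{-n-1}K(x',y')$, $g_\epsilon(\epsilon x')=\epsilon^{-1}g(x')$, and $\mathcal{Q}(\Omega)_\epsilon=\epsilon\,\mathcal{Q}(\Omega/\epsilon)_1$). Setting $R:=1/\epsilon$, the claim~\eqref{eq::tail_estimate_phi} reduces to
\begin{equation*}
\lim_{R\to+\infty} R^{1-n}\F(E_{p,t},R\Omega)=\phi(p)\,\haus{n-1}(\Omega\cap\partial\mathcal{I}_p).
\end{equation*}
Since $E_{p,t}$ is planelike with $\partial E_{p,t}\subseteq S_M:=\{|x\cdot p|\leq M\}$ by Corollary~\ref{cor::K_planelike}, outside $S_M$ the set $E_{p,t}$ coincides with a half-space, so the contribution to $\F(E_{p,t},R\Omega)$ coming from $R\Omega\setminus S_M$ is controlled by an argument analogous to Lemma~\ref{lemma::vanishing_energy} (the $g$-integral vanishing on unit cubes lying entirely in $S_M^\pm$ by~\eqref{eq::g_zero_avg}).

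Next, I introduce an intermediate scale $\rho=\rho(R)$ with $1\ll\rho\ll R$ (for concreteness $\rho:=R^{1/2}$) and tile $\partial\mathcal{I}_p$ by translates of an $(n-1)$-cube of side $\rho$ centered at a lattice $\{q_i\}_i\subseteq\partial\mathcal{I}_p$ of spacing $\rho$. Extending each one in the $p$-direction yields $n$-cubes $Q_i:=q_i+\rho Q^p$ that tile the strip $\{|x\cdot p|\leq\rho/2\}\supseteq S_M$ for $\rho$ large. I classify the $Q_i$ as \emph{interior} ($Q_i\Subset R\Omega$), \emph{boundary} ($Q_i\cap\partial(R\Omega)\neq\varnothing$), and \emph{exterior} ($Q_i\cap R\Omega=\varnothing$, discarded). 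Corollary~\ref{cor::phi_translated_cube} applied at each interior center $q_i$ (with $1/\epsilon:=\rho$) gives $\rho^{1-n}\F(E_{p,t},Q_i)=\phi(p)+o(1)$ as $\rho\to+\infty$. The Lipschitz regularity of $\Omega$ yields $N_{\mathrm{int}}\,\rho^{n-1}=R^{n-1}\haus{n-1}(\Omega\cap\partial\mathcal{I}_p)+O(R^{n-2}\rho)$, and $N_{\mathrm{bd}}=O((R/\rho)^{n-2})$, so summing the interior contributions produces the expected main term $\phi(p)\,R^{n-1}\haus{n-1}(\Omega\cap\partial\mathcal{I}_p)(1+o(1))$.

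The main obstacle is to reconcile $\F(E_{p,t},R\Omega)$ with $\sum_{i\text{ int.}}\F(E_{p,t},Q_i)$, since $\P_K$ is neither monotone nor additive in the domain: the partition both double-counts cross-interactions $\mathcal{L}_K(Q_i\cap E_{p,t},Q_j\cap E_{p,t}^c)$ for $i\neq j$ and omits contributions from $R\Omega\setminus\bigcup_i Q_i$. These discrepancies are handled by the ideas already used in the proof of Lemma~\ref{lemma::finite_limit}: the interactions between non-adjacent cubes decay like $|q_i-q_j|^{-(n+2s_2)}$ by the upper bound in~\eqref{eq::K_behavior}, summing to $O(R^{n-1}\rho^{1-2s_2})=o(R^{n-1})$ thanks to the fact that $E_{p,t}$ is a half-space outside $S_M$; the boundary cubes contribute at most $N_{\mathrm{bd}}\cdot O(\rho^{n-1})=O(R^{n-2}\rho)=o(R^{n-1})$ via the uniform bound~\eqref{eq::finite_limit}; and the mismatch of the $g$-integral between $\mathcal{Q}(R\Omega)_1$ and $\bigcup_i\mathcal{Q}(Q_i)_1$ is of the same lower order by $g\in L^\infty$ and Lipschitz regularity. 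A further subtlety is that Corollary~\ref{cor::phi_translated_cube} gives the limit pointwise in the center $q_i$, whereas we need near-uniformity across the roughly $(R/\rho)^{n-1}$ interior cubes; this is handled by using the $\Z^n$-invariance of $\F$ (periodicity of $g$ together with translation invariance of $K$) to reduce each $q_i$ to its nearest lattice point, with the remaining $O(1)$ shift absorbed by the flatness bound $M$ on $E_{p,t}$ and the decay of $K$. Dividing by $R^{n-1}$ and letting $R\to+\infty$ with $\rho=R^{1/2}$ then yields~\eqref{eq::tail_estimate_phi}.
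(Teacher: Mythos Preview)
Your proposal is correct and follows essentially the same two-scale strategy as the paper: cover $\partial\mathcal{I}_p\cap\Omega$ by cubes at an intermediate scale, invoke Corollary~\ref{cor::phi_translated_cube} on each interior cube to extract the main term $\phi(p)$, and show all remaining contributions are lower order. The only tactical difference is that the paper controls the cross-interactions by passing to the truncated energy $\E_\epsilon$ of~\eqref{eq::truncated_energy} and applying Lemma~\ref{lemma::alberti_bellettini}, whereas you estimate them directly through the decay in~\eqref{eq::K_behavior}; both routes work, and you in fact flag the uniformity-in-$q_i$ issue more explicitly than the paper does.
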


	\begin{proof}
		Let us consider sequences~$\epsilon_j$, $\rho_j$ such that
		\begin{equation}\label{bvc47tyfuhkaeuo37948t42qioh}
			\lim_{j\to+\infty} \epsilon_j = \lim_{j\to+\infty} \rho_j = \lim_{j\to+\infty} \frac{\epsilon_j}{\rho_j} = 0. 
		\end{equation}
Also, we consider the collection%~$\mathcal{B}_j$ of all disjoint cubes that are translated of~$\rho_j Q^p$ (of side~$\rho_j$), centered at points of the form~$\epsilon_j v_j$, for some~$v_j\in\partial \mathcal{I}_p$, and contained in~$\Omega$. %, namely
		$$\mathcal{B}_j := \big\{\rho_j\widetilde{Q}:=\epsilon_j v_j+\rho_jQ^p \mbox{ s.t. } v_j\in\partial\mathcal{I}_p \mbox{, and }\rho_j\widetilde{Q}\subseteq\Omega\big\}. $$
		In particular, $N_j:=\sharp(\mathcal{B}_j) = \lfloor \rho_j^{1-n}\haus{n-1}(\Omega\cap\partial\mathcal{I}_p)\rfloor$.  Let us also denote 
		$$R_j:=\bigcup_{\rho_j\widetilde{Q}\in \mathcal{B}_j}\rho_j\widetilde{Q}.$$
		
%		Since~$\epsilon_j/\rho_j\to0$ and~$\int_Q g=0$, i
It is convenient to look at the family of cubes 
		\begin{equation*}
			\mathcal{Q}_{E_p}(\Omega/\epsilon_j)_1 := \bigcup
			\big(\ell+Q\big)\end{equation*}
where the union is taken over all~$\ell\in\Z^n$ such that~$\ell+Q\subset \Omega/\epsilon_j $ and~$(\ell+Q)\cap\partial E_p\neq\varnothing$.

Thanks to~\eqref{bvc47tyfuhkaeuo37948t42qioh}, we can take~$j$
sufficiently large such that~$\rho_j/\epsilon_j>M$, and we have that
		\begin{equation} \label{eq::g_reminder}
			\begin{split}
				&\epsilon_j^{n-1}\int_{E_p\cap \mathcal{Q}(\Omega/\epsilon)_1}g = \epsilon_j^{n-1}\int_{E_p\cap \mathcal{Q}_{E_p}(\Omega/\epsilon_j)_1}g\\
				&\qquad= \sum_{\rho_j\widetilde{Q}\in \mathcal{B}_j}\left(\epsilon_j^{n-1}\int_{E_p\cap \mathcal{Q}_{E_p}(\rho_j\widetilde{Q}/\epsilon_j)_1}g\right) + \epsilon_j^{n-1}\int_{E_p\cap \left( \mathcal{Q}_{E_p}(\Omega/\epsilon_j)\setminus \bigcup_{\rho_j\widetilde{Q}\in \mathcal{B}_j}\mathcal{Q}_{E_p}(\rho_j\widetilde{Q}/\epsilon_j)_1\right)}g\\
				&\qquad = \sum_{\rho_j\widetilde{Q}\in \mathcal{B}_j}\left(\epsilon_j^{n-1}\int_{E_p\cap \mathcal{Q}(\rho_j\widetilde{Q}/\epsilon_j)_1}g\right)  +\psi(j),
			\end{split}
		\end{equation}
		where
		\begin{equation*} 
			\psi(j):=\epsilon_j^{n-1}\int_{E_p\cap \left( \mathcal{Q}_{E_p}(\Omega/\epsilon_j)_1\setminus \bigcup_{\rho_j\widetilde{Q}\in \mathcal{B}_j}\mathcal{Q}_{E_p}(\rho_j\widetilde{Q}/\epsilon_j)_1\right)}g.
		\end{equation*}
		
		Now, let $\eta_j:=\epsilon_j/\rho_j$, and define, for every~$\rho_j\widetilde{Q}\in\mathcal{B}_j$,
		\begin{align*}
			&M_{\Omega,j} := \sharp \left\{\ell+Q\subset \Omega/\epsilon_j \mbox{ s.t. } \ell\in\Z^n \mbox{ and }(\ell+Q)\cap\partial E_p\neq\varnothing\right\}\\
			\mbox{and}\quad& M_{\widetilde{Q},j} := \sharp\left\{\ell+Q\subset \widetilde{Q}/\eta_j \mbox{ s.t. } \ell\in\Z^n \mbox{ and }(\ell+Q)\cap\partial E_p\neq\varnothing\right\}.
		\end{align*}
		Since $E_p$ is a planelike in direction $p$, we have that
		\begin{equation} \label{eq::estimate_M_Omega}
			\begin{split}
				\lim_{j\to+\infty}\epsilon^{n-1}M_{\Omega,j} &= \lim_{j\to+\infty} \sharp \left\{\epsilon_j(\ell+Q)\subset \Omega \mbox{ s.t. } \ell\in\Z^n \mbox{ and }\epsilon_j(\ell+Q)\cap\partial \epsilon_jE_p\neq\varnothing\right\}\\
				&= \haus{n-1}(\Omega\cap\partial\mathcal{I}_p) 
			\end{split}
		\end{equation}
		and, similarly, for every $\rho_j\widetilde{Q}\in\mathcal{B}_j$,
		\begin{equation} \label{eq::estimate_M_Q}
			\begin{split}
				&\lim_{j\to+\infty} \eta^{n-1}M_{\widetilde{Q},j} = \lim_{j\to+\infty} \sharp\left\{\eta_j(\ell+Q)\subset \widetilde{Q} \mbox{ s.t. } \ell\in\Z^n \mbox{ and }\eta_j(\ell+Q)\cap\partial (\eta_jE_p)\neq\varnothing\right\}\\
				&\qquad = \haus{n-1}(\widetilde{Q}\cap\partial\mathcal{I}_p) = 1.
			\end{split}
		\end{equation}
		
		Moreover, recalling that the fractional part of a real number is a bounded function, we deduce that
		\begin{equation} \label{eq::reminder1}\begin{split}
			\lim_{j\to+\infty} \rho_j^{n-1} N_j &= \lim_{j\to+\infty} \rho_j^{n-1} \left(\rho_j^{1-n}\haus{n-1}(\Omega\cap\partial\mathcal{I}_p)-\{\rho_j^{1-n}\haus{n-1}(\Omega\cap\partial\mathcal{I}_p\}\right)\\& = \haus{n-1}(\Omega\cap\partial\mathcal{I}_p).
			\end{split}
		\end{equation}
		
		Thus, from~\eqref{eq::estimate_M_Omega},~\eqref{eq::estimate_M_Q}, and~\eqref{eq::reminder1} together, we infer that
		\begin{equation} \label{eq::small_g_reminder}
			\begin{split}
				&|\psi(j)| \leq \epsilon_j^{n-1}\norm{g}_{L^1(Q)}\left[M_{\Omega,j}-\sum_{\rho_j\widetilde{Q}\in \mathcal{B}_j} M_{\tilde{Q},j}\right]\\
				&\qquad = \norm{g}_{L^1(Q)}\left[\haus{n-1}(\Omega\cap\partial\mathcal{I}_p) + o(1)-\sum_{\rho_j\widetilde{Q}\in \mathcal{B}_j} \left(\rho^{n-1}+o(\rho^{n-1})\right)\right]\\
				&\qquad =  \norm{g}_{L^1(Q)}\left[\haus{n-1}(\Omega\cap\partial\mathcal{I}_p) + o(1)- N_j\left(\rho^{n-1}+o(\rho^{n-1})\right)\right],
			\end{split}
		\end{equation} which is infinitesimal as~$j\to+\infty$.
	
		Therefore, from~\eqref{eq::g_reminder} and recalling the definition of~$\E$ from~\eqref{eq::truncated_energy}, we obtain that
		\begin{equation*}
			\begin{split}
			\epsilon_j^{n-1}\F\left(E_p,\frac1{\epsilon_j}\Omega\right) &= \epsilon_j^{n-1}\E\left(E_p,\frac1{\epsilon_j}\Omega\right) + \epsilon_j^{n-1}\mathcal{L}_K\left(E_p\cap\frac1{\epsilon_j}\Omega,E_p^c\cap\frac1{\epsilon_j}\Omega^c\right)\\
				&  =\sum_{\rho_j\widetilde{Q}\in \mathcal{B}_j} \left[\epsilon_j^{n-1}\E\left(E_p,\frac{\rho_j}{\epsilon_j}\widetilde{Q}\right)\right] 
				+ \psi(j)+ \epsilon_j^{n-1}\P_K\left(E_p,\frac1{\epsilon_j}(\Omega\setminus R_j)\right)\\
				&\quad-\epsilon_j^{n-1}\mathcal{L}_K\left(E_p\cap\frac1{\epsilon_j}(\Omega\setminus R_j)^c,E_p^c\cap\frac1{\epsilon_j}(\Omega\setminus R_j)\right)\\
				&\quad+ \epsilon_j^{n-1}\mathcal{L}_K\left(E_p\cap\frac1{\epsilon_j}\Omega,E_p^c\cap\frac1{\epsilon_j}\Omega^c\right) \\
				&  =\sum_{\rho_j\widetilde{Q}\in \mathcal{B}_j} \left[\epsilon_j^{n-1}\F\left(E_p,\frac{\rho_j}{\epsilon_j}\widetilde{Q}\right) -\epsilon_j^{n-1}\mathcal{L}_K\left(E_p\cap\frac{\rho_j}{\epsilon_j}\widetilde{Q}, E_p^c\cap \frac{\rho_j}{\epsilon_j}\widetilde{Q}^c \right)\right] \\
				&\quad + \psi(j)+ \epsilon_j^{n-1}\P_K\left(E_p,\frac1{\epsilon_j}(\Omega\setminus R_j)\right)\\
				&\quad-\epsilon_j^{n-1}\mathcal{L}_K\left(E_p\cap\frac1{\epsilon_j}(\Omega\setminus R_j)^c,E_p^c\cap\frac1{\epsilon_j}(\Omega\setminus R_j)\right)\\
				&\quad + \epsilon_j^{n-1}\mathcal{L}_K\left(E_p\cap\frac1{\epsilon_j}\Omega,E_p^c\cap\frac1{\epsilon_j}\Omega^c\right) .
			\end{split}
		\end{equation*}
		
		At this point, observe that it follows from Corollary~\ref{cor::phi_translated_cube}, Lemma~\ref{lemma::alberti_bellettini}, and our choice of~$\rho_j\widetilde{Q}$ that
			\begin{equation*}
				\epsilon_j^{n-1}\F\left(E_p,\frac{\rho_j}{\epsilon_j}\widetilde{Q}\right) -\epsilon_j^{n-1}\mathcal{L}_K\left(E_p\cap\frac{\rho_j}{\epsilon_j}\widetilde{Q}, E_p^c\cap \frac{\rho_j}{\epsilon_j}\widetilde{Q}^c \right) = \rho_j^{n-1}\phi(p)+ o(\rho_j^{n-1}).
			\end{equation*}
		Thus, we infer that
		\begin{equation} \label{eq::estimate_energy_Omega}
			\begin{split}
				&\epsilon_j^{n-1}\F\left(E_p,\frac1{\epsilon_j}\Omega\right) \\
				=&N_j\left(\rho_j^{n-1}\phi(p)+ o(\rho_j^{n-1})\right) 
				+ \psi(j)+ \epsilon_j^{n-1}\P_K\left(E_p,\frac1{\epsilon_j}(\Omega\setminus R_j)\right) \\
				&\quad-\epsilon_j^{n-1}\mathcal{L}_K\left(E_p\cap\frac1{\epsilon_j}(\Omega\setminus R_j)^c,E_p^c\cap\frac1{\epsilon_j}(\Omega\setminus R_j)\right)\\
				&\quad + \epsilon_j^{n-1}\mathcal{L}_K\left(E_p\cap\frac1{\epsilon_j}\Omega,E_p^c\cap\frac1{\epsilon_j}\Omega^c\right).
			\end{split}
		\end{equation}
		
		Furthermore, by inspection of the proof of~\eqref{eq::energy_tail} in Lemma~\ref{lemma::finite_limit}, we infer that 
		\begin{equation}  \label{eq::reminder2}\begin{split}&
				\lim_{j\to+\infty} \epsilon_j^{n-1}\P_K\left(E_p,\Omega/\epsilon_j\setminus\left(\bigcup_{\rho_j\widetilde{Q}\in \mathcal{B}_j} \frac{\rho_j}{\epsilon_j}\widetilde{Q}\right)\right) \\&\qquad\leq \lim_{j\to+\infty} c\Big(\haus{n-1}(\Omega\cap\partial \mathcal{I}_p)-N_j\rho^{n-1}+\epsilon_j^{n-1}(N_j\rho_j\epsilon_j^{-1})^{2s_2-1-\alpha}\Big)=0.\end{split}
		\end{equation}
		
		Additionally, thanks to Lemma~\ref{lemma::alberti_bellettini}, we also have
		\begin{equation*}
				\limsup_{j\to+\infty}\, \epsilon_j^{n-1}\mathcal{L}_K\left(E_p\cap\frac1{\epsilon_j}(\Omega\setminus R_j)^c,E_p^c\cap\frac1{\epsilon_j}(\Omega\setminus R_j)\right) + \epsilon_j^{n-1}\mathcal{L}_K\left(E_p\cap\frac1{\epsilon_j}\Omega,E_p^c\cap\frac1{\epsilon_j}\Omega^c\right)=0.
		\end{equation*}
		Using this, \eqref{eq::reminder1},~\eqref{eq::small_g_reminder}, and~\eqref{eq::reminder2} in~\eqref{eq::estimate_energy_Omega}, we conclude that
		\begin{equation*}
			\lim_{j\to+\infty} \epsilon_j^{n-1}\F\left(E_p,\frac1{\epsilon_j}\Omega\right) = \haus{n-1}(\Omega\cap\partial\mathcal{I}_p)\phi(p).\qedhere
		\end{equation*}
	\end{proof}

	\section{Proof of the $\Gamma$-$\liminf$ inequality} \label{sec::gamma_liminf}

	Here, we present a proof of the~$\Gamma-\liminf$ inequality in Theorem~\ref{th::gamma_conv}-\eqref{item::gamma_conv_inf}.
	To this purpose, let~$E\subseteq \R^n$, and let~$\{E_\epsilon\}_\epsilon$ be a sequence of sets such that~$E_\epsilon\to E$ 
	in~$L^1_{\loc}(\R^n)$.
	
	Moreover, up to extracting a subsequence, we suppose that
	$$\liminf_{\epsilon\to0} \F_\epsilon(E_\epsilon,\Omega) = \lim_{\epsilon\to0} \F_\epsilon(E_\epsilon,\Omega) <+\infty,$$ 
	otherwise the result is trivial. In particular, without loss of generality, we assume that 
	\begin{equation*}
		\sup_{\epsilon} \F_\epsilon(E_\epsilon,\Omega)<+\infty,
	\end{equation*}
	
	Now, following the ideas of~\cite{MR1286918} (see also~\cite{MR1656477, MR1634336, chambolle_thouroude}), we consider the Radon measure
	\begin{equation}
		\lambda_\epsilon(\cdot) := \iiint_{(\cdot)\cap\Omega\times\R^n} \chi_{E_\epsilon}(x)\chi_{E_\epsilon^c}(y)K_\epsilon(x,y)\,dx\,dy  + \sum_{\substack{\ell\in\Z^n\\ \epsilon(\ell+Q)\subseteq \Omega}}  \delta_{\epsilon \ell}(\cdot) \iint_{E_\epsilon\cap \epsilon(\ell+Q)}g_\epsilon(x)\,dx,
	\end{equation}
	where, for any set $A\subseteq\R^n$, 
	\begin{equation*}
		\delta_{\epsilon \ell}(A):=
		\begin{cases}
			0,\quad&\mbox{if }\epsilon\ell\not\in A,\\
			1,\quad&\mbox{if }\epsilon\ell\in A.
		\end{cases}
	\end{equation*}
	
	By Proposition~\ref{prop::energy_lower_bound_BR}, we have that~$\lambda_\epsilon\geq0$. Moreover, the total mass of~$\lambda_\epsilon$ is bounded uniformly in~$\epsilon$. Indeed, by construction, we have that
	\begin{align*}
		&\lambda_\epsilon(\R^n) = \lambda_\epsilon(\Omega) = \iint_{\Omega\times\R^n} \chi_{E_\epsilon}(x)\chi_{E^c_\epsilon}(y)K_\epsilon(x,y)\,dx\,dy + \int_{E_\epsilon\cap\mathcal{Q}(\Omega)_\epsilon} g_\epsilon(x)\,dx \\
		&\qquad\qquad=\E_\epsilon(E_\epsilon,\Omega) \leq \F_\epsilon(E_\epsilon,\Omega) \leq \sup_{\epsilon} \F_\epsilon(E_\epsilon,\Omega)<+\infty,
	\end{align*}
	where $\E_\epsilon$ si defined as in~\eqref{eq::truncated_energy}.
	
	Therefore, there exists a positive measure~$\lambda$ such that
	\begin{align*}
		&\lambda_\epsilon \rightharpoonup^* \lambda,\mbox{ i.e. $\lambda_\epsilon$ converges weakly$*$ to $\lambda$ in the sense of measure}\\
		&\mbox{and}\quad \lambda(A) \leq \liminf_{\epsilon\to0} \lambda_\epsilon(A),\quad\mbox{for every set~$A$ such that~$\lambda(\partial A)=0$.}
	\end{align*}
	In particular, we have that
	\begin{equation} \label{eq::lambda_lsc}
		\lambda(\Omega) \leq \liminf_{\epsilon\to0} \lambda_\epsilon(\Omega) \leq \liminf_{\epsilon\to0} \F_\epsilon(E_\epsilon,\Omega).
	\end{equation}
	\begin{comment}
		Up to taking a subsequence, we assume that 
		\begin{equation*}
			\lambda(A) = \lim_{\epsilon\to0} \lambda_\epsilon(A),
		\end{equation*}
		for every set~$A$ such that~$\lambda(\partial A)=0$.
	\end{comment}
	
Now, let~$\overline{x}$ be a regular point of~$E$, namely a point~$\overline{x}\in\partial E$ with~$(n-1)$-density~$1$ and such that the blow-up of~$E$ around~$\overline{x}$ converges to~$\{(x-\overline{x})\cdot\nu_E(\overline{x})>0\}$. More precisely,
	\begin{align*}
		&\lim_{r\to0}\frac{\haus{n-1}(\partial E\cap B_r(\overline{x}))}{\omega_{n-1}r^{n-1}}=1,\\
		\mbox{and}\quad& \lim_{r\to0^+}\frac1{r^{n}}\int_{B_{2r}(\overline{x})} |\chi_E-\chi_{\{(x-\overline{x})\cdot\nu_E(\overline{x})>0\}}(x)|\,dx = 0,
	\end{align*}
	where $\omega_{n-1}:=\haus{n-1}(B^{n-1}_1)$.
	
	We consider the Radon-Nikodyn derivative of~$\lambda$ with respect to the~$(n-1)$-dimensional Hausdorff measure at~$\overline{x}$, namely
	\begin{equation*}
		\frac{d\lambda}{d\haus{n-1}}(\overline{x}) := \lim_{r\to0} \frac{\lambda(B_r(\overline{x}))}{\haus{n-1}(\partial E\cap B_r(\overline{x}))} =\lim_{r\to0} \frac{\lambda(B_r(\overline{x}))}{\omega_{n-1}r^{n-1}},
	\end{equation*}
	and observe that, in light of~\eqref{eq::lambda_lsc} and the Besicovitch Derivation Theorem (see~\cite[Theorem~5.52]{MR1857292}), Theorem~\ref{th::gamma_conv}-\eqref{item::gamma_conv_inf} will follow if we show that 
	\begin{equation} \label{eq::radon_nikodym}
		\frac{d\lambda}{d\haus{n-1}}(\overline{x}) \geq \phi (\nu_E(\overline{x})),
	\end{equation} where~$\phi$ is as in~\eqref{eq::anisotropy}.
	
Let us consider subsequences~$\{\epsilon_j\}_j$, $\{r_j\}_j$, and~$\{\eta_j:=\epsilon_j/r_j\}_j$ such that
	\begin{equation} \label{eq::taking_subseq}
		\begin{split}
			&\lim_{j\to+\infty} \frac{\lambda_{\epsilon_j}(B_{r_j}(\overline{x}))}{\omega_{n-1}r_j^{n-1}} =\frac{d\lambda}{d\haus{n-1}}(\overline{x}),\\
			&\lim_{j\to+\infty}\frac1{r_j^{n}}\int_{B_{2r_j}(\overline{x})} |\chi_E(x)-\chi_{\{(x-\overline{x})\cdot\nu_E(\overline{x})>0\}}(x)|\,dx = 0,\\
			\mbox{and}\quad&\lim_{j\to+\infty} \eta_j  = 0.
		\end{split}
	\end{equation}
	In what follows, in order to lighten the notation, we will write~$E_j:=E_{\epsilon_j}$, $\lambda_{\epsilon_j}:=\lambda_j$, and~$\nu:=\nu_E(\overline{x})$. %drop the subindex $j$.
	We now focus on proving~\eqref{eq::radon_nikodym}, that in this notation reads
	\begin{equation*}
		\lim_{j\to+\infty} \frac{\lambda_{j}(B_{r_j}(\overline{x}))}{\omega_{n-1}r_j^{n-1}} \geq\phi(\nu).
	\end{equation*}
	
	To achieve this, let us introduce, for every set~$A$, the notation~$A':=(A-\overline{x})/r_j$. Thus, taking advantage of the scaling properties of~$\lambda_{j}$, we obtain
	\begin{equation} \label{eq::rn_estimates1}
		\begin{split}
			&\frac{\lambda_{j}(B_{r_j}(\overline{x}))}{r_j^{n-1}} \\		
			=& r_j^{1-n}\left(\iint_{B_{r_j}(\overline{x})\times\R^n} \chi_{E_j}(x)\chi_{E^c_j}(y)K_{\epsilon_j}(x,y)\,dx\,dy + \int_{E_j\cap\mathcal{Q}(B_{r_j}(\overline{x}))_{\epsilon_j}} g_{\epsilon_j}(x)\,dx\right)\\
			=& \iint_{B_{1}\times\R^n} \chi_{E'_j}(x)\chi_{E'^c_j}(y)K_{\eta_j}(x+\overline{x}/r_j,y+\overline{x}/r_j)\,dx\,dy \\
			&+ \int_{E'_j\cap\mathcal{Q}'(B_{r_j}(\overline{x}))_{\epsilon_j}} g_{\eta_j}(x+\overline{x}/r_j)\,dx,
		\end{split}
	\end{equation}
	where
	\begin{equation*}
		\mathcal{Q}'(B_{r_j}(\overline{x}))_{\epsilon_j} := \left(\mathcal{Q}(B_{r_j}(\overline{x}))_{\epsilon_j}-\overline{x}\right)/r_j = \bigcup\left\{\frac{\epsilon_j(k+Q)-\overline{x}}{r_j} \mbox{s.t. }k\in\Z^n,\ \epsilon_j(k+Q)\subseteq B_{r_j}(\overline{x})\right\}.
	\end{equation*}
	
	We also set~$A'':=A'+\eta_j\{\overline{x}/\epsilon_j\}$, where~$\{x\}_j:=\{x_j\} = x_j-\lfloor x_j \rfloor$ is the fractional part of the~$j$-th component of a point~$x$. Then, \eqref{eq::rn_estimates1} boils down to
	\begin{equation*}
		\begin{split}
			\frac{\lambda_{j}(B_{r_j}(\overline{x}))}{r_j^{n-1}} &=  \iint_{B_{1}(\eta_j\{\overline{x}/\epsilon_j\})\times\R^n} \chi_{E''_j}(x)\chi_{E''^c_j}(y)K_{\eta_j}(x,y)\,dx\,dy + \int_{E''_j\cap\mathcal{Q}''(B_{r_j}(\overline{x}))_{\epsilon_j}} g_{\eta_j}(x)\,dx\\
			& =\iint_{B''_j\times\R^n} \chi_{E''_j}(x)\chi_{E''^c_j}(y)K_{\eta_j}(x,y)\,dx\,dy + \int_{E''_j\cap\mathcal{Q}''(B_{r_j}(\overline{x}))_{\epsilon_j}} g_{\eta_j}(x)\,dx,
		\end{split}
	\end{equation*}
	where, to be consistent with our notation, we wrote~$B_j'':=B_{1}(\eta_j\{\overline{x}/\epsilon_j\})$.
	
	Moreover, observe that
	\begin{equation*}
		\begin{split}
			\mathcal{Q}''(B_{r_j}(\overline{x}))_{\epsilon_j} &= \bigcup\left\{\frac{\epsilon_j(k+Q)-\overline{x}}{r_j}+\eta_j\left\{\frac{\overline{x}}{\epsilon_j}\right\} \mbox{s.t. }k\in\Z^n,\ \epsilon_j(k+Q)\subseteq B_{r_j}(\overline{x})\right\}\\
			& = \bigcup\left\{\eta_j\left(k-\left\lfloor\frac{\overline{x}}{\epsilon_j}\right\rfloor + Q\right) \mbox{s.t. }k\in\Z^n,\ \eta_j\left(k-\left\lfloor\frac{\overline{x}}{\epsilon_j}\right\rfloor + Q\right)\subseteq B''_j(\overline{x})\right\}\\
			& = \mathcal{Q}(B''_j)_{\eta_j}.
		\end{split}
	\end{equation*}
	Hence, it follows from the last two equations in display that
	\begin{equation} \label{eq::rn_estimates2}
		\frac{\lambda_{j}(B_{r_j}(\overline{x}))}{\omega_{n-1}r_j^{n-1}} =  \frac{\E_{\eta_j}(E''_j, B''_j)}{\omega_{n-1}}.
	\end{equation}
	
	Furthermore, since~$\eta_j\to0$ and the fractional part is a bounded function, we deduce from~\eqref{eq::taking_subseq} that
	\begin{equation}\label{eq::conv_halfspace}
		\begin{split}
			&\lim_{j\to+\infty} \int_{B_{3/2}}|\chi_{E''_j}(x)-\chi_{\mathcal{I}_\nu}(x)|\,dx \\
			\leq\;& \lim_{j\to+\infty} \int_{B_{3/2}}|\chi_{E''_j}(x)-\chi_{\{(x-\eta_j\{\overline{x}/\epsilon_j\})\cdot\nu>0\}}(x)|\,dx \\&\qquad+ \lim_{j\to+\infty} \int_{B_{3/2}}|\chi_{\{(x-\eta_j\{\overline{x}/\epsilon_j\})\cdot\nu>0\}}(x)-\chi_{\mathcal{I}_\nu}(x)|\,dx\\
			=\;& 0.
		\end{split}
	\end{equation}
	In particular, $\chi_{E''_j}\to\chi_{\mathcal{I}_\nu}$ a.e. in~$B_{3/2}$ (up to a subsequence).
	
	Now, let~$E_\nu$ be a planelike minimizer for~$\J$ in direction~$\nu$ constructed combining Theorem~\ref{th::existence_minimizer}, Theorem~\ref{th::level_sets_minimality}, and Corollary~\ref{cor::K_planelike}. Then, let us define sets
	\begin{equation} \label{eq::competitor_liminf}
		F_j := (\eta_j E_\nu\setminus \mathcal{Q}(B''_j)_1) \cup (E''_j\cap  \mathcal{Q}(B''_j)_1),
	\end{equation} see Figure~\ref{fig::G_liminf}.
	
	\begin{figure}
		\centering
		\includegraphics[width=.8\linewidth]{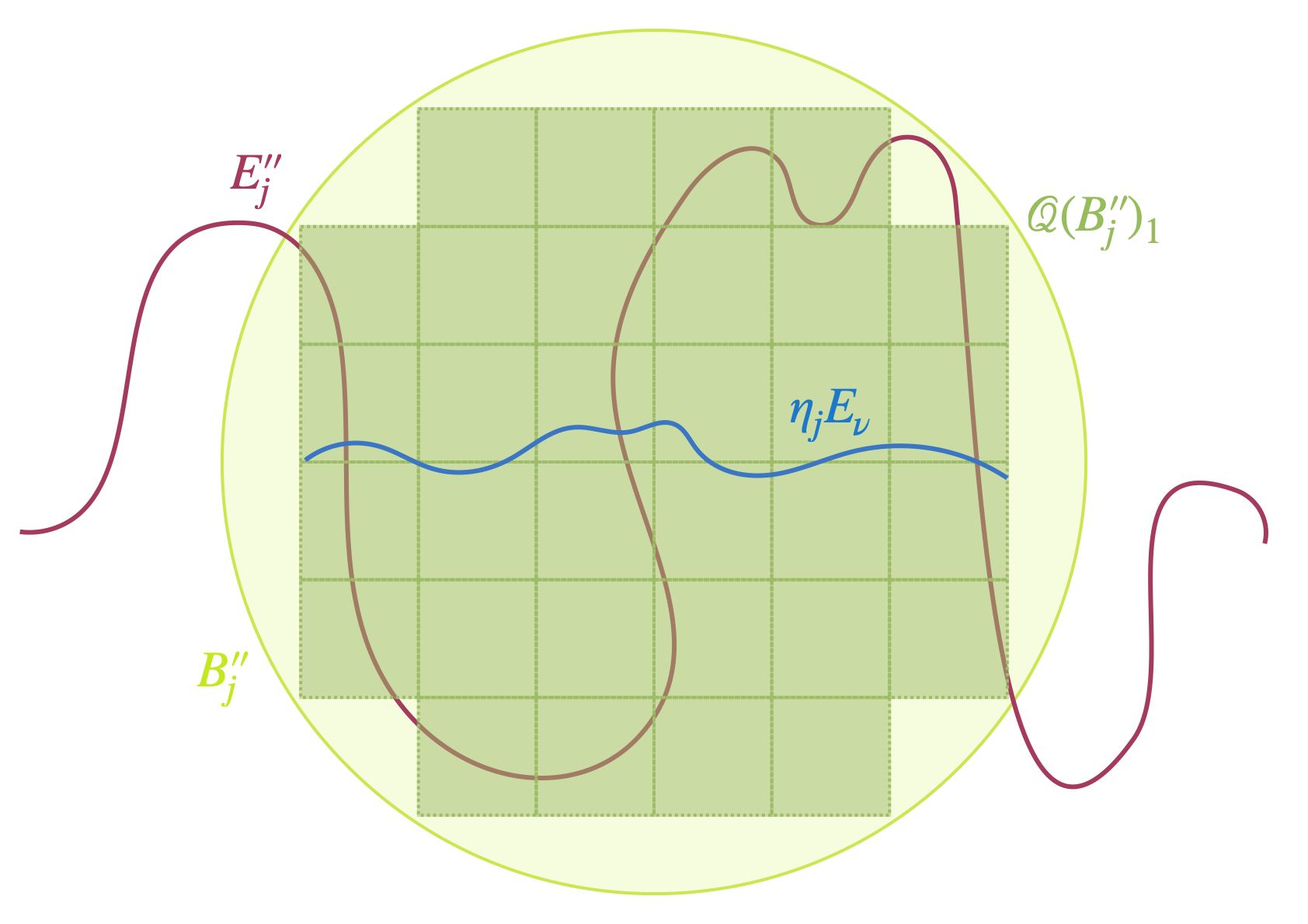}
		\caption{The competitor $F_j$ as in \eqref{eq::competitor_liminf} that produces the correct $\Gamma-\liminf$ inequality.}\label{fig::G_liminf}
	\end{figure}
	
	Since~$F_j\setminus\mathcal{Q}(B''_j)_1 = \eta_j E_\nu \setminus\mathcal{Q}(B''_j)_1$, by Theorem~\ref{th::level_sets_minimality}, Remark~\ref{rem::class_A_F} and Corollary~\ref{cor::tail_estimate_phi}, we infer that
	\begin{equation} \label{eq::conv_phi}
	\lim_{j\to+\infty}	\F_{\eta_j}(F_j, B''_j) \geq\lim_{j\to+\infty} \F_{\eta_j}(\eta_jE_\nu, B''_j) =\haus{n-1}(\partial \mathcal{I}_\nu\cap B_1)\phi(\nu) = \omega_{n-1}\phi(\nu).
	\end{equation}
	
	Also, observe that
	\begin{equation} \label{eq::measure_energy_lower_bound}
		\E_{\eta_j}(E''_j, B''_j) - \F_{\eta_j}(F_j, B''_j) \geq -\mathcal{L}_{K_{\eta_j}}(F_j\cap B''_j,F_j^c\cap (B''_j)^c) .
	\end{equation}
	Since both~$E_j''\to \mathcal{I}_\nu$ and~$\eta_j E_\nu\to \mathcal{I}_\nu$ a.e. in~$\R^n$, thanks to Lemma~\ref{lemma::alberti_bellettini}, we have that
	\begin{equation} \label{eq::fast_decay}
		\lim_{j\to\infty}  \mathcal{L}_{K_{\eta_j}}(F_j\cap B''_j,F_j^c\cap (B''_j)^c) =  0.
	\end{equation}
	
	Therefore, putting together~\eqref{eq::conv_phi}, \eqref{eq::measure_energy_lower_bound}, and~\eqref{eq::fast_decay}, we find that
	\begin{equation*}
		\liminf_{j\to+\infty} \E_{\eta_j}(E''_j, B''_j) \geq \lim_{j\to+\infty} \F_{\eta_j}(\eta_jE_\nu, B''_j)-\mathcal{L}_{K_{\eta_j}}(F_j\cap B''_j,F_j^c\cap (B''_j)^c) = \omega_{n-1}\phi(\nu).
	\end{equation*}
	
	This, together with~\eqref{eq::rn_estimates2}, entails the desired result.
	
	% ------------------------------------------------------------------------------------------------
	
	\section{Continuity of the stable norm $\phi$} \label{sec::phi_continuous}
	
	In this section, we discuss the continuity of the stable norm~$\phi$. This result is a version of~\cite[Corollary A.4]{chambolle_thouroude}
	covering our setting, which will play a crucial role in proving the~$\Gamma-\limsup$ inequality (see Theorem~\ref{th::gamma_conv}-\eqref{item::gamma_conv_sup}) in Section~\ref{sec::gamma_limsup}.
	
	\begin{remark}
		Before presenting our result, we point out that, in~\cite[Corollary~A.4]{chambolle_thouroude}, the authors claim that the same argument of~\cite[Lemma 10.2]{MR1852978} suffices to prove the convexity of the stable norm~$\phi$. However, because of—plausibly—a typo in the proof of the latter, such argument cannot be directly applied in our context.
	\end{remark}
	
	\begin{proposition} \label{prop::phi_continuous} %\label{lemma::phi_convex}
		$\phi:\S^{n-1}\to\R$ is a continuous function.
	\end{proposition}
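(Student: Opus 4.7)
The plan is to prove continuity of $\phi$ at every $p_0\in\S^{n-1}$ by establishing lower and upper semicontinuity separately along an arbitrary sequence $p_k\to p_0$ in $\S^{n-1}$. For each $k$, let $E_{p_k}$ denote a planelike minimizer for $\J$ in direction $p_k$ with the uniform flatness $\partial E_{p_k}\subseteq\{|x\cdot p_k|\leq M\}$ furnished by Theorem~\ref{th::level_sets_minimality} and Corollary~\ref{cor::K_planelike} (with $M$ independent of $k$); let $E_{p_0}$ be chosen analogously.

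For lower semicontinuity I combine Corollary~\ref{cor::tail_estimate_phi} with the $\Gamma$-$\liminf$ inequality proved in Section~\ref{sec::gamma_liminf}. Passing to a subsequence realizing $\liminf_k\phi(p_k)$ and applying Corollary~\ref{cor::tail_estimate_phi} to $\Omega=B_1$, a diagonal selection (choosing $\epsilon_k\to 0$ so that $|\F_{\epsilon_k}(\epsilon_k E_{p_k},B_1)-\omega_{n-1}\phi(p_k)|<1/k$) produces
$$\F_{\epsilon_k}(\epsilon_k E_{p_k},B_1)=\omega_{n-1}\phi(p_k)+o(1).$$
The flatness $\partial(\epsilon_k E_{p_k})\subseteq\{|x\cdot p_k|\leq\epsilon_k M\}$ combined with $p_k\to p_0$ forces $\epsilon_k E_{p_k}\to\mathcal{I}_{p_0}$ in $L^1_{\loc}(\R^n)$, so the $\Gamma$-$\liminf$ inequality of Section~\ref{sec::gamma_liminf} yields
$$\omega_{n-1}\liminf_k\phi(p_k)\;\geq\;\F_\phi(\mathcal{I}_{p_0},B_1)\;=\;\omega_{n-1}\phi(p_0).$$

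For upper semicontinuity I employ a cut-and-paste construction at unrescaled scale. Fix $R\gg 1$ and a buffer $L=L(R)\in(2\sqrt{n},R/2)$ with $L\to\infty$ and $L/R\to 0$, and set
$$F_{k,R,L}:=\bigl(E_{p_0}\cap Q^{p_k}_{R-L}\bigr)\cup\bigl(E_{p_k}\cap(Q^{p_k}_{R-L})^c\bigr),$$
so that $F_{k,R,L}$ coincides with $E_{p_k}$ outside $\mathcal{Q}(Q^{p_k}_R)_1$; the class-A minimality of $E_{p_k}$ then gives $\F(E_{p_k},Q^{p_k}_R)\leq \F(F_{k,R,L},Q^{p_k}_R)$. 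I decompose the right-hand side into a bulk contribution on $Q^{p_k}_{R-L}$ coming from $E_{p_0}$, an annular contribution on $Q^{p_k}_R\setminus Q^{p_k}_{R-L}$ coming from $E_{p_k}$, and a cross-interaction across $\partial Q^{p_k}_{R-L}$. Dividing by $R^{n-1}$ and letting $R\to\infty$ (for $k$ fixed): the bulk term converges, via the rescaled form of Corollary~\ref{cor::tail_estimate_phi} applied with $\Omega=Q^{p_k}_1$, to $\phi(p_0)\,\haus{n-1}(Q^{p_k}_1\cap\partial\mathcal{I}_{p_0})$, which tends to $\phi(p_0)$ as $p_k\to p_0$; the annular term is $o(1)$ thanks to the tail estimate~\eqref{eq::energy_tail} of Lemma~\ref{lemma::finite_limit} and $L/R\to 0$; the cross-interaction is bounded, via a rescaled application of Lemma~\ref{lemma::alberti_bellettini} with $\epsilon_j=1/R$ and boundary traces $\chi_{\mathcal{I}_{p_0}},\chi_{\mathcal{I}_{p_k}}$, by $C_K\,\haus{n-1}\bigl(\partial Q^{p_k}_1\cap(\mathcal{I}_{p_0}\Delta\mathcal{I}_{p_k})\bigr)=O(|p_k-p_0|)$. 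Sending $R\to\infty$ first and then $k\to\infty$ gives $\limsup_k\phi(p_k)\leq\phi(p_0)$, completing the proof.

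The main obstacle will be the cross-interaction estimate, because the nonlocal kernel $K$ couples the interior region $\{F_{k,R,L}=E_{p_0}\}$ to the exterior $\{F_{k,R,L}=E_{p_k}\}$ over arbitrarily large distances. Achieving the correct scale $O(R^{n-1}|p_k-p_0|)$ rather than a crude $O(R^n)$ bound requires simultaneously exploiting the integrability of $K$ at infinity (Remark~\ref{rem::high_localing_K}) through Lemma~\ref{lemma::alberti_bellettini}, and the uniform planelike flatness constant $M$, which confines the symmetric difference $E_{p_0}\Delta E_{p_k}$ to a wedge-shaped region of thickness $O(|p_k-p_0|)$ about the hyperplane $\partial\mathcal{I}_{p_0}$.
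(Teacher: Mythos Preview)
Your outline is correct, but it takes a substantially different route from the paper's for the upper-semicontinuity half.

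The paper's argument hinges on the \emph{rotation invariance} of $K$ (assumption~\eqref{eq::K_invariance}): if $\mathcal{R}_\theta$ is a rotation carrying $q$ to $p$, then $\P_{K_\epsilon}(\epsilon E_q,Q^q)=\P_{K_\epsilon}(\epsilon\mathcal{R}_\theta E_q,Q^p)$ exactly, so the difference $\F_\epsilon(\epsilon E_q,Q^q)-\F_\epsilon(\epsilon\mathcal{R}_\theta E_q,Q^p)$ reduces to a pure $g$-term, bounded by a volume $\psi(\theta)\to 0$. Combining this with the $\Gamma$-$\liminf$ inequality applied to the sequence $\epsilon\mathcal{R}_\theta E_q\to\mathcal{I}_p$ gives $\phi(q)-\phi(p)\geq -\|g\|_\infty\psi(\theta)$; swapping $p\leftrightarrow q$ yields the other inequality. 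Thus both directions follow from a single volume estimate and no perimeter comparison is needed at all.

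Your lower-semicontinuity step (diagonal selection plus $\Gamma$-$\liminf$) is fine and close in spirit to the paper's. Your upper-semicontinuity step is genuinely different: rather than exploiting rotation invariance, you build a competitor $F_{k,R,L}$ and control bulk, annular and cross pieces separately via Corollary~\ref{cor::tail_estimate_phi}, Lemma~\ref{lemma::finite_limit}, and Lemma~\ref{lemma::alberti_bellettini}. This works, and has the advantage of being robust to dropping the rotation-invariance assumption on $K$; the price is a considerably more technical argument. Two points deserve care in a full write-up: (i) Lemma~\ref{lemma::alberti_bellettini} is stated for fixed domains $A,B$, whereas your rescaled inner cube $Q^{p_k}_{(R-L)/R}$ varies with $R$, so you must either freeze it or argue that the drift is harmless since $L/R\to 0$; (ii) the ``cross-interaction'' you bound via Lemma~\ref{lemma::alberti_bellettini} is the full $A\times A^c$ interaction of the glued set $F_{k,R,L}$, so to recover the bulk term $\F(E_{p_0},A)$ (rather than only its interior part) you should pair this with a second application of Lemma~\ref{lemma::alberti_bellettini} to $\chi_{E_{p_0}}$ alone, whose inner and outer traces coincide and hence contribute $0$ in the limit.
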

	
	\begin{proof}
		Let~$p$, $q\in\S^{n-1}$ and define~$\theta$ as the angle between~$p$ and~$q$ (i.e.~$\theta$ is defined through~$p\cdot q = \cos\theta$), so that if~$\mathcal{R}_\theta\in SO(n)$ denotes a rotation of angle~$\theta$, then~$p=\mathcal{R}_\theta q$.
		
		The continuity of~$\phi$ will follow if we show the existence of a function~$\psi$ such that
		\begin{equation*}
|\phi(q)-\phi(p)| \leq \psi(\theta)\qquad
\mbox{and}\qquad\lim_{\theta\to0} \psi(\theta) = 0.
		\end{equation*}
		
		To this purpose, we use Theorem~\ref{th::gamma_conv}-\eqref{item::gamma_conv_inf} (notice indeed that~$\epsilon \mathcal{R}_\theta E_q$ converges to~$\mathcal{I}_p$ as~$\epsilon\to0$)
		and Proposition~\ref{prop::existence_phi} to obtain
		\begin{equation*}
			\liminf_{\epsilon\to0} \F_\epsilon(\epsilon \mathcal{R}_\theta E_q,Q^p) \geq \F_\phi(\mathcal{I}_p,Q^p)=\phi(p)=\lim_{\epsilon\to0} \F_\epsilon(\epsilon E_p,Q^p).
		\end{equation*}
		From this and the fact that~$K$ is rotation invariant (see~\eqref{eq::K_invariance}), it follows that
		\begin{equation} \label{eq::phi_variation1}
			\begin{split}
				\phi(q)-\phi(p) &= \lim_{\epsilon\to0}\Big( \F_\epsilon(\epsilon E_q,Q^q)- \F_\epsilon(\epsilon E_p,Q^p)\Big)\\
				&\geq \liminf_{\epsilon\to0}\Big( \F_\epsilon(\epsilon E_q,Q^q)- \F_\epsilon(\epsilon \mathcal{R}_\theta E_q,Q^p)\Big) \\
				&= \liminf_{\epsilon\to0} \epsilon^{n-1}\left(\int_{E_q\cap \mathcal{Q}(Q^q/\epsilon)_1} g - \int_{\mathcal{R}_\theta E_q\cap \mathcal{Q}(Q^p/\epsilon)_1} g\right).
			\end{split}
		\end{equation}
		
We now recall that
		$$ \partial E_q \subseteq \{|x\cdot q| \le M\} =: S^q_{M} $$
		and hence
		\begin{equation*}
			\bigcup\big\{k+Q \mbox{ s.t. }k\in\Z^n \mbox{ and }(k+Q)\cap\partial E_q \neq\varnothing\big\} \subseteq S_{M+\sqrt{n}}^q.
		\end{equation*}
		Thus, in light of~\eqref{eq::g_zero_avg}, we find that
		\begin{equation*}
			\int_{E_q\cap \mathcal{Q}(Q^q/\epsilon)_1} g - \int_{\mathcal{R}_\theta E_q\cap \mathcal{Q}(Q^p/\epsilon)_1} g 
= \int_{E_q\cap \mathcal{Q}(Q^q/\epsilon)_1 \cap S^q_{M+\sqrt{n}}} g - \int_{\mathcal{R}_\theta E_q\cap \mathcal{Q}(Q^p/\epsilon)_1 \cap \mathcal{R}_\theta S^q_{M+\sqrt{n}}} g.
		\end{equation*}
		Therefore, using also a H\"older inequality in~\eqref{eq::phi_variation1} and observing that $\mathcal{R}_\theta S^q_{M+\sqrt{n}} = S^p_{M+\sqrt{n}}$, we obtain
		\begin{equation*}
			\begin{split}
				&\phi(q)-\phi(p) \\&\quad
				\geq \liminf_{\epsilon\to0} \left(-\epsilon^{n-1}\norm{g}_{L^\infty(\R^n)} \left| 
				\Big(E_q\cap \mathcal{Q}(Q^q/\epsilon)_1\cap S^q_{M+\sqrt{n}}\Big)\Delta \Big(\mathcal{R}_\theta E_q\cap \mathcal{Q}(Q^p/\epsilon)_1\cap \mathcal{R}_\theta S^q_{M+\sqrt{n}}\Big)\right|\right)\\
				&\quad \geq \liminf_{\epsilon\to0} \left(-\epsilon^{n-1}\norm{g}_{L^\infty(\R^n)} \left| \Big(E_q\cap \mathcal{Q}(Q^q/\epsilon)_1\cap S^q_{M+\sqrt{n}}\Big)\Delta \Big(\mathcal{R}_\theta E_q\cap \mathcal{Q}(Q^p/\epsilon)_1\cap  S^p_{M+\sqrt{n}}\Big)\right|\right).
			\end{split}
		\end{equation*}
		
		Now, observe that, arguing as in~\eqref{eq::term_zeta_to_0}, we have that
		\begin{equation*}
			\begin{split}
				&\left| (E_q\cap \mathcal{Q}(Q^q/\epsilon)_1\cap S^q_{M+\sqrt{n}})\Delta (\mathcal{R}_\theta E_q\cap \mathcal{Q}(Q^p/\epsilon)_1\cap S^p_{M+\sqrt{n}})\right|\\
				&\qquad\leq \left| [E_q\cap S^q_{M+\sqrt{n}}\cap \frac{1}{\epsilon}C^q] \Delta [ \mathcal{R}_\theta E_q\cap S^p_{M+\sqrt{n}}\cap \frac{1}{\epsilon}C^p]\right| + \epsilon^{1-n}\zeta(\epsilon),
			\end{split}
		\end{equation*}
		where $C^q$ is the (unitary) square-based cylinder with axis $q$
		% $$ C^q := \{x-x\cdot q\}$$
		and
		\begin{equation*}
			\begin{split}
				\epsilon^{1-n}\zeta(\epsilon) := &\left|S^q_{M+\sqrt{n}}\cap S^p_{M+\sqrt{n}}\cap\mathcal{R}_\theta E^q\cap \big(
				C^p/\epsilon \setminus \mathcal{Q}(Q^p/\epsilon)_1\big)\right|\\
				&+\left|S^q_{M+\sqrt{n}}\cap S^p_{M+\sqrt{n}}\cap E^q \cap \big(
				C^q/\epsilon \setminus \mathcal{Q}(Q^q/\epsilon)_1\big)\right|,
			\end{split}
		\end{equation*}
		with~$\zeta(\epsilon)\to0$, as~$\epsilon\to0$. 
		
		Thus,
		\begin{equation*}
			\begin{split}
				&\phi(q)-\phi(p) \\
				\geq&  \liminf_{\epsilon\to0} \left(-\epsilon^{n-1}\norm{g}_{L^\infty(\R^n)} \left| \left(
				E_q\cap S^q_{M+\sqrt{n}}\cap \frac{1}{\epsilon}C^q\right) \Delta \left(  \mathcal{R}_\theta E_q\cap S^p_{M+\sqrt{n}}\cap \frac{1}{\epsilon}C^p\right)\right|\right).
				%\\ =& -\norm{g}_\infty \left| [\mathcal{I}_q\cap S^q_{M+\sqrt{n}}\cap C^q] \Delta [\mathcal{I}_p\cap \mathcal{R}_\theta S^q_{M+\sqrt{n}}\cap C^p]\right|.
			\end{split}
		\end{equation*}
		
		Let us define 
		\begin{equation*}
			%\begin{split}
			\psi(\theta) := \limsup_{\epsilon\to0} \epsilon^{n-1} \left| \left(
			E_q\cap S^q_{M+\sqrt{n}}\cap \frac{1}{\epsilon}C^q\right) \Delta \left(  \mathcal{R}_\theta E_q\cap S^p_{M+\sqrt{n}}\cap \frac{1}{\epsilon}C^p\right)\right|,
			% &:= \norm{g}_\infty \left| [\mathcal{I}_q\cap S^q_{M+\sqrt{n}}\cap C^q] \Delta [\mathcal{I}_p\cap \mathcal{R}_\theta S^q_{M+\sqrt{n}}\cap C^p]\right|\\
			% &\qquad =  \norm{g}_\infty \left| [\mathcal{I}_q\cap S^q_{M+\sqrt{n}}\cap C^q] \Delta \mathcal{R}_\theta [\mathcal{I}_q\cap S^q_{M+\sqrt{n}}\cap C^q]\right|,
			%\end{split}
		\end{equation*}
		so that 
		\begin{equation*}
			\phi(q)-\phi(p) \geq -\norm{g}_{L^\infty(\R^n)}\psi(\theta).
		\end{equation*}
		Moreover, notice that~$\psi(\theta)\to0$, as~$\theta\to0$ (namely as~$p\to q$).
		
		Interchanging the roles of~$p$ and~$q$, we obtain that
		\begin{equation*}
			\begin{split}
				&\phi(p)-\phi(q) \geq \liminf_{\epsilon\to0}\left( -\norm{g}_{L^\infty(\R^n)} \left| \big(
				E_q\cap S^p_{M+\sqrt{n}}\cap C^p\big) \Delta \big(\mathcal{R}_{-\theta}\cap S^q_{M+\sqrt{n}}\cap C^q\big)\right|\right) \\
				&\qquad\qquad= -\norm{g}_{L^\infty(\R^n)}|J \mathcal{R}_\theta| \psi(\theta)= -\norm{g}_{L^\infty(\R^n)} \psi(\theta),
			\end{split}
		\end{equation*}
		and hence 
		\begin{equation*}
			|\phi(q)-\phi(p)| \leq \norm{g}_\infty\psi(\theta),
		\end{equation*}
		as desired.
	\end{proof}
	
	Now, we define the~$1$-homogeneous extension of~$\phi$ as 
	\begin{equation} \label{eq::homog_anisotropy}
		\widetilde{\phi}(p) :=
		\begin{cases}
			|p|\phi\left(\frac{p}{|p|}\right),\quad&{\mbox{ if }}p\neq0,\\
			0,\quad&{\mbox{ if }} p=0.
		\end{cases}
	\end{equation}	
		As a byproduct of Proposition~\ref{prop::phi_continuous}, we obtain that also~$\widetilde{\phi}$ is continuous (in the whole~$\R^n$).
	
	\begin{corollary} \label{cor::phi_homog_continuous}
		$\widetilde{\phi}:\R^{n}\to\R$ is a continuous function.
	\end{corollary}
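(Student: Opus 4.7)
The plan is to split the argument according to whether the point $p \in \R^n$ is zero or not, and in each case reduce the continuity of $\widetilde{\phi}$ to that of $\phi$ on $\S^{n-1}$ given by Proposition~\ref{prop::phi_continuous}.

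First, I would handle the case $p \neq 0$. On the open set $\R^n \setminus \{0\}$, the map $p \mapsto p/|p|$ takes values in $\S^{n-1}$ and is continuous, and so is $p \mapsto |p|$. Therefore, writing $\widetilde{\phi}(p) = |p|\,\phi(p/|p|)$ and invoking Proposition~\ref{prop::phi_continuous} (which gives the continuity of $\phi$ on $\S^{n-1}$), the function $\widetilde{\phi}$ is a product/composition of continuous functions on $\R^n \setminus \{0\}$ and is therefore continuous there.

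Next, I would address continuity at the origin. Since $\S^{n-1}$ is compact and $\phi$ is continuous on it by Proposition~\ref{prop::phi_continuous}, the quantity
\begin{equation*}
C_\phi := \sup_{q \in \S^{n-1}} |\phi(q)|
\end{equation*}
is finite. Consequently, for every $p \neq 0$,
\begin{equation*}
|\widetilde{\phi}(p) - \widetilde{\phi}(0)| = |p|\,\bigl|\phi(p/|p|)\bigr| \leq C_\phi\,|p|,
\end{equation*}
which tends to $0$ as $p \to 0$. This gives continuity of $\widetilde{\phi}$ at the origin and completes the proof.

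No step of this argument appears delicate: the work has already been done in establishing the continuity and (hence) boundedness of $\phi$ on the compact sphere. If any obstacle were to arise, it would be at the origin, where one must ensure that $\phi$ does not blow up along $\S^{n-1}$; but this is ruled out by compactness together with Proposition~\ref{prop::phi_continuous}.
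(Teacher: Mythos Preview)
Your proof is correct and follows essentially the same approach as the paper: both rely on Proposition~\ref{prop::phi_continuous} for the continuity of $\phi$ on $\S^{n-1}$ and on the boundedness of $\phi$ (by compactness of the sphere) to control the behavior near the origin. Your presentation is arguably cleaner, since you invoke composition of continuous maps directly on $\R^n\setminus\{0\}$ and treat the origin separately, whereas the paper carries out an explicit limit computation via the decomposition $|p|\phi(p/|p|)=(|p|-|q|)\phi(p/|p|)+|q|\phi(p/|p|)$; but the underlying idea is the same.
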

	
	\begin{proof}
		Let~$q\in\R^n$. Then, it follows from the homogeneity of~$\widetilde{\phi}$ and Proposition~\ref{prop::phi_continuous} that
		\begin{equation*}
			\begin{split}
				&\lim_{p\to q}\widetilde{\phi}(p) 
				= \lim_{p\to q}|p|{\phi}\left(\frac{p}{|p|}\right) 
				=  \lim_{p\to q}\left((|p|-|q|){\phi}\left(\frac{p}{|p|}\right) + |q|{\phi}\left(\frac{p}{|p|}\right)\right)\\
			&\qquad = \lim_{p\to q}\left(
(|p|-|q|){\phi}\left(\frac{p}{|p|}\right)  
	+ |q|{\phi}\left(\frac{q}{|q|}\right)
	\right) 
				=  \lim_{p\to q}\left( (|p|-|q|){\phi}
				\left(\frac{p}{|p|}\right)\right) 
				 + \widetilde{\phi}(q).
			\end{split}
		\end{equation*}
		Since~${\phi}(p/|p|)$ is bounded, we conclude that
		\begin{equation*}
			\lim_{p\to q}\widetilde{\phi}(p) = \widetilde{\phi}(q),
		\end{equation*}
		showing the continuity of~$\widetilde{\phi}$ at~$q$.
		\end{proof}
	
	% ------------------------------------------------------------------------------------------------------
	
	\section{Proof of the $\Gamma$-$\limsup$ inequality} \label{sec::gamma_limsup}

	This section is devoted to proving~$\Gamma$-$\limsup$ inequality in Theorem~\ref{th::gamma_conv}-\eqref{item::gamma_conv_sup}. Our argument adapts to the setting under consideration here the ideas of~\cite{MR1852978, chambolle_thouroude}, in which the result follows from a standard polyhedral approximation taking advantage of the existence of planelike minimizers. For this, we need the following technical result.
	
	\begin{lemma}\label{lemma::limsup_polyhedron}
		Let~$E\subseteq \R^n$ be such that~$E\cap \Omega$ is a polyhedron. Then, there exists a sequence of sets~$\{E_\epsilon\}_\epsilon$ such that
		\begin{equation*}
			\limsup_{\epsilon\to0} \F_{\epsilon}(E_\epsilon,\Omega) \leq \F_\phi(E,\Omega).
		\end{equation*}
	\end{lemma}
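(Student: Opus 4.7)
The plan follows the standard polyhedral-approximation argument of~\cite{MR1852978, chambolle_thouroude}, adapted to our non-local setting via the quantitative tools of Sections~\ref{sec::preliminary_results_gamma_conv} and~\ref{sec::phi_well_defined}. Since $E\cap\Omega$ is a polyhedron, its boundary in $\Omega$ is a finite union of flat polygonal faces $\partial E\cap\Omega=\bigcup_{i=1}^N F_i$, each with outward unit normal $\nu_i$ and lying in a hyperplane $\{x\cdot\nu_i=c_i\}$. For each $i$, I would invoke Theorem~\ref{th::level_sets_minimality} and Corollary~\ref{cor::K_planelike} to fix a planelike minimizer $E_{\nu_i}$ for $\J$ in direction $\nu_i$, and choose translations $\tau_i^\epsilon\in\epsilon\Z^n$ as close as possible to $c_i\nu_i$, so that $\tilde E_\epsilon^i:=\epsilon E_{\nu_i}+\tau_i^\epsilon$ converges in $L^1_{\loc}$ to $\{x\cdot\nu_i<c_i\}$. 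For a small parameter $h>0$ I would shrink each face by $h$ away from its $(n-2)$-dimensional edges to produce an inner polygon $F_i^h\subseteq F_i$, and erect the pairwise disjoint thin slabs $T_i^h:=\{x+t\nu_i\colon x\in F_i^h,\ |t|<h\}$. The recovery sequence is then
$$E_\epsilon^h:=\bigg(\bigcup_{i=1}^N \tilde E_\epsilon^i\cap T_i^h\bigg)\cup\bigg(E\setminus\bigcup_{i=1}^N T_i^h\bigg),$$
which converges to $E$ in $L^1_{\loc}$ as first $\epsilon\to0$ and then $h\to0$.

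For the energy estimate I would exploit the subadditivity of $\P_{K_\epsilon}$ with respect to its domain (together with the analogous additive decomposition of the $g$-integral) to write
$$\F_\epsilon(E_\epsilon^h,\Omega)\le\sum_{i=1}^N\F_\epsilon(E_\epsilon^h,T_i^h)+\F_\epsilon\Big(E_\epsilon^h,\,\Omega\setminus\bigcup_i\overline{T_i^h}\Big).$$
On each slab $T_i^h$ the set $E_\epsilon^h$ coincides with $\tilde E_\epsilon^i$, and Corollary~\ref{cor::tail_estimate_phi}---combined with the translation invariance of $K$ and the $\epsilon$-periodicity of $g_\epsilon$, which is the reason behind the choice $\tau_i^\epsilon\in\epsilon\Z^n$---yields $\lim_{\epsilon\to0}\F_\epsilon(\tilde E_\epsilon^i,T_i^h)=\phi(\nu_i)\,\haus{n-1}(F_i^h)$. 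The difference $\F_\epsilon(E_\epsilon^h,T_i^h)-\F_\epsilon(\tilde E_\epsilon^i,T_i^h)$ comes only from cross-interactions through $\partial T_i^h$, where on both sides the sets involved converge in $L^1_{\loc}$ to the same limiting half-space determined by $\nu_i$; Lemma~\ref{lemma::alberti_bellettini} then bounds this discrepancy by a quantity vanishing as $\epsilon\to0$ times $\haus{n-1}(\partial T_i^h)$. On the residual region $A_h:=\Omega\setminus\bigcup_i\overline{T_i^h}$, the set $E_\epsilon^h=E$ still carries polyhedral boundary on the edge strips $F_i\setminus F_i^h$, whose total $(n-1)$-measure is $O(h)$; a direct bound using the decay $K_\epsilon(x,y)\le\kappa_2\epsilon^{2s_2-1}|x-y|^{-n-2s_2}$ from Remark~\ref{rem::high_localing_K} (for the long-range contribution), a slab-type local estimate as in Lemma~\ref{lemma::finite_limit} (for the short-range contribution), and the cancellation from~\eqref{eq::g_zero_avg} (for the $g$-term) together give $\limsup_{\epsilon\to0}\F_\epsilon(E,A_h)\le C\,h$.

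Combining the two estimates,
$$\limsup_{\epsilon\to0}\F_\epsilon(E_\epsilon^h,\Omega)\le\sum_{i=1}^N\phi(\nu_i)\,\haus{n-1}(F_i^h)+C\,h,$$
and letting $h\to0$ the right-hand side converges to $\sum_{i=1}^N\phi(\nu_i)\,\haus{n-1}(F_i)=\F_\phi(E,\Omega)$. A standard diagonal extraction with $h=h(\epsilon)\to0$ slowly enough then produces the single recovery sequence asserted in the statement. The principal technical obstacle lies in the estimate on the edge strips $F_i\setminus F_i^h$, on which the interface of $E_\epsilon^h=E$ is not governed by any single planelike minimizer; this is handled by combining the decay $\epsilon^{2s_2-1}$ (whose positivity requires the assumption $s_2>1/2$) for the long-range part with Lemma~\ref{lemma::alberti_bellettini} for the remaining surface-like contributions near $\partial T_i^h$.
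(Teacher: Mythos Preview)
Your argument is correct and rests on the same three pillars as the paper's proof: the planelike minimizers of Corollary~\ref{cor::K_planelike}, the cell-limit Corollary~\ref{cor::tail_estimate_phi} for the main contribution on each face, and Lemma~\ref{lemma::alberti_bellettini} for the vanishing of cross-interactions. The route, however, differs. The paper does not introduce the auxiliary thickness~$h$: it simply partitions~$\Omega$ into~$N$ fixed disjoint Lipschitz subdomains~$\Omega_j$, one per face, sets $E_\epsilon:=(E\setminus\Omega)\cup\bigcup_j\big(\Omega_j\cap(x_j+\epsilon E_{p_j})\big)$, and reads off~\eqref{eq::conv_length} on each~$\Omega_j$. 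The mismatch between neighbouring pieces lives on~$\partial\Omega_j\cap\partial\Omega_{j'}$, where the two planelike limits agree $\haus{n-1}$-a.e., so Lemma~\ref{lemma::alberti_bellettini} kills those cross-terms outright; no edge-strip estimate and no diagonal extraction in~$h$ are needed. Your slab construction buys a cleaner geometric separation of the faces, at the price of the extra residual estimate $\limsup_\epsilon\F_\epsilon(E_\epsilon^h,A_h)\le Ch$ (note: this is $\F_\epsilon(E_\epsilon^h,A_h)$, not $\F_\epsilon(E,A_h)$---the exterior data in the slabs differ, and you need one more application of Lemma~\ref{lemma::alberti_bellettini} across $\partial T_i^h$ to pass from one to the other) and the subsequent limit $h\to0$. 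Both approaches are valid; the paper's is shorter because it absorbs the $(n-2)$-dimensional edge issues directly into the null set on which Lemma~\ref{lemma::alberti_bellettini} is insensitive.
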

	\begin{proof}
		Since~$E\cap \Omega$ is a polyhedron, there exist points~$x_j\in \partial E\cap \Omega$ and directions~$p_j\in\S^{n-1}$, with~$j=1,\dots,N$, for some~$N\in\N$, such that
		$$ \partial E\cap \Omega = \bigcup_{j=1}^N( x_j+\mathcal{I}_{p_j})\cap\Omega.$$
Here above and in the rest of the proof, the notation in~\eqref{eq::def_halfsapce} for~$\mathcal{I}_{p_j}$ is used.

Also, there are~$N$ disjoint Lipschitz domains~$\Omega_j$ such that
$$\Omega=\bigcup_{j=1}^N\Omega_j\qquad {\mbox{and}}\qquad
(x_j+\mathcal{I}_{p_j})\cap\Omega\subseteq \Omega_j.$$
		
		Now, for any~$\epsilon>0$, we define 
		\begin{equation} \label{eq::poly_approx}
			E_\epsilon := (E\setminus \Omega) \cap \bigcup_{j=1}^N \Big(\Omega_j \cap \big(x_j+\epsilon E_{p_j}\big)\Big),
		\end{equation}
		where, for every~$j=1,\dots,N$, $E_{p_j}$ is a planelike minimizer constructed as in Theorem~\ref{th::existence_minimizer} (see Figure~\ref{fig::G_limsup}). 
		
		\begin{figure}
			\centering
			\includegraphics[width=.85\linewidth]{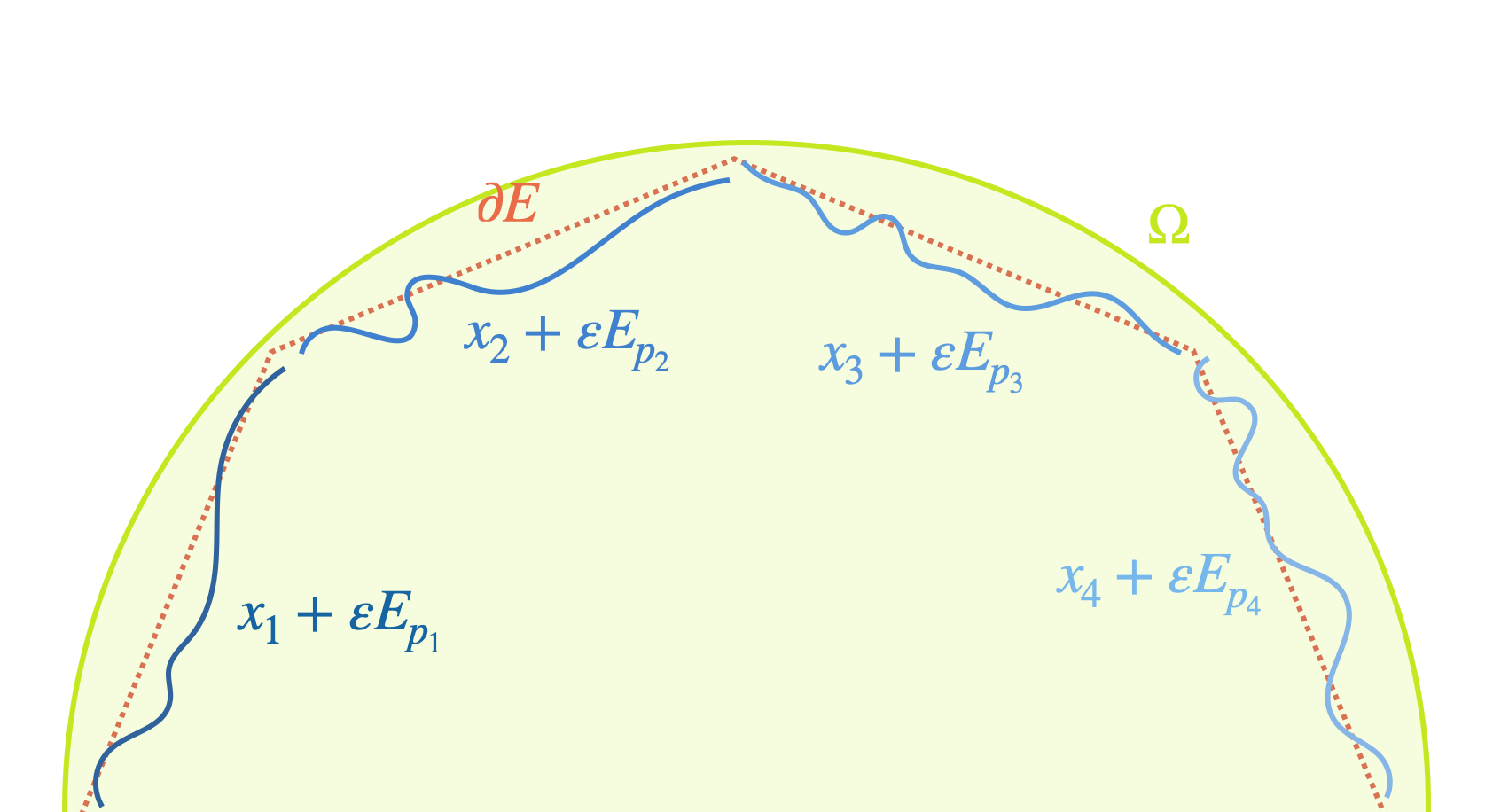}
			\caption{The planelike approximation of the polygonal set $E$ as in \eqref{eq::poly_approx}.}\label{fig::G_limsup}
		\end{figure}
		
		Therefore, we obtain
		\begin{equation} \label{eq::fragmentation}
			\begin{split}
				\F_\epsilon(E_\epsilon,\Omega) &\leq \sum_{j=1}^{N}\F_\epsilon(x_j+\epsilon E_{p_j},\Omega_j) + \int_{E_\epsilon\cap (\mathcal{Q}(\Omega)_\epsilon\setminus \bigcup_{j=1}^N\mathcal{Q}(\Omega_j)_\epsilon)} g_\epsilon (x)\,dx\\
				&\quad+ \sum_{j=1}^{N} \mathcal{L}_{K_\epsilon}(\epsilon E_{p_j}\cap\Omega_j, E_\epsilon^c\cap\Omega_j^c)+ \sum_{j=1}^{N} \mathcal{L}_{K_\epsilon}(\epsilon E_{p_j}^c\cap\Omega_j, E_\epsilon\cap\Omega_j^c) \\
				&\quad- \sum_{j=1}^{N} \mathcal{L}_{K_\epsilon}(\epsilon E_{p_j}\cap\Omega_j, \epsilon E_{p_j}^c\cap\Omega_j^c)- \sum_{j=1}^{N} \mathcal{L}_{K_\epsilon}(\epsilon E_{p_j}^c\cap\Omega_j, \epsilon E_{p_j}\cap\Omega_j^c) .
			\end{split}
		\end{equation}
		
		By Corollary~\ref{cor::tail_estimate_phi}, for every~$j\in\{1,\dots,N\}$, we have that
		\begin{equation} \label{eq::conv_length}
			\lim_{\epsilon\to0} \F_\epsilon(x_j+\epsilon E_{p_j},\Omega_j) = \phi(p_j)\haus{n-1}\big(\Omega_j\cap(x_j+\partial\mathcal{I}_{p_j})\big).
		\end{equation}
		
		Furthermore,
		\begin{equation*}
			\begin{split}
				&\int_{E_\epsilon\cap (\mathcal{Q}(\Omega)_\epsilon\setminus \bigcup_{j=1}^N\mathcal{Q}(\Omega_j)_\epsilon)} g_\epsilon(x)\, dx \leq \epsilon^{n-1} \int_{(E_\epsilon/\epsilon)\cap (\mathcal{Q}(\Omega/\epsilon)_1\setminus \bigcup_{j=1}^N\mathcal{Q}(\Omega_j/\epsilon)_1)} |g(x)|\, dx\\
				&\qquad = \epsilon^{n-1} \sum_{j=1}^{N} \int_{(x_j/\epsilon+E_{p_j})\cap (\Omega_j/\epsilon) \cap (\mathcal{Q}(\Omega/\epsilon)_1\setminus \mathcal{Q}(\Omega_j/\epsilon)_1} |g(x)|\, dx\\
				&\qquad \leq 2(M+\sqrt{n})\norm{g}_{L^1(Q)}\epsilon^{n-1}\sum_{j=1}^{N}\Big(\epsilon^{1-n}\haus{n-1}(\Omega_j\cap \partial\mathcal{I}_{p_j})-\lfloor \epsilon^{1-n}\haus{n-1}(\Omega_j\cap \partial\mathcal{I}_{p_j}) \rfloor \Big).
			\end{split}
		\end{equation*}
		Thus,
		\begin{equation}  \label{eq::vanishing_g}
			\limsup_{\epsilon\to0} \int_{E_\epsilon\cap (\mathcal{Q}(\Omega)_\epsilon\setminus \bigcup_{j=1}^N\mathcal{Q}(\Omega_j)_\epsilon)} g_\epsilon(x)\, dx \leq 0.
		\end{equation}
		
		Moreover, thanks to Lemma~\ref{lemma::alberti_bellettini}, we infer that
		\begin{equation} \label{eq::vanishing_dim_n-2}
			\begin{split}
				\lim_{\epsilon\to0} &\sum_{j=1}^{N} \mathcal{L}_{K_\epsilon}(\epsilon E_{p_j}\cap\Omega_j, E_\epsilon^c\cap\Omega_j^c)+ \sum_{j=1}^{N} \mathcal{L}_{K_\epsilon}(\epsilon E_{p_j}^c\cap\Omega_j, E_\epsilon\cap\Omega_j^c) \\
				&- \sum_{j=1}^{N} \mathcal{L}_{K_\epsilon}(\epsilon E_{p_j}\cap\Omega_j, \epsilon E_{p_j}^c\cap\Omega_j^c)- \sum_{j=1}^{N} \mathcal{L}_{K_\epsilon}(\epsilon E_{p_j}^c\cap\Omega_j, \epsilon E_{p_j}\cap\Omega_j^c) =0,
			\end{split}
		\end{equation}
		for all~$j\in\{1,\dots,N\}$. 
		
		Hence, gathering~\eqref{eq::fragmentation}, \eqref{eq::conv_length}, \eqref{eq::vanishing_g},
		and~\eqref{eq::vanishing_dim_n-2}, we conclude that
		\begin{equation*}
			\limsup_{\epsilon\to0} \F_\epsilon(E_\epsilon,\Omega) \leq \sum_{j=1}^{N} \phi(p_j)\haus{n-1}\big(\Omega_j\cap(x_j+\partial\mathcal{I}_{p_j})\big) = 
			\int_{\partial E \cap \Omega} \phi(\nu_E)\,d\haus{n-1}.
		\end{equation*}
		as desired. 
	\end{proof}
	
	\begin{proof}[Proof of Theorem~\ref{th::gamma_conv}-\eqref{item::gamma_conv_sup}]
		By a standard approximation argument, there exists a sequence of sets~$\{E_j\}_j$ such that~$E_j\cap \Omega$ is a polyhedron, for every~$j$, and 
		$$ \lim_{j\to+\infty} \haus{n-1}(E_j\cap\Omega) = \mbox{Per}(E,\Omega).$$
		Therefore, recalling that (the~$1$-homogeneous extension of)~$\phi$ is continuous by Corollary~\ref{cor::phi_homog_continuous}, we deduce from~\cite[Theorem~2.39]{MR1857292} that 
		$$ \lim_{j\to+\infty} \F_\phi(E_j,\Omega) = \F_\phi(E,\Omega) .$$
		
		Using Lemma~\ref{lemma::limsup_polyhedron} and taking a diagonal sequence, we construct a sequence of sets~$\{E_{\epsilon_j}\}_j$ such that~$E_{\epsilon_j}\to E$ in~$L^1_{\loc}(\R^n)$, and
		$$ \limsup_{j\to+\infty} \F_{\epsilon_j}(E_{\epsilon_j},\Omega) \leq \F_\phi(E,\Omega) , $$
		showing Theorem~\ref{th::gamma_conv}-\eqref{item::gamma_conv_sup}.
	\end{proof}
	
	\begin{remark}
		We point out that, in light of Lemma~\ref{lemma::alberti_bellettini}, 
		one can equivalently
		consider energies~$\F_\epsilon((\cdot),\Omega)$, $\E_\epsilon((\cdot),\Omega)$, or even~$\mathscr{L}_\epsilon((\cdot),\Omega):=\mathcal{L}_{K_\epsilon}((\cdot)\cap\Omega,(\cdot)^c\cap\Omega)$ in Theorem~\ref{th::gamma_conv}. Indeed, since the rescaled 
		planelike~$\epsilon E_p$ converges to the straight line~$\mathcal{I}_p$, as~$\epsilon\to0$, such limit crosses transversely any Lipschitz domain. Therefore, the energy contribution due to the interactions with the exterior of~$\Omega$ vanishes in the limit.
	\end{remark}
	\begin{comment}	
		On the other hand, choosing one energy over another comes with its own advantages and drawbacks. For instance, the energy~$\F_\epsilon^\Omega$ is symmetric with respect to the domain, in the fashion of the non-local Allen-Cahn energy, where, however, the contribution coming from outside~$\Omega$ cannot be omitted (see, for instance, \cite{MR2948285}). The energy~$\E_\epsilon^\Omega$ is additive, and hence it is more suitable to prove Theorem~\ref{th::gamma_conv}-\eqref{item::gamma_conv_inf}.
		%thanks to Lemma~\ref{lemma::isoperimetric_estimate} and following the argument in~\cite[Proposition~7.3]{MR1852978}, we infer that its minimizers must be connected. 
		Lastly, the energy~$\mathscr{L}_\epsilon^\Omega$, which is similar to the one considered in~\cite{MR1634336}, is easier to deal with in the proof of Theorem~\ref{th::gamma_conv} since it does not require Lemma~\ref{lemma::alberti_bellettini} to estimate the contribution due to~$\Omega^c$. 
		%in the proof of Theorem~\ref{th::gamma_conv}-\eqref{item::gamma_conv_inf}.
	\end{comment}
	
	As a corollary of the~$\Gamma$-convergence, we improve Proposition~\ref{prop::phi_continuous}. Indeed, since a~$\Gamma$-limit is lower semi-continuous by construction, we infer that the stable norm must be convex.
	
	Recall the definitions of the stable norm~$\phi:\S^{n-1}\to\R$ in~\eqref{eq::anisotropy}, and of its homogenization~$\widetilde{\phi}$ in~\eqref{eq::homog_anisotropy}. Then, the following holds true:
	\begin{corollary}\label{cor::phi_convex}
		The function~$\widetilde{\phi}:\R^n\to\R$ is convex.
	\end{corollary}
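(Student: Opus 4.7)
The plan is to combine the general fact that every $\Gamma$-limit is sequentially lower semi-continuous in the topology of $\Gamma$-convergence with the classical characterization of lower semi-continuous anisotropic perimeters on sets of finite perimeter. By Theorem~\ref{th::gamma_conv} we have $\F_\phi = \Gamma\text{-}\lim_{\epsilon\to 0}\F_\epsilon$ with respect to the $L^1_{\loc}(\R^n)$-convergence of sets, hence $\F_\phi(\cdot,\Omega)$ is sequentially lower semi-continuous in this topology for every Lipschitz domain $\Omega$: explicitly, for every sequence of sets $\{E_j\}_j$ of finite perimeter with $E_j\to E$ in $L^1_{\loc}(\R^n)$,
\begin{equation*}
\int_{\partial E\cap\Omega}\widetilde\phi(\nu_E)\,d\haus{n-1}=\F_\phi(E,\Omega)\le\liminf_{j\to+\infty}\F_\phi(E_j,\Omega)=\liminf_{j\to+\infty}\int_{\partial E_j\cap\Omega}\widetilde\phi(\nu_{E_j})\,d\haus{n-1}.
\end{equation*}

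From here, I would invoke the classical Ambrosio--Reshetnyak-type characterization (see e.g.\ Ambrosio--Fusco--Pallara, \emph{Functions of Bounded Variation and Free Discontinuity Problems}, Theorem~5.14): for a positively $1$-homogeneous and continuous function $\widetilde\phi:\R^n\to[0,+\infty)$, the functional $E\mapsto\int_{\partial E\cap\Omega}\widetilde\phi(\nu_E)\,d\haus{n-1}$ is lower semi-continuous on sets of finite perimeter with respect to $L^1_{\loc}$-convergence if and only if $\widetilde\phi$ is convex on $\R^n$. Continuity of $\widetilde\phi$ is ensured by Corollary~\ref{cor::phi_homog_continuous}, $1$-homogeneity is built into~\eqref{eq::homog_anisotropy}, and non-negativity follows from Proposition~\ref{prop::energy_lower_bound_BR} together with the $\Gamma$-$\liminf$ inequality; hence the characterization applies and forces $\widetilde\phi$ to be convex.

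If instead one prefers a self-contained argument, I would directly prove the equivalent subadditivity property $\widetilde\phi(p+q)\le\widetilde\phi(p)+\widetilde\phi(q)$, which together with $1$-homogeneity gives convexity. Given $p,q\in\R^n\setminus\{0\}$ not parallel, I would approximate in $L^1_{\loc}$ a half-space $H$ with normal $(p+q)/|p+q|$ by a sequence of sets $H_j$ whose boundaries are fine zigzags alternating between hyperplane pieces with normals $p/|p|$ and $q/|q|$, in proportions chosen so that the average direction reproduces $(p+q)/|p+q|$. The functional $\F_\phi$ evaluated on such zigzags converges to $\widetilde\phi(p)+\widetilde\phi(q)$ times the relevant area, while $\F_\phi$ on the limit half-space equals $\widetilde\phi(p+q)$ times the same area; lower semi-continuity then yields the desired inequality.

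The principal technical point in the direct approach is verifying that the codimension-$2$ corners of the zigzag carry vanishing $\haus{n-1}$-mass and thus do not contribute to the limiting energy, and that the orientation/proportions of the zigzag are matched so as to give exactly the convex combination $(p+q)/|p+q|$; both are routine but require careful bookkeeping. The citation-based route bypasses this entirely, which is why I would adopt it as the main line of proof.
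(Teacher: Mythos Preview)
Your primary approach is correct and coincides with the paper's own reasoning: the paper gives no formal proof but simply remarks, just before stating the corollary, that ``since a $\Gamma$-limit is lower semi-continuous by construction, we infer that the stable norm must be convex,'' and your proposal faithfully expands this one-line argument. One minor caution: Theorem~5.14 in \cite{MR1857292} is Reshetnyak's lower semi-continuity theorem, which assumes convexity rather than deducing it, so you would need a different reference for the necessity direction---though the fact is classical and your zigzag sketch supplies it directly.
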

		
	% ----------------------------------------------------------------------------------------------------------------
	
	\begin{appendix}
		
		\section{Norms subject to short-range interactions}
		
		We point out a useful equivalence of norms when a kernel is restricted to close-by interactions (see e.g.~\cite[Lemma~2.1]{2025arXiv250409976A} for related results). For the rest of this section, let~$s\in(0,1/2)$.
		
		\begin{lemma}\label{lemma::short_range_norm} There exists~$C>0$, depending only on~$\delta$, $n$, and~$s$, such that,
			for all~$u:\R^n\to\R$ such that
			\begin{equation*}\int_Q u(x)\,dx=0,\end{equation*} we have that
			$$ \iint_{Q\times Q} \frac{|u(x)-u(y)|}{|x-y|^{n+2s}}\,dx\,dy\le
			C\iint_{(Q\times Q)\cap\{|x-y|<\delta\}} \frac{|u(x)-u(y)|}{|x-y|^{n+2s}}\,dx\,dy.$$
		\end{lemma}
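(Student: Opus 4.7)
The plan is to split the left-hand side as
\[
\iint_{Q\times Q} \frac{|u(x)-u(y)|}{|x-y|^{n+2s}}\,dx\,dy \;=\; I_{<\delta} + I_{\geq\delta},
\]
where $I_{<\delta}$ denotes the integral restricted to $\{|x-y|<\delta\}$ and $I_{\geq\delta}$ the one on $\{|x-y|\geq\delta\}$. The first term is exactly the right-hand side of the lemma, so only $I_{\geq\delta}$ needs to be controlled. On the long-range set I use the crude bound $|x-y|^{n+2s}\geq\delta^{n+2s}$, together with the zero-mean hypothesis (which implies $|u(x)| \leq \int_Q |u(x)-u(y)|\,dy$ and hence $\iint_{Q\times Q}|u(x)-u(y)|\,dx\,dy\leq 2\norm{u}_{L^1(Q)}$), to get
\[
I_{\geq\delta} \;\leq\; \delta^{-n-2s}\iint_{Q\times Q}|u(x)-u(y)|\,dx\,dy \;\leq\; 2\delta^{-n-2s}\norm{u}_{L^1(Q)}.
\]
The whole statement therefore reduces to a short-range Poincar\'e-type bound
\[
\norm{u}_{L^1(Q)} \;\leq\; C\iint_{(Q\times Q)\cap\{|x-y|<\delta\}}\frac{|u(x)-u(y)|}{|x-y|^{n+2s}}\,dx\,dy,
\]
for zero-mean $u$, with $C=C(n,s,\delta)$.

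To establish this short-range Poincar\'e inequality I fix an integer $N=N(n,\delta)$ with $2\sqrt{n}/N<\delta$ and tile $Q$ by the $N^n$ congruent subcubes $\{Q_i\}_{i=1}^{N^n}$ of side $1/N$. Any two points in the same $Q_i$, or in a pair of face-adjacent subcubes $Q_i$, $Q_j$, lie at distance less than $\delta$, so all pairs appearing in the integrals on $Q_i\times Q_i$ and on $Q_i\times Q_j$ belong to the short-range region. Setting $\bar u_{Q_i}:=\frac{1}{|Q_i|}\int_{Q_i}u$, the standard fractional Poincar\'e inequality (see e.g.\ \cite[Theorem~6.33]{MR4567945}) applied on each subcube yields
\[
\norm{u-\bar u_{Q_i}}_{L^1(Q_i)} \;\leq\; C_1(n,s,\delta)\iint_{Q_i\times Q_i}\frac{|u(x)-u(y)|}{|x-y|^{n+2s}}\,dx\,dy,
\]
and summing over $i\in\{1,\dots,N^n\}$ controls $\sum_i\norm{u-\bar u_{Q_i}}_{L^1(Q_i)}$ by the short-range seminorm on $Q$.

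It remains to bound $\sum_i|Q_i|\,|\bar u_{Q_i}|$, and this is done by a chaining argument. For face-adjacent subcubes every pair $(x,y)\in Q_i\times Q_j$ satisfies $|x-y|\leq 2\sqrt{n}/N<\delta$, so
\[
|\bar u_{Q_i}-\bar u_{Q_j}| \;\leq\; \frac{1}{|Q_i|\,|Q_j|}\iint_{Q_i\times Q_j}|u(x)-u(y)|\,dx\,dy \;\leq\; C_2(n,s,\delta)\iint_{Q_i\times Q_j}\frac{|u(x)-u(y)|}{|x-y|^{n+2s}}\,dx\,dy.
\]
Any two subcubes of the tiling are joined by a path of at most $nN$ face-adjacent pairs, so the same bound (with a larger constant) holds for arbitrary $(i,j)$. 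The zero-mean condition $\sum_i|Q_i|\bar u_{Q_i}=\int_Qu=0$ gives $\bar u_{Q_i}=N^{-n}\sum_j(\bar u_{Q_i}-\bar u_{Q_j})$, so that multiplying by $|Q_i|=N^{-n}$ and summing over $i$ yields $\sum_i|Q_i|\,|\bar u_{Q_i}|\leq C_3\cdot$ (short-range seminorm), and combining with the Poincar\'e estimate on each $Q_i$ concludes the proof.

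\textbf{Main obstacle.} The delicate step is the bookkeeping in the chaining argument: after expressing each $|\bar u_{Q_i}-\bar u_{Q_j}|$ as a telescoping sum along a path of face-adjacent subcubes and summing over all $(i,j)$, one must verify that every elementary pairing $(Q_k,Q_{k+1})$ is used only a bounded number of times, so that the final constant depends only on $n$, $s$ and $\delta$ and not on how the chains are chosen. Since $N$ depends only on $n$ and $\delta$, the number of subcubes and the maximum path length are finite and fixed, so this is a purely combinatorial check, but it is the only non-routine point of the argument.
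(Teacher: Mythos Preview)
Your argument is correct and follows a genuinely different route from the paper. The paper proves the same short-range Poincar\'e reduction
\[
\int_Q|u|\;\le\;C\iint_{(Q\times Q)\cap\{|x-y|<\delta\}}\frac{|u(x)-u(y)|}{|x-y|^{n+2s}}\,dx\,dy
\]
by contradiction and compactness: assuming a normalized sequence~$u_j$ violates the bound with constants~$j$, it uses the fractional compact embedding on subcubes of side~$<\delta/2$ to extract an~$L^1(Q)$ limit~$u_0$, then Fatou forces the short-range seminorm of~$u_0$ to vanish, so~$u_0$ is constant, hence~$0$ by zero mean, contradicting~$\|u_0\|_{L^1}=1$. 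Your proof is instead fully constructive: the same subcube decomposition, fractional Poincar\'e on each~$Q_i$, and an explicit chaining of averages through face-adjacent pairs. The constructive route gives (in principle) an explicit constant and avoids compactness entirely; the paper's route is shorter to write but non-effective. Your ``main obstacle'' is a non-issue precisely for the reason you give: since~$N=N(n,\delta)$ is fixed, there are only~$N^{2n}$ pairs~$(i,j)$, each chain has length~$\le n(N-1)$, and the~$Q_i\times Q_j$ are pairwise disjoint in~$Q\times Q$, so the total multiplicity with which any face-adjacent block integral is used is bounded by~$nN^{2n+1}$. One cosmetic point: the bound $\iint_{Q\times Q}|u(x)-u(y)|\,dx\,dy\le 2\|u\|_{L^1(Q)}$ is just the triangle inequality and does not need the zero-mean hypothesis; the latter is only used in the chaining step via $\sum_i|Q_i|\bar u_{Q_i}=0$.
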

		
		\begin{proof} Up to renaming constants, the desired result follows if we prove that
			$$ \iint_{(Q\times Q)\cap\{|x-y|\ge \delta\}} \frac{|u(x)-u(y)|}{|x-y|^{n+2s}}\,dx\,dy\le
			C\iint_{(Q\times Q)\cap\{|x-y|<\delta\}} \frac{|u(x)-u(y)|}{|x-y|^{n+2s}}\,dx\,dy.$$
			
			Moreover, since
			\begin{equation*}\begin{split}
					\iint_{(Q\times Q)\cap\{|x-y|\ge \delta\}} \frac{|u(x)-u(y)|}{|x-y|^{n+2s}}\,dx\,dy&
					\le\frac{1}{\delta^{n+2s}}
					\iint_{Q\times Q}\big( |u(x)|+|u(y)|\big)\,dx\,dy\\&=\frac{2}{\delta^{n+2s}}\int_Q|u(x)|\,dx,
			\end{split}\end{equation*}
			it suffices to show that
			$$\int_Q|u(x)|\,dx\le
			C\iint_{(Q\times Q)\cap\{|x-y|<\delta\}} \frac{|u(x)-u(y)|}{|x-y|^{n+2s}}\,dx\,dy.$$
			
			We argue for the sake of contradiction and suppose that this is not true.
Namely, we suppose that
there exists  a sequence of functions~$u_j:\R^n\to\R$ such that
\begin{equation}\label{f8u49thgeoigeg98765}\int_Q u_j(x)\,dx=0\end{equation} and
			\begin{equation}\label{0122}
				1=\int_Q|u_j(x)|\,dx>j\iint_{(Q\times Q)\cap\{|x-y|<\delta\}} \frac{|u_j(x)-u_j(y)|}{|x-y|^{n+2s}}\,dx\,dy.
			\end{equation}
			We decompose~$Q$ as the disjoint union of cubes~$Q_1,\dots,Q_N$ of side less than~$\frac{\delta}2$ and we deduce from~\eqref{0122} that, for all~$\ell\in\{1,\dots,N\}$,
			\begin{equation}\label{0123} 
				\frac1j>\iint_{\substack{Q_\ell\times (Q_\ell)_\delta \\ (Q\times Q)\cap\{|x-y|<\delta\} }} \frac{|u_j(x)-u_j(y)|}{|x-y|^{n+2s}}\,dx\,dy
				\geq\iint_{Q_\ell\times Q_\ell} \frac{|u_j(x)-u_j(y)|}{|x-y|^{n+2s}}\,dx\,dy,
			\end{equation}
			where
			$$ (Q_\ell)_\delta := \{x\in\R^n \mbox{ s.t. } \mbox{dist}(x,Q_\ell)<\delta\}.$$
			
			Then (see e.g.~\cite[Theorem~7.1]{MR2944369}), the sequence~$u_j$ is precompact in~$L^1(Q_\ell)$
			and consequently, up to a subsequence, $u_j$ converges to some function~$u_0$ in~$L^1(Q_\ell)$ for all~$\ell\in\{1,\dots,N\}$, and thus in~$L^1(Q)$ and a.e. in~$Q$.
			
Thus, from~\eqref{0123} and Fatou Lemma,
$$		\iint_{Q_\ell\times Q_\ell} \frac{|u_0(x)-u_0(y)|}{|x-y|^{n+2s}}\,dx\,dy=0,$$
which implies that~$u_0$ is constant.	As a result, since
we have from~\eqref{f8u49thgeoigeg98765} that
$$ \int_Q u_0(x)\,dx=0,$$ we conclude that~$u_0$ is the null function.

 However, according to~\eqref{0122},
			$$1=\lim_{j\to+\infty}\int_Q|u_j(x)|\,dx=\int_Q|u_0(x)|\,dx,$$
			providing the desired  contradiction.
		\end{proof}
		
		% ------------------------------------------------------------------------------------------------
		
		\section{Uniform density estimates for~$\left(\Lambda,\frac{2s_1}{n}\right)$-minimizers of~$\P_K$} \label{appendix::ude}
	
		In this section, we revisit the proof of~\cite[Theorem 2.2]{sequoia} adapting it to the setting under consideration here. The precise statement goes as follows:
	
		\begin{proposition} \label{prop::ude_almost_min}
			Let~$K$ satisfy~\eqref{eq::K_invariance}, \eqref{eq::K_integrable}, \eqref{eq::K_behavior}, and~\eqref{eq::K_lower_bound_Q} and let~$\Omega\subseteq \R^n$ be a Lipschitz domain.
			
Then, there exists~$\Lambda_0>0$ such that for all~$\Lambda\in(0,\Lambda_0]$ the following statement holds true.
		
			Let~$E$ be a~$\left(\Lambda,\frac{2s_1}{n}\right)$-minimal set for~$\P_K$ in~$\Omega$ (in the sense of Definition~\ref{def::almost_minimality}). 
			
			Then, there exists a constant~$c_0\in(0,1)$, depending only on~$n$, $s_1$, and~$\kappa_2$, such that, for any~$x_0\in(\partial E)\cap\Omega$ and~$r\in(0,\min\{\delta/4,\mbox{dist}(x_0,\partial \Omega)\})$, 
			\begin{equation} \label{eq::unif_dens_estimates}
				c_0r^n \leq |E\cap B_r(x_0)| \leq (1-c_0)r^n.
			\end{equation}
		\end{proposition}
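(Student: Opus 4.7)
The plan is to follow the scheme developed in~\cite[Theorem~2.2]{sequoia} for $(\Lambda,0)$-minimizers of the fractional perimeter, and to check that the same argument accommodates both the more general kernel $K$ (through the two-sided estimate \eqref{eq::K_behavior}) and the almost-minimality exponent $\xi = 2s_1/n$. Since $\P_K$ is symmetric in $E$ and $E^c$, the set $E^c$ is itself $\left(\Lambda, 2s_1/n\right)$-minimal, so it suffices to establish the lower bound $|E \cap B_r(x_0)| \geq c_0 r^n$ and apply the argument to $E^c$ for the upper bound.

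Fix $x_0 \in (\partial E) \cap \Omega$ and set $V(r) := |E \cap B_r(x_0)|$. The admissible range $r < \min\{\delta/4,\mathrm{dist}(x_0,\partial\Omega)\}$ guarantees $B_r(x_0) \Subset \Omega$, so I would use the competitor $F := E \setminus B_r(x_0)$, which agrees with $E$ outside $\Omega$ and satisfies $|E \Delta F| = V(r)$. A direct expansion of the perimeter difference (the terms involving $\Omega^c$ cancel since $B_r(x_0)\Subset\Omega$) together with the almost-minimality yields
$$\mathcal{L}_K(E \cap B_r(x_0), E^c \setminus B_r(x_0)) \leq \mathcal{L}_K(E \setminus B_r(x_0), E \cap B_r(x_0)) + \Lambda V(r)^{1-2s_1/n}.$$
I would then invoke \eqref{eq::K_behavior}: because $r < \delta/4$, all pairs contributing on the left satisfy $|x-y|<\delta$ in a large subregion, so the lower bound on $K$ estimates the left-hand side from below by $\kappa_1$ times a short-range fractional interaction, while the upper bound controls the first term on the right by $\kappa_2 \iint (E\setminus B_r)\times(E\cap B_r) |x-y|^{-n-2s_1}\,dx\,dy$. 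At this point the structure becomes identical to~\cite[Theorem~2.2]{sequoia} with fractional exponent $s_1$, up to the universal constants $\kappa_1, \kappa_2$.

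Combining the fractional isoperimetric inequality of order $s_1$ applied to $E\cap B_r(x_0)$ with the coarea-type splitting of the right-hand integral developed in~\cite[Section~3]{sequoia}, one obtains a closed integro-differential inequality of the form
$$c_1 V(r)^{(n-2s_1)/n} \leq c_2\, r^{n-2s_1}\,V'(r) + \Lambda\, V(r)^{1-2s_1/n}.$$
The main (and essentially only) new obstacle is the perturbation term $\Lambda V(r)^{1-2s_1/n}$. The crucial observation is that $1-2s_1/n = (n-2s_1)/n$, so this term has exactly the same homogeneity in $V(r)$ as the isoperimetric left-hand side---this is precisely why the exponent $2s_1/n$ is the right one in Definition~\ref{def::almost_minimality}. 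Choosing $\Lambda_0$ small enough (depending on $n$, $s_1$, $\kappa_1$, $\kappa_2$), the perturbation is absorbed into the left-hand side, leaving
$$c\, V(r)^{(n-2s_1)/n} \leq r^{n-2s_1}\,V'(r).$$
Integrating this ODE from $0$ to any admissible $r$ then yields $V(r) \geq c_0 r^n$, which is the desired density estimate, with $c_0$ depending only on $n$, $s_1$, $\kappa_1$, and $\kappa_2$.
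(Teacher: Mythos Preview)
Your overall strategy---compete with $E\setminus B_r$, use the fractional isoperimetric inequality, and absorb the $\Lambda$-term by exploiting that $1-\tfrac{2s_1}{n}=\tfrac{n-2s_1}{n}$---is exactly the one in the paper. The gap is in the form of the key inequality and in the final step.

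The coarea splitting, using only the upper bound in~\eqref{eq::K_behavior}, gives
\[
\mathcal{L}_K(E\cap B_r,\,B_r^c)\ \le\ C\int_0^r \frac{V'(\rho)}{(r-\rho)^{2s_1}}\,d\rho,
\]
so after absorbing the almost-minimality term one arrives at the \emph{integral} inequality
\[
V(r)^{(n-2s_1)/n}\ \le\ C\int_0^r \frac{V'(\rho)}{(r-\rho)^{2s_1}}\,d\rho,
\]
not a pointwise bound by $r^{n-2s_1}V'(r)$. (That expression is also dimensionally inconsistent: the left side scales like $r^{n-2s_1}$ while $r^{n-2s_1}V'(r)$ scales like $r^{2n-1-2s_1}$.) This integral inequality cannot be treated as an ODE. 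The paper instead integrates it over $r\in(0,t)$ and applies Fubini to obtain
\[
\int_0^t V(r)^{(n-2s_1)/n}\,dr\ \le\ C\,t^{1-2s_1}\,V(t),
\]
and then concludes via a De~Giorgi--type iteration on the dyadic sequence $t_k=\tfrac{t_0}{2}+\tfrac{t_0}{2^{k+1}}$ (Lemma~\ref{lemma::iteration}), arguing by contradiction from a hypothetical radius $t_0$ with $V(t_0)<c_0 t_0^n$. Your sketch omits this iteration step, which is the actual mechanism converting the integral inequality into the density bound $V(t)\ge c_0 t^n$.
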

		
		For the proof of Proposition~\ref{prop::ude_almost_min}, we employ the following auxiliary results
		in Lemmata~\ref{lemma::iteration} and~\ref{lemma::isoperimetric_estimate}.
		
		\begin{lemma}[Lemma 7.1, \cite{MR1707291}] \label{lemma::iteration}
			Let~$\beta\in(0,1)$, $N>1$, and~$M>0$. Let~$\{x_k\}_k$ be a decreasing sequence in~$\R$ such that
			$$ x_{k+1}^{1-\beta}\leq N^kMx_k.$$
			If~$x_0\leq N^{\frac{1}{\beta}-\frac{1}{\beta^2}}M^{-\frac{1}{\beta}}$, then~$x_k\to0$ as~$k\to+\infty$.
		\end{lemma}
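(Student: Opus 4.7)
The plan is to look for a geometric decay rate: I will prove by induction that $x_k\leq r^k x_0$ for a suitable $r\in(0,1)$, with $r$ chosen in terms of $N$ and $\beta$ so that the precise hypothesis on $x_0$ is exactly what the induction requires. Since we may assume $x_k\geq 0$ for all $k$ (otherwise $x_{k+1}^{1-\beta}$ is not defined when $1-\beta\notin\Z$), a bound $x_k\leq r^k x_0$ with $r<1$ immediately forces $x_k\to 0$.

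Set $\gamma:=1/(1-\beta)>1$, so that the recurrence can be rewritten as
\begin{equation*}
x_{k+1}\leq (N^k M x_k)^{\gamma} = N^{k\gamma} M^{\gamma} x_k^{\gamma}.
\end{equation*}
Assuming inductively that $x_k\leq r^k x_0$, this yields
\begin{equation*}
x_{k+1}\leq N^{k\gamma} M^{\gamma} r^{k\gamma} x_0^{\gamma} = (N r^{\beta})^{k\gamma}\, r^{k}\, M^{\gamma} x_0^{\gamma},
\end{equation*}
where I used $\gamma - 1 = \beta\gamma$ to extract the factor $r^k$. The inductive step $x_{k+1}\leq r^{k+1} x_0$ therefore reduces to
\begin{equation*}
(N r^{\beta})^{k\gamma}\, M^{\gamma} x_0^{\gamma - 1} \leq r,
\qquad\text{i.e.,}\qquad (N r^{\beta})^{k\gamma} \leq r\, M^{-\gamma} x_0^{-\beta\gamma}.
\end{equation*}

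The natural choice is $r:=N^{-1/\beta}\in(0,1)$, so that $Nr^{\beta}=1$ and the left-hand side above equals $1$ for every $k\geq 0$. The required condition then becomes $x_0^{\beta\gamma}\leq r\, M^{-\gamma}$, which, recalling $\beta\gamma = \beta/(1-\beta)$ and therefore $1/(\beta\gamma) = 1/\beta - 1$, rearranges to
\begin{equation*}
x_0 \leq r^{1/\beta - 1} M^{-1/\beta} = N^{-(1/\beta - 1)/\beta} M^{-1/\beta} = N^{1/\beta - 1/\beta^2} M^{-1/\beta},
\end{equation*}
which is exactly the hypothesis. Hence the induction closes: $x_k\leq r^k x_0$ for every $k$, and since $r\in(0,1)$ we obtain $x_k\to 0$.

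There is no genuine obstacle in the argument; the only real decision is the ansatz $x_k\leq r^k x_0$ with the specific choice $r=N^{-1/\beta}$, which is dictated by the structure of the recurrence (making $Nr^{\beta}=1$ kills the only growing factor $N^{k\gamma}$ uniformly in $k$). The numerology $1/\beta - 1/\beta^2$ in the hypothesis on $x_0$ then arises from the single consistency condition between the $k$-independent part of the estimate and the decay factor $r$.
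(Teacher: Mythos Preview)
Your proof is correct. The paper does not provide its own proof of this lemma; it simply cites it from Giusti's book~\cite{MR1707291}, so there is no in-paper argument to compare against. Your induction with the ansatz $x_k\le r^k x_0$ and the choice $r=N^{-1/\beta}$ is precisely the standard argument from that reference, and your algebra recovering the threshold $N^{1/\beta-1/\beta^2}M^{-1/\beta}$ is clean.
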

		
		\begin{lemma}[Isoperimetric lower bound for the kernel~$K$] \label{lemma::isoperimetric_estimate}
			Suppose that~$K$ satisfy~\eqref{eq::K_invariance}, \eqref{eq::K_integrable}, \eqref{eq::K_behavior}, and~\eqref{eq::K_lower_bound_Q}. %, \eqref{eq::K_invariance}, and~\eqref{eq::K_integrable}.
			
			Then, there exists a positive constant~$C$, depending on~$n$, $s_1$, $\kappa_1$, and~$\delta$, such that
			\begin{equation} \label{eq::isoperimetric_estimate}
				\P_K(B_r) \geq C
				\begin{cases}
					r^{n-2s_1},\quad &{\mbox{if }}\;0<r\leq\delta/4,\\
					r^{n-1},\quad &{\mbox{if }}\;r>\delta/4.
				\end{cases}
				\quad
			\end{equation}
		\end{lemma}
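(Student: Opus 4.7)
The starting point is the lower bound in~\eqref{eq::K_behavior}, which reduces the task to estimating the truncated fractional interaction
\begin{equation*}
\P_K(B_r)\;\ge\;\kappa_1\iint_{B_r\times B_r^c\cap\{|x-y|<\delta\}}\frac{dx\,dy}{|x-y|^{n+2s_1}}.
\end{equation*}
The plan is to estimate the right-hand side separately in the two regimes $r\le\delta/4$ and $r>\delta/4$, obtaining $r^{n-2s_1}$ in the first case via a single-scale volume bound, and $r^{n-1}$ in the second case by summing many local contributions of order $\delta^{n-2s_1}$ via a packing argument.

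For $r\in(0,\delta/4]$, I would restrict the integration to the pair $(x,y)\in B_r\times (B_{2r}\setminus B_r)$. In this region $|x-y|\le 3r\le 3\delta/4<\delta$, so the truncation is inactive and $|x-y|^{-n-2s_1}\ge (3r)^{-n-2s_1}$. The elementary volume estimate $|B_r|\cdot|B_{2r}\setminus B_r|=\omega_n^2(2^n-1)r^{2n}$ then yields
\begin{equation*}
\iint_{B_r\times(B_{2r}\setminus B_r)}\frac{dx\,dy}{|x-y|^{n+2s_1}}\;\ge\;\frac{\omega_n^2(2^n-1)}{3^{n+2s_1}}\,r^{n-2s_1},
\end{equation*}
which is the desired bound.

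For $r>\delta/4$, the idea is to reduce to many copies of the small-scale estimate by a packing/covering argument on the sphere $\partial B_r$. I would produce a disjoint family $\{B_{\delta/16}(y_k)\}_{k=1}^{N}$ with $y_k\in\partial B_r$ and $N\ge c_n\,(r/\delta)^{n-1}$, which exists by standard packing of $\partial B_r$ (of surface area comparable to $r^{n-1}$) by spherical caps of radius $\sim\delta$. For each $k$, using $r\ge\delta/4$, the point $y_k^\pm:=(1\mp\delta/(32r))\,y_k$ lies in $B_{\delta/64}(y_k)$ with $y_k^-\in B_r$ and $y_k^+\in B_r^c$, so the little balls $B_{\delta/64}(y_k^\pm)\subseteq B_{\delta/16}(y_k)$ give
\begin{equation*}
|B_r\cap B_{\delta/16}(y_k)|\;\ge\;c_n\delta^n,\qquad |B_r^c\cap B_{\delta/16}(y_k)|\;\ge\;c_n\delta^n.
\end{equation*}
Since $|x-y|\le\delta/8<\delta$ for $x,y\in B_{\delta/16}(y_k)$, the contribution of each $k$-th ball is
\begin{equation*}
\iint_{(B_r\cap B_{\delta/16}(y_k))\times(B_r^c\cap B_{\delta/16}(y_k))}\frac{dx\,dy}{|x-y|^{n+2s_1}}\;\ge\;\frac{c_n^2\delta^{2n}}{(\delta/8)^{n+2s_1}}\;=\;c\,\delta^{n-2s_1}.
\end{equation*}
Because the balls $B_{\delta/16}(y_k)$ are pairwise disjoint, these contributions add up and, recalling $N\gtrsim(r/\delta)^{n-1}$, yield $\P_K(B_r)\ge C\,r^{n-1}\,\delta^{1-2s_1}$, which is the desired form with $C=C(n,s_1,\kappa_1,\delta)$.

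The routine calculations are the two volume/packing bookkeeping steps; the only point that requires a little care is the second case, where one must verify that the small balls $B_{\delta/16}(y_k)$ genuinely straddle $\partial B_r$ with comparable mass on both sides whenever $r\ge\delta/4$, and that a disjoint packing of the claimed cardinality exists on the sphere. Both are standard but are the only place where the hypothesis $r\ge\delta/4$ is used in an essential (non-scaling) way.
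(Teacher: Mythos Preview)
Your argument is correct and follows essentially the same two-regime strategy as the paper: both restrict to $B_r\times(B_{2r}\setminus B_r)$ for $r\le\delta/4$, and for $r>\delta/4$ both sum $\sim(r/\delta)^{n-1}$ local contributions from small balls centered on $\partial B_r$, each contributing $\sim\delta^{n-2s_1}$. Your execution of the large-$r$ case is slightly more elementary---you use a disjoint packing and place explicit sub-balls on each side of $\partial B_r$, whereas the paper uses a covering and a rescaled slab argument to control the local geometry---and there is a harmless arithmetic slip (with your definition $|y_k^\pm-y_k|=\delta/32$, not $<\delta/64$), but the needed inclusions $B_{\delta/64}(y_k^\pm)\subseteq B_{\delta/16}(y_k)$ still hold and the argument goes through.
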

		
		\begin{proof}
			To show~\eqref{eq::isoperimetric_estimate}, let us start by
			assuming~$0<r\leq\delta/4$. In this case, by~\eqref{eq::K_behavior} and the scaling properties of the kernel~$|x|^{-n-2s_1}$, we have that
			\begin{equation} \label{eq::isoperimetric_small_scale}
				\begin{split}
					&P_K(B_r)=\iint_{B_r\times B_r^c} K(x,y)\,dx\,dy \geq \iint_{B_r\times (B_{2r}\setminus B_{r})} K(x,y)\,dx\,dy \\
					&\qquad\geq \kappa_1 \iint_{B_r\times (B_{2r}\setminus B_{r})} \frac{dx\,dy}{|x-y|^{n+2s_1}}\\
					&\qquad =\kappa_1r^{n-2s_1} \iint_{B_1\times (B_2\setminus B_1)} \frac{dx\,dy}{|x-y|^{n+2s_1}}=: C_1(n,s_1,\kappa_1)r^{n-2s_1}.
				\end{split}
			\end{equation}
			
			Also, if~$r>\delta/4$, let us consider the covering~$\{B_{\delta/8}(x_0)\}_{x_0\in\partial B_r}$ of~$\partial B_r$. By compactness, we extract a finite sub-covering~$\mathcal{B}$. In particular, \begin{equation}\label{ciserve}
			\sharp(\mathcal{B})=c\left(\frac{r}{\delta}\right)^{n-1},\end{equation} for some constant~$c>0$ depending only on~$n$. Thus, we have that
			\begin{equation}\label{y854ghsdjkgu4y}\begin{split}
				&P_K(B_r)=\iint_{B_r\times B_r^c} K(x,y)\,dx\,dy \geq \sum_{B_{\delta/8}(x_0)\in\mathcal{B}} \iint_{(B_r\cap B_{\delta/8}(x_0))\times (B_r^c\cap B_{\delta/8}(x_0))} K(x,y)\,dx\,dy \\
				&\qquad \geq \kappa_1 \sum_{B_{\delta/8}(x_0)\in\mathcal{B}} \iint_{(B_r\cap B_{\delta/8}(x_0))\times (B_r^c\cap B_{\delta/8}(x_0))} \frac{dx\,dy}{|x-y|^{n+2s_1}}\\
				&\qquad \geq \kappa_1\delta^{n-2s_1} \sum_{B_{\delta/8}(x_0)\in\mathcal{B}} \iint_{(B_{8r/\delta}\cap B_1(8x_0/\delta))\times (B_{8r/\delta}^c\cap B_1(8x_0/\delta))} \frac{dx\,dy}{|x-y|^{n+2s_1}} .
			\end{split}\end{equation}
			
We now use the notation
$$x_\delta:=\frac{8x_0}\delta$$
and we claim that
\begin{equation}\label{aggbis23}
(\partial B_{8r/\delta})\cap B_1(x_\delta) \subseteq \left\{ -\frac14<(x-x_\delta)\cdot\frac{x_0}{|x_0|}\le 0 \right\}.
\end{equation}
To check this, we pick~$x\in \partial B_{8r/\delta}\cap B_1(x_\delta)$ and we observe that
\begin{equation*}
1>|x-x_\delta|^2=|x|^2+|x_\delta|^2-2x\cdot x_\delta=
\left(\frac{8r}\delta\right)^2+\left(\frac{8}\delta\right)^2|x_0|^2
-\frac{16 x\cdot x_0}\delta=
2\left(\frac{8r}\delta\right)^2
-\frac{16 x\cdot x_0}\delta
\end{equation*}
and accordingly
\begin{equation}\label{wqkdh82hiunvtymbioSq19wsdu.0}
x\cdot\frac{ x_0}{|x_0|}=
\frac{\delta}{16r}\;
\frac{16 x\cdot x_0}\delta
>\frac{\delta}{16r}\left[
2\left(\frac{8r}\delta\right)^2-1
\right]
=\frac{8r}\delta-\frac{\delta}{16r}.
\end{equation}

Moreover,
\begin{equation}\label{wqkdh82hiunvtymbioSq19wsdu.1}
(x-x_\delta)\cdot\frac{x_0}{|x_0|}=
x\cdot\frac{x_0}{|x_0|}-|x_\delta|=
x\cdot\frac{x_0}{|x_0|}-\frac{8|x_0|}\delta=
x\cdot\frac{x_0}{|x_0|}-\frac{8r}\delta,
\end{equation}
from which we arrive at
\begin{equation}\label{wqkdh82hiunvtymbioSq19wsdu}
(x-x_\delta)\cdot\frac{x_0}{|x_0|}\le
|x|-\frac{8r}\delta=0.
\end{equation}

Also, from~\eqref{wqkdh82hiunvtymbioSq19wsdu.0} and~\eqref{wqkdh82hiunvtymbioSq19wsdu.1},
$$ (x-x_\delta)\cdot\frac{x_0}{|x_0|}>-\frac{\delta}{16r}>-\frac14.$$
Combining this and~\eqref{wqkdh82hiunvtymbioSq19wsdu}, we obtain~\eqref{aggbis23}, as desired.	
		
			It thereby follows from~\eqref{aggbis23} that
			\begin{align*}
				&\iint_{(B_{8r/\delta}\cap B_1(x_\delta))\times (B_{8r/\delta}^c\cap B_1(x_\delta))} \frac{dx\,dy}{|x-y|^{n+2s_1}} \\
				&\qquad\qquad \geq \iint_{\left( \left\{ (x-x_\delta)\cdot\frac{x_0}{|x_0|}>0\right\}\cap B_1(x_\delta)\right)\times \left( \left\{ (y-x_\delta)\cdot\frac{x_0}{|x_0|}<-\frac{1}{4}\right\}\cap B_1(x_\delta)\right)} \frac{dx\,dy}{|x-y|^{n+2s_1}}.
			\end{align*}
			Plugging this information into~\eqref{y854ghsdjkgu4y}, we obtain that
			\begin{equation*}\begin{split}
				&P_K(B_r) \geq \kappa_1\delta^{n-2s_1} 
				\sum_{B_{\delta/8}(x_0)\in\mathcal{B}} \iint_{\left( \left\{ (x-x_\delta)\cdot\frac{x_0}{|x_0|}>0\right\}\cap B_1(x_\delta)\right)\times \left( \left\{ (y-x_\delta)\cdot\frac{x_0}{|x_0|}<-\frac{1}{4}\right\}\cap B_1(x_\delta)\right)} \frac{dx\,dy}{|x-y|^{n+2s_1}}.
			\end{split}\end{equation*}			
			By the translation and rotation invariance of the fractional kernel~$|x|^{-n-2s_1}$ and~\eqref{ciserve}, we infer that
			\begin{equation}\label{eq::isoperimetric_large_scale}
				\begin{split}
					P_K(B_r)&\ge\kappa_1\delta^{n-2s_1} 
				\sum_{B_{\delta/8}(x_0)\in\mathcal{B}}\iint_{\left(B_1\cap\{x_n>0\}\right)\times \left(B_1\cap \left\{ y_n<-\frac14\right\}\right)} \frac{dx\,dy}{|x-y|^{n+2s_1}}\\&
				 \geq c\kappa_1\delta^{1-2s_1}r^{n-1}\iint_{(B_1\cap\{x_n>0\})\times (B_1\cap\{y_n<-1/4\})} \frac{dx\,dy}{|x-y|^{n+2s_1}}\\
					&=:C_2(n,\kappa_1,s_1,\delta)r^{n-1}.
				\end{split}
			\end{equation}
			
We conclude the proof from~\eqref{eq::isoperimetric_small_scale} and~\eqref{eq::isoperimetric_large_scale} by setting~$C:=\min\{C_1,C_2\}$.
		\end{proof}
		
		\begin{proof} [Proof of Proposition~\ref{prop::ude_almost_min}]
			Let us recall that, by definition, if~$E$ is a~$\left(\Lambda,\frac{2s_1}{n}\right)$-minimal set, for some~$\Lambda\geq0$, then
			\begin{equation} \label{eq::almost_min2}
				\P_K(E, \Omega)\leq\P_K(F, \Omega)+\Lambda|E\Delta F|^{1-\frac{2s_1}{n}},
			\end{equation}
			for every~$F$ such that~$ F\setminus \Omega=E\setminus \Omega$.
			
			Let~$x_0\in( \partial E)\cap \Omega$. Up to a translation, we suppose that~$x_0$ coincides with the origin. 
			
			Define~$A_r:=E\cap B_r$, with~$r\in(0,\delta/4)$, where~$\delta$ is as in~\eqref{eq::K_behavior}, and observe
			that~$A_r\subseteq \Omega$. Also, let~$\mu(r):=|A_r|$ and notice that, by the co-area formula, $\mu'(r)=\haus{n-1}(E\cap\partial B_r)$.
			
			Our strategy now is to provide an estimate for~$\mu^{1-\frac{2s_1}{n}}(r)$ in terms of~$\mu(r)$.
			% For this, we set $q=\frac{n-1}{n}$%$q:=\frac{2n}{n-2s_1}$ 
			For this, let us consider~$\rho_r\in(0,r)$ such that~$|A_r|=\omega_n\rho_r^n$. Then, observe that, thanks to Lemma~\ref{lemma::isoperimetric_estimate} and~\cite[Proposition~3.1]{MR3732175},  
			\begin{equation} \label{eq::ude1}
				%\norm{\chi_{A_r}}_{L^q(\R^n)}\leq C\norm{\chi_{A_r}}_{H^{\frac{2s_1}{2}}(\R^n)} = C\left(\mathcal{L}(A_r,A_r^c)\right)^{\frac12},
				\norm{\chi_{A_r}}_{L^{\frac{n}{n-2s_1}}(\R^n)} = |A_r|^{1-\frac{2s_1}{n}} \leq C\P_K(B_{\rho_r}) \leq C\P_K(A_r) = C \mathcal{L}_K(A_r,A_r^c),
			\end{equation}
			up to renaming~$C$, and we stress that~$C$ depends only\footnote{Notice indeed that, once we assume that~$\rho_r<\delta/4$, we can select~$C:=C_1$ given by~\eqref{eq::isoperimetric_small_scale}			
			in Lemma~\ref{lemma::isoperimetric_estimate}. Thus, the constant~$C$ in~\eqref{eq::ude1} is independent of~$\delta$.}
			on~$n$, $s_1$, and~$\kappa_1$.
			
			Now, since
			\begin{equation*}
				\mathcal{L}_K(A_r,A_r^c) = \mathcal{L}_K(A_r,E\cap A_r^c)+\mathcal{L}_K(A_r,E^c),
			\end{equation*}
			it follows from the subsolution property in~\cite[Definition~1.2]{sequoia} that
			\begin{equation} \label{eq::ude2}
				\mathcal{L}_K(A_r,A_r^c)\leq 2\mathcal{L}_K(A_r,E\cap A_r^c) + \Lambda\mu^{1-\frac{2s_1}{n}}(r)\leq 2\mathcal{L}_K(A_r,B_r^c) + \Lambda\mu^{1-\frac{2s_1}{n}}(r).
			\end{equation}
			Moreover, by the co-area formula and~\eqref{eq::K_behavior}, we have that
			\begin{equation*}
				\begin{split}
					&	\mathcal{L}_K(A_r,B_r^c)
					= \iint_{A_r\times B_r^c} K(x,y)dx\,dy \leq \kappa_2 \iint_{A_r\times B_r^c} \frac{dy\,dx}{|x-y|^{n+2s_1}} 
					\leq C\int_{A_r}\left(\int_{r-|x|}^{+\infty}\frac{dz}{z^{2s_1+1}}\right)\,dx \\
					&\qquad\quad\leq C\int_{A_r} \frac{dx}{(r-|x|)^{2s_1}}
					\leq C\int_0^r\frac{\mu'(\rho)}{(r-\rho)^{2s_1}}\, d\rho ,
				\end{split}
			\end{equation*}
			for some~$C>0$, depending on~$n$, $s_1$, and~$\kappa_2$ and possibly changing from line to line.
			
			Plugging this into~\eqref{eq::ude2}, we obtain that
			\begin{equation*} 
				\mathcal{L}_K(A_r,A_r^c)\le C\int_0^r\frac{\mu'(\rho)}{(r-\rho)^{2s_1}}\, d\rho + \Lambda\mu^{1-\frac{2s_1}{n}}(r).
			\end{equation*}
			From the last inequality and~\eqref{eq::ude1} we deduce that
			\begin{equation}\label{eq::ude3}
				\mu^{1-\frac{2s_1}{n}}(r)=\norm{\chi_{A_r}}_{L^{\frac{n}{n-2s_1}}(\R^n)} \leq 
				C \left(\int_0^r\frac{\mu'(\rho)}{(r-\rho)^{2s_1}}\, d\rho + \Lambda\mu^{1-\frac{2s_1}{n}}(r)\right) ,
			\end{equation} up to renaming~$C$.
			
			Furthermore, we assume that~$\Lambda$ is so small that 
			\begin{equation*}
				C\Lambda \leq \frac{1}{2}.
			\end{equation*}
			
			Using this into~\eqref{eq::ude3}, we thus obtain that
			\begin{equation*}
				\mu^{1-\frac{2s_1}{n}} (r)\leq C\int_0^r \frac{\mu'(\rho)}{(r-\rho)^{2s_1}}\, d\rho .
			\end{equation*}
			Hence,	integrating the latter inequality in~$r\in(0,t)$, we deduce that, for all~$t\in(0, \delta/4]$,
			\begin{equation} \label{eq::ude7}
				\begin{split}
					\int_0^t \mu^{1-\frac{2s_1}{n}}(r)\, dr
					&\leq \int_0^t \left(C \int_0^r \frac{\mu'(\rho)}{(r-\rho)^{2s_1}}\, d\rho  \right)\,dr\\
					&= C\int_0^t \left(\mu'(\rho)\int_\rho^t \frac{dr}{(r-\rho)^{2s_1}}\right)\, d\rho \\
					&= C\int_0^t \mu'(\rho) (t-\rho)^{1-2s_1}\,d\rho \\
					&\leq C \mu(t) \,t^{1-2s_1}.
				\end{split}
			\end{equation}
			
			Now, we set
			$$ c_0:=2^{\frac{n}{2s_1}-\frac{n^2}{(2s_1)^2}}(4C)^{-\frac{n}{2s_1}}$$
			and we claim that
			\begin{equation}\label{y9564rzxcvbnasdfghjkqwertyui1234567}
				\mu(t)\ge c_0t^n \quad {\mbox{for all }} t\in(0,\delta/4].
			\end{equation}
			To prove this,
			we argue by contradiction and assume that
			there exists~$t_0\in(0, \delta/4]$ such that
			\begin{equation}\label{wt436y95687rfdcghvdsj}
				\mu(t_0)< c_0t_0^n.\end{equation}
			
			We define~$t_k:=\frac{t_0}{2}+\frac{t_0}{2^{k+1}}$. Then, using~\eqref{eq::ude7}, we have that
			$$ \frac{t_0}{2^{k+2}} \mu^{1-\frac{2s_1}{n}}(t_{k+1}) = (t_k-t_{k+1})  \mu^{1-\frac{2s_1}{n}}(t_{k+1})  \leq \int_{t_{k+1}}^{t_k}  \mu^{1-\frac{2s_1}{n}}(r)\,dr\leq C \mu(t_k) t_k^{1-2s_1}\leq C \mu(t_k) t_0^{1-2s_1} .$$
			Notice that, by continuity, 
			$$\lim_{k\to+\infty}\mu(t_{k})=\mu\left(\frac{t_0}2\right)=|E\cap B_{t_0/2}|>0.$$
			Therefore, using Lemma~\ref{lemma::iteration}
			with~$x_k:=\mu(t_k)$,
			$\beta:=s/n$, $M:=4Ct_0^{-2s_1}$, and~$N:=2$, we find that 
			$$\mu(t_0)>N^{\frac{1}{\beta}-\frac{1}{\beta^2}}M^{-\frac{1}{\beta}}=2^{\frac{n}{2s_1}-\frac{n^2}{(2s_1)^2}}\big(4Ct_0^{-2s_1}\big)^{-\frac{n}{2s_1}}.$$ 
			Thus, thanks to~\eqref{wt436y95687rfdcghvdsj} we deduce that
			$$ c_0t_0^n> \mu(t_0)>
			2^{\frac{n}{2s_1}-\frac{n^2}{(2s_1)^2}}\big(4Ct_0^{-2s_1}\big)^{-\frac{n}{2s_1}}=
			2^{\frac{n}{2s_1}-\frac{n^2}{(2s_1)^2}}(4C)^{-\frac{n}{2s_1}} t_0^n
			= c_0t_0^n ,$$
			which gives the desired contradiction
			and proves~\eqref{y9564rzxcvbnasdfghjkqwertyui1234567}. 
			
			So, \eqref{y9564rzxcvbnasdfghjkqwertyui1234567} yields
			$$ |E\cap B_r|\geq c_0r^n \quad \text{ for all }r\in(0,\delta/4],$$
			proving the first inequality in~\eqref{eq::unif_dens_estimates}.
			
			Moreover, if~$E$ is an almost minimal set, then also~$E^c$ is almost minimal . %(recall Definition~\ref{def::super_sub_sol} and Lemma~\ref{lemma::almost_minimal_subsupersol}),
			Thus, we exploit~\eqref{y9564rzxcvbnasdfghjkqwertyui1234567}
			replacing~$E$ with~$E^c$, obtaining that, for~$r$ sufficiently small,
			$$ |E^c\cap B_r|\geq c_0r^n,$$
			from which we infer the second inequality in~\eqref{eq::unif_dens_estimates}.
		\end{proof}
		
		% ------------------------------------------------------------------------------------------------
		
		\section{Behavior of~$\P_K$ in large domains}
		
		Here, we show that if $\Omega$ is a Lipschitz domain and
		$$ \Omega_R :=\left\{x\in\R^n \mbox{ s.t. }\frac{x}{R}\in\Omega\right\},$$
		then~$\P_K(\Omega_R)$ is bounded by~$R^{n-1}$. 
		
		First, let us consider the space of Lipschitz function in~$B_\rho^{n-1}$, for some~$\rho>0$,
		denoted by~$\mbox{Lip}(B_\rho^{n-1})$ and
		endowed with the norm
		\begin{equation*}
			\norm{f}_{Lip(B_\rho^{n-1})} := \norm{f}_{L^\infty(B_\rho^{n-1})} +  [f]_{Lip(B_\rho^{n-1})} ,
		\end{equation*}
		where
		\begin{equation*}
			[f]_{Lip(B_\rho^{n-1})} := \sup_{\substack{x',y'\in B_\rho^{n-1} \\ x'\neq y'}} \frac{|f(x')-f(y')|}{|x-y'|}.
		\end{equation*}
		
We also recall the following notion of Lipschitz set (see \cite[Definition 2.4.5]{MR3791463}).
		\begin{definition} \label{def::unif_lipschitz}
			We say that an open set $\Omega\subset \R^n$ has Lipschitz boundary (or that $\Omega$ is a Lipschitz set) if there exist $r_\Omega>0$, $L_\Omega>0$, and $\alpha_\Omega>0$ such that for every $x_0\in\partial\Omega$ there exist a rotation $\mathcal{R}_{\theta_{x_0}}$ (of an angle $\theta_{x_0}\in[0,2\pi)$) and a Lipschitz function $\phi_{x_0}\in\mbox{Lip}(B_{r_\Omega}^{n-1})$ such that:
			\begin{enumerate}[i)] 
				\item $[\phi_{x_0}]_{Lip(B_{r_\Omega}^{n-1})} \leq L_\Omega$; 
				\item $\mathcal{R}_{\theta_{x_0}}(\Omega-x_0)\cap\left(B_{r_\Omega}^{n-1}\times(-\alpha_\Omega,\alpha_\Omega)\right) = \left\{(x',x_n) \mbox{ s.t. }x'\in B_{r_\Omega}^{n-1}\mbox{ and }x_n\in(-\alpha_\Omega,\phi_{x_0}(x'))\right\}$.
		%		\item $\partial \left(\mathcal{R}_{\theta_{x_0}}\Omega(-x_0)\right)\cap\left(B_{r_\Omega}^{n-1}\times(-\alpha_\Omega,\alpha_\Omega)\right) = \left\{(x',\phi_{x_0}(x')) \mbox{ s.t. }x'\in B_{r_\Omega}^{n-1}\right\}$.
				%\mbox{for every }x\in\partial\Omega\cap B_{r_\Omega}(x_0) , \mbox{we have } x = \widetilde{x}+\phi_{x_0}(\widetilde{x})\nu_{x_0} \mbox{ with }\widetilde{x}\in\{(x-x_0)\cdot\nu_{x_0}=0\}\cap
			\end{enumerate}
		\end{definition}
		
		The main result of this section is the following:
		\begin{proposition} \label{prop::P_K_upper_bound_BR} 
			There exist~$\eta\in(0,1)$ and~$C=C(n,s_1,s_2,\kappa_2,\delta,\eta,\Omega)>0$ such that
			\begin{equation*} \label{eq::P_K_upper_bound_BR}
				\P_K(\Omega_R) \leq CR^{n-1}, 
			\end{equation*}
		for every $R\geq1$ large enough.
		\end{proposition}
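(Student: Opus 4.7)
The plan is to use Fubini's theorem to write
$$\P_K(\Omega_R) = \int_{\Omega_R} g_R(x)\,dx, \qquad g_R(x):=\int_{\Omega_R^c} K(x,y)\,dy,$$
and then to control $g_R(x)$ pointwise via the distance $d(x) := \mbox{dist}(x, \partial\Omega_R)$, exploiting the decay of $K$ provided by~\eqref{eq::K_behavior}.

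First I would establish the pointwise bound
$$g_R(x) \leq C\,d(x)^{-2s_1} \quad \text{if } 0<d(x)\leq 1, \qquad g_R(x) \leq C\,d(x)^{-2s_2} \quad \text{if } d(x)\geq 1,$$
with $C$ depending only on $n$, $s_1$, $s_2$, and $\kappa_2$. This follows from the inclusion $\Omega_R^c\subseteq\{y:|x-y|\geq d(x)\}$ combined with a split of the integration region at $|x-y|=1$: on $\{|x-y|\leq 1\}$ one uses $K(x,y)\leq\kappa_2|x-y|^{-n-2s_1}$, and on $\{|x-y|\geq 1\}$ one uses the sharper tail bound $K(x,y)\leq\kappa_2|x-y|^{-n-2s_2}$, both furnished by~\eqref{eq::K_behavior}. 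Each of the two resulting integrals can then be computed explicitly in spherical coordinates (using $s_1<1/2$ and $s_2>1/2$ to ensure the relevant radial integrals converge).

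Second, I would exploit that $d$ is $1$-Lipschitz on $\Omega_R$ with $|\nabla d|=1$ a.e., so that the coarea formula gives
$$\int_{\Omega_R} f(d(x))\,dx \;=\; \int_0^{+\infty} f(r)\,\haus{n-1}\!\big(\{d=r\}\cap\Omega_R\big)\,dr.$$
The key geometric input needed is the uniform estimate
$$\haus{n-1}\!\big(\{d=r\}\cap\Omega_R\big) \leq C\,R^{n-1}, \qquad \mbox{for every } r\geq 0,$$
with $C$ depending only on $n$ and on the Lipschitz constants $r_\Omega$, $L_\Omega$, $\alpha_\Omega$ of $\Omega$ in the sense of Definition~\ref{def::unif_lipschitz}. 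This would be proven by covering $\partial\Omega$ with finitely many Lipschitz-graph charts $B_{r_\Omega}(x_i)$, scaling by $R$ to obtain a covering of $\partial\Omega_R$ by $O(R^{n-1})$ translated patches of fixed size, and bounding the $(n-1)$-measure of $\{d=r\}$ inside each such patch by a constant depending only on $L_\Omega$ and $\alpha_\Omega$ (via a Lipschitz parametrization of the level surface).

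Third, combining the pointwise bound with the coarea estimate yields
$$\int_{\Omega_R\cap\{d\leq 1\}} d(x)^{-2s_1}\,dx \leq CR^{n-1}\int_0^1 r^{-2s_1}\,dr = \frac{CR^{n-1}}{1-2s_1},$$
$$\int_{\Omega_R\cap\{d>1\}} d(x)^{-2s_2}\,dx \leq CR^{n-1}\int_1^{c\,R} r^{-2s_2}\,dr \leq \frac{CR^{n-1}}{2s_2-1},$$
finite thanks to $s_1<\tfrac{1}{2}$ and $s_2>\tfrac{1}{2}$ respectively. Summing these two contributions delivers $\P_K(\Omega_R)\leq CR^{n-1}$.

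The main obstacle will be rigorously establishing the uniform geometric estimate on $\haus{n-1}(\{d=r\}\cap\Omega_R)$: since $d$ is only Lipschitz and $\Omega$ is not smooth, its level surfaces need not be smooth and the bound genuinely requires exploiting the local Lipschitz-graph description of $\partial\Omega$ together with a covering argument whose cardinality is of order $R^{n-1}$; the parameter $\eta\in(0,1)$ in the statement probably enters in tuning this covering (for instance, in balancing the short/long-range splitting scale against the covering size), which is why it influences the final constant but not the exponent.
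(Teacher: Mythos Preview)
Your approach is correct and takes a genuinely different route from the paper. The paper splits the double integral by the interaction distance $|x-y|$ into three ranges (below $\delta$, between $\delta$ and $\eta R$, above $\eta R$), covers $\partial\Omega_R$ with balls of the matching scale, and in each ball straightens the boundary to a hyperplane via a bi-Lipschitz change of variables (Lemma~\ref{lemma::lip_transform}) to reduce to explicit half-space integrals; the intermediate range also invokes a separate one-dimensional estimate (Lemma~\ref{lemma::estimate_mid_scale_1D}). Your route---Fubini, pointwise control of $g_R$ by $d^{-2s}$, then coarea---is considerably shorter and avoids both auxiliary lemmas, but transfers the geometric burden to the uniform level-set bound $\haus{n-1}(\{d=r\})\le CR^{n-1}$. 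One refinement is needed there: your covering of $\partial\Omega_R$ by $O(R^{n-1})$ unit-size patches only sees a collar of bounded width, so the Lipschitz-graph description of $\{d=r\}$ (as the boundary of the Minkowski erosion of the local epigraph, which does preserve the Lipschitz constant) is available only for $r$ up to a fixed constant. It is cleaner to argue by scaling, writing $\{d_{\Omega_R}=r\}=R\cdot\{d_\Omega=r/R\}$, which reduces matters to a uniform bound on $\haus{n-1}(\{d_\Omega=s\})$ for $s$ in the Lipschitz collar of the fixed domain $\Omega$; then the remaining deep interior $\{d>\eta R\}$ is handled by the crude estimate $d^{-2s_2}\le(\eta R)^{-2s_2}$ together with $|\Omega_R|\le CR^n$, yielding a harmless $O(R^{n-2s_2})$ contribution. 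That last step is precisely where $\eta$ enters, confirming your intuition.
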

		
		The proof relies on the following one-dimensional observation:
		
		\begin{lemma} \label{lemma::estimate_mid_scale_1D}
			Let~$s\in(1/2,1)$ and~$\delta\in(0,1)$. Then,
			\begin{equation*}
				\int_{R}^{+\infty}\int_{0}^{R}\frac{\chi_{(\delta,+\infty)}(|x-y|)}{|x-y|^{1+2s}}\,dx\,dy \leq \frac{\delta^{1-2s}}{2s-1}.
			\end{equation*}
		\end{lemma}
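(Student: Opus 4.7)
The plan is to reduce the double integral to a one-variable integral via the substitution $h = y - x$, since on the region of integration we have $y > R > x > 0$, hence $h > 0$. For each fixed $x \in (0,R)$, as $y$ ranges over $(R,+\infty)$, $h$ ranges over $(R-x,+\infty)$, so the integral becomes
\[
\int_0^R \int_{R-x}^{+\infty} \frac{\chi_{(\delta,+\infty)}(h)}{h^{1+2s}}\, dh\, dx.
\]
Next I would apply Fubini-Tonelli to swap the order of integration. For fixed $h>\delta$, the condition $R - x < h$ is equivalent to $x > R - h$, so $x$ ranges over $(\max(0,R-h),R)$, which has length $\min(h,R)$. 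This yields the single integral
\[
\int_\delta^{+\infty} \frac{\min(h,R)}{h^{1+2s}}\, dh.
\]

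Assuming $\delta < R$ (which is the relevant regime, since otherwise the left-hand side reduces to an even simpler integral with the same bound), I would split this at $h = R$ and compute both pieces explicitly:
\[
\int_\delta^{R} h^{-2s}\, dh + R\int_R^{+\infty} h^{-1-2s}\, dh
= \frac{\delta^{1-2s}-R^{1-2s}}{2s-1} + \frac{R^{1-2s}}{2s}.
\]
A direct algebraic manipulation gives
\[
\frac{R^{1-2s}}{2s} - \frac{R^{1-2s}}{2s-1} = -\frac{R^{1-2s}}{2s(2s-1)} \leq 0,
\]
so the whole expression is bounded above by $\dfrac{\delta^{1-2s}}{2s-1}$, establishing the lemma.

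There is no substantial obstacle here; the argument is a routine change of variables followed by Fubini and explicit antidifferentiation, with the only bookkeeping being the careful identification of the region $\{R-x < h\}$ when swapping the order of integration. The assumption $s > 1/2$ is used precisely to ensure convergence at infinity of $h^{-1-2s}$ (making the second piece finite) and to produce the positive denominator $2s-1$ in the final bound.
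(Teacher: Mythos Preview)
Your proof is correct and follows essentially the same route as the paper: a direct Fubini-based computation with explicit antidifferentiation, concluding by discarding a negative term. The paper integrates in the inner variable first and then splits the outer integral at $R+\delta$, whereas you streamline this with the substitution $h=y-x$ before swapping the order; the resulting arithmetic is identical.
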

		
		\begin{proof}
			The claim follows from a direct computation that we provide here for the facility of the reader.
			
		We observe that
			\begin{align*}
\int_{R}^{+\infty}\int_{0}^{R}\frac{\chi_{(\delta,+\infty)}(|x-y|)}{|x-y|^{1+2s}}\,dx\,dy &= \int_{R}^{+\infty}\int_{0}^{\min\{R,x-\delta\}}\frac{dx\,dy}{(x-y)^{1+2s}} \\
				&= \frac{1}{2s}\int_{R}^{+\infty} \left[\frac{1}{(x-y)^{2s}}\right]_0^{\min\{R,x-\delta\}}\,dx.
			\end{align*}
			Since~$x-\delta=\min\{R,x-\delta\}$ if and only if~$x\in(R,R+\delta]$, we obtain that
			\begin{align*}
				&\int_{R}^{+\infty}\int_{0}^{R}\frac{\chi_{(\delta,+\infty)}(|x-y|)}{|x-y|^{1+2s}}\,dx\,dy = \frac{1}{2s}\left(\int_{R}^{R+\delta} \frac{dx}{\delta^{2s}} +
				\int_{R+\delta}^{+\infty}\frac{dx}{(x-R)^{2s}}- \int_R^{+\infty} \frac{dx}{x^{2s}}\right)\\&\qquad\le  \frac{1}{2s}\left(\int_{R}^{R+\delta} \frac{dx}{\delta^{2s}} +
				\int_{R+\delta}^{+\infty}\frac{dx}{(x-R)^{2s}}\right)
			=\frac1{2s}\left(\delta^{1-2s}+\frac{\delta^{1-2s}}{2s-1}
				\right)
	= \frac{\delta^{1-2s}}{2s-1}.\qedhere
			\end{align*}
		\end{proof}
		
		Another important tool for proving Proposition~\ref{prop::P_K_upper_bound_BR} is an adaptation of~\cite[Lemma~10]{MR2782803} for Lipschitz domains. For this, let us define, for any $\rho>0$ and $\lambda\geq1$, the sets
		\begin{equation}\label{C2BIS}
			\begin{split}
				&C_{\rho,\lambda} := \{(x',x_n)\in\R^n \mbox{ s.t. }|x'|<\rho,\ |x_n|<\lambda\rho\} = B_\rho^{n-1}\times(-\lambda\rho,\lambda\rho),\\
				&C_{\rho,\lambda}^+ := \{(x',x_n)\in\R^n \mbox{ s.t. }|x'|<\rho,\ 0<x_n<\lambda\rho\} = B_\rho^{n-1}\times(0,\lambda\rho),\\
				&C_{\rho,\lambda}^- := \{(x',x_n)\in\R^n \mbox{ s.t. }|x'|<\rho,\ -\lambda\rho<x_n<0\} = B_\rho^{n-1}\times(-\lambda\rho,0),\\
				\mbox{and}\quad& C_{\rho}^0 := C_{\rho,\lambda}\cap\{x_n=0\}= B_\rho^{n-1}\times\{0\}.
			\end{split}
		\end{equation}
For any $\phi\in \mbox{Lip}(B_\rho^{n-1})$, let also
			\begin{align*}
				&C_{\rho\lambda}^+(\phi) := \{(x',x_n)\in\R^n \mbox{ s.t. }|x'|<\rho,\ \phi(x')<x_n<\lambda\rho\} ,\\
				\mbox{and}\quad& C_{\rho,\lambda}^-(\phi) := \{(x',x_n)\in\R^n \mbox{ s.t. }|x'|<\rho,\ -\rho\lambda<x_n<\phi(x')\} .
			\end{align*}
		
		\begin{lemma} \label{lemma::lip_transform}
			Let $s\in(0,1)$, $\rho>0$, and $\lambda\geq1$, and let $\phi\in \mbox{Lip}(B_\rho^{n-1})$ be such that $\phi(0)=0$ and~$\nabla\phi(0)=0$.
			
				If 
			\begin{equation} \label{eq::norm_bound}
				 [\phi]_{Lip(B_\rho^{n-1})} \leq \frac{\lambda}{10},
			\end{equation}
			then there exists a transformation $\Psi:\R^n\to\R^n$ such that $\Psi(C_\rho^\pm(\phi))=C_\rho^\pm$ and
			\begin{equation*}
				\iint_{C_{\rho,\lambda}^+(\phi) \times C_{\rho,\lambda}^-(\phi)} \frac{dx\,dy}{|x-y|^{n+2s}} \leq c \iint_{C_{\rho,\lambda}^+ \times C_{\rho,\lambda}^-} \frac{dx\,dy}{|x-y|^{n+2s}},
			\end{equation*}
			for some positive constant $c$ independent of $\rho$.
		\end{lemma}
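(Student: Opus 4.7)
The plan is to flatten the graph of $\phi$ via the bi-Lipschitz change of variables
\[
\Psi(x',x_n) := \bigl(x',\, x_n-\phi(x')\bigr),\qquad \Psi^{-1}(y',y_n)=\bigl(y',\, y_n+\phi(y')\bigr),
\]
which has unit Jacobian and maps the graph $\{x_n=\phi(x')\}$ to the hyperplane $\{y_n=0\}$. This reduces the problem on the curved cylinders $C_{\rho,\lambda}^\pm(\phi)$ to one on the standard cylinders, up to a mild enlargement controlled by the Lipschitz constant of $\phi$.

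The first step is to verify that $\Psi$ sends $C_{\rho,\lambda}^\pm(\phi)$ into a standard cylinder of comparable size. Since $\phi(0)=0$ and $[\phi]_{Lip(B_\rho^{n-1})}\le \lambda/10$, for every $x'\in B_\rho^{n-1}$ one has
\[
|\phi(x')|\leq [\phi]_{Lip(B_\rho^{n-1})}\,|x'|\leq \frac{\lambda\rho}{10}.
\]
Consequently $\Psi(C_{\rho,\lambda}^{+}(\phi))\subseteq C_{\rho,\mu}^{+}$ and $\Psi(C_{\rho,\lambda}^{-}(\phi))\subseteq C_{\rho,\mu}^{-}$ with $\mu:=\tfrac{11\lambda}{10}$, and $\Psi$ is a bijection onto its image.

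The second step is to compare $|x-y|$ with $|\Psi(x)-\Psi(y)|$. Writing $\tilde x=\Psi(x)$, $\tilde y=\Psi(y)$, one has
\[
|x-y|^2=|\tilde x'-\tilde y'|^2+\bigl|(\tilde x_n-\tilde y_n)+(\phi(\tilde x')-\phi(\tilde y'))\bigr|^2,
\]
so by the triangle inequality and $[\phi]_{Lip}\le \lambda/10$,
\[
|x-y|\leq \bigl(1+[\phi]_{Lip}\bigr)\,|\tilde x-\tilde y|\leq \Bigl(1+\frac{\lambda}{10}\Bigr)|\tilde x-\tilde y|.
\]
In particular $|x-y|^{-n-2s}\geq c_\lambda\,|\Psi(x)-\Psi(y)|^{-n-2s}$ for a constant depending only on $\lambda$ (hence we use the \emph{reverse} inequality $|\Psi(x)-\Psi(y)|\le C_\lambda|x-y|$ in the needed direction: perform the change of variables first, which gives $|x-y|^{-n-2s}\le (1+\lambda/10)^{n+2s}|\Psi(x)-\Psi(y)|^{-n-2s}$).

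The third step is to perform the change of variables in the double integral. Since $|\det D\Psi|=1$, setting $\tilde x=\Psi(x)$ and $\tilde y=\Psi(y)$,
\[
\iint_{C_{\rho,\lambda}^+(\phi)\times C_{\rho,\lambda}^-(\phi)}\frac{dx\,dy}{|x-y|^{n+2s}}
\leq \Bigl(1+\frac{\lambda}{10}\Bigr)^{n+2s}\iint_{\Psi(C_{\rho,\lambda}^+(\phi))\times\Psi(C_{\rho,\lambda}^-(\phi))}\frac{d\tilde x\,d\tilde y}{|\tilde x-\tilde y|^{n+2s}}.
\]
By the first step the right-hand side is dominated by the integral over $C_{\rho,\mu}^+\times C_{\rho,\mu}^-$, and a simple rescaling $\tilde x\mapsto (\lambda/\mu)\tilde x$ (or a direct splitting of the enlarged cylinder into $O(1)$ translated copies of $C_{\rho,\lambda}^\pm$ and bounding tail interactions by Lemma~\ref{lemma::estimate_mid_scale_1D}) reduces it to the integral over $C_{\rho,\lambda}^+\times C_{\rho,\lambda}^-$, picking up only a dimensional constant.

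The only delicate point is bookkeeping: the image $\Psi(C_{\rho,\lambda}^\pm(\phi))$ is not exactly the standard cylinder but a slightly larger curvilinear region, so the reduction from $C_{\rho,\mu}^\pm$ back to $C_{\rho,\lambda}^\pm$ requires either a rescaling argument (exploiting that all constants only see the ratio $\mu/\lambda=11/10$, independent of $\rho$) or a covering-plus-tail argument; both give a final constant $c$ that depends on $\lambda$, $n$ and $s$ but not on $\rho$, as stated.
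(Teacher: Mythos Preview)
Your approach differs from the paper's in a material way. You use the bare shear
$\Psi(x',x_n)=(x',x_n-\phi(x'))$, which has unit Jacobian but does \emph{not} map
$C_{\rho,\lambda}^\pm(\phi)$ onto $C_{\rho,\lambda}^\pm$: the top and bottom faces
$\{x_n=\pm\lambda\rho\}$ are moved, so the image is
$\{(y',y_n):y'\in B_\rho^{n-1},\ 0<y_n<\lambda\rho-\phi(y')\}$ (and similarly below),
only contained in $C_{\rho,\mu}^\pm$ with $\mu=\tfrac{11}{10}\lambda$. Since the lemma
explicitly asks for a $\Psi$ with $\Psi(C_{\rho,\lambda}^\pm(\phi))=C_{\rho,\lambda}^\pm$,
your $\Psi$ does not verify that part of the statement. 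The paper fixes this by
multiplying $\phi$ by a smooth cutoff $\psi$ that equals $1$ near the graph and $0$
near $\{x_n=\pm\lambda\rho\}$, i.e.\ $\Psi(x)=(x',x_n-\psi(x)\phi(x'))$; then the top
and bottom faces are fixed, the graph still flattens to $\{y_n=0\}$, and the exact
equality $\Psi(C_{\rho,\lambda}^\pm(\phi))=C_{\rho,\lambda}^\pm$ holds. The price is
that the Jacobian is no longer $1$, but $|1-\partial_n\psi\,\phi(x')|$, which the
bound $[\phi]_{Lip}\le\lambda/10$ (combined with $|\nabla\psi|\lesssim 1/(\lambda\rho)$
and $|\phi|\le[\phi]_{Lip}\rho$) keeps uniformly bounded away from $0$.

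For the integral inequality itself your route can be salvaged, but your ``simple
rescaling $\tilde x\mapsto(\lambda/\mu)\tilde x$'' is not right: it sends
$C_{\rho,\mu}^\pm$ to $C_{\rho\lambda/\mu,\lambda}^\pm$, changing the base radius. The
clean way to close this step is the scaling identity
\[
\iint_{C_{\rho,\alpha}^+\times C_{\rho,\alpha}^-}\frac{dx\,dy}{|x-y|^{n+2s}}
=\rho^{n-2s}\iint_{C_{1,\alpha}^+\times C_{1,\alpha}^-}\frac{du\,dv}{|u-v|^{n+2s}},
\]
so that the ratio between the $\mu$- and $\lambda$-integrals is a finite constant
depending only on $n,s,\lambda$ (and $\mu=\tfrac{11}{10}\lambda$), independent of
$\rho$. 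With that correction, your argument gives the integral estimate, but the
paper's cutoff construction is what actually delivers the full conclusion of the
lemma without any post-processing.
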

		
		\begin{comment}
			\begin{equation*}
				\eta_\rho(\phi) := [\phi]_{Lip(B_\rho^{n-1})}
			\end{equation*}
		\end{comment}
		\begin{proof}
			Since $\phi$ is Lipschitz and $\phi(0)=0$, we have that
			\begin{equation}\label{C3BIS}
				\sup_{|x'|<\rho}|\phi(x')| \leq [\phi]_{Lip(B_\rho^{n-1})}\rho.
			\end{equation}
			
We recall~\eqref{eq::norm_bound} and we define the function $\psi\in\cont^\infty(\R^n,[0,1])$ such that
			\begin{equation*}
				\psi(x) :=
				\begin{cases}
					1,\quad&\mbox{if }\displaystyle|x_n-\phi(x')|\leq \frac{\lambda-[\phi]_{Lip(B_\rho^{n-1})}}{8}\rho,\\
					0,\quad&\mbox{if }\displaystyle|x_n-\phi(x')|\geq \big(\lambda-[\phi]_{Lip(B_\rho^{n-1})}\big) \rho,
				\end{cases}
			\end{equation*}
			and
			\begin{equation}\label{C3TER}
				|\nabla\psi|\leq \frac{2}{(\lambda-[\phi]_{Lip(B_\rho^{n-1})}) \rho}.
			\end{equation}
			
			Notice that, for any $|x'|\leq\rho$, by~\eqref{C3BIS} we have that
			\begin{equation*}
				|\phi(x')\pm \lambda\rho| \geq \lambda\rho - |\phi(x')| \geq (\lambda-[\phi]_{Lip(B_\rho^{n-1})}) \rho
			\end{equation*}
			and hence
			\begin{equation} \label{eq::psi_flattening1}
				\psi(x',\pm \lambda\rho) = 0.
			\end{equation}
			On the other hand, for any $|x'|\leq\rho$, we have that
			\begin{equation} \label{eq::psi_flattening2}
				\psi(x',\phi(x')) = 1.
			\end{equation}
			
			Now, we consider the transformation $\Psi:\R^n\to\R^n$ defined as
			\begin{equation*}
				\Psi(x) := (x',x_n-\psi(x)\phi(x')) = x-\psi(x)\phi(x')e_n,
			\end{equation*}
			where $e_n:=(0,\dots,0,1)$ is the usual unit vector in the $n^{th}$-direction.
			
			By Rademacher's Theorem, $\phi$ is differentiable almost everywhere, and, if $\widetilde{x}:=\Psi(x)$ and $\widetilde{y}:=\Psi(y)$, we have (see also~\cite[Theorem 2, Sec. 3.3.3]{MR3409135})
			\begin{equation*}
				\begin{split}
					d\widetilde{x}\,d\widetilde{y} &= |1-\partial_n\psi(x)\phi(x')|\,|1-\partial_n\psi(y)\phi(y')|dx\,dy \\
					& \geq \left(1- \left| \partial_n\psi(x)\phi(x')+ \partial_n\psi(y)\phi(y') - \partial_n\psi(x)\phi(x')\partial_n\psi(y)\phi(y')\right|\right) dx\,dy \\
					&\geq (1-c_1) dx\,dy,
				\end{split}
			\end{equation*}
			with
			\begin{equation} \label{eq::c_1_estimate}
						c_1:= \frac{4[\phi]_{Lip(B_\rho^{n-1})}^2}{(\lambda-[\phi]_{Lip(B_\rho^{n-1})})^2}+\frac{4[\phi]_{Lip(B_\rho^{n-1})}}{\lambda-[\phi]_{Lip(B_\rho^{n-1})}} \leq \frac{1}{16}+\frac{1}{2}< 1,
			\end{equation}
			where we have employed~\eqref{eq::norm_bound}, \eqref{C3BIS} and~\eqref{C3TER}.
			It thereby follows that
			\begin{equation} \label{eq::diff_bound}
				dx\,dy \leq \frac{d\widetilde{x}\,d\widetilde{y}}{1-c_1} .
			\end{equation}
			Besides, recalling~\eqref{eq::psi_flattening1} and~\eqref{eq::psi_flattening2}, we deduce that 
			\begin{equation} \label{eq::Psi_transform}
				\Psi(C_{\rho,\lambda}^\pm(\phi))=C_{\rho,\lambda}^\pm .
			\end{equation}
			
			Moreover, for almost every $x$, $y\in C_\rho$, using again~\eqref{C3BIS} and~\eqref{C3TER} we have that
			\begin{equation}\label{eq::s_kernel_bound} 
				\begin{split}
					|\widetilde{x}-\widetilde{y}| &= |x-\psi(x)\phi(x')e_n-y+\psi(y)\phi(y')e_n| \\
					& \leq |x-y| + |\psi(x)|\,|\phi(x')-\phi(y')|+|\phi(y')|\,|\psi(x)-\psi(y)|\\
					& \leq |x-y| + [\phi]_{Lip(B_\rho^{n-1})}|x'-y'| + [\phi]_{Lip(B_\rho^{n-1})}\rho \frac{2}{(\lambda-[\phi]_{Lip(B_\rho^{n-1})})\rho} |x-y|\\
					&\leq c_2 |x-y|
				\end{split}
			\end{equation}
			for some constant $c_2>0$ depending on~$\lambda$ but independent of~$\rho$.
			
Employing the changes of variable $\widetilde{x}:=\Psi(x)$ and $\widetilde{y}:=\Psi(y)$ together with~\eqref{eq::diff_bound}, \eqref{eq::Psi_transform}, and~\eqref{eq::s_kernel_bound}, we conclude that
			\begin{equation*}
				\iint_{C_{\rho,\lambda}^+(\phi) \times C_{\rho,\lambda}^-(\phi)} \frac{dx\,dy}{|x-y|^{n+2s}} \leq \frac{c_3}{1-c_1} \iint_{C_{\rho,\lambda}^+ \times C_{\rho,\lambda}^-} \frac{d\widetilde{x}\,d\widetilde{y}}{|\widetilde{x}-\widetilde{y}|^{n+2s}},
			\end{equation*}
			% =\iint_{C_\rho^+ \times C_\rho^-} \frac{|\nabla\Psi^{-1}(\widetilde{x})||\nabla\Psi^{-1}(\widetilde{y})|}{|\Psi^{-1}(\widetilde{x})-\Psi^{-1}(\widetilde{y})|^{n+2s}}d\widetilde{x}d\widetilde{y}
		for some~$c_3>0$ independent of~$\rho$, showing the desired result.
		\end{proof}

\begin{proof}[Proof of Proposition~\ref{prop::P_K_upper_bound_BR}]	
	Let~$R>0$ and take~$\eta\in(0,1)$ such that 
	$$\delta<\eta R< 6\sqrt{n}\eta R < R.$$
	Since $\Omega$ is bounded, there exists~$\zeta>0$ such that $\Omega\subseteq B_\zeta$. Up to scaling, we can suppose that $\zeta=1$.
	%, otherwise we consider $\widetilde{\eta}:=\eta\zeta\in(0,1)$. 
	
	Moreover, let $r_\Omega$, $L_\Omega$, and $\alpha_\Omega$ be as in Definition~\ref{def::unif_lipschitz} and define 
	\begin{equation*}
		\lambda := \max\{10 L_\Omega,1\}.
	\end{equation*}
	Wa assume that $R$ is so large that
	\begin{equation*}
		3\sqrt{n}\delta < R r_{\Omega}\quad\mbox{and}\quad 3\sqrt{n}\delta\lambda< R \alpha_{\Omega}
	\end{equation*}
	and that $\eta$ is so small that 
	\begin{equation*}
		3\sqrt{n}\eta <r_\Omega\quad\mbox{and}\quad 3\sqrt{n}\eta\lambda< \alpha_{\Omega}.
	\end{equation*}

	\begin{comment}
		Moreover, since $\Omega$ has Lipschitz boundary, for every $x_0\in\partial \Omega$, there exist $\nu_{x_0}\in \S^{n-1}$, $\phi_{x_0}\in\mbox{Lip}(\{(x-x_0)\cdot\nu_{x_0}=0\})$, and $r_{x_0}>0$ such that
		\begin{equation*}
			\mbox{for every }x\in\partial\Omega\cap B_{r_{x_0}}(x_0) , \mbox{we have } x = \widetilde{x}+\phi(\widetilde{x})\nu_{x_0} \mbox{ with }\widetilde{x}\in\{(x-x_0)\cdot\nu_{x_0}=0\}\cap B_{r_{x_0}}(x_0).
		\end{equation*}
		Assume also that
		\begin{equation*}
			3\sqrt{n}\delta < R\inf_{x_0\in\partial\Omega}r_{x_0}.
		\end{equation*}
		
		Observe that
		\begin{equation*}
			\sup_{x_0\in\partial\Omega} \norm{\phi_{x_0}}_{Lip} <+\infty
		\end{equation*} 
		by compactness, and let 
		\begin{equation*}
			\lambda := \max\{10 \sup_{x_0\in\partial\Omega} \norm{\phi_{x_0}}_{Lip},1\}.
		\end{equation*}
	\end{comment}

			Then, we have
			\begin{equation} \label{eq::estimate_P_K_B_R}
				\begin{split}
					&\P_K(\Omega_R) \\
					=& \iint_{\substack{\Omega_R\times \Omega_R^c \\ \{|x-y|<\delta\}}} K(x,y)\,dx\,dy +  \iint_{\substack{\Omega_R\times \Omega_R^c \\ \{\delta\leq |x-y|<\eta R\}}} K(x,y)\,dx\,dy +  \iint_{\substack{\Omega_R\times \Omega_R^c \\ \{|x-y|\ge\eta R\}}} K(x,y)\,dx\,dy\\
					=&:\mathcal{I}_1+\mathcal{I}_2+\mathcal{I}_3.
				\end{split}
			\end{equation}
We aim to provide estimates for each~$\mathcal{I}_j$, with~$j=1,2,3$.
			
			First, notice that, by compactness, there exists a finite covering of~$\partial \Omega_R$ made of balls of radius~$\delta$,
			$$ \mathcal{B}_\delta := \{B_\delta(x_j) \mbox{ s.t. }x_j\in\partial \Omega_R\}_{j=1}^{N_\delta},$$
			where~$N_\delta:=\sharp(\mathcal{B}_\delta) \leq c_n\delta^{1-n}R^{n-1}$, for some~$c_n>0$ depending only on the dimension~$n$. 
			
			Moreover, 
			\begin{align*}
				(\Omega_R\times \Omega_R^c) \cap \{|x-y|<\delta\} &\subseteq \bigcup_{i,j=0}^{N_\delta}
				\Big((\Omega_R\cap
				B_{\sqrt{n}\delta}(x_i))\times (\Omega_R^c\cap B_{\sqrt{n}\delta}(x_j))\Big) \cap \{|x-y|<\delta\}\\
				& \subseteq \bigcup_{i=0}^{N_\delta}\Big((\Omega_R\cap
				B_{3\sqrt{n}\delta}(x_i))\times(\Omega_R^c\cap B_{3\sqrt{n}\delta}(x_i))\Big).
			\end{align*}
			Besides, since $\Omega_R$ has Lipschitz boundary, for every $i$, let $\mathcal{R}_{\theta_i}:=\mathcal{R}_{\theta_{x_i}}$ be a rotation and let $\phi_i:=\phi_{x_i}$ be a Lipschitz function as in Definition~\ref{def::unif_lipschitz}, so that
			\begin{equation*}
				\partial(\mathcal{R}_{\theta_i}(\Omega_R-x_i))\cap C_{3\sqrt{n}\delta,\lambda} = \left\{(x',\phi_i(x')) \mbox{ s.t. } x'\in B_{3\sqrt{n}\delta}^{n-1}\right\}.
				%\mbox{graph}(\phi_i)\cap C_{3\sqrt{n}\delta,\lambda}
			\end{equation*}
			
			Thus, recalling also~\eqref{eq::K_invariance} and~\eqref{eq::K_behavior}, and taking advantage of the rotational symmetries, we infer that 
			\begin{equation} \label{eq::estimate_I1}
				\begin{split}
					\mathcal{I}_1 &= \iint_{\substack{\Omega_R\times \Omega_R^c \\ \{|x-y|<\delta\}}} K(x,y)\,dx\,dy \leq \sum_{i=0}^{N_\delta}\kappa_2 \iint_{(\Omega_R\cap B_{3\sqrt{n}\delta}(x_i))\times (\Omega_R^c\cap B_{3\sqrt{n}\delta}(x_i))} \frac{dx\,dy}{|x-y|^{n+2s_1}}\\
					& \leq \sum_{i=0}^{N_\delta}\kappa_2 \iint_{(\mathcal{R}_{\theta_i}(\Omega_R-x_i)\cap B_{3\sqrt{n}\delta})\times (\mathcal{R}_{\theta_i}(\Omega_R^c-x_i)\cap B_{3\sqrt{n}\delta})} \frac{dx\,dy}{|x-y|^{n+2s_1}}\\
					& \leq \sum_{i=0}^{N_\delta}\kappa_2 \iint_{(\mathcal{R}_{\theta_i}(\Omega_R-x_i)\cap C_{3\sqrt{n}\delta,\lambda})\times (\mathcal{R}_{\theta_i}(\Omega_R^c-x_i)\cap C_{3\sqrt{n}\delta,\lambda})} \frac{dx\,dy}{|x-y|^{n+2s_1}}.
				\end{split}
			\end{equation}
			
			\begin{comment}
			&\kappa_2N_\delta \iint_{(B_R(-Re_n)\cap B_{3\sqrt{n}\delta})\times (B_R^c(-Re_n)\cap B_{3\sqrt{n}\delta})} \frac{dx\,dy}{|x-y|^{n+2s_1}}\\
			&\leq \kappa_2N_\delta \iint_{(B_R(-Re_n)\cap C_{3\sqrt{n}\delta})\times (B_R^c(-Re_n)\cap C_{3\sqrt{n}\delta})} \frac{dx\,dy}{|x-y|^{n+2s_1}},
			where, for any~$\rho>0$, 
			\begin{equation}%\label{C2BIS}
				C_\rho := \{(x',x_n)\in\R^n \mbox{ s.t. }|x'|<\rho,\ |x_n|<\rho\} = B_\rho^{n-1}\times(-\rho,\rho).
			\end{equation}
			
			Now, observe that the angle at the center that insists on the arch~$\partial B_R(-Re_n)\cap C_{3\sqrt{n}\delta}$ is of order~$1/R$. Hence, for any~$R$ big enough, there exists a transformation~$\Phi$ close to the identity such that
			\end{comment}
			
			Now, thanks to Lemma~\ref{lemma::lip_transform}, for every $i$, there exists a transformation~${\Phi_i:\R^n\to\R^n}$ such that
			\begin{align*}
				&\Phi_i(\mathcal{R}_{\theta_i}(\Omega_R-x_i)\cap C_{3\sqrt{n}\delta,\lambda}) = \{x_n<0\}\cap C_{3\sqrt{n}\delta,\lambda}\\
				\mbox{and}\quad&\Phi_i(\mathcal{R}_{\theta_i}(\Omega_R^c-x_i)\cap C_{3\sqrt{n}\delta,\lambda}) = \{x_n>0\}\cap C_{3\sqrt{n}\delta,\lambda}.
			\end{align*}
			and
			\begin{equation} \label{eq::apply_flattening}
				\begin{split}
					&\iint_{(\mathcal{R}_{\theta_i}(\Omega_R-x_i)\cap C_{3\sqrt{n}\delta,\lambda})\times (\mathcal{R}_{\theta_i}(\Omega_R^c-x_i)\cap C_{3\sqrt{n}\delta,\lambda})} \frac{dx\,dy}{|x-y|^{n+2s_1}} \\
					&\qquad\qquad\leq c_{1,i} \iint_{(\{x_n<0\}\cap C_{3\sqrt{n}\delta})\times (\{x_n>0\}\cap C_{3\sqrt{n}\delta})} \frac{dx\,dy}{|x-y|^{n+2s_1}} ,
				\end{split}
			\end{equation}
			for some positive constant~$c_{1,i}$ depending only on~$n$, $s_1$, and~$\Phi_i$. %, and possibly changing at every step. 
			
			Moreover, recalling~\eqref{eq::c_1_estimate} and~\eqref{eq::s_kernel_bound} from the proof of Lemma~\ref{lemma::lip_transform}, we infer that, for every $i$,
			\begin{equation*} 
			c_{1,i} \leq \left(1+[\phi_i]_{Lip(B_{r_\Omega}^{n-1})} +\frac{2[\phi_i]_{Lip(B_{r_\Omega}^{n-1})} }{\lambda-[\phi_i]_{Lip(B_{r_\Omega}^{n-1})} }\right)^{-(n+2s_1)}\frac{1}{1-\frac{9}{16}} \leq \frac{16}{7}.
			\end{equation*}
Let us set
			\begin{equation} \label{eq::estimate_c_1_i}
				c_1 := \frac{16}{7} \iint_{(\{x_n<0\}\cap C_{3\sqrt{n}\delta})\times (\{x_n>0\}\cap C_{3\sqrt{n}\delta})} \frac{dx\,dy}{|x-y|^{n+2s_1}}<+\infty,
			\end{equation}
			which is independent of $i$.
			  
			 Thus, plugging~\eqref{eq::apply_flattening} and~\eqref{eq::estimate_c_1_i} into~\eqref{eq::estimate_I1}, we arrive at
			\begin{equation}
				\mathcal{I}_1 \leq \sum_{i=0}^{N_\delta}\kappa_2 c_1^i \leq \kappa_2c_1 N_\delta \leq c_1R^{n-1},
			\end{equation}
			up to renaming~$c_1>0$, now also depending on~$\kappa_2$ and~$\delta$.
			
			Let us now focus on estimating~$\mathcal{I}_2$. To do this, we cover~$\partial \Omega_R$ with balls of radius~$\eta R$. By compactness, we find a finite covering
			$$ \mathcal{B}_{\eta R} := \{B_{\eta R}(x_i) \mbox{ s.t. }x_i\in \partial \Omega_R\}_{i=0}^{N_{\eta R}},$$
			with~$N_{\eta R} \leq c_{n,\Omega} \eta^{1-n}$ (independent of~$R$), and
			$$ (\Omega_R\times \Omega_R^c) \cap \{|x-y|<\eta R\} \subseteq \bigcup_{i=0}^{N_{\eta R}}\left( (\Omega_R\cap B_{3\sqrt{n}\eta R})\times( \Omega_R^c \cap B_{3\sqrt{n}\eta R})\right)
			\cap \{|x-y|<\eta R\} .$$
			Therefore,
			\begin{equation} \label{eq::estimate_I2_1}
				\begin{split}
					&\mathcal{I}_2 \leq \sum_{i=0}^{N_{\eta R}} \kappa_2\iint_{\substack{(\Omega_R\cap B_{3\sqrt{n}\eta R}(x_i))\times (\Omega_R^c\cap B_{3\sqrt{n}\eta R}(x_i)) \\ \{\delta\leq|x-y|<\eta R\}}} \frac{dx\,dy}{|x-y|^{n+2s_2}}\\
					&\qquad\leq \sum_{i=0}^{N_{\eta R}} \kappa_2\iint_{\substack{(\mathcal{R}_{\theta_i}(\Omega_R-x_i)\cap B_{3\sqrt{n}\eta R})\times (\mathcal{R}_{\theta_i}(\Omega_R^c-x_i)\cap B_{3\sqrt{n}\eta R}) \\ \{\delta\leq|x-y|<\eta R\}}} \frac{dx\,dy}{|x-y|^{n+2s_2}}\\
					&\qquad\leq \sum_{i=0}^{N_{\eta R}} \kappa_2\iint_{\substack{(\mathcal{R}_{\theta_i}(\Omega_R-x_i)\cap C_{3\sqrt{n}\eta R,\lambda})\times (\mathcal{R}_{\theta_i}(\Omega_R^c-x_i)\cap C_{3\sqrt{n}\eta R,\lambda}) \\ \{\delta\leq|x-y|<\eta R\}}} \frac{dx\,dy}{|x-y|^{n+2s_2}}\\
				\end{split}
			\end{equation}
			
			Thus, in the same spirit as above, recalling the notation in~\eqref{C2BIS},	we find a transformation~${\Psi_i:\R^n\to\R^n}$ such that
			\begin{align*}
				&\Psi_i(\mathcal{R}_{\theta_i}(\Omega_R-x_i)\cap C_{3\sqrt{n}\eta R,\lambda}) = \{x_n<0\}\cap C_{3\sqrt{n}\eta R,\lambda} = B_{3\sqrt{n}\eta R}^{n-1}\times (-3\sqrt{n}\eta \lambda R,0)\\
				\mbox{and}\quad& \Psi_i(\mathcal{R}_{\theta_i}(\Omega_R^c-x_i)\cap C_{3\sqrt{n}\eta R,\lambda}) = \{x_n>0\}\cap C_{3\sqrt{n}\eta R,\lambda} = B_{3\sqrt{n}\eta R}^{n-1}\times (0,3\sqrt{n}\eta \lambda R).
			\end{align*}
			
			For the sake of simplicity, let us call 
			\begin{equation}\label{vbcnmxwieoy8t48ouegjk}
				r:=3\sqrt{n}\eta R.
			\end{equation} 
			Then, using again Lemma~\ref{lemma::lip_transform} and setting $c_2:=\max_i c_{2,i}$ (which does not depend on $R$), we have
			\begin{equation}\label{C4BIS}\begin{split}
				&\iint_{\substack{(\mathcal{R}_{\theta_i}(\Omega_R-x_i)\cap B_{r})\times (\mathcal{R}_{\theta_i}(\Omega_R^c-x_i)\cap B_{r}) \\ \{\delta\leq|x-y|<\eta R\} }} \frac{dx\,dy}{|x-y|^{n+2s_2}} \\
				\leq &c_{2,i} \iint_{\substack{(B_r^{n-1}\times (-\lambda r,0))\times (B_r^{n-1}\times(0,\lambda r)) \\ \{\delta\leq|x-y|<\eta R\} }} \frac{dx\,dy}{|x-y|^{n+2s_2}}\\
				\leq &c_2 \iint_{\substack{(B_r^{n-1}\times (-\lambda r,0))\times (B_r^{n-1}\times(0,\lambda r)) \\ \{\delta\leq|x-y|<\eta R\} }} \frac{dx\,dy}{(|x'-y'|^2+|x_n-y_n|^2)^{\frac{n+2s_2}{2}}},
			\end{split}\end{equation}
where we have used the notation~$x=(x',x_n)\in\R^{n-1}\times\R$.
			
Adopting the change of variable~$z':=\frac{y'-x'}{|x_n-z_n|}$ and~$z_n:=y_n$, we obtain
		\begin{eqnarray*}
			&& \iint_{\substack{ (B_r^{n-1}\times (-\lambda r,0))\times (B_r^{n-1}\times(0,\lambda r)) \\ \{\delta\leq|x-y|<\eta R\} }} \frac{dx\,dy}{(|x'-y'|^2+|x_n-y_n|^2)^{\frac{n+2s_2}{2}}} \\
			&&\qquad\le \int_{B_r^{n-1}}\int_{ \R^{n-1} } \int_{-\lambda r}^0 \int_0^{\lambda r}  \chi_{(\delta,\eta R)} \left(|x_n-z_n|\sqrt{1+|z'|^2} \right) \frac{|x_n-z_n|^{-1-2s_2}}{ (1+|z'|^2)^{\frac{n+2s_2}{2}}} \,dx'\,dz'\,dx_n\,dz_n\\
			&&\qquad\le \int_{B_r^{n-1}}\int_{ \R^{n-1} } \int_{-\lambda r}^0 \int_0^{\lambda r}  \chi_{(\delta,+\infty)} \left(|x_n-z_n|\sqrt{1+|z'|^2} \right) \frac{|x_n-z_n|^{-1-2s_2}}{ (1+|z'|^2)^{\frac{n+2s_2}{2}}} \,dx'\,dz'\,dx_n\,dz_n\\
			&&\qquad =\int_{B_r^{n-1}}\int_{ \R^{n-1} } \int_{-\lambda r}^0 \int_0^{\lambda r}  \chi_{\left(\delta/\sqrt{1+|z'|^2},+\infty\right)} (|x_n-z_n|) \frac{|x_n-z_n|^{-1-2s_2}}{ (1+|z'|^2)^{\frac{n+2s_2}{2}}} \,dx'\,dz'\,dx_n\,dz_n,
			\end{eqnarray*}
		where the notation~$B_r^{n-1}$ stands for the ball of radius~$r$ in~$\R^{n-1}$.
		
Now, thanks to Lemma~\ref{lemma::estimate_mid_scale_1D}, we have that
$$	\int_{-\lambda r}^0 \int_0^{\lambda r}  \frac{\chi_{\left(\delta/\sqrt{1+|z'|^2},+\infty\right)} (|x_n-z_n|)}{|x_n-z_n|^{1+2s_2}}\,dx_n\,dz_n
	\le \frac{1}{2s_2-1} \left(\frac\delta{\sqrt{1+|z'|^2}}\right)^{1-2s_2}. $$
Therefore,
\begin{eqnarray*}
&& \iint_{\substack{ (B_r^{n-1}\times (-\lambda r,0))\times (B_r^{n-1}\times(0,\lambda r)) \\ \{\delta\leq|x-y|<\eta R\} }}
\frac{dx\,dy}{(|x'-y'|^2+|x_n-y_n|^2)^{\frac{n+2s_2}{2}}} \\
&&\qquad \le\frac{\delta^{1-2s_2}}{2s_2-1}
\int_{B_r^{n-1}}\int_{ \R^{n-1} }  
\frac{dx'\,dz'}{ (1+|z'|^2)^{\frac{n+1}{2}}}\le c_2 r^{n-1}\le c_2 R^{n-1},
\end{eqnarray*} up to changing~$c_2>0$ at every step,
where we used~\eqref{vbcnmxwieoy8t48ouegjk} in the last inequality.
	
Plugging this information into~\eqref{C4BIS}, we conclude that
 		\begin{equation*}
			\iint_{\substack{(\mathcal{R}_{\theta_i}(\Omega_R-x_i)\cap B_{r})\times (\mathcal{R}_{\theta_i}(\Omega_R^c-x_i)\cap B_{r}) \\ \{\delta\leq|x-y|<\eta R\} }} \frac{dx\,dy}{|x-y|^{n+2s_2}} 
			\leq c_2 R^{n-1}.
		\end{equation*}
It follows from this and~\eqref{eq::estimate_I2_1} that
			\begin{equation} \label{eq::estimate_I2_3}
				\mathcal{I}_2 \leq \sum_{i=0}^{N_{\eta R}} \kappa_2 c_2 R^{n-1} = N_{\eta R} \kappa_2 c_2 R^{n-1} \leq c_{n,\Omega} \eta^{1-n} \kappa_2 c_2 R^{n-1} \leq c_2R^{n-1} ,
			\end{equation}
			up to renaming $c_2$.
			
			At last, we exhibit an estimate for~$\mathcal{I}_3$. % Since $\Omega$ is bounded, there exists a constant $\zeta>0$ such that $\Omega\subseteq B_\zeta$
			By virtue of~\eqref{eq::K_behavior}, we have that
			\begin{equation}\label{C6BIS}\begin{split}
				\mathcal{I}_3 &\leq \kappa_2\iint_{\substack{\Omega_R\times \Omega_R^c \\ \{|x-y|\geq\eta R\}}} \frac{dx\,dy}{|x-y|^{n+2s_2}} \\
				&= \kappa_2\iint_{\substack{\Omega_R\times (B_{(1+\eta)R}\setminus \Omega_R) \\ \{|x-y|\geq\eta R\}}} \frac{dx\,dy}{|x-y|^{n+2s_2}} + \kappa_2\iint_{\substack{\Omega_R\times B_{(1+\eta)R}^c \\ \{|x-y|\geq\eta R\}}} \frac{dx\,dy}{|x-y|^{n+2s_2}}.
			\end{split}\end{equation}
Notice that
			\begin{equation} \label{eq::estimate_I3_1}
				\iint_{\substack{\Omega_R\times (B_{(1+\eta)R}\setminus \Omega_R) \\ \{|x-y|\geq\eta R\}}} \frac{dx\,dy}{|x-y|^{n+2s_2}} \leq \frac{|\Omega_R||B_{(1+\eta)R}\setminus \Omega_R|}{(\eta R)^{n+2s_2}} \leq c_3R^{n-2s_2} ,
			\end{equation}
			for some positive~$c_3=c_3(n,s_2,\eta)$.
			
			Additionally, by the change of variable~$z:=y-x$,
			\begin{equation*}
\iint_{\substack{\Omega_R\times B_{(1+\eta)R}^c \\ \{|x-y|\geq\eta R\}}} \frac{dx\,dy}{|x-y|^{n+2s_2}} \leq
\iint_{ \Omega_R\times B_{\eta R}^c } \frac{dx\,dz}{|z|^{n+2s_2}}
\leq c_3 R^{n-2s_2}.
			\end{equation*}	
			This inequality and~\eqref{eq::estimate_I3_1}, together with~\eqref{C6BIS}, yield that
			\begin{equation} \label{eq::estimate_I3_3}
				\mathcal{I}_3 \leq \kappa_2c_3R^{n-2s_2}.
			\end{equation}
			
As a result of~\eqref{eq::estimate_P_K_B_R}, \eqref{eq::estimate_I1}, \eqref{eq::estimate_I2_3}, and~\eqref{eq::estimate_I3_3}, we obtain that
			\begin{equation*}
				\P_K(\Omega_R) \leq c_1R^{n-1}+c_2R^{n-1}+c_3R^{n-2s_2},
			\end{equation*}
concluding the proof.
		\end{proof}
		
		% ------------------------------------------------------------------------------------------------
		
		\section{Well-posedness of the Plateau problem for the energy~$\F$} \label{sec::good_plateau_problem}
		
		In this section, we use a Sobolev embedding and a simple covering argument to show that the minimization problem for the energy functional~$\F$ with fixed external datum is well-posed. 
		To do this, let us consider a Lipschitz domain~$\Omega\subseteq\R^n$ and a set~$\widetilde{E}\subseteq\R^n$ of finite~$K$-perimeter. Suppose~$K$ satisfies~\eqref{eq::K_invariance}, \eqref{eq::K_integrable}, \eqref{eq::K_behavior}, and~\eqref{eq::K_lower_bound_Q}. Let also~$g\in L^\infty(\R^n)$ be a~$\Z^n$-periodic function satisfying~\eqref{eq::g_zero_avg}. 
		
		We	recall that the functional~$\F$ is defined in~\eqref{1.4BIS} as
		\begin{equation*}
			\F(E,\Omega) := \P_K(E,\Omega)+ \int_{E\cap \mathcal{Q}(\Omega)_1}g(x)\,dx
		\end{equation*}
	and we investigate the problem:
		\begin{equation}\label{eq::plateau_F}
			\begin{split}
				&\mbox{find~$E\subseteq\R^n$ such that~$E\setminus\Omega=\widetilde{E}\setminus \Omega$}\\
				&\mbox{and}\quad\F(E,\Omega)\leq \F(F,\Omega),\quad\mbox{for every~$F$ s.t. } F\setminus\Omega=\widetilde{E}\setminus \Omega,
			\end{split}
		\end{equation}
		
		We claim that:
		
		\begin{proposition}[Well-posedness of the minimization problem~\eqref{eq::plateau_F}] \label{prop::good_plateau_problem}
		There exists a minimizer for~$\F$ in~$\Omega$ with external datum~$\widetilde{E}\setminus\Omega$.
		\end{proposition}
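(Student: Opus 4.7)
The strategy is the classical direct method of the calculus of variations: take a minimizing sequence, extract a limit via a compactness argument, and show lower semicontinuity.

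First, observe that the problem is non-trivial: the competitor~$\widetilde{E}$ itself satisfies~$\P_K(\widetilde{E},\Omega)<+\infty$ by Lemma~\ref{eq::finite_P_K}, so~$\F(\widetilde{E},\Omega)<+\infty$ (since~$\Omega$, and hence~$\mathcal{Q}(\Omega)_1$, is bounded and~$g\in L^\infty$). Moreover, since
\begin{equation*}
\left|\int_{E\cap \mathcal{Q}(\Omega)_1} g(x)\,dx\right|\le \|g\|_{L^\infty(\R^n)}\,|\Omega|,
\end{equation*}
the infimum~$m:=\inf\F(\cdot,\Omega)$ over admissible competitors is finite. Taking a minimizing sequence~$\{E_j\}_j$ with~$E_j\setminus\Omega=\widetilde{E}\setminus\Omega$, we obtain a uniform bound~$\F(E_j,\Omega)\le m+1$ and, absorbing the~$g$-term, a uniform bound on~$\P_K(E_j,\Omega)$.

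The key step is compactness. Using the lower bound in~\eqref{eq::K_behavior}, the uniform bound on~$\P_K(E_j,\Omega)$ yields
\begin{equation*}
\sup_j\,\iint_{(\Omega\times\Omega)\cap\{|x-y|<\delta\}}\frac{|\chi_{E_j}(x)-\chi_{E_j}(y)|}{|x-y|^{n+2s_1}}\,dx\,dy \le \frac{2}{\kappa_1}\sup_j\P_K(E_j,\Omega)<+\infty.
\end{equation*}
Complementing this with the trivial estimate on the region~$\{|x-y|\ge\delta\}\cap(\Omega\times\Omega)$ (which is bounded by~$\delta^{-n-2s_1}|\Omega|^2$ since~$|\chi_{E_j}|\le1$), we conclude that the sequence~$\{\chi_{E_j}\}_j$ is uniformly bounded in~$W^{s_1,1}(\Omega)$. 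Since~$\Omega$ is a bounded Lipschitz domain, the fractional Rellich-Kondrachov Theorem (see e.g.~\cite[Theorem~7.1]{MR2944369}) provides a subsequence (still denoted~$\{E_j\}_j$) and a set~$E\subseteq\R^n$ such that~$\chi_{E_j}\to\chi_E$ in~$L^1(\Omega)$ and a.e. in~$\Omega$. Since~$\chi_{E_j}=\chi_{\widetilde{E}}$ on~$\Omega^c$ for every~$j$, the limit automatically satisfies~$E\setminus\Omega=\widetilde{E}\setminus\Omega$, so~$E$ is an admissible competitor.

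Finally, lower semicontinuity follows by combining Fatou's Lemma with dominated convergence: by a.e. convergence of~$\chi_{E_j}$ on~$\R^n$ (using the fact that~$\chi_{E_j}=\chi_{\widetilde{E}}$ outside~$\Omega$), we get
\begin{equation*}
\P_K(E,\Omega)\le\liminf_{j\to+\infty}\P_K(E_j,\Omega),
\end{equation*}
while~$g\chi_{E_j}\chi_{\mathcal{Q}(\Omega)_1}$ is dominated by the integrable function~$\|g\|_{L^\infty(\R^n)}\chi_{\mathcal{Q}(\Omega)_1}$, so
\begin{equation*}
\lim_{j\to+\infty}\int_{E_j\cap\mathcal{Q}(\Omega)_1}g(x)\,dx=\int_{E\cap\mathcal{Q}(\Omega)_1}g(x)\,dx.
\end{equation*}
Summing these inequalities yields~$\F(E,\Omega)\le\liminf_{j\to+\infty}\F(E_j,\Omega)=m$, which combined with admissibility gives the desired minimality. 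The main technical ingredient is the fractional Rellich-Kondrachov compactness, which is non-trivial but standard; the~$g$-term is essentially harmless thanks to boundedness of~$\Omega$ and~$g\in L^\infty$.
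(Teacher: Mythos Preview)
Your proof is correct and follows the same direct-method template as the paper: bound the infimum, take a minimizing sequence, extract a limit by fractional compactness, and conclude by lower semicontinuity. The only substantive difference is in how compactness is obtained. The paper (Lemma~\ref{lemma::F_compactness}) covers~$\overline{\Omega}$ by balls of radius~$\delta/2$ so that the lower bound in~\eqref{eq::K_behavior} applies on each~$B^{(\ell)}\times B^{(\ell)}$, deduces a uniform~$H^{s_1}(B^{(\ell)})$ bound (using~$|\chi_F(x)-\chi_F(y)|^2=|\chi_F(x)-\chi_F(y)|$), applies the compact embedding ball-by-ball, and then patches the limits together on the overlaps. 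You instead bound the full Gagliardo seminorm on~$\Omega\times\Omega$ at once by noting that the long-range contribution~$\{|x-y|\ge\delta\}$ is trivially controlled by~$\delta^{-n-2s_1}|\Omega|^2$, and then invoke the compact embedding globally on the Lipschitz domain~$\Omega$. Your route is a bit more streamlined and avoids the patching argument; the paper's covering approach, on the other hand, requires only compactness on balls and would work even without a global extension/embedding theorem for~$\Omega$.
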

		
This result is a consequence of a standard compactness argument, whose precise statement goes as follows:
		
		\begin{lemma}\label{lemma::F_compactness}
			Let~$\{E_j\}_j$ be a sequence of sets such that~$E_j\setminus\Omega=\widetilde{E}\setminus\Omega$ and
			\begin{equation*}
				\sup_j \F(E_j,\Omega) < +\infty.
			\end{equation*}
			
			Then, there exist a set~$E\subseteq\R^n$ and a subsequence~$\{E_{j_m}\}_m$ such that~$E\setminus\Omega=\widetilde{E}\setminus\Omega$ and
			\begin{equation*}
				E_{j_m}\to E \mbox{ in } L^1_{\loc}(\R^n),\quad\mbox{as }m\to+\infty.
			\end{equation*}
		\end{lemma}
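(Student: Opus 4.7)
The plan is to convert the uniform bound $\sup_j \F(E_j,\Omega)<+\infty$ into a uniform bound on the fractional Sobolev seminorm $[\chi_{E_j}]_{W^{2s_1,1}(\Omega)}$, and then invoke the compact embedding $W^{2s_1,1}(\Omega)\hookrightarrow L^1(\Omega)$ to extract a converging subsequence. The fact that the exterior datum $\widetilde E\setminus\Omega$ is fixed is precisely what makes the argument close here, in contrast with the open compactness question flagged after Theorem~\ref{th::gamma_conv} (which concerns the rescaled energies $\F_\epsilon$, where the favorable scaling of the short-range Gagliardo seminorm is lost as $\epsilon\to 0$).

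First I would observe that, since $g\in L^\infty(\R^n)$, $|\Omega|<+\infty$, and $\mathcal Q(\Omega)_1\subseteq\Omega$, the forcing contribution satisfies $\left|\int_{E_j\cap\mathcal Q(\Omega)_1}g\right|\leq \|g\|_{L^\infty(\R^n)}|\Omega|$, so the hypothesis yields
\begin{equation*}
\sup_j \P_K(E_j,\Omega)\leq M<+\infty.
\end{equation*}
Restricting next the integration in $\P_K(E_j,\Omega)$ to $\Omega\times\Omega\subseteq\Omega_\sharp$ and using the lower bound in~\eqref{eq::K_behavior}, I would extract a uniform bound on the short-range Gagliardo seminorm
\begin{equation*}
\iint_{(\Omega\times\Omega)\cap\{|x-y|<\delta\}}\frac{|\chi_{E_j}(x)-\chi_{E_j}(y)|}{|x-y|^{n+2s_1}}\,dx\,dy\leq \frac{2M}{\kappa_1}.
\end{equation*}
The long-range contribution is then trivially bounded by $2|\Omega|^2/\delta^{n+2s_1}$, so altogether $\sup_j[\chi_{E_j}]_{W^{2s_1,1}(\Omega)}<+\infty$, and together with $\|\chi_{E_j}\|_{L^1(\Omega)}\leq|\Omega|$ the sequence $\{\chi_{E_j}\}_j$ is uniformly bounded in $W^{2s_1,1}(\Omega)$.

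Since $\Omega$ is Lipschitz and $2s_1\in(0,1)$, the fractional Rellich--Kondrachov theorem (see e.g.~\cite[Theorem~7.1]{MR2944369}) supplies a subsequence $\chi_{E_{j_m}}\to u$ in $L^1(\Omega)$ and a.e.~in $\Omega$. Being the pointwise a.e.~limit of $\{0,1\}$-valued functions, $u$ coincides a.e.~with $\chi_{E'}$ for some measurable $E'\subseteq\Omega$, and I would then define $E:=E'\cup(\widetilde E\setminus\Omega)$, which by construction satisfies $E\setminus\Omega=\widetilde E\setminus\Omega$.

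Finally, to upgrade the convergence to $L^1_{\loc}(\R^n)$, for any compact $\mathcal K\subset\R^n$ I would split $\mathcal K=(\mathcal K\cap\Omega)\cup(\mathcal K\setminus\overline\Omega)\cup(\mathcal K\cap\partial\Omega)$: on the first piece we have $L^1$-convergence from the previous step, on the second $\chi_{E_{j_m}}=\chi_{\widetilde E}=\chi_E$ by hypothesis, and the third has zero Lebesgue measure since $\Omega$ is Lipschitz. No step here is delicate; the only mild point worth emphasizing is that the pointwise lower bound in~\eqref{eq::K_behavior}, although only localized to the scale $\delta$, is already strong enough to dominate a genuine fractional seminorm on the bounded set~$\Omega$, so that the lack of a Poincar\'e inequality for general $K$ highlighted in the introduction does not obstruct the argument in this fixed-boundary-datum setting.
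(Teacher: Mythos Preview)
Your proof is correct and reaches the same conclusion via the same compactness mechanism (fractional Rellich--Kondrachov, \cite[Theorem~7.1]{MR2944369}), but with a slightly different organization than the paper. The paper covers $\overline\Omega$ by finitely many balls $B^{(\ell)}=B_{\delta/2}(x_\ell)\cap\Omega$ of diameter $<\delta$, so that the lower bound in~\eqref{eq::K_behavior} applies to \emph{every} pair $(x,y)\in B^{(\ell)}\times B^{(\ell)}$ and one obtains directly a uniform $H^{s_1}$ bound on each ball; it then extracts subsequences ball by ball and patches the local limits using that consecutive balls overlap. You instead stay global on $\Omega$ and split the $W^{2s_1,1}$ seminorm into a short-range piece (controlled by $\P_K$ via~\eqref{eq::K_behavior}) and a long-range piece (trivially bounded by $|\Omega|^2/\delta^{n+2s_1}$). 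Your route is a bit more direct, since it avoids the covering-and-patching step; the paper's route avoids having to single out the long-range contribution. For characteristic functions the $H^{s_1}$ and $W^{2s_1,1}$ seminorms coincide (since $|\chi_F(x)-\chi_F(y)|^2=|\chi_F(x)-\chi_F(y)|$), so the two arguments are really the same in substance. One cosmetic point: your long-range bound should read $|\Omega|^2/\delta^{n+2s_1}$ rather than $2|\Omega|^2/\delta^{n+2s_1}$, since $|\chi_{E_j}(x)-\chi_{E_j}(y)|\le 1$.
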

		
		\begin{proof}
			Let us consider the covering~$\{B_{\delta/2}(x)\}_{x\in\overline{\Omega}}$ of~$\Omega$ made of balls of radius~$\delta/2$. By compactness, there exist finitely many points~$x_1,\dots,x_N\in\overline{\Omega}$ such that, setting~$B^{(\ell)}:=B_{\delta/2}(x_\ell)\cap \Omega$, we have
			\begin{align*}
				&\Omega\subseteq \bigcup_{\ell=1}^N B^{(\ell)},\\
				\mbox{and}\quad& B^{(\ell)}\cap B^{(\ell+1)}\neq \varnothing, \mbox{ for every }\ell\in\{1,\dots,N-1\}.
			\end{align*}
			
			Now, observe that, in virtue of~\eqref{eq::K_behavior},
			for any~$\ell\in\{1,\dots,N\}$ and for any set~$F$,
\begin{equation*}
\begin{split}
&\frac{\kappa_1}2[\chi_F]^2_{H^{s_1}(B^{(\ell)}}
=\frac{\kappa_1}2\iint_{B^{(\ell)}\times B^{(\ell)}} \frac{|\chi_F(x)-\chi_{F}(y)|^2} {|x-y|^{n+s_1}}\,dx\,dy\\&\qquad
\le\frac1{2}\iint_{B^{(\ell)}\times B^{(\ell)}} |\chi_F(x)-\chi_{F}(y)|^2 K(x,y)\,dx\,dy\\&\qquad
= \iint_{B^{(\ell)}\times B^{(\ell)}} \chi_F(x)\chi_{F^c}(y)K(x,y)\,dx\,dy \leq \iint_{\Omega\times \Omega} \chi_F(x)\chi_{F^c}(y)K(x,y)\,dx\,dy \\
					&\qquad \leq \P_K(F,\Omega) = \F(F,\Omega) - \int_{F\cap\mathcal{Q}(\Omega)_1}g(x)\,dx \leq \F(F,\Omega) + \norm{g}_{L^\infty(\Omega)}|\Omega|,
				\end{split}
			\end{equation*}		
			where~$[\cdot]_{H^s}$ denotes the~$(2s)$-Gagliardo seminorm.
			
			In particular, we deduce that, for all~$j\in\N$,
			\begin{equation*}
				\frac{\kappa_1}2[\chi_{E_j}]^2_{H^{s_1}(B^{(1)})} \leq \sup_j\F(E_j,\Omega) + \norm{g}_{L^\infty(\Omega)}|\Omega|.
			\end{equation*}
			Hence, by the compactness of the Sobolev embedding~$H^{s_1}(B^{(1)})\hookrightarrow\hookrightarrow L^2(B^{(1)})\subseteq L^1(B^{(1)})$
			(see~\cite[Corollary~7.2]{MR2944369}), there exist a set~$E_1\subseteq B^{(1)}$ and a subsequence~$\{E_{j_m}\}_m$ such that~$E_{j_m}\to E_1$ in~$L^1(B^{(1)})$ and pointwise in~$B^{(1)}$.
			
			Similarly,
			\begin{equation*}
				\frac{\kappa_1}2[\chi_{E_{j_m}}]^2_{H^{s_1}(B^{(2)})} \leq \sup_m\F(E_{j_m},\Omega) + \norm{g}_{L^\infty(\Omega)}|\Omega|.
			\end{equation*}
			Therefore, up to considering a further subsequence, we also have that~$E_{j_m}\to E_2$ in~$L^1(B^{(2)})$ and pointwise in~$B^{(2)}$, for some set~$E_2\subseteq B^{(2)}$. Moreover, by the uniqueness of the limit, it follows that~$E_1=E_2$ in~$B^{(1)}\cap B^{(2)}$.
			
			Repeating the same argument in every ball~$B^{(\ell)}$, up to considering further subsequences, we infer that~$E_{j_m}\to E$ in~$L^1(\Omega)$, where
			\begin{equation*}
				E := \left(\bigcup_{\ell=1}^N E_\ell\right) \cup \widetilde{E}\setminus\Omega,
			\end{equation*}
			concluding the proof.
		\end{proof}
		
		\begin{proof}[Proof of Proposition~\ref{prop::good_plateau_problem}]
			First, notice that the energy functional~$\F$ is uniformly bounded from below. Indeed, for any set~$F$ we have
			\begin{equation*}
				\F(F,\Omega) \geq \int_{F\cap\mathcal{Q}(\Omega)_1} g(x)\,dx \geq -\norm{g}_{L^\infty(\Omega)}|\Omega|.
			\end{equation*}
			Thus, we consider a minimizing sequence~$\{E_j\}_j$ such that~$E_j\setminus\Omega=\widetilde{E}\setminus\Omega$, and
			\begin{equation*}
				\F(E_j,\Omega)\to \inf\{\F(F,\Omega) \quad\mbox{s.t. }F\setminus\Omega=\widetilde{E}\setminus\Omega\},\quad\mbox{as }j\to+\infty.
			\end{equation*}
			
Now we take
$$ F:=\begin{cases}
\Omega &\quad {\mbox{ in }}\Omega,\\
\widetilde{E}\setminus\Omega &\quad {\mbox{ in }}\R^n\setminus\Omega
\end{cases} $$ and we observe that,
for any~$j$ large enough,
\begin{eqnarray*}
&&	\F(E_j,\Omega)\le \F(F,\Omega)
=\P_K(F,\Omega)+ \int_{F\cap \mathcal{Q}(\Omega)_1}g(x)\,dx \le \P_K(\Omega)+\norm{g}_{L^\infty(\Omega)}|\Omega|
	<+\infty.
			\end{eqnarray*}
	Therefore, the desired result follows from Lemma~\ref{lemma::F_compactness} and the lower semi-continuity of the functional~$\F$.		
		\end{proof}

\section*{Acknowledgments} \label{sec::acknowledgments}	
		%\addcontentsline{toc}{section}{\nameref{sec::acknowledgments}}
		Matteo Novaga is member of the INDAM-GNAMPA, and was supported by Next Generation EU, PRIN 2022E9CF89.
		Serena Dipierro and Enrico Valdinoci are members of the Australian Mathematical Society (AustMS).
		Serena Dipierro and Riccardo Villa are supported by the Australian Research Council Future Fellowship FT230100333 ``New perspectives on nonlocal equations''.
		Enrico Valdinoci is supported by the Australian Laureate Fellowship FL190100081 ``Minimal surfaces, free boundaries and partial differential equations''.
		Riccardo Villa is supported by a Scholarship for International Research Fees at the University of Western Australia.
		
	\end{appendix}
	
	\begin{bibdiv}
		\begin{biblist} %{key={alphabetical}}
			
			\bib{MR1612250}{article}{
				author={Alberti, Giovanni},
				author={Bellettini, Giovanni},
				title={A nonlocal anisotropic model for phase transitions. I. The optimal
					profile problem},
				journal={Math. Ann.},
				volume={310},
				date={1998},
				number={3},
				pages={527--560},
				issn={0025-5831},
				review={\MR{1612250}},
				doi={10.1007/s002080050159},
			}
			
			\bib{MR1634336}{article}{ 
				author={Alberti, Giovanni},
				author={Bellettini, Giovanni},
				title={A non-local anisotropic model for phase transitions: asymptotic
					behaviour of rescaled energies},
				journal={European J. Appl. Math.},
				volume={9},
				date={1998},
				number={3},
				pages={261--284},
				issn={0956-7925},
				review={\MR{1634336}},
				doi={10.1017/S0956792598003453},
			}
			
			%	\bib{MR0637494}{article}{
				%		author={Alt, Hans Wilhelm},
				%		author={Caffarelli, Luis A.},
				%		author={Friedman, Avner},
				%		title={Asymmetric jet flows},
				%		journal={Comm. Pure Appl. Math.},
				%		volume={35},
				%		date={1982},
				%		number={1},
				%		pages={29--68},
				%		issn={0010-3640},
				%		review={\MR{0637494}},
				%		doi={10.1002/cpa.3160350103},
				%	}
			
			\bib{MR1857292}{book}{
				author={Ambrosio, Luigi},
				author={Fusco, Nicola},
				author={Pallara, Diego},
				title={Functions of bounded variation and free discontinuity problems},
				series={Oxford Mathematical Monographs},
				publisher={The Clarendon Press, Oxford University Press, New York},
				date={2000},
				pages={xviii+434},
				isbn={0-19-850245-1},
				review={\MR{1857292}},
			}
			
			\bib{2025arXiv250409976A}{article}{
					title = {Nonlocal operators in divergence form and existence theory for integrable data},
				author = {Arcoya, David},
				author = {Dipierro, Serena},
				author = {Proietti Lippi, Edoardo},
				author = {Sportelli, Caterina},
				author = {Valdinoci, Enrico},
				journal = {J. Funct. Anal.},
%pages = {111317},
%date = {2025},
issn = {0022-1236},
doi = {https://doi.org/10.1016/j.jfa.2025.111317},
%      eprint={2504.09976},
%      archivePrefix={arXiv},
%      primaryClass={math.AP},
  %      doi={https://arxiv.org/abs/2504.09976},
      }
     				
			\bib{MR719634}{article}{
				author={Aubry, S.},
				author={Le Daeron, P. Y.},
				title={The discrete Frenkel-Kontorova model and its extensions. I. Exact
					results for the ground-states},
				journal={Phys. D},
				volume={8},
				date={1983},
				number={3},
				pages={381--422},
				issn={0167-2789},
				review={\MR{719634}},
				doi={10.1016/0167-2789(83)90233-6},
			}
			
			\bib{MR2197072}{article}{ 
				author={Auer, Franz},
				author={Bangert, Victor},
				title={Differentiability of the stable norm in codimension one},
				journal={Amer. J. Math.},
				volume={128},
				date={2006},
				number={1},
				pages={215--238},
				issn={0002-9327},
				review={\MR{2197072}},
			}
			
			\bib{MR1286918}{article}{
				author={Barroso, Ana Cristina},
				author={Fonseca, Irene},
				title={Anisotropic singular perturbations---the vectorial case},
				journal={Proc. Roy. Soc. Edinburgh Sect. A},  
				volume={124},
				date={1994},
				number={3},
				pages={527--571},
				issn={0308-2105},
				review={\MR{1286918}},
				doi={10.1017/S0308210500028778},
			}
			
			\bib{MR1656477}{article}{
				author={Bouchitt\'e, Guy},
				author={Fonseca, Irene},
				author={Mascarenhas, Luisa},
				title={A global method for relaxation},
				journal={Arch. Rational Mech. Anal.},
				volume={145},
				date={1998},
				number={1},
				pages={51--98},
				issn={0003-9527},
				review={\MR{1656477}},
				doi={10.1007/s002050050124},
			}
			
			\bib{MR4537323}{article}{
				author={Bucur, Claudia},
				author={Dipierro, Serena},
				author={Lombardini, Luca},
				author={Maz\'{o}n, Jos\'{e} M.},
				author={Valdinoci, Enrico},
				title={$(s, p)$-harmonic approximation of functions of least
					$W^{s,1}$-seminorm},
				journal={Int. Math. Res. Not. IMRN},
				date={2023},
				number={2},
				pages={1173--1235},
				issn={1073-7928},
				review={\MR{4537323}},
				doi={10.1093/imrn/rnab284},
			}
			
			\bib{MR4142859}{article}{
				author={Cabr\'{e}, Xavier},
				title={Calibrations and null-Lagrangians for nonlocal perimeters and an
					application to the viscosity theory},
				journal={Ann. Mat. Pura Appl. (4)},
				volume={199},
				date={2020},
				number={5},
				pages={1979--1995},
				issn={0373-3114},
				review={\MR{4142859}},
				doi={10.1007/s10231-020-00952-z},
			}
			
			\bib{MR1852978}{article}{
				author={Caffarelli, Luis A.},
				author={de la Llave, Rafael},
				title={Planelike minimizers in periodic media},
				journal={Comm. Pure Appl. Math.},
				volume={54},
				date={2001},
				number={12},
				pages={1403--1441},
				issn={0010-3640},
				review={\MR{1852978}},
				doi={10.1002/cpa.10008},
			}
			
			\bib{MR2675483}{article}{
				author={Caffarelli, L.},
				author={Roquejoffre, J.-M.},
				author={Savin, O.},
				title={Nonlocal minimal surfaces},
				journal={Comm. Pure Appl. Math.},
				volume={63},
				date={2010},
				number={9},
				pages={1111--1144},
				issn={0010-3640},
				review={\MR{2675483}},
				doi={10.1002/cpa.20331},
			}
			
			\bib{MR2782803}{article}{
				author={Caffarelli, Luis},
				author={Valdinoci, Enrico},
				title={Uniform estimates and limiting arguments for nonlocal minimal
					surfaces},
				journal={Calc. Var. Partial Differential Equations},
				volume={41},
				date={2011},
				number={1-2},
				pages={203--240},
				issn={0944-2669},
				review={\MR{2782803}},
				doi={10.1007/s00526-010-0359-6},
			}
			
			\bib{MR1620543}{article}{
				author={Candel, A.},
				author={de la Llave, R.},
				title={On the Aubry-Mather theory in statistical mechanics},
				journal={Comm. Math. Phys.},
				volume={192},
				date={1998},
				number={3},
				pages={649--669},
				issn={0010-3616},
				review={\MR{1620543}},
				doi={10.1007/s002200050313},
			}
			
			\bib{MR3732175}{article}{
				author={Cesaroni, Annalisa},
				author={Novaga, Matteo},
				title={The isoperimetric problem for nonlocal perimeters},
				journal={Discrete Contin. Dyn. Syst. Ser. S},
				volume={11},
				date={2018},
				number={3},
				pages={425--440},
				issn={1937-1632},
				review={\MR{3732175}},
				doi={10.3934/dcdss.2018023},
			}
			
			\bib{MR3223561}{article}{
				author={Chambolle, A.},
				author={Goldman, M.},
				author={Novaga, M.},
				title={Plane-like minimizers and differentiability of the stable norm},
				journal={J. Geom. Anal.},
				volume={24},
				date={2014},
				number={3},
				pages={1447--1489},
				issn={1050-6926},
				review={\MR{3223561}},
				doi={10.1007/s12220-012-9380-7},
			}
			
			\bib{chambolle_thouroude}{article}{ 
				author={Chambolle, Antonin},
				author={Thouroude, Gilles},
				title={Homogenization of interfacial energies and construction of
					plane-like minimizers in periodic media through a cell problem},
				journal={Netw. Heterog. Media},
				volume={4},
				date={2009},
				number={1},
				pages={127--152},
				issn={1556-1801},
				review={\MR{2480426}},
				doi={10.3934/nhm.2009.4.127},
			}
			
			\bib{MR3652519}{article}{
				author={Cozzi, Matteo},
				author={Dipierro, Serena},
				author={Valdinoci, Enrico},
				title={Planelike interfaces in long-range Ising models and connections
					with nonlocal minimal surfaces},
				journal={J. Stat. Phys.},
				volume={167},
				date={2017},
				number={6},
				pages={1401--1451},
				issn={0022-4715},
				review={\MR{3652519}},
				doi={10.1007/s10955-017-1783-1},
			}
			
			\bib{MR4360596}{article}{
				author={De Luca, Lucia},
				author={Novaga, Matteo},
				author={Ponsiglione, Marcello},
				title={The 0-fractional perimeter between fractional perimeters and Riesz
					potentials},
				journal={Ann. Sc. Norm. Super. Pisa Cl. Sci. (5)},
				volume={22},
				date={2021},
				number={4},
				pages={1559--1596},
				issn={0391-173X},
				review={\MR{4360596}},
			}
			
			\bib{MR2944369}{article}{
				author={Di Nezza, Eleonora},
				author={Palatucci, Giampiero},
				author={Valdinoci, Enrico},
				title={Hitchhiker's guide to the fractional Sobolev spaces},
				journal={Bull. Sci. Math.},
				volume={136},
				date={2012},
				number={5},
				pages={521--573},
				issn={0007-4497},
				review={\MR{2944369}},
				doi={10.1016/j.bulsci.2011.12.004},
			}
			
			\bib{sequoia}{article}{ 
				title={On non-local almost minimal sets and an application to the non-local Massari's Problem}, 
				author={Dipierro, Serena},
				author={Valdinoci, Enrico},
				author={Villa, Riccardo},
				journal={J. Math. Pures Appl., to appear}
		%		date={2024},
%				eprint={2411.07727},
%				archivePrefix={arXiv},
%				primaryClass={math.AP},
%				doi={https://arxiv.org/abs/2411.07727}, 
			}
			
			\bib{MR3409135}{book}{
				author={Evans, Lawrence C.},
				author={Gariepy, Ronald F.},
				title={Measure theory and fine properties of functions},
				series={Textbooks in Mathematics},
				edition={Revised edition},
				publisher={CRC Press, Boca Raton, FL},
				date={2015},
				pages={xiv+299},
				isbn={978-1-4822-4238-6},
				review={\MR{3409135}},
			}
			
			\bib{MR3310082}{article}{
				author={Fiscella, Alessio},
				author={Servadei, Raffaella},
				author={Valdinoci, Enrico},
				title={Density properties for fractional Sobolev spaces},
				journal={Ann. Acad. Sci. Fenn. Math.},
				volume={40},
				date={2015},
				number={1},
				pages={235--253},
				issn={1239-629X},
				review={\MR{3310082}},
				doi={10.5186/aasfm.2015.4009},
			}
			
			\bib{MR1681462}{book}{
				author={Folland, Gerald B.},
				title={Real analysis},
				series={Pure and Applied Mathematics (New York)},
				edition={2},
				note={Modern techniques and their applications;
					A Wiley-Interscience Publication},
				publisher={John Wiley \& Sons, Inc., New York},
				date={1999},
				pages={xvi+386},
				isbn={0-471-31716-0},
				review={\MR{1681462}},
			}
			
			\bib{MR1707291}{book}{
				author = {Giusti, Enrico},
				title = {Direct Methods In The Calculus Of Variations.},
				isbn = {9789812380432},
				publisher = {World Scientific},
				date = {2003},
				pages = {vii+403},
			}
			
			\bib{MR3791463}{book}{
				author={Henrot, Antoine},
				author={Pierre, Michel},
				title={Shape variation and optimization},
				series={EMS Tracts in Mathematics},
				volume={28},
				note={A geometrical analysis;
					English version of the French publication [MR2512810] with additions and
					updates},
				publisher={European Mathematical Society (EMS), Z\"urich},
				date={2018},
				pages={xi+365},
				isbn={978-3-03719-178-1},
				review={\MR{3791463}},
				doi={10.4171/178},
			}
			
			\bib{MR4567945}{book}{
				author={Leoni, Giovanni},
				title={A first course in fractional Sobolev spaces},
				series={Graduate Studies in Mathematics},
				volume={229},
				publisher={American Mathematical Society, Providence, RI},
				date={2023},
				pages={xv+586},
				isbn={[9781470468989]},
				isbn={[9781470472535]},
				isbn={[9781470472528]},
				review={\MR{4567945}},
				doi={10.1090/gsm/229},
			}
			
				\bib{MR2356117}{article}{
				author={de la Llave, Rafael},
				author={Valdinoci, Enrico},
				title={Ground states and critical points for generalized
					Frenkel-Kontorova models in $\Z^d$},
				journal={Nonlinearity},
				volume={20},
				date={2007},
				number={10},
				pages={2409--2424},
				issn={0951-7715},
				review={\MR{2356117}},
				doi={10.1088/0951-7715/20/10/008},
			}

			\bib{MR2542727}{article}{
				author={de la Llave, Rafael},
				author={Valdinoci, Enrico},
				title={A generalization of Aubry-Mather theory to partial differential
					equations and pseudo-differential equations},
				journal={Ann. Inst. H. Poincar\'{e} C Anal. Non Lin\'{e}aire},
				volume={26},
				date={2009},
				number={4},
				pages={1309--1344},
				issn={0294-1449},
				review={\MR{2542727}},
				doi={10.1016/j.anihpc.2008.11.002},
			}
				
			\bib{MR670747}{article}{
				author={Mather, John N.},
				title={Existence of quasiperiodic orbits for twist homeomorphisms of the
					annulus},
				journal={Topology},
				volume={21},
				date={1982},
				number={4},
				pages={457--467},
				issn={0040-9383},
				review={\MR{670747}},
				doi={10.1016/0040-9383(82)90023-4},
			}
			
			\bib{MR3491533}{article}{
				author={Maz\'{o}n, Jos\'{e} M.},
				author={Rossi, Julio D.},
				author={Toledo, Juli\'{a}n},
				title={Fractional $p$-Laplacian evolution equations},
				language={English, with English and French summaries},
				journal={J. Math. Pures Appl. (9)},
				volume={105},
				date={2016},
				number={6},
				pages={810--844},
				issn={0021-7824},
				review={\MR{3491533}},
				doi={10.1016/j.matpur.2016.02.004},
			}
			
			\bib{MR3930619}{book}{
				author={Maz\'{o}n, Jos\'{e} M.},
				author={Rossi, Julio Daniel},
				author={Toledo, J. Juli\'{a}n},
				title={Nonlocal perimeter, curvature and minimal surfaces for measurable
					sets},
				series={Frontiers in Mathematics},
				publisher={Birkh\"{a}user/Springer, Cham},
				date={2019},
				pages={xviii+123},
				isbn={978-3-030-06242-2},
				isbn={978-3-030-06243-9},
				review={\MR{3930619}},
				doi={10.1007/978-3-030-06243-9},
			}
			
			\bib{MR3996039}{article}{
				author={Maz\'{o}n, Jos\'{e} M.},
				author={Rossi, Julio D.},
				author={Toledo, Juli\'{a}n},
				title={Nonlocal perimeter, curvature and minimal surfaces for measurable
					sets},
				journal={J. Anal. Math.},
				volume={138},
				date={2019},
				number={1},
				pages={235--279},
				issn={0021-7670},
				review={\MR{3996039}},
				doi={10.1007/s11854-019-0027-5},
			}
			
			\bib{MR0847308}{article}{
				author={Moser, J\"urgen},
				title={Minimal solutions of variational problems on a torus},
				journal={Ann. Inst. H. Poincar\'e{} Anal. Non Lin\'eaire},
				volume={3},
				date={1986},
				number={3},
				pages={229--272},
				issn={0294-1449},
				review={\MR{0847308}},
			}
			
			\bib{MR4645235}{article}{
				author={Novaga, Matteo},
				author={Onoue, Fumihiko},
				title={Local H\"{o}lder regularity of minimizers for nonlocal variational
					problems},
				journal={Commun. Contemp. Math.},
				volume={25},
				date={2023},
				number={10},
				pages={Paper No. 2250058, 29},
				issn={0219-1997},
				review={\MR{4645235}},
				doi={10.1142/S0219199722500584},
			}
			
			\bib{MR2342272}{article}{
				author={Novaga, Matteo},
				author={Valdinoci, Enrico},
				title={The geometry of mesoscopic phase transition interfaces},
				journal={Discrete Contin. Dyn. Syst.},
				volume={19},
				date={2007},
				number={4},
				pages={777--798},
				issn={1078-0947},
				review={\MR{2342272}},
				doi={10.3934/dcds.2007.19.777},
			}
			
			\bib{pagliardini_non_local_planelike}{article}{
			     title={Minimizers for a fractional Allen-Cahn equation in a periodic medium}, 
      author={Pagliardini, Dayana},
      date={2017},
      eprint={1710.02205},
      archivePrefix={arXiv},
      primaryClass={math.AP},
      doi={https://arxiv.org/abs/1710.02205}, 
      		}
			
			\bib{MR4117514}{article}{
				author={Pagliari, Valerio},
				title={Halfspaces minimise nonlocal perimeter: a proof {\it via}
					calibrations},
				journal={Ann. Mat. Pura Appl. (4)},
				volume={199},
				date={2020},
				number={4},
				pages={1685--1696},
				issn={0373-3114},
				review={\MR{4117514}},
				doi={10.1007/s10231-019-00937-7},
			}
			
			\bib{MR2126143}{article}{
				author={Petrosyan, Arshak},
				author={Valdinoci, Enrico},
				title={Geometric properties of Bernoulli-type minimizers},
				journal={Interfaces Free Bound.},
				volume={7},
				date={2005},
				number={1},
				pages={55--77},
				issn={1463-9963},
				review={\MR{2126143}},
				doi={10.4171/IFB/113},
			}
			
			\bib{MR2809349}{book}{
				author={Rabinowitz, Paul H.},
				author={Stredulinsky, Edward W.},
				title={Extensions of Moser-Bangert theory},
				series={Progress in Nonlinear Differential Equations and their
					Applications},
				volume={81},
				note={Locally minimal solutions},
				publisher={Birkh\"{a}user/Springer, New York},
				date={2011},
				pages={viii+208},
				isbn={978-0-8176-8116-6},
				review={\MR{2809349}},
				doi={10.1007/978-0-8176-8117-3},
			}
			
			\bib{MR2948285}{article}{
				author={Savin, Ovidiu},
				author={Valdinoci, Enrico},
				title={$\Gamma$-convergence for nonlocal phase transitions},
				journal={Ann. Inst. H. Poincar\'e{} C Anal. Non Lin\'eaire},
				volume={29},
				date={2012},
				number={4},
				pages={479--500},
				issn={0294-1449},
				review={\MR{2948285}},
				doi={10.1016/j.anihpc.2012.01.006},
			}
			
			\bib{MR2099113}{article}{
				author={Valdinoci, Enrico},
				title={Plane-like minimizers in periodic media: jet flows and
					Ginzburg-Landau-type functionals},
				journal={J. Reine Angew. Math.},
				volume={574},
				date={2004},
				pages={147--185},
				issn={0075-4102},
				review={\MR{2099113}},
				doi={10.1515/crll.2004.068},
			}
			
		\end{biblist}
	\end{bibdiv}

\end{document}